\DeclareMathAlphabet\mathbfcal{OMS}{cmsy}{b}{n}
\newtheorem{theorem}{Theorem}[section]
\newtheorem{proposition}[theorem]{Proposition}
\newtheorem{lemma}[theorem]{Lemma}
\newtheorem{hypothesis}[theorem]{Assumption}
\theoremstyle{definition}
\newtheorem{definition}[theorem]{Definition}
\theoremstyle{remark}
\newtheorem{remark}[theorem]{Remark}
\newcommand\ubar[1]{\stackunder[1.2pt]{$#1$}{\rule{1.0ex}{.2ex}}}
\newcommand{\balpha}{{\boldsymbol{\alpha}}}
\newcommand{\bbeta}{{\boldsymbol{\beta}}}
\newcommand{\bgamma}{{\boldsymbol{\gamma}}}
\newcommand{\bnu}{{\boldsymbol{\nu}}}
\newcommand{\bphi}{{\boldsymbol{\phi}}}
\newcommand{\bpsi}{{\boldsymbol{\psi}}}
\newcommand\FF{\ensuremath{\mathbb{F}}}
\newcommand{\PP}{\ensuremath{\mathbb{P}}}
\newcommand\bL{\ensuremath{\mathbf{L}}}
\newcommand\bW{\ensuremath{\mathbf{W}}}
\newcommand\bX{\ensuremath{\mathbf{X}}}
\newcommand\bY{\ensuremath{\mathbf{Y}}}
\newcommand\bZ{\ensuremath{\mathbf{Z}}}
\newcommand\bp{\ensuremath{\mathbf{p}}}
\newcommand\bcM{\mathbfcal{M}}
\begin{document}
\title{A Probabilistic Approach to Extended Finite State Mean Field Games}
\author{Ren\'e Carmona \and Peiqi Wang}
\address{Department of Operations Research and Financial Engineering, Princeton University}

\begin{abstract}
We develop a probabilistic approach to continuous-time finite state mean field games. Based on an alternative description of continuous-time Markov chain by means of semimartingale and the weak formulation of stochastic optimal control, our approach not only allows us to tackle the mean field of states and the mean field of control in the same time, but also extend the strategy set of players from Markov strategies to closed-loop strategies. We show the existence and uniqueness of Nash equilibrium for the mean field game, as well as how the equilibrium of mean field game consists of an approximative Nash equilibrium for the game with finite number of players under different assumptions of structure and regularity on the cost functions and transition rate between states.
\end{abstract}

\maketitle

\section{Introduction}

Mean field game in which players' states belong to a finite space is first studied in \cite{gomes2013}. The dynamics of each player's states is depicted by a continuous-time Markov chain, whose transition rate matrix is a function of the player's control and probability distribution of all players' states. By assuming that each player adopts a Markovian strategy, the Nash equilibrium can be characterized by a HJB equation corresponding to the optimal control of continuous-time Markov chain on the one hand, and a Kolmogorov equation on how probability distribution of player's states evolves on the other hand. Due to the finite nature of the state space, both equations turn out to be ordinary differential equations and existence of the solution to this forward-backward system can be obtained by a fixed point argument. Continuous-time finite state mean field games were applied to model socio-economic phenomena such as paradigm shift in a scientific community and consumer choice in \cite{gomes2014}. In \cite{kolokoltsov2016}, the strategic aspect of cyber attack and defense is analyzed through a finite state mean field game model, in which the author introduces a major player - the hacker - whose action influences each minor player - the computer user - in terms of their payoff and dynamics. Theoretical aspects of finite state mean field games with major and minor players are investigated in \cite{carmona2016discrete} where existence of Nash equilibria and results on approximate Nash equilibrium for finite player game in small duration are obtained, along with the master equation characterizing the Nash equilibrium.

\vspace{3mm}
In this paper, we develop a probabilistic framework for continuous-time finite state mean field game. Our starting point is a semimartingale representation of continuous-time Markov chain introduced in \cite{elliott1995}: Let $(X_t)_{0\le t\le T}$ be a continuous-time Markov chain with $m$ states which are identified with the $m$ standard basis vectors in $\mathbb{R}^m$, then we can write:
\[
X_{t} = X_0 + \int_{(0,t]} Q^*(t)\cdot X_{t-} dt + \mathcal{M}_t.
\]
where $Q(t)$ is the transition rate matrix (also known as the Q-matrix) with $Q^*(t)$ being its transpose and $\mathcal{M}$ is a martingale. We immediately notice the analogy with diffusion processes and apply Girsanov Theorem to construct equivalent probability measures under which the process $X$ admits a different transition rate process. This opens a pathway to formulating the optimal control problem of continous-time Markov chain in a so-called \emph{weak} fashion. Indeed, in the context of optimal control of diffusion processes, the weak formulation links the control of the drift to the control of the probability measure (as opposed to the control of the path) and identifies the value function of the control problem as the solution to a backward stochastic differential equation (BSDE). By the comparison principle of the BSDE, the optimality of the control problem can be obtained by optimizing the driver of the BSDE which coincides with the Hamiltonian function. It turns out that such procedure can be transplanted to the case of optimal control of continuous-time Markov chain, thanks to the theory of BSDE driven by Markov chain developed in \cite{cohen2008} and \cite{cohen2010}.

\vspace{3mm}
Once the optimal control problem can be characterized by a BSDE, our next step is to develop a probabilistic approach to the mean field game. Probabilistic approach to mean field game is first proposed in \cite{carmona2013probabilistic}, where player's optimization problem is treated in the \emph{strong} formulation. By applying Pontryagin's Maximum Principle, the optimality of player's control problem is characterized by a forward-backward stochastic differential equation (FBSDE). Later in \cite{carmona2015probabilistic}, the authors consider the weak formulation of control problem and use the argument of change of measure which we briefly described above to obtain the BSDE characterizing the optimality. In both cases, the existence of Nash equilibria of the mean field game boils down to the well-posedness of a BSDE (or FBSDE) in which the probability distribution of the solution enters into the driver and the terminal condition of the equation. These are the so-called McKean-Vlasov type of BSDE (or FBSDE) for which the existence of the solution can be obtained by a fixed-point argument \`a la Schauder.

\vspace{3mm}
By developing the weak formulation, our contributions to finite state mean field game are three-fold. On the one hand, the flexibility of the probabilistic approach allows us to incorporate not only the mean field of state, but also the mean field of control into the dynamics and cost functionals of individual players. Mean field of control is known to be notoriously intractable via PDE method, due to the difficulties in deriving the equation obeyed by the flow of probability measure of the optimal control. Under the probabilistic framework however, the mean field of state and the mean field of control can be dealt with in similar manners, although the treatment of mean field of control is more involved in terms of the topological argument. On the other hand, using the weak formulation we are able to show Nash equilibria exist among all closed-loop strategies, including the strategies depending on the past history of player's states, whereas the PDE approach can only accommodate Markovian strategies. 

\vspace{3mm}
Lastly, the weak formulation we develop for the finite state mean field game will serve as a launching pad to tackle the finite state mean field agent-principal problem. Such model is a form of Stackelberg game in which the principal fixes a contract first and a large population of agents reaches Nash equilibrium according to the contract proposed by the principal. By fixing a contract we actually mean that the principal chooses a control which enters into each agent's dynamics and cost functions. One meaningful direction in probing mean field agent-principal problems is to understand how the principal can choose the optimal contract so that its own cost function depending on agent's distribution is minimized. To the best of our knowledge, this type of problem is first investigated in \cite{elie2016} where the agent's dynamics is a diffusion. The main idea is to formulate the optimal contract problem as a Mckean-Vlasov optimal control problem, in which the state process to be controlled is the Mckean-Vlasov BSDE characterizing the Nash equilibrium in the weak formulation of the mean field game. With the help of the weak formulation we develop in this paper, we believe that the same technique can be applied to the case of finite state mean field agent-principal problem, which could lead to potential applications in epidemics and cyber security.

\vspace{3mm}
We would also like to mention a few literatures related to our paper. In \cite{cecchin2017} the authors proposed a probabilistic framework for finite state mean field game where the player's dynamics of states is represented by stochastic differential equations driven by Poisson random measures. By using Ky Fan's fixed point theorem, the authors obtained existence and uniqueness of the Nash equilibrium in relaxed open-loop as well as relaxed feedback controls. Then under stronger assumption that guarantees uniqueness of optimal non-relaxed feedback control, the authors deduced existence of Nash equilibria in non-relaxed feedback form. In \cite{doncel2017}, continuous-time mean field games with finite state space and finite action space were studied. The authors proved existence of Nash equilibrium among relaxed feedback controls. In \cite{benazzoli2017} the authors investigated mean field games where each player's state follows a jump-diffusion process and the player controls the sizes of the jumps. The formulation is based on weak formulation of stochastic controls and martingale problems. Existence of Nash equilibrium among relaxed controls and Markovian controls is established.

\vspace{3mm}
The rest of the paper is organized as follows. In Section 2, we introduce the weak formulation of finite state mean field game, which is based on a semimartingale representation of continuous-time Markov chain and an argument of change of measure. We state the assumptions used throughout the paper and give the precise definition of the Nash equilibrium in the weak formulation. In Section 3, we analyze player's optimal control problem when facing a fixed mean field of state and control, by characterizing the value function and the optimal control using a BSDE driven by Markov chain. Section 4 is devoted to the existence and the uniqueness of the Nash equilibrium. Finally in Section 5, we formulate the game with finite number of players and show the Nash equilibrium of the mean field game is an approximate Nash equilibrium of the game with finite number of players.

\section{The Weak Formulation for Finite State Mean Field Games}

\subsection{Notations}
If  $M$ is a square real matrix, we denote by by $M^*$  its transpose and $M^+$ its Moore-Penrose pseudo inverse. For a column vector $X$, we denote by $diag(X)$ the square diagonal matrix whose diagonal elements are given by the entries of $X$. If  $\gamma$ is a random variable on a probability space $(\Omega,\mathcal{F},\mathbb{P})$, we denote its law or its distribution, namely  the push-forward of $\mathbb{P}$ by $\gamma$by $\mathbb{P}_{\#\gamma} := \mathbb{P}\circ \gamma^{-1}$.

\vspace{3mm}
For two square integrable martingales $L$, $M$, we denote by $[L, M]$ the quadratic covariation process of $L$ and $M$. For two semimartingales $L$ and $M$, we denote by $\langle L, M\rangle$ the predictable quadratic covariation process of $L$ and $M$. For a semimartingale $L$ such that $L_0 = 0$, we denote by $\mathcal{E}(L)$ the process of Dol\'eans-Dade exponential of $L$. See Chapter II.6 in \cite{protter2005} for the definitions of these standard concepts.

\subsection{Controlled probability measure}
For the control of continuous-time finite state Markov chains we adopt the formalism first introduced in \cite{elliott1995}, and later developed in \cite{cohen2008} and \cite{cohen2010}. If $\bX = (X_t)_{0\le t\le T}$ is a continuous-time Markov chain with $m$ states, we identify these states with the basis vectors $e_i$ in $\mathbb{R}^m$ and we denote by $E$ the resulting state space $E=\{e_1,\dots,e_m\}$. We assume that the sample paths $t\rightarrow X_t$ are \emph{c\`adl\`ag}, i.e. right continuous with left limits, and continuous at $T$. In other words, we force $X_{T-}=X_T$. 

\vspace{3mm}
We first construct a canonical probability space for $\bX$. Let $\Omega$ be the space of c\`adl\`ag functions from $[0,T]$ to $E$ which are continuous at $T$, and let $\bX$ be the canonical process on $\Omega$, that is $X_t(\omega) := \omega_t$. We denote by $\mathbb{F} := (\mathcal{F}_t)_{t \in [0,T]}$ with $\mathcal{F}_t := \sigma\{X_s, s\le t\}$ the natural filtration generated by $\bX$,
and we set $\mathcal{F} := \mathcal{F}_T$. Throughout the rest of the paper, we fix a probability measure $\mathbf{p}^{\circ}$ on the set $E$. It will be used as the initial distribution of the process $\bX$. On the filtered space $(\Omega, \mathbb{F}, \mathcal{F})$, we consider the probability measure $\mathbb{P}$ under which $\bX$ is a continuous-time Markov chain with initial distribution $\mathbf{p}^{\circ}$ and transition rates between any two different states equal to $1$. This means that for $i,j \in \{1,\dots,m\}$, $i\neq j$ and $\Delta t >0$, we have $\mathbb{P}[X_{t+\Delta t} = e_j | \mathcal{F}_t ] = \mathbb{P}[X_{t+\Delta t} = e_j| X_t]$ and $\mathbb{P}[X_{t+\Delta t} = e_j | X_t = e_i] = \Delta t + o(\Delta t)$. By Appendix B in \cite{elliott1995}, the process $X$ has the  representation:
\begin{equation}
\label{eq:canonical_representation}
X_t = X_0 + \int_{(0,t]} Q^0\cdot X_{t-} dt + \mathcal{M}_t,
\end{equation}
where $Q^0$ is the square matrix with diagonal elements all equal to $-(m-1)$ and off-diagonal elements all equal to $1$, and  $\\bcM=(\mathcal{M}_t)_{t\ge 0}$ is a $\mathbb{R}^m$-valued $\mathbb{P}$-martingale. The multiplication $\cdot$ is understood as matrix multiplication. 
Indeed, $Q^0$ is the transition rate matrix of $\bX$ under the probability measure $\mathbb{P}$.
\begin{remark}
The representation originally proposed in \cite{elliott1995} is:
\[
X_t = X_0 + \int_{(0,t]} Q^0\cdot X_{t} dt + \mathcal{M}_t.
\]
However since $X_t$ is only discontinuous on a countable set, we can replace $X_t$ by $X_{t-}$ in the integral. The reason for this slight change of representation is to make the integrand a predictable process, which will be suitable for the change of measure argument in what follows.
\end{remark}
We shall refer to the probability measure $\mathbb{P}$ as the \emph{reference measure} on the sample space. The first step of the weak formulation of mean field game consists in depicting how each player's control as well as the mean field determine the probability measure of the sample path. We denote by $\mathcal{S}$ the $m$-dimensional simplex:
\[
\mathcal{S} := \{ p \in \mathbb{R}^m;\; \sum_{i=1}^m p_i = 1, p_i \ge 0\},
\]
which we identify with the space of probability distributions on $E$. Let $A$ be a compact subset of $\mathbb{R}^l$ from which the players can choose their controls. Denote by $\mathcal{P}(A)$ the space of probability measures on $A$. We introduce a function $q$:
\[
[0,T] \times \{1,\dots,m\}^2 \times A \times \mathcal{S}\times\mathcal{P}(A) \rightarrow q(t, i, j, \alpha, p,\nu),
\] 
and we denote by $Q(t,\alpha,p,\nu)$ the matrix $[q(t, i, j, \alpha, p,\nu)]_{1\le i,j\le m}$. Throughout the rest of the paper, we make the following assumption on $q$:

\begin{hypothesis}
\label{hypo:boundedness}
(i) For all $(t,\alpha, p, v) \in [0,T] \times A \times \mathcal{S}\times\mathcal{P}(A)$, the matrix $Q(t, \alpha, p,\nu)$ is a Q-matrix.

\noindent(ii) There exist constants $C_1,C_2>0$ such that for all $(t,i,j,\alpha,p,\nu) \in [0,T]\times E^2\times A\times\mathcal{S}\times \mathcal{P}(A)$ such that $i\neq j$, we have $0 < C_1< q(t,i,j,\alpha,p,\nu) < C_2$.

\noindent(iii) There exists a constant $C>0$ such that for all $(t,i,j)\in[0,T]\times E^2$, $\alpha,\alpha' \in A$, $p,p' \in \mathcal{S}$ and $\nu,\nu'\in\mathcal{P}(A)$, we have:
\[
|q(t,i,j,\alpha,p,\nu) - q(t,i,j,\alpha', p', \nu')| \le C( \|\alpha - \alpha'\| + \|p - p'\| + \mathcal{W}_1(\nu,\nu')).
\]
where $\mathcal{W}_1$ denotes the $1$-Wasserstein distance between probability measures on $A$.
\end{hypothesis}

\vskip 6pt\noindent
Recall that a matrix $Q=[Q_{ij}]$ is called a Q-matrix if $Q_{ij}\ge 0$ for $i\ne j$ and 
\[
\sum_{j \neq i} Q_{ij} = -Q_{ii},\qquad \text{for all } i.
\]
\begin{remark}
Assumption \ref{hypo:boundedness} is analog to the non-degeneracy condition in the diffusion-based mean field game models. It guarantees that the probability measure $\mathbb{Q}^{(\alpha,p,\nu)}$ defined in \eqref{eq:q_measure} below, is equivalent to the reference measure $\mathbb{P}$. In some applications of continuous-time Markov chain models, it happens that jumps from some states to others are forbidden, in which case the transition rate function $q$ would satisfy $q(t,i,j,\alpha,p, \nu)\equiv 0$ for some couples $(i,j)$. For example, this is the case in the botnet defense model proposed by \cite{kolokoltsov2016}, as well as in the extended version of the model which includes an attacker studied in \cite{carmona2016discrete}. When that happens, we need to use a different reference probability measure $\mathbb{P}$: we set the transition rate to $1$ for all the jumps, except for those that are forbidden, for which we set the transition rate to $0$. Fortunately, this is the only modification we need to make in order to accommodate this kind of special case. The arguments presented in the following can be trivially extended to be compatible with this modified reference probability.
\end{remark}

We state without proof a useful property  of the martingale $\bcM$. The proof of this result can be found in \cite{cohen2008}:
\begin{lemma}
The predictable quadratic variation of the martingale $\bcM$ under $\mathbb{P}$ is given by the formula:
\begin{equation}\label{eq:quad_var}
\langle \bcM, \bcM \rangle_t = \int_0^t \psi_t dt,
\end{equation}
where $\psi_t$ is given by:
\begin{equation}\label{eq:matrix_psi}
\psi_t := diag(Q^0 \cdot X_{t-}) - Q^0 \cdot diag( X_{t-}) - diag( X_{t-}) \cdot Q^0.
\end{equation}
\end{lemma}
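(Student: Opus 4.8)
The plan is to sidestep any direct manipulation of the compensator of the jump measure of $\bX$ and instead exploit the algebraic constraint $X^i_t X^j_t = \delta_{ij}\,X^i_t$, which holds because $\bX$ takes values in the set $E$ of standard basis vectors. Writing $\mathcal{M}^i$ for the $i$-th component of $\bcM$, the idea is to apply the integration by parts formula to the semimartingales $X^i$ and $X^j$ and then compare, on the two sides of the resulting identity, the predictable finite-variation part of the (special) semimartingale: the bracket $\langle\mathcal{M}^i,\mathcal{M}^j\rangle$ shows up on one side, and what is left on the other will be the claimed integrand $\psi^{ij}$.

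First I would record a few elementary consequences of \eqref{eq:canonical_representation}. Componentwise it reads $X^i_t = X^i_0 + \int_0^t (Q^0 X_{s-})_i\,ds + \mathcal{M}^i_t$, so $\bcM$ is bounded on $[0,T]$ (hence a locally square-integrable martingale, for which $\langle\mathcal{M}^i,\mathcal{M}^j\rangle$ is well defined), its jumps satisfy $\Delta\mathcal{M}^i_t = \Delta X^i_t$, and, the drift term being continuous of finite variation, it contributes nothing to brackets so that $[\mathcal{M}^i,\mathcal{M}^j] = [X^i,X^j]$. The integration by parts formula then gives
\[
X^i_t X^j_t = X^i_0 X^j_0 + \int_{(0,t]} X^i_{s-}\,dX^j_s + \int_{(0,t]} X^j_{s-}\,dX^i_s + [\mathcal{M}^i,\mathcal{M}^j]_t .
\]
Inserting the decomposition of $dX^i$ and $dX^j$, the two stochastic integrals against $\bcM$ are martingales, and $\langle\mathcal{M}^i,\mathcal{M}^j\rangle$ is, by definition, the unique predictable finite-variation process vanishing at $0$ whose difference with $[\mathcal{M}^i,\mathcal{M}^j]$ is a martingale. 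Identifying the predictable finite-variation parts on the two sides — legitimate because every process in sight is a bounded special semimartingale and its canonical decomposition is thus unique — yields, for $i\neq j$ (where $X^i_t X^j_t\equiv 0$),
\[
\langle\mathcal{M}^i,\mathcal{M}^j\rangle_t = -\int_0^t \bigl( X^i_{s-}(Q^0 X_{s-})_j + X^j_{s-}(Q^0 X_{s-})_i \bigr)\,ds ,
\]
and, for $i=j$ (where $X^i_t X^i_t = X^i_t$, whose predictable finite-variation part is $\int_0^t (Q^0 X_{s-})_i\,ds$),
\[
\langle\mathcal{M}^i,\mathcal{M}^i\rangle_t = \int_0^t \bigl( (Q^0 X_{s-})_i - 2 X^i_{s-}(Q^0 X_{s-})_i \bigr)\,ds .
\]

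It then remains to recognize these two integrands as the $(i,j)$-entry of the matrix in \eqref{eq:matrix_psi}. Since $X_{s-}$ is always some basis vector $e_k$, one has the pointwise matrix identities $X^i_{s-}(Q^0 X_{s-})_j = [\operatorname{diag}(X_{s-})\,Q^0]_{ij}$, $X^j_{s-}(Q^0 X_{s-})_i = [Q^0\operatorname{diag}(X_{s-})]_{ij}$ and $(Q^0 X_{s-})_i = [\operatorname{diag}(Q^0 X_{s-})]_{ii}$, where I have used that $Q^0$ is symmetric so that $(Q^0)^*=Q^0$. Substituting these, and observing that $[\operatorname{diag}(Q^0 X_{s-})]_{ij}=0$ for $i\neq j$, both displays above collapse into $\langle\mathcal{M}^i,\mathcal{M}^j\rangle_t = \int_0^t \psi^{ij}_s\,ds$ with $\psi_s = \operatorname{diag}(Q^0 X_{s-}) - Q^0\operatorname{diag}(X_{s-}) - \operatorname{diag}(X_{s-})\,Q^0$, which is exactly \eqref{eq:quad_var}--\eqref{eq:matrix_psi}.

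The only genuinely delicate point I anticipate is the justification for ``reading off'' the predictable finite-variation parts on both sides of the integration by parts formula — i.e.\ the uniqueness of the canonical decomposition of the special semimartingale $X^i X^j$ — together with the discipline of keeping the left limits $X_{s-}$ rather than $X_s$ in every integrand, which is what makes them predictable; both are taken care of by the finiteness of the state space and the boundedness of $Q^0$. For readers preferring a more hands-on route, the same conclusion follows by compensating $[\mathcal{M}^i,\mathcal{M}^j]_t = \sum_{s\le t}\Delta X^i_s\,\Delta X^j_s$ directly: conditionally on $X_{s-}=e_k$, a jump to $e_l$ with $l\neq k$ occurs at rate $Q^0_{kl}$ and changes $\bX$ by $e_l-e_k$, so the compensator is $\int_0^t \sum_{l\neq k} Q^0_{kl}(\mathbbm{1}_{i=l}-\mathbbm{1}_{i=k})(\mathbbm{1}_{j=l}-\mathbbm{1}_{j=k})\,ds$ evaluated along $X_{s-}=e_k$, which reduces to $\int_0^t\psi^{ij}_s\,ds$ by the same bookkeeping.
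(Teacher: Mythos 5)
Your argument is correct: the integration-by-parts identity for $X^iX^j$ combined with the constraint $X^i_tX^j_t=\delta_{ij}X^i_t$ and the uniqueness of the canonical decomposition of a bounded special semimartingale does yield $\langle\mathcal{M}^i,\mathcal{M}^j\rangle_t=\int_0^t\psi^{ij}_s\,ds$, and your bookkeeping of the matrix entries (including the use of the symmetry of $Q^0$) checks out against the explicit form $\|Z\|^2_{e_i}=\sum_{j\neq i}|Z_j-Z_i|^2$ recorded in the paper. The paper itself states the lemma without proof and defers to the cited reference of Cohen and Elliott, where essentially this same computation (product rule on $X\cdot X^*$ and identification of the predictable finite-variation parts) is carried out, so your proposal is a correct, self-contained reconstruction of the standard argument rather than a genuinely different route.
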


\vskip 6pt
\noindent If we define for each $i$ the matrix $\psi^i$ by:
$$
\psi^i := diag(Q^0 \cdot e_i) - Q^0 \cdot diag(e_i) - diag(e_i) \cdot Q^0,
$$ 
then clearly we have $\psi_t = \sum_{i = 1}^m \mathbbm{1}(X_{t-} = e_i)\psi^i$. Since each $\psi^i$ is a semi-definite positive matrix, so is $\psi_t$. We define the corresponding (stochastic) seminorm $\|\cdot\|_{X_{t-}}$ on $\mathbb{R}^m$ by:
\begin{equation}\label{eq:seminorm}
\|Z\|^2_{X_{t-}} := Z^*\cdot \psi_t\cdot Z.
\end{equation}
The semi-norm $\|\cdot\|_{X_{t-}}$ can be rewritten in a more explicit way. For $i\in\{1,\dots,m\}$, let us define the seminorm $\|\cdot\|_{e_i}$ on $\mathbb{R}^m$ by $\|Z\|^2_{e_i} := Z^* \cdot \psi^i \cdot Z = \sum_{j\neq i} |Z_j - Z_i|^2$. Then it is easy to see that $\|Z\|_{X_{t-}} = \sum_{i=1}^m\mathbbm{1}(X_{t-} = i)\|Z\|_{e_i}$.

\vspace{3mm}
Since $\psi_t$ is symmetric, we have $(\psi_t^+)^* = \psi_t^+$. Recall that $\psi_t^+$ is the Moore-Penrose generalized inverse of the matrix $\psi_t$. On the other hand, it is straightforward to verify that for all $t\in [0,T]$ and $w \in \Omega$, the range of the matrix $\psi_t$ (i.e. the linear space spanned by the columns of $\psi_t$) is the space $\{q\in\mathbb{R}^m;\; \sum_{i=1}^m q_i = 0\}$. Therefore for all $q\in\mathbb{R}^m$ with $\sum_{i=1}^m q_i = 0$, we have $\psi_t \cdot \psi_t^+ \cdot q = q$. This holds in particular for any row vector from any $Q$-matrix, or any vector of the form $(e_j - e_i)$.

\vspace{3mm}
In order for the paper to be as self-contained as possible, we also recall the following version of Girsanov Theorem on change of probability measure. See Theorem III.41 in \cite{protter2005} or Lemma 4.3 in \cite{sokol2015}.

\begin{theorem}
\label{thm:girsanov}
Let $T>0$ and $\bL=(L_t)_{t\ge 0}$ be a martingale defined on $[0,T]$ with $\Delta L_t\ge -1$. Assume that the Dol\'eans-Dade exponential $\mathcal{E}(\bL)$ of $\bL$ is a uniformly integrable martingale and let $\mathbb{Q}$ be the probability measure having Radon-Nikodym derivative $\mathcal{E}(\bL)_T$ with respect to $\mathbb{P}$. If the quadratic covariation process $[\bcM, \bL]$ is integrable under $\mathbb{P}$, then $\bcM - \langle \bcM,\bL \rangle$ is a martingale under $\mathbb{Q}$, where the predictable quadratic covariation $\langle \bcM,\bL \rangle$ is computed under the measure $\mathbb{P}$.
\end{theorem}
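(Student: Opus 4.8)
\medskip
\noindent\textbf{A proposed proof.} The plan is to pass to the density process of $\mathbb{Q}$ with respect to $\mathbb{P}$, reduce the claim to a statement under $\mathbb{P}$ via the abstract Bayes rule, verify it at the level of local martingales by an integration-by-parts computation, and finally upgrade from a local to a genuine martingale using the integrability hypotheses.

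Set $Z_t := \mathcal{E}(\bL)_t$. Since $\Delta L_t \ge -1$ we have $Z_t \ge 0$, and since $\mathcal{E}(\bL)$ is a uniformly integrable $\mathbb{P}$-martingale, $Z_t = \mathbb{E}^{\mathbb{P}}[\mathcal{E}(\bL)_T \mid \mathcal{F}_t]$ is exactly the Radon--Nikodym density process of $\mathbb{Q}$ relative to $\mathbb{P}$; in particular $\mathbb{E}^{\mathbb{P}}[Z_T]=1$ so $\mathbb{Q}$ is a probability measure, and $Z$ solves $dZ_t = Z_{t-}\, d\bL_t$, $Z_0=1$. By the abstract Bayes formula, an adapted c\`adl\`ag process $N$ is a $\mathbb{Q}$-(local) martingale if and only if $NZ$ is a $\mathbb{P}$-(local) martingale; hence it suffices to prove that $(\bcM - \langle \bcM, \bL\rangle)Z$ is a $\mathbb{P}$-martingale.

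For the core computation, write $[\bcM, \bL] = \langle \bcM, \bL\rangle + R$ with $R$ a $\mathbb{P}$-local martingale, $R_0=0$. The canonical filtration $\mathbb{F}$ is quasi-left-continuous --- the jump times of $\bX$ under $\mathbb{P}$ are totally inaccessible --- so the predictable finite-variation process $\langle \bcM, \bL\rangle$ is in fact continuous, whence $[\langle \bcM, \bL\rangle, Z]=0$. Using $[\bcM, Z]_t = \int_0^t Z_{s-}\, d[\bcM, \bL]_s$, which follows from $dZ_s = Z_{s-}\, d\bL_s$, the product rule applied to $Y := \bcM - \langle \bcM, \bL\rangle$ and $Z$ gives, after the two $\int_0^{\cdot} Z_{s-}\, d\langle \bcM, \bL\rangle_s$ contributions cancel,
\[
Y_t Z_t = \int_0^t Y_{s-}\, dZ_s + \int_0^t Z_{s-}\, d\bcM_s + \int_0^t Z_{s-}\, dR_s ,
\]
a sum of stochastic integrals of locally bounded predictable integrands against $\mathbb{P}$-local martingales, hence a $\mathbb{P}$-local martingale; equivalently $\bcM - \langle \bcM, \bL\rangle$ is a $\mathbb{Q}$-local martingale.

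It remains to promote this local martingale to a true one, and this is both where the hypotheses are genuinely used and where I expect the main difficulty. The process $\bcM$ in \eqref{eq:canonical_representation}, being the martingale part of the finite-state chain, is bounded on $[0,T]$ by a deterministic constant $c$; the assumed $\mathbb{P}$-integrability of $[\bcM, \bL]$ passes to its compensator, so $V := \mathrm{Var}(\langle \bcM, \bL\rangle)_T \in L^1(\mathbb{P})$ and $\sup_{t \le T}|Y_t| \le c + V$; and $\mathcal{E}(\bL)$, hence $Z$, is uniformly integrable. Along a localizing sequence $\tau_n \uparrow T$ reducing $YZ$, each stopped process $(YZ)^{\tau_n}$ is a genuine $\mathbb{P}$-martingale; passing to the limit then requires a uniform-integrability estimate for the family $\{Y_{\tau_n\wedge t}\,Z_{\tau_n\wedge t}\}$ built solely from the boundedness of $\bcM$, the $L^1$-bound on $V$, and the uniform integrability of $Z$, rather than from $L^p$-type control on the density. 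By contrast, the bookkeeping of the previous paragraph is essentially automatic once one has observed the quasi-left-continuity of $\mathbb{F}$, which is precisely what makes the $[\,\cdot\,]$-versus-$\langle\,\cdot\,\rangle$ correction terms disappear.
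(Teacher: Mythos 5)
First, a point of reference: the paper does not prove Theorem \ref{thm:girsanov} at all --- it is recalled from the literature, with the proof deferred to Theorem III.41 of \cite{protter2005} and Lemma 4.3 of \cite{sokol2015}. So your attempt is being compared against a citation, not a proof. Your integration-by-parts computation for the local-martingale part is the standard proof of the Girsanov--Meyer theorem and is correct: with $Z=\mathcal{E}(\bL)$, $Y=\bcM-\langle\bcM,\bL\rangle$ and $R=[\bcM,\bL]-\langle\bcM,\bL\rangle$, the identity $Y_tZ_t=\int_0^t Y_{s-}\,dZ_s+\int_0^t Z_{s-}\,d\bcM_s+\int_0^t Z_{s-}\,dR_s$ holds, and each term is a $\mathbb{P}$-local martingale. (In fact you do not even need continuity of $\langle\bcM,\bL\rangle$: for any predictable finite-variation process $A$ and local martingale $Z$, the bracket $[A,Z]$ is itself a local martingale, which is all the cancellation requires; that said, your quasi-left-continuity observation is valid here because the jump times of $\bX$ under $\mathbb{P}$ are totally inaccessible, and the paper's explicit computation of $\langle\bcM,\bL\rangle$ confirms it is absolutely continuous.) One should also note that the Bayes-rule equivalence in the local-martingale direction needs $\mathbb{Q}\ll\mathbb{P}$ only, which you have, so that part is fine.

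The genuine gap is the final step, which you acknowledge but do not close --- and the route you sketch cannot close it with the ingredients you list. You propose to obtain uniform integrability of $\{Y_{\tau_n\wedge t}Z_{\tau_n\wedge t}\}$ from the bound $\sup_{t\le T}|Y_t|\le c+V$ with $V:=\mathrm{Var}(\langle\bcM,\bL\rangle)_T\in L^1(\mathbb{P})$ together with the uniform integrability of $Z$. But $|Y_{\tau_n\wedge t}Z_{\tau_n\wedge t}|\le (c+V)\,Z_{\tau_n\wedge t}$, and making this family uniformly integrable requires $\mathbb{E}^{\mathbb{P}}[V Z_T]<\infty$, i.e.\ $V\in L^1(\mathbb{Q})$; this is \emph{not} implied by $V\in L^1(\mathbb{P})$ and $\mathbb{E}^{\mathbb{P}}[Z_T]=1$ (take $V=Z_T$ with $Z_T\notin L^2(\mathbb{P})$: then $VZ_T\notin L^1(\mathbb{P})$ even though both factors are integrable). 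So the ``uniform-integrability estimate built solely from the boundedness of $\bcM$, the $L^1$-bound on $V$, and the uniform integrability of $Z$'' that you defer to does not exist; some control of $V$ under $\mathbb{Q}$ (equivalently, a joint integrability condition on $V$ and $Z_T$ under $\mathbb{P}$) is genuinely needed. In every application the paper makes of the theorem this is automatic, because $\langle\bcM,\bL\rangle_t=\int_0^t(Q^*(s,\alpha_s,p_s,\nu_s)-Q^0)\cdot X_{s-}\,ds$ has total variation bounded by a deterministic constant thanks to Assumption \ref{hypo:boundedness}, so $\sup_{t\le T}|Y_t|$ is bounded and the $\mathbb{Q}$-local martingale $Y$ is of class (D) for free. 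To prove the theorem at the stated level of generality you would need either to add such a hypothesis or to import the actual argument of Lemma 4.3 in \cite{sokol2015}; as written, your last paragraph describes the missing estimate rather than supplying it.
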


We now describe how the control of a player and the mean field affect the probability law of $\bX$. Let us define the player's strategy set $\mathbb{A}$ to be the collection of $\mathbb{F}$-predictable processes $\balpha=(\alpha_t)_{t\in[0, T]}$ such that $\alpha_t \in A$ for $t \in [0, T]$. Given a flow of probability measures $\bp=(p_t)_{t\in[0, T]}$ on $E$, and a flow of probability measures $\bnu=(\nu_t)_{t\in[0, T]}$ on $A$, we define the scalar martingale $\bL^{(\balpha,\bp,\bnu)}$ under $\mathbb{P}$ by:
\begin{equation}\label{eq:martingale_L}
L^{(\balpha,\bp,\bnu)}_t := \int_0^t X_{s^-}^* \cdot(Q(s, \alpha_s, p_s, \nu_s) - Q^0)\cdot\psi_s^+ \cdot d\mathcal{M}_s.
\end{equation}
Clearly, the jumps of this are given by:
\begin{equation}\label{eq:jump_of_l}
\Delta L^{(\balpha,\bp,\bnu)}_t = X_{t^-}^* \cdot (Q(t, \alpha_t, p_t, \nu_t) - Q^0)\cdot \psi_t^+ \cdot \Delta X_t.
\end{equation}
One can easily check that $\psi_t^+\cdot(e_j - X_{t-}) =\frac{m-1}{m} e_j - \sum_{i\neq j}\frac{1}{m} e_i $ when $X_{t-}= e_i \neq e_j$. Therefore when $X_{t-} = e_i \neq e_j = X_t$, we have:
\begin{align*}
\Delta L^{(\balpha,\bp,\bnu)}_t  =&\;\; X_{t-}^*\cdot(Q(t, \alpha_t, p_t, \nu_t) - Q^0)\cdot\psi_t^+\cdot(e_j - X_{t-}) \\
=&\;\; e_i^*\cdot(Q(t, \alpha_t, p_t, \nu_t) - Q^0)\cdot\left[\frac{m-1}{m} e_j - \sum_{k\neq j}\frac{1}{m} e_k\right]\\
=&\;\; \frac{m-1}{m} (q(t,i,j,\alpha_t,p_t,\nu_t) - q^0_{i,j}) - \frac{1}{m} \sum_{k\neq j}(q(t,i,k,\alpha_t,p_t,\nu_t) - q^0_{i,k})\\
=&\;\; q(t,i,j,\alpha_t,p_t,\nu_t) - q^0_{i,j}\\ 
=&\;\; q(t,i,j,\alpha_t,p_t,\nu_t) - 1,
\end{align*}
where the last equality is due to the fact that $\sum_{k=1}^m(q(t,i,k,\alpha_t,p_t,\nu_t) - q^0_{i,k}) = 0$. Therefore we have $\Delta L^{(\balpha,\bp,\bnu)}_t \ge -1$. By Theorem III.45 in \cite{protter2005} and the remark that follows, in order to show that $\mathcal{E}(\bL^{(\balpha,\bp,\bnu)})$ is uniformly integrable, it suffices to show $\mathbb{E}[\exp(\langle \bL^{(\balpha,\bp,\bnu)}, \bL^{(\balpha,\bp,\bnu)}\rangle_T)] < \infty$. This is straightforward since we have:
\begin{align*}
&\langle \bL^{(\balpha,\bp,\bnu)}, \bL^{(\balpha,\bp,\bnu)}\rangle_T \\
&=\hskip 24pt
 \int_0^T X_{s^-}^*\cdot (Q(s, \alpha_s, p_s, \nu_s) - Q^0)\cdot\psi_s^+ \cdot \frac{d\langle \bcM, \bcM\rangle_s}{ds} \cdot (X_{s^-}^* \cdot (Q(s, \alpha_s, p_s, \nu_s) - Q^0)\cdot\psi_s^+)^*ds\\
&=\hskip 24pt
 \int_0^T X_{s^-}^* \cdot (Q(s, \alpha_s, p_s, \nu_s) - Q^0)\cdot\psi_s^+\cdot(Q^*(s, \alpha_s, p_s, \nu_s) - Q^0)\cdot X_{s^-}ds.
\end{align*}
and the integrand is bounded by some constant by Assumption \ref{hypo:boundedness}.

\vskip 6pt
We now apply Girsanov's Theorem. It is straightforward to obtain that:
\begin{align*}
\langle \bcM, \bL^{(\balpha,\bp,\bnu)} \rangle_t =& \int_{0}^t d\langle \bcM, \bcM \rangle_s \cdot (\psi_s^+)^*\cdot(Q^*(s, \alpha_s, p_s, \nu_s) - Q^0) \cdot X_{s-} \\
=& \int_{0}^t \psi_s\cdot\psi_s^+ \cdot (Q^*(s, \alpha_s, p_s, \nu_s) - Q^0)\cdot X_{s-} ds \\
=&  \int_{0}^t ( Q^*(s, \alpha_s, p_s, \nu_s) - Q^0)\cdot X_{s-} ds.
\end{align*}
In the last equality, we use the fact that $( Q^*(s, \alpha_s, p_s, \nu_s) - Q^0) X_{s}$ is the difference between two row vectors coming from $Q$-matrices, 
therefore is invariant by $\psi_s\cdot\psi_s^+$. Let us define the probability measure $\mathbb{Q}^{(\balpha,\bp,\bnu)}$ by:
\begin{equation}
\label{eq:q_measure}
\frac{d\mathbb{Q}^{(\balpha,\bp,\bnu)}}{d\mathbb{P}} := \mathcal{E}(\bL^{(\balpha,\bp,\bnu)})_T.
\end{equation}
By Theorem \ref{thm:girsanov}, we know that the process $\bcM^{(\balpha,\bp,\bnu)}$, defined as:
\begin{equation}\label{eq:martingale_M}
\mathcal{M}^{(\balpha,\bp,\bnu)}_t := \mathcal{M}_t - \int_{0}^t ( Q^*(s, \alpha_s, p_s, \nu_s) - Q^0)\cdot  X_{s-} ds,
\end{equation}
is a $\mathbb{Q}^{(\balpha,\bp,\bnu)}$-martingale. Therefore the canonical decomposition \eqref{eq:canonical_representation} of $X$ under $\PP$ can be rewritten as:
\begin{equation}\label{eq:X_decomp}
X_t = X_0 + \int_0^t Q^*(s, \alpha_s, p_s, \nu_s)\cdot X_{s-} dt + \mathcal{M}^{(\balpha,\bp,\bnu)}_t.
\end{equation}
This means that under the measure $\mathbb{Q}^{(\balpha,\bp,\bnu)}$, the stochastic intensity rate of $\bX$ is given by $Q(t, \alpha_t, p_t, \nu_t)$. In addition, since $\mathbb{Q}^{(\balpha,\bp,\bnu)}$ and $\mathbb{P}$ coincides on $\mathcal{F}_0$, the law of $X_0$ under $\mathbb{Q}^{(\balpha,\bp,\bnu)}$ is the same as under the reference measure $\mathbb{P}$, which is $\mathbf{p}^{\circ}$.  In particular, when $\balpha$ is a Markov control, i.e. of the form $\alpha_t = \phi(t, X_{t-})$ for some measurable function $\phi$, $\bX$ becomes a continuous-time Markov chain with intensity rate $q(t,i,j,\phi(t,i),p_t,\nu_t)$ under the measure $\mathbb{Q}^{(\balpha,\bp,\bnu)}$.

\begin{remark}
In the optimal control literature, admissible controls are often classified into the categories of open-loop controls and closed-loop controls. Open-loop controls are often referred to controls adapted to the underlying filtration, which is often generated by the noise process. Closed-loop controls, on the other hand, are controls that are adapted to the filtration generated by the history of the state process. In our set up, however, we see that the underlying filtration is indeed the one generated by the past path of the state process. Therefore this difference vanishes.
\end{remark}

\subsection{Weak formulation of mean field games}
Let $f:[0,T] \times E \times A \times \mathcal{S} \times \mathcal{P}(A) \rightarrow \mathbb{R}$ and $g:E \times \mathcal{S}\rightarrow \mathbb{R}$ be respectively the running and terminal cost functions. In the rest of the paper, we make the following assumptions on the regularity of the cost functions.
\begin{hypothesis}\label{hypo:lipschitz_cost}
There exists a constant $C>0$ such that for all $(t,i,j)\in[0,T]\times E^2$, $\alpha,\alpha' \in A$, $p,p' \in \mathcal{S}$ and $\nu,\nu'\in\mathcal{P}(A)$, we have:
\begin{align}
|f(t,e_i,\alpha,p,\nu) - f(t,e_i,\alpha', p', \nu')| \le& C( \|\alpha - \alpha'\| + \|p - p'\| + \mathcal{W}_1(\nu,\nu')),\\
|g(e_i,p) - g(e_i,p') | \le& C \|p - p'\|.
\end{align} 
\end{hypothesis}

\vskip 6pt\noindent
When a player chooses a strategy $\balpha\in\mathbb{A}$ and the mean field is $(\bp,\bnu)$, its cost is:
\begin{equation}\label{eq:total_cost}
J(\balpha,\bp,\bnu):=\mathbb{E}^{\mathbb{Q}^{(\balpha,\bp,\bnu)}}\Bigl[\int_0^T f(t, X_t, \alpha_t, p_t, \nu_t) dt + g(X_T, p_T)\Bigr].
\end{equation}
Each player aims at minimizing its cost, that is, it solves the optimization problem:
\begin{equation}\label{eq:optimization_pb}
V(\bp,\bnu) := \inf_{\balpha\in\mathbb{A}}\mathbb{E}^{\mathbb{Q}^{(\balpha,\bp,\bnu)}}\Bigl[\int_0^T f(t, X_t, \alpha_t, p_t, \nu_t) dt + g(X_T, p_T)\Bigr].
\end{equation}

\noindent
The key idea of the theory of mean field games lies in the limit scenario of having infinitely many players in the game, where a single player's strategy $\balpha$ does not alter the mean field $(\bp,\bnu)$. Therefore when each player solves its own optimization problem, it considers $(\bp,\bnu)$ as given. A Nash equilibrium is then achieved when the law of $X_t$ under the player controlled probability law, along with the distribution of its control under the same probability law, coincide with $(\bp,\bnu)$. This justifies the following definition of a Nash equilibrium for the weak formulation of finite state mean field games.

\begin{definition}
\label{def:equilibrium}
Let $\bp^*:[0,T]\rightarrow \mathcal{S}$, and $\bnu^*:[0,T]\rightarrow \mathcal{P}(A)$ be two measurable functions and $\balpha^*\in\mathbb{A}$. We say that the tuple $(\balpha^*,\bp^*,\bnu^*)$ is a Nash equilibrium for the weak formulation of the mean field game if:

\noindent (i) $\balpha^*$ minimizes the cost when the mean field is given by $(\bp^*,\bnu^*)$:
\begin{equation}\label{eq:def_nasheq_optimality}
\balpha^* \in \arg\inf_{\alpha \in \mathbb{A}}\mathbb{E}^{\mathbb{Q}^{(\balpha,\bp^*,\bnu^*)}}\left[\int_0^T f(t, X_t, \alpha_t, p^*_t, \nu^*_t) dt + g(X_T, p^*_T)\right].
\end{equation}
\noindent (ii) $(\balpha^*,\bp^*,\bnu^*)$ satisfies the consistency conditions whereby for each time $t\in[0,T]$ it holds:
\begin{equation}\label{eq:def_nasheq_cons1}
p^*_t = \{\mathbb{Q}^{(\balpha^*,\bp^*,\bnu^*)}[X_t =e_i]\}_{i = 1, \dots, m},
\end{equation}
\begin{equation}\label{eq:def_nasheq_cons2}
\nu^*_t = \mathbb{Q}^{(\balpha^*,\bp^*,\bnu^*)}_{\#\alpha^*_t}.
\end{equation}
\end{definition}

\section{Individual Player's Optimization Problem}

Before introducing and solving the individual player optimization problem, we provide the necessary background on stochastic equations based on continuous time Markov chains.

\subsection{BSDE driven by continuous-time Markov chain}
We first recall some of the results on BSDEs driven by continuous-time Markov chains obtained in \cite{cohen2008} and \cite{cohen2010}. Recall that $\bcM$ is the $\mathbb{P}$-martingale in the canonical decomposition of the Markov chain $\bX$ in (\ref{eq:canonical_representation}). We consider the following BSDE with unknown $(\bY,\bZ)$, where $\bY$ is an adapted and c\`adl\`ag process in $\mathbb{R}$, and $\bZ$ is an adapted and left-continuous process in $\mathbb{R}^m$:
\begin{equation}
\label{eq:bsed_markov}
Y_t = \xi + \int_t^T F(w,s,Y_s,Z_s)ds - \int_t^T Z_s^* \cdot d\mathcal{M}_s.
\end{equation}
Here $\xi$ is a $\mathcal{F}_T$-measurable $\mathbb{P}$-square integrable random variable and $F$ is the driver function, assumed to be such that the process $t \rightarrow F(w,t,y,z)$ is predictable for all $y,z$. 

\vspace{3mm}
Recalling the definition (\ref{eq:seminorm}) of the stochastic semi-norm $\|\cdot\|_{X_{t-}}$, we have the following existence and uniqueness result. See Theorem 1.1 in \cite{cohen2010}.

\begin{lemma}
\label{lem:bsde_ex_un}
Assume that there exists $C > 0$ such that $dt\otimes d\mathbb{P}$-a.s., for all $y, y' \in \mathbb{R}$ and $z, z'\in\mathbb{R}^m$ we have:
\[
|F(w,t,y,z) - F(w,t,y',z')| \le C (|y - y'| + \|z - z'\|_{X_{t-}}).
\]
Then the BSDE (\ref{eq:bsed_markov}) admits a solution $(\bY,\bZ)$ satisfying
\[
\mathbb{E}\left[\int_0^T |Y_t|^2 dt\right] < +\infty,\quad\quad\mathbb{E}\left[\int_0^T \|Z_t\|_{X_{t-}}^2 dt\right] < +\infty.
\]
In addition, the solution is unique in the sense that if $(\bY^1, \bZ^1)$ and $(\bY^2, \bZ^2)$ are two solutions, then $\bY^1$ and $\bY^2$ are indistinguishable and we have $\mathbb{E}[\int_0^T \|Z^1_t- Z^2_t\|^2_{X_{t-}}dt] = 0$.
\end{lemma}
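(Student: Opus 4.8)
The plan is to prove both existence and uniqueness by a Picard fixed-point argument, carried out in a weighted Hilbert space of processes, with the martingale representation theorem for the Markov chain $\bX$ playing the role that Brownian martingale representation plays in the classical BSDE theory. Let $\cH^2$ denote the set of pairs $(\bY,\bZ)$ with $\bY$ adapted c\`adl\`ag, $\bZ$ predictable and $\mathbb{R}^m$-valued, and
\[
\mathbb{E}\Bigl[\int_0^T |Y_t|^2\,dt\Bigr] < \infty, \qquad \mathbb{E}\Bigl[\int_0^T \|Z_t\|_{X_{t-}}^2\,dt\Bigr] < \infty,
\]
equipped, for a parameter $\beta>0$, with the norm $\|(\bY,\bZ)\|_\beta^2 := \mathbb{E}[\int_0^T e^{\beta t}(|Y_t|^2 + \|Z_t\|_{X_{t-}}^2)\,dt]$. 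Since $\|\cdot\|_{X_{t-}}$ is only a seminorm — its kernel is $\mathrm{span}(\mathbf 1)$, which is also the kernel of $\psi_t$ and the reason the integral $\int_0^\cdot Z_s^*\,d\mathcal{M}_s$ is unchanged when $\bZ$ is shifted by an $\mathbf 1$-multiple (because $\mathbf 1^*\bcM\equiv 0$) — one works on the quotient by this kernel, i.e. only with the canonical representative of $\bZ$. The two facts I would import from \cite{cohen2008} are: (a) every $\mathbb{P}$-square-integrable $\mathbb{F}$-martingale $\bN$ with $N_0=0$ can be written as $N_t = \int_0^t Z_s^*\,d\mathcal{M}_s$ for a predictable $\bZ$ with $\mathbb{E}[\int_0^T \|Z_s\|_{X_{s-}}^2\,ds]<\infty$, unique in the seminorm sense; and (b) the It\^o isometry $\mathbb{E}[N_t^2] = \mathbb{E}[\int_0^t \|Z_s\|_{X_{s-}}^2\,ds]$, which follows from \eqref{eq:quad_var} because $\langle \int Z^*d\mathcal{M}, \int Z^*d\mathcal{M}\rangle_t = \int_0^t Z_s^*\psi_s Z_s\,ds$.

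Given $(\bY,\bZ)\in\cH^2$, the process $s\mapsto F(w,s,Y_s,Z_s)$ is predictable, and the Lipschitz bound together with the (standard, implicit) square-integrability of $F(w,\cdot,0,0)$ makes $\zeta := \xi + \int_0^T F(w,s,Y_s,Z_s)\,ds$ a $\mathbb{P}$-square-integrable random variable. Let $\bar N_t := \mathbb{E}[\zeta\mid\mathcal{F}_t]$, apply (a) to $\bar N-\bar N_0$ to produce $\widehat\bZ$, and set $\widehat Y_t := \bar N_t - \int_0^t F(w,s,Y_s,Z_s)\,ds$; then $(\widehat\bY,\widehat\bZ)$ solves \eqref{eq:bsed_markov} with the driver frozen at $(\bY,\bZ)$, and $\widehat\bY$ is c\`adl\`ag, being a c\`adl\`ag martingale minus a continuous finite-variation term. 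Routine estimates (Doob, the isometry (b)) show $\Phi(\bY,\bZ):=(\widehat\bY,\widehat\bZ)$ maps $\cH^2$ into itself. For the contraction, feed two pairs $\varphi^1,\varphi^2\in\cH^2$ into $\Phi$, write $(\delta\bY,\delta\bZ)$ for the difference of the outputs and $\delta F_s$ for the difference of the (frozen) drivers, and apply It\^o's formula to $e^{\beta t}|\delta Y_t|^2$ on $[0,T]$. Taking expectations — the stochastic integral is a true martingale by the $\cH^2$ bounds — the bracket of $\int \delta Z^*d\mathcal{M}$ contributes the $\|\delta Z\|_{X_{s-}}^2$ term and one obtains
\[
\beta\,\mathbb{E}\!\int_0^T\! e^{\beta s}|\delta Y_s|^2\,ds + \mathbb{E}\!\int_0^T\! e^{\beta s}\|\delta Z_s\|_{X_{s-}}^2\,ds = 2\,\mathbb{E}\!\int_0^T\! e^{\beta s}\,\delta Y_s\,\delta F_s\,ds .
\]
Bounding $|\delta F_s|\le C(|\Delta Y_s| + \|\Delta Z_s\|_{X_{s-}})$ in terms of the \emph{inputs} $\Delta := \varphi^1-\varphi^2$, using Young's inequality, and choosing $\beta$ large depending only on $C$, yields $\|(\delta\bY,\delta\bZ)\|_\beta^2 \le \tfrac12\|\varphi^1-\varphi^2\|_\beta^2$.

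Banach's fixed point theorem then gives a unique fixed point of $\Phi$ in $(\cH^2,\|\cdot\|_\beta)$, which is exactly a solution of \eqref{eq:bsed_markov} with the stated integrability. For the uniqueness assertion, the identical It\^o computation applied to the difference of two genuine solutions — now with the driver evaluated along the solutions themselves — combined with Gronwall's lemma forces $\mathbb{E}\int_0^T|\delta Y_s|^2\,ds = 0$ and $\mathbb{E}\int_0^T\|\delta Z_s\|_{X_{s-}}^2\,ds = 0$; since both $\bY$'s are c\`adl\`ag and agree $dt\otimes d\mathbb{P}$-a.e.\ they are indistinguishable, and the statement about $\bZ$ is precisely the vanishing of that seminorm integral.

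The main obstacle — and the reason one may legitimately invoke Theorem~1.1 of \cite{cohen2010} rather than reproving it — is ingredients (a) and (b) themselves: the martingale representation and the accompanying isometry for the \emph{degenerate}, linearly dependent $\mathbb{R}^m$-valued martingale $\bcM$ (whose components sum to zero), together with the bookkeeping required to pin down that $\bZ$ is determined only modulo $\mathrm{span}(\mathbf 1)$, which forces every Lipschitz estimate above to be phrased in the stochastic seminorm $\|\cdot\|_{X_{t-}}$ rather than the Euclidean norm. Once those structural facts are in hand, the argument is the standard Picard/contraction machinery.
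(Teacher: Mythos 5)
The paper does not prove this lemma at all: it is quoted verbatim from Theorem~1.1 of \cite{cohen2010}, so there is no internal proof to compare against. Your reconstruction --- martingale representation for $\bcM$ plus the It\^o isometry in the stochastic seminorm, then a Picard contraction in the $e^{\beta t}$-weighted $\cH^2$ space --- is correct and is essentially the argument of the cited reference; the only small imprecisions are that your displayed identity should retain the nonnegative term $\mathbb{E}[|\delta Y_0|^2]$ on the left (or be stated as an inequality after dropping it), and that the square-integrability of $F(\cdot,\cdot,0,0)$, which you correctly note is needed for $\Phi$ to map $\cH^2$ into itself, is indeed an implicit standing hypothesis here (satisfied in all of the paper's applications, where the driver is a bounded Hamiltonian).
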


We also have the following stability property, which can be proved by mimicking the argument used in the proof of Theorem 2.1 in \cite{hu1997}.

\begin{lemma}
\label{lem:bsde_apriori}
For $n\ge 0$, let $(\bY^n, \bZ^n)$ be the solution to the BSDE \eqref{eq:bsed_markov} with driver $F^n$ and terminal condition $\xi^n$. 
Assume that for each $n$, $F^n$ satisfies the Lipschitz continuity assumption in Lemma \ref{lem:bsde_ex_un} with the same constant. In addition, assume that the following conditions hold:

\noindent(i) $\lim_{n\to\infty}\mathbb{E}[|\xi^n - \xi^0|^2] = 0$.

\noindent(ii) For each $t\le T$, $\lim_{n\to\infty}\mathbb{E}[(\int_t^T |F^n(w,s,Y^0_s,Z^0_s) - F^0(w,s,Y^0_s,Z^0_s)| ds)^2  ] = 0$.

\noindent(iii) There exists $C>0$ such that $\mathbb{E}[(\int_t^T (F^n(w,s,Y^0_s,Z^0_s) - F^0(w,s,Y^0_s,Z^0_s)) ds)^2  ] \le C$ for all $t\le T$ and $n\ge 0$.

\noindent Then we have:
\[
\lim_{n\rightarrow+\infty}\mathbb{E}\left[\int_t^T \|Z_s^n - Z_s^0\|_{X_{s-}}^2 ds\right] + \mathbb{E}[|Y^n_t - Y^0_t|^2] = 0.
\]
\end{lemma}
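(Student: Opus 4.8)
The plan is to run the classical a priori estimate for BSDEs, with the quadratic variation formula \eqref{eq:quad_var} playing the role usually played by the It\^o isometry. Set $\delta\bY := \bY^n - \bY^0$, $\delta\bZ := \bZ^n - \bZ^0$, $\delta\xi := \xi^n - \xi^0$, and introduce the driver perturbation $\Delta F^n_t := F^n(w,t,Y^0_t,Z^0_t) - F^0(w,t,Y^0_t,Z^0_t)$. Then $(\delta\bY,\delta\bZ)$ solves a BSDE of the form \eqref{eq:bsed_markov} with terminal value $\delta\xi$ and driver $G^n(w,t,y,z) := F^n(w,t,y+Y^0_t,z+Z^0_t) - F^0(w,t,Y^0_t,Z^0_t)$, which is $C$-Lipschitz in $(y,z)$ for the seminorm $\|\cdot\|_{X_{t-}}$ and obeys $|G^n(w,t,y,z)| \le C(|y|+\|z\|_{X_{t-}}) + |\Delta F^n_t|$. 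In particular Lemma \ref{lem:bsde_ex_un} applies to every $(\bY^n,\bZ^n)$, and together with the Burkholder--Davis--Gundy inequality it gives $\mathbb{E}[\sup_{s\le T}|\delta Y_s|^2] < \infty$.

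First I would apply It\^o's formula to $s\mapsto |\delta Y_s|^2$ on $[t,T]$. Since the finite-variation part of $\delta\bY$ is continuous while its martingale part $-\int\delta Z^*\,d\bcM$ is purely discontinuous, the bracket satisfies $[\delta\bY] = [\int\delta Z^*\,d\bcM]$, and by \eqref{eq:quad_var} the expected increment of this bracket over $[t,T]$ equals $\mathbb{E}\int_t^T \delta Z_s^*\,\psi_s\,\delta Z_s\,ds = \mathbb{E}\int_t^T\|\delta Z_s\|_{X_{s-}}^2\,ds$. The supremum bound from the previous step makes $\int\delta Y_{s-}\,\delta Z_s^*\,d\bcM_s$ a genuine martingale, so taking expectations yields the energy identity
\[
\mathbb{E}|\delta Y_t|^2 + \mathbb{E}\int_t^T \|\delta Z_s\|_{X_{s-}}^2\,ds = \mathbb{E}|\delta\xi|^2 + 2\,\mathbb{E}\int_t^T \delta Y_s\,\bigl[F^n(w,s,Y^n_s,Z^n_s)-F^0(w,s,Y^0_s,Z^0_s)\bigr]\,ds .
\]
I would then bound the bracket in the last integral by $C(|\delta Y_s|+\|\delta Z_s\|_{X_{s-}}) + |\Delta F^n_s|$ and use Young's inequality to absorb $\tfrac12\mathbb{E}\int_t^T\|\delta Z_s\|_{X_{s-}}^2\,ds$ into the left-hand side, leaving a term proportional to $\mathbb{E}\int_t^T|\delta Y_s|^2\,ds$ and the cross term $2\,\mathbb{E}\int_t^T|\delta Y_s|\,|\Delta F^n_s|\,ds$.

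To treat the cross term I would not integrate $|\Delta F^n|^2$ in time — assumption (ii) controls only $\mathbb{E}(\int_t^T|\Delta F^n_s|\,ds)^2$ — but pass to the supremum: for $\lambda>0$, $2\,\mathbb{E}\int_t^T|\delta Y_s||\Delta F^n_s|\,ds \le \lambda\,\mathbb{E}[\sup_{s\in[t,T]}|\delta Y_s|^2] + \lambda^{-1}\mathbb{E}(\int_t^T|\Delta F^n_s|\,ds)^2$. A Burkholder--Davis--Gundy/Doob estimate applied to the BSDE for $\delta\bY$ (again using $|G^n|\le C(|\delta Y_s|+\|\delta Z_s\|_{X_{s-}}) + |\Delta F^n_s|$ and \eqref{eq:quad_var}) bounds $\mathbb{E}[\sup_{s\in[t,T]}|\delta Y_s|^2]$ by a constant times $\mathbb{E}|\delta\xi|^2 + \mathbb{E}\int_t^T|\delta Y_s|^2\,ds + \mathbb{E}\int_t^T\|\delta Z_s\|_{X_{s-}}^2\,ds + \mathbb{E}(\int_t^T|\Delta F^n_s|\,ds)^2$. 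Substituting this back, choosing $\lambda$ small, and closing the resulting system of inequalities — either by carrying the exponentially weighted quantities $e^{\beta s}|\delta Y_s|^2$ with $\beta$ large (which absorbs $\mathbb{E}\int|\delta Y_s|^2\,ds$ outright), or by Gronwall's lemma on a short subinterval followed by a finite backward iteration over a partition of $[0,T]$ — yields
\[
\sup_{t\le T}\mathbb{E}|\delta Y_t|^2 + \mathbb{E}\int_0^T\|\delta Z_s\|_{X_{s-}}^2\,ds \le C'\Bigl(\mathbb{E}|\delta\xi|^2 + \mathbb{E}\bigl(\textstyle\int_0^T|\Delta F^n_s|\,ds\bigr)^2\Bigr).
\]
By (i) the first term on the right tends to $0$ and by (ii) so does the second, while (iii) is precisely what keeps the quantities entering these manipulations uniformly (in $n$) integrable, so the bound is legitimate; letting $n\to\infty$ gives the assertion, and the statement for general $t\le T$ follows by the same argument started at time $t$.

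\textbf{Main obstacle.} The genuinely delicate point is the cross term $\mathbb{E}\int_t^T|\delta Y_s|\,|\Delta F^n_s|\,ds$. The hypotheses provide convergence of the driver perturbation $\Delta F^n$ only in the ``$L^1$ in time, then $L^2$ in $\omega$'' sense, never in $L^2$ in time, so the naive pointwise-in-time Young inequality is unavailable and one is forced into a maximal inequality for $\delta\bY$; this couples the estimate for $\mathbb{E}\sup_s|\delta Y_s|^2$ with the one for $\mathbb{E}\int_0^T\|\delta Z_s\|_{X_{s-}}^2\,ds$, and the bookkeeping needed to decouple them — the exponential weight or the short-interval iteration — is the real technical content, the rest being the routine It\^o/Young/Gronwall machinery.
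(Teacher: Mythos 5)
Your argument is correct and coincides with the paper's: the paper gives no proof of this lemma but simply refers to the proof of Theorem~2.1 in \cite{hu1997}, and that proof is precisely the a priori estimate you describe — It\^o's formula for $|\delta Y|^2$ with the predictable bracket $\int\|\delta Z_s\|_{X_{s-}}^2\,ds$ standing in for the It\^o isometry, the driver perturbation handled through $\sup_s|\delta Y_s|$ and a BDG/maximal inequality because it is only controlled in $L^1$ in time, and the resulting coupled system closed by Young's inequality and Gronwall. You have correctly identified the one genuinely delicate point of that argument, so nothing is missing.
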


Finally we state a crucial comparison result for linear BSDEs. See Theorem 3.16 in \cite{cohen2010}.

\begin{lemma}
\label{lem:linear_bsde_comp}
Let $\bgamma$ be a bounded predictable process in $\mathbb{R}^m$, $\bbeta$ a bounded predictable process in $\mathbb{R}$, $\bphi$ a non-negative predictable process in $\mathbb{R}$ such that $\mathbb{E}[\int_0^T \|\phi_t\|^2 dt] < +\infty$, and $\xi$ a non-negative square-integrable $\mathcal{F}_T$-measurable random variable in $\mathbb{R}$, and let us assume that $(\bY,\bZ)$ solves the linear BSDE:
\begin{equation}\label{eq:linear_bsde}
Y_t = \xi + \int_t^T (\phi_u +\beta_u Y_u + \gamma_u^* \cdot Z_u) du -\int_t^T Z_u^*\cdot d\mathcal{M}_u.
\end{equation}
If for all $t\in(0,T]$ and $j$ such that $e_j^*\cdot Q^0 \cdot X_{t-} > 0$, we have $1 + \gamma_t^*\cdot\psi_t^+\cdot(e_j - X_{t-}) \ge 0$ where $\psi_t^+$ is the Moore-Penrose inverse of the matrix $\psi_t$ defined in equation (\ref{eq:matrix_psi}), then $\bY$ is nonnegative.
\end{lemma}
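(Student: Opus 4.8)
The plan is to run the classical linearization argument for BSDE comparison, but with an adjoint (``density'') process tailored to the pure\nobreakdash-jump martingale $\mathcal{M}$ and, crucially, \emph{restarted} at each time at which nonnegativity is tested. It suffices to show that $Y_{t_0}\ge 0$ $\mathbb{P}$-a.s. for each fixed $t_0\in[0,T]$; running $t_0$ over a countable dense subset of $[0,T]$ and using the right-continuity of the paths of $\bY$ then gives $\mathbb{P}[\,Y_t\ge 0\text{ for all }t\in[0,T]\,]=1$. Fix $t_0$ and, on $[t_0,T]$, let $\Gamma$ solve the linear forward equation $d\Gamma_t=\Gamma_{t-}\bigl(\beta_t\,dt+\gamma_t^*\psi_t^+\,d\mathcal{M}_t\bigr)$ with $\Gamma_{t_0}=1$, that is $\Gamma_t=\exp\bigl(\int_{t_0}^t\beta_s\,ds\bigr)\mathcal{E}(\bL)_t$ with $\bL_t:=\int_{t_0}^t\gamma_s^*\psi_s^+\,d\mathcal{M}_s$. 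Since $\bgamma$ is bounded and $\psi_s^+$ takes finitely many values, the integrand of $\bL$ is bounded; exactly as in the estimate of $\langle\bL^{(\balpha,\bp,\bnu)},\bL^{(\balpha,\bp,\bnu)}\rangle_T$ in Section 2, the bracket $[\bL,\bL]_T$ is then bounded by a constant and the jumps of $\bL$ are bounded, so $\mathcal{E}(\bL)$ is a true martingale whose supremum over $[t_0,T]$ lies in every $L^p(\mathbb{P})$. Moreover, at a transition of $\bX$ from $e_i$ to $e_j$ one has $\Delta\bL_s=\gamma_s^*\psi_s^+(e_j-X_{s-})$, and every such $j$ is accessible under the reference measure, i.e. $e_j^*Q^0X_{s-}>0$; hence the hypothesis yields $1+\Delta\bL_s\ge 0$, so $\mathcal{E}(\bL)\ge 0$ and $\Gamma\ge 0$.

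Next I apply the product rule to $\Gamma_tY_t$. Because $\mathcal{M}$ is a pure-jump martingale with $\langle\mathcal{M},\mathcal{M}\rangle_t=\int_0^t\psi_s\,ds$ by \eqref{eq:quad_var}, the predictable finite-variation part of $[\Gamma,Y]$ is $t\mapsto\int_{t_0}^t\Gamma_{s-}\gamma_s^*\psi_s^+\psi_sZ_s\,ds$, and collecting all finite-variation contributions, the predictable finite-variation part of $\Gamma_tY_t$ equals $\Gamma_{t-}\bigl(-\phi_t-\beta_tY_t-\gamma_t^*Z_t+\beta_tY_t+\gamma_t^*\psi_t^+\psi_tZ_t\bigr)\,dt$. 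As always in this framework we take the canonical representative of $\bZ$, with $Z_t$ in the range of $\psi_t$ (cf. the uniqueness statement in Lemma \ref{lem:bsde_ex_un}; equivalently, $\bgamma$ is such that the driver of \eqref{eq:linear_bsde} is Lipschitz for $\|\cdot\|_{X_{t-}}$). Then $\psi_t^+\psi_tZ_t=Z_t$, the two $\gamma^*Z$-terms cancel, and the predictable finite-variation part of $\Gamma_tY_t$ is simply $-\Gamma_{t-}\phi_t\,dt$; equivalently, $\Gamma_tY_t+\int_{t_0}^t\Gamma_{s-}\phi_s\,ds$ is a local martingale on $[t_0,T]$.

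A standard localization argument — reducing the local martingale along stopping times and letting them increase to $T$, using the integrability of $\bY$, $\bZ$, $\mathcal{E}(\bL)$, $\xi$ and $\bphi$, together with monotone convergence for the nonnegative term $\int\Gamma_{s-}\phi_s\,ds$ and uniform integrability for the endpoint term — shows that this local martingale is a genuine martingale. Evaluating it at $t_0$ and at $T$, and recalling $Y_T=\xi$, gives
\[
Y_{t_0}=\Gamma_{t_0}Y_{t_0}=\mathbb{E}\Bigl[\Gamma_T\,\xi+\int_{t_0}^T\Gamma_{s-}\phi_s\,ds\;\Big|\;\mathcal{F}_{t_0}\Bigr],
\]
and since $\Gamma\ge 0$, $\xi\ge 0$ and $\bphi\ge 0$, the right-hand side is nonnegative. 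Hence $Y_{t_0}\ge 0$ $\mathbb{P}$-a.s., as required.

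The delicate point — and the reason for restarting $\Gamma$ at $t_0$ rather than at $0$, and for reading off nonnegativity from a $\mathbb{P}$-conditional expectation rather than from Girsanov's theorem (Theorem \ref{thm:girsanov}) — is that the hypothesis only gives $1+\Delta\bL_s\ge 0$, not a strict inequality. When some accessible transition satisfies $1+\gamma_s^*\psi_s^+(e_j-X_{s-})=0$, the stochastic exponential $\mathcal{E}(\bL)$ is killed on an event of positive $\mathbb{P}$-probability, so $\Gamma_t$ may vanish for $t>t_0$ and the candidate measure with density $\mathcal{E}(\bL)_T$ is absolutely continuous but not equivalent to $\mathbb{P}$; a Girsanov-based argument would then only deliver $Y_t\ge 0$ $\mathbb{Q}$-a.s. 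Starting $\Gamma$ at $t_0$ keeps $\Gamma_{t_0}=1$, so one never divides by a possibly vanishing $\Gamma_t$ and the $\mathbb{P}$-a.s. conclusion is immediate. The only other step needing care is the normalization $Z_t\in\mathrm{range}(\psi_t)$ used to cancel the $\gamma^*Z$-terms, without which the statement would in fact be false.
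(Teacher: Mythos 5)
The paper does not actually prove this lemma: it is quoted from Theorem 3.16 of \cite{cohen2010}. Your argument is a correct, self-contained proof by the standard linearization/adjoint-process method, and it is essentially the mechanism behind the cited result: the nonnegative adjoint $\Gamma$ restarted at $t_0$, the cancellation of the $\gamma^*Z$ term against the compensator of $[\Gamma,Y]$ computed from \eqref{eq:quad_var}, and the representation $Y_{t_0}=\mathbb{E}\bigl[\Gamma_T\xi+\int_{t_0}^T\Gamma_{s-}\phi_s\,ds\,\big|\,\mathcal{F}_{t_0}\bigr]\ge 0$ are all sound, as is your reason for avoiding Theorem \ref{thm:girsanov} (the hypothesis permits $1+\Delta L_s=0$, so $\mathcal{E}(\bL)_T$ may vanish on a set of positive probability and the candidate measure need not be equivalent to $\mathbb{P}$). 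Two remarks. First, your normalization caveat is genuine and well spotted: $\gamma_t^*\cdot\psi_t^+\cdot\psi_t\cdot Z_t=\gamma_t^*\cdot Z_t$ requires either $Z_t\in\mathrm{range}(\psi_t)$ or $\mathbf{1}^*\cdot\gamma_t=0$, and without one of these the statement is literally false as written (take $\gamma\equiv\mathbf{1}$, $\xi=0$, $\phi=\beta=0$, $Z\equiv-\mathbf{1}$, which solves the equation with $Y_t=-m(T-t)<0$). In the paper's applications (e.g.\ Proposition \ref{prop:bsed_optimal_control}) one has $\gamma_t=(Q^*(t,\alpha_t,p_t,\nu_t)-Q^0)\cdot X_{t-}$, whence $\mathbf{1}^*\cdot\gamma_t=0$ and the issue disappears, so your reading is the intended one. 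Second, a small imprecision: it is the predictable bracket $\langle\bL,\bL\rangle_T$, not the optional bracket $[\bL,\bL]_T=\sum_{s\le T}(\Delta L_s)^2$, that is bounded by a deterministic constant; the latter grows with the number of jumps. This does not affect your conclusion, since boundedness of $\langle\bL,\bL\rangle_T$ (or the Gronwall argument in the proof of Proposition \ref{prop:stable_subset_by_phi}) already yields the uniform integrability and the moment bounds needed for the localization step.
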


Later in the treatment of games with finitely many players, we will need to consider BSDEs driven by multiple independent continuous-time Markov chains. It turns out that all the results above regarding BSDEs driven by one single continuous-time Markov chain can be easily extended to this more general setting. For the sake of completeness, we state and prove these results in the appendix.

\subsection{Hamiltonian}
We define the Hamiltonian for the optimization problem of the individual player as the function $H$ from $[0,T] \times E \times \mathbb{R}^m\times A \times \mathcal{S} \times \mathcal{P}(A)$ into $\mathbb{R} $ by:
\begin{equation}
\label{eq:hamiltonian_def}
H(t,x,z,\alpha,p,\nu) := f(t,x,\alpha,p,\nu) + x^*\cdot(Q(t, \alpha, p, \nu) - Q^0)\cdot z.
\end{equation} 
Since the process $X$ takes value in the set $\{e_1,\dots,e_m\}$, it is more convenient to consider $m$ Hamiltonian functions $H_i$ defined for $i=1,\cdots,m$ by $H_i(t,z,\alpha,p,\nu):=H(t,e_i,z,\alpha,p,\nu)$. Clearly we have:
\begin{equation}\label{eq:reduced_hamiltonian_def}
H_i(t,z,\alpha,p,\nu) = f(t,e_i, \alpha,p,\nu) + \sum_{j\neq i} (z_j - z_i)(q(t, i, j, \alpha, p,\nu) - 1).
\end{equation}
We denote by $\hat H_i$ the corresponding minimized Hamiltonian:
\[
\hat H_i(t,z,p,\nu) := \inf_{\alpha \in A} H_i(t,z,\alpha,p,\nu),
\]
and to show the existence of Nash equilibria, we make the following assumption on the minimizer of the Hamiltonian.

\begin{hypothesis}
\label{hypo:lipschitz_optimizer}
(i) For any $t\in[0,T]$, $i\in\{1,\dots,m\}$, $z\in\mathbb{R}^m$, $p\in\mathcal{S}$ and $\nu\in\mathcal{P}(A)$, the mapping $\alpha \rightarrow H_i(t,z,\alpha,p,\nu)$ admits a unique minimizer which does not depend on the mean field of control $\nu$. We denote the minimizer by $\hat a_i(t,z,p)$.

\noindent (ii) $\hat a_i$ is measurable on $[0,T]\times\mathbb{R}^m\times\mathcal{S}$ and there exist constants $C_1>0$ and $C_2\ge0$ such that for all $i\in\{1,\dots,m\}$, $z, z'\in\mathbb{R}^m$, $p, p'\in\mathcal{S}$:
\begin{equation}\label{eq:reduced_optimizer_lipschitz}
\|\hat a_i(t,z,p) - \hat a_i(t,z',p')\| \le C_1\|z - z'\|_{e_i} + (C_1 + C_2 \|z\|_{e_i})\|p - p'\|.
\end{equation}
\end{hypothesis}

\begin{remark}
For the sake of convenience, we choose to make the assumption directly on the uniqueness and the regularity of the minimizer of the Hamiltonian. One possible way to make sure Assumption \ref{hypo:lipschitz_optimizer} holds is to impose linearity on the transition rate function $q$, and strong convexity of the running cost function $f$. For example, the following set of conditions will guarantee that Assumption \ref{hypo:lipschitz_optimizer} holds:
\end{remark}

\begin{hypothesis}
\label{hypo:transition_rate_eq_existence}
\noindent
(i) $A$ is a convex and compact subset of $\mathbb{R}^l$.

\noindent(ii) The transition rate function $q$ takes the form $q(t,i,j,\alpha,p,\nu) = q_0(t,i,j,p,\nu) + q_1(t,i,j,p)\cdot\alpha$, where $q_0:[0,T]\times E^2\times\mathcal{S}\times\mathcal{P}(A) \rightarrow \mathbb{R}$ and $q_1:[0,T]\times E^2 \times \mathcal{S} \rightarrow \mathbb{R}^l$ are two continuous mappings.

\noindent(iii) The running cost function $f$ is of the form $f(t, x, \alpha, p, \nu) = f_0(t, x, \alpha, p) + f_1(t, x, p, \nu)$, where for each $i\in\{1,\dots,m\}$, the mapping $f_0(\cdot, e_i, \cdot, \cdot)$ (resp. $f_1(\cdot, e_i, \cdot, \cdot)$) is continuous on $[0,T]\times A \times \mathcal{P}$ (resp. $[0,T]\times\mathcal{S}\times\mathcal{P}(A)$).

\noindent(iv) For all $(t,e_i,p)\in[0,T]\times E \times \mathcal{S}$, the mapping $\alpha \rightarrow f_0(t,e_i, \alpha, p)$ is once continuously differentiable and there exists a constant $C>0$ such that:
\begin{equation}
\|\nabla_{\alpha} f_0(t, e_i, \alpha, p) - \nabla_{\alpha} f_0(t, e_i, \alpha, p')\| \le C\|p - p'\|.
\end{equation}

\noindent (v) $f_0$ is $\gamma$-uniformly convex in $\alpha$, i.e., for all $(t,e_i,p)\in[0,T]\times E \times \mathcal{S}$ and $\alpha, \alpha' \in A$, we have:
\begin{equation}\label{eq:strong_convex_f0}
f_0(t,e_i,\alpha,p) - f_0(t,e_i,\alpha',p) - (\alpha - \alpha') \cdot \nabla_{\alpha} f_0(t, e_i, \alpha, p) \ge \gamma \|\alpha' - \alpha\|^2
\end{equation}
\end{hypothesis}

\vskip 6pt
We define the functions  $\hat H$ and $\hat a$ by:
\begin{align}
\hat H(t,x,z,p,\nu) :=& \sum_{i=1}^m \mathbbm{1}(x = e_i)\hat H_i(t,z,p,\nu),\label{eq:def_h_hat}\\
\hat a(t,x,z,p) :=& \sum_{i=1}^m  \mathbbm{1}(x = e_i)\hat a_i(t,z,p).\label{eq:def_a_hat}
\end{align}
From item (i) of Assumption \ref{hypo:lipschitz_optimizer} and the definition of the reduced Hamiltonian $H_i$, it is clear that $\hat a(t,x,z,p)$ is the unique minimizer of the mapping $\alpha \rightarrow H(t,x,z,\alpha,p,\nu)$, and the minimum equals $\hat H(t,x,z,p,\nu)$. In addition, from Assumptions \ref{hypo:boundedness}, \ref{hypo:lipschitz_cost}, \ref{hypo:lipschitz_optimizer}, and the definition of the stochastic semi-norm $\|\cdot\|_{X_{t-}}$, it is easy to deduce the regularity of the mappings $\hat H$ and $\hat a$.

\begin{lemma}
\label{lem:lip_h_hat}
There exists a constant $C>0$ such that for all $(\omega,t)\in\Omega\times(0,T]$, $p,p'\in\mathcal{S}$, $\nu,\nu' \in \mathcal{P}(A)$ and $z,z'\in\mathbb{R}^m$, we have:
\begin{equation}\label{eq:lipschitz_h_hat}
|\hat H(t,X_{t-},z,p,\nu) - \hat H(t,X_{t-},z',p',\nu')| 
\le C\|z - z'\|_{X_{t-}} + C(1 +  \|z\|_{X_{t-}})(\|p - p'\| + \mathcal{W}_1(\nu, \nu')),
\end{equation}
\begin{equation}
\label{eq:lipschitz_full_optimizer}
|\hat a(t,X_{t-},z,p) - \hat a(t,X_{t-},z',p')| \le C\|z - z'\|_{X_{t-}} + C(1 +  \|z\|_{X_{t-}})\|p - p'\|.
\end{equation}
\end{lemma}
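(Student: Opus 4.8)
The plan is to reduce both inequalities to per-state estimates. On the event $\{X_{t-}=e_i\}$, the definitions \eqref{eq:def_h_hat} and \eqref{eq:def_a_hat} give $\hat H(t,X_{t-},z,p,\nu)=\hat H_i(t,z,p,\nu)$ and $\hat a(t,X_{t-},z,p)=\hat a_i(t,z,p)$, and by the identity $\|Z\|_{X_{t-}}=\sum_{i=1}^m\mathbbm{1}(X_{t-}=i)\|Z\|_{e_i}$ recorded after \eqref{eq:seminorm} the stochastic seminorm collapses to $\|\cdot\|_{e_i}$ there. So it suffices to prove, with a constant $C$ independent of $i$, that $|\hat H_i(t,z,p,\nu)-\hat H_i(t,z',p',\nu')|\le C\|z-z'\|_{e_i}+C(1+\|z\|_{e_i})(\|p-p'\|+\mathcal{W}_1(\nu,\nu'))$ and $\|\hat a_i(t,z,p)-\hat a_i(t,z',p')\|\le C\|z-z'\|_{e_i}+C(1+\|z\|_{e_i})\|p-p'\|$; multiplying by $\mathbbm{1}(X_{t-}=e_i)$ and summing over $i$ then yields \eqref{eq:lipschitz_h_hat} and \eqref{eq:lipschitz_full_optimizer}.

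The bound for $\hat a_i$ is nothing but \eqref{eq:reduced_optimizer_lipschitz} with $C:=\max(C_1,C_2)$, a constant independent of $i$ by Assumption \ref{hypo:lipschitz_optimizer}(ii), so there is nothing to do there. For $\hat H_i$ I would first establish a Lipschitz estimate on $H_i$ that is uniform in the control variable $\alpha$: starting from \eqref{eq:reduced_hamiltonian_def}, writing each summand as $(z_j-z_i)(q-1)-(z_j'-z_i')(q'-1)=\bigl[(z_j-z_i)-(z_j'-z_i')\bigr](q-1)+(z_j'-z_i')(q-q')$, applying Cauchy--Schwarz over $j\ne i$, and invoking the two-sided bound $C_1<q<C_2$ and the Lipschitz continuity of $q$ from Assumption \ref{hypo:boundedness} together with the Lipschitz continuity of $f$ from Assumption \ref{hypo:lipschitz_cost}, one obtains a constant $C$ independent of $i$ with
\[
\bigl|H_i(t,z,\alpha,p,\nu)-H_i(t,z',\alpha,p',\nu')\bigr|\le C\|z-z'\|_{e_i}+C\bigl(1+\|z\|_{e_i}\vee\|z'\|_{e_i}\bigr)\bigl(\|p-p'\|+\mathcal{W}_1(\nu,\nu')\bigr)
\]
for every $\alpha\in A$. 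Because the minimizer $\hat a_i(t,z,p)$ of $\alpha\mapsto H_i(t,z,\alpha,p,\nu)$ does not depend on $\nu$ (Assumption \ref{hypo:lipschitz_optimizer}(i)), we have $\hat H_i(t,z,p,\nu)=H_i(t,z,\hat a_i(t,z,p),p,\nu)$, hence the envelope inequality $\hat H_i(t,z,p,\nu)-\hat H_i(t,z',p',\nu')\le H_i(t,z,\hat a_i(t,z',p'),p,\nu)-H_i(t,z',\hat a_i(t,z',p'),p',\nu')$ holds, as does its mirror image with the two triples exchanged. Feeding the uniform estimate into these two inequalities at $\alpha=\hat a_i(t,z',p')$ and $\alpha=\hat a_i(t,z,p)$ respectively gives $|\hat H_i(t,z,p,\nu)-\hat H_i(t,z',p',\nu')|\le C\|z-z'\|_{e_i}+C(1+\|z\|_{e_i}\vee\|z'\|_{e_i})(\|p-p'\|+\mathcal{W}_1(\nu,\nu'))$.

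The last step is to bring this into the stated form, where the factor multiplying $\|p-p'\|+\mathcal{W}_1(\nu,\nu')$ must carry $\|z\|_{e_i}$ rather than $\|z\|_{e_i}\vee\|z'\|_{e_i}$. Since $\|\cdot\|_{e_i}$ obeys the triangle inequality one has $\|z'\|_{e_i}\le\|z\|_{e_i}+\|z-z'\|_{e_i}$, and since $\mathcal{S}$ is bounded and $A$ is compact the quantities $\|p-p'\|$ and $\mathcal{W}_1(\nu,\nu')$ are bounded by a universal constant; thus the cross term $\|z-z'\|_{e_i}(\|p-p'\|+\mathcal{W}_1(\nu,\nu'))$ is absorbed into $C\|z-z'\|_{e_i}$ after enlarging $C$, which leaves exactly \eqref{eq:lipschitz_h_hat} after summing the indicators. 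I do not expect a genuine obstacle: the argument is a routine combination of the standing assumptions with the envelope (``$\sup$ of differences'') trick for infima, and the only point requiring care is the bookkeeping of which of $\|z\|_{e_i}$, $\|z'\|_{e_i}$ appears as a prefactor — handled by the boundedness of $\mathcal{S}$ and $A$ as just described.
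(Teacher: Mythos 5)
Your proof is correct and follows essentially the same route as the paper's: reduce to the per-state quantities $\hat H_i$, $\hat a_i$, prove a Lipschitz bound on $H_i$ uniform in $\alpha$ from Assumptions \ref{hypo:boundedness} and \ref{hypo:lipschitz_cost}, and pass it through the infimum by the envelope (sup-of-differences) trick, with \eqref{eq:lipschitz_full_optimizer} read off directly from \eqref{eq:reduced_optimizer_lipschitz}. If anything you are more careful than the paper at the symmetrization step, where exchanging $z$ and $z'$ produces a prefactor $\|z'\|_{e_i}$ rather than $\|z\|_{e_i}$; your absorption of the cross term using the boundedness of $\mathcal{S}$ and compactness of $A$ correctly fills in a detail the paper leaves implicit.
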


\begin{proof}
Inequality (\ref{eq:lipschitz_full_optimizer}) is an easy consequence of Assumption \ref{hypo:lipschitz_optimizer}  and the definition of the stochastic seminorm $\|\cdot\|_{X_{t-}}$. We now deal with the regularity of $\hat H$. By Berge's maximum theorem, the continuity of $H_i$ and the compactness of $A$ imply the continuity of $\hat H_i$. Let $z,z' \in \mathbb{R}^m$, $p,p'\in\mathcal{S}$ and $\nu,\nu'\in\mathcal{P}(A)$. For any $\alpha \in A$, we have:
\begin{align*}
\hat H_i(t,z,p,\nu) - H_i(t,z',\alpha,p',\nu')& \le  H_i(t,z,\alpha,p,\nu) - H_i(t,z',\alpha,p',\nu') \\
=&\;\; f(t, e_i, \alpha, p, \nu) - f(t, e_i, \alpha, p', \nu') + \sum_{j\neq i} [(z_j - z_i) - (z'_j - z'_i)] q(t,i,j,\alpha, p', \nu')\\
&\hskip -25pt
+\sum_{j\neq i} (z_j - z_i) [q_0(t,i,j, p, \nu) - q_0(t,i,j, p', \nu')] + (z_j - z_i) [q_1(t,i,j, p) - q_1(t,i,j, p')] \cdot \alpha \\
\le &\;\; C\|z - z'\|_{e_i} + C(1 +  \|z\|_{e_i})(\|p - p'\| + \mathcal{W}_1(\nu, \nu')),
\end{align*}
where we used the Lipschitz property of $f$ and $q$, and the boundedness of $A$ and $q$. Since the above is true for all $\alpha \in A$, taking supremum of the left-hand side, we obtain:
\[
\hat H_i(t,z,p,\nu) - \hat H_i(t,z',p',\nu') \le C\|z - z'\|_{e_i} + C(1 +  \|z\|_{e_i})(\|p - p'\| + \mathcal{W}_1(\nu, \nu')).
\]
Exchanging the roles of $z$ and $z'$, we obtain:
\[
|\hat H_i(t,z,p,\nu) - \hat H_i(t,z',p',\nu')| \le C\|z - z'\|_{e_i} + C(1 +  \|z\|_{e_i})(\|p - p'\| + \mathcal{W}_1(\nu, \nu')),
\]
and (\ref{eq:lipschitz_h_hat}) follows immediately from the definition of the seminorm $\|\cdot\|_{X_{t-}}$.
\end{proof}

\subsection{Player's optimization problem}
In this subsection, we show that the optimization problem of the player facing a given mean field of state and control can be characterized by a BSDE driven by the continuous-time Markov chain $\bX$. Let us fix measurable flows $\bp:[0,T]\rightarrow \mathcal{S}$ and $\bnu:[0,T]\rightarrow \mathcal{P}(A)$, an admissible strategy $\balpha\in\mathbb{A}$, and let us consider the BSDE:
\begin{equation}\label{eq:bsde_total_cost}
Y_t = g(X_T, p_T) + \int_t^T H(s, X_{s-}, Z_s, \alpha_s, p_s, \nu_s) ds - \int_t^T Z_s^*\cdot d\mathcal{M}_s.
\end{equation}

\begin{lemma}
\label{lem:bsde_total_cost}
The BSDE (\ref{eq:bsde_total_cost}) admits a unique solution $(\bY,\bZ)$ and $J(\balpha,p,\nu) = \mathbb{E}^{\mathbb{P}}[Y_0]$.
\end{lemma}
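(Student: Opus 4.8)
The plan is to prove the lemma in two stages: first, establish existence and uniqueness of the solution $(\bY,\bZ)$ to the BSDE \eqref{eq:bsde_total_cost} by checking the hypotheses of Lemma \ref{lem:bsde_ex_un}; second, identify $\mathbb{E}^{\mathbb{P}}[Y_0]$ with the cost $J(\balpha,\bp,\bnu)$ by a change-of-measure argument. For the first stage, the driver is $F(\omega,s,y,z) := H(s,X_{s-}(\omega),z,\alpha_s(\omega),p_s,\nu_s)$, which in fact does not depend on $y$. From the explicit form \eqref{eq:reduced_hamiltonian_def}, on the event $\{X_{s-}=e_i\}$ we have $F(\omega,s,y,z) = f(s,e_i,\alpha_s,p_s,\nu_s) + \sum_{j\neq i}(z_j-z_i)(q(s,i,j,\alpha_s,p_s,\nu_s)-1)$, so $|F(\omega,s,y,z)-F(\omega,s,y',z')| = |\sum_{j\neq i}(z_j-z_i - z_j'+z_i')(q(s,i,j,\cdots)-1)| \le C_2' \sum_{j\neq i}|(z_j-z_i)-(z_j'-z_i')| \le C \|z-z'\|_{e_i} = C\|z-z'\|_{X_{s-}}$, using the boundedness of $q$ from Assumption \ref{hypo:boundedness}(ii) and Cauchy--Schwarz to pass from the $\ell^1$ to the $\ell^2$ form of the seminorm. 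The predictability of $s\mapsto F(\omega,s,y,z)$ follows because $X_{s-}$ is left-continuous hence predictable and $\balpha$ is $\mathbb{F}$-predictable. The terminal condition $\xi = g(X_T,p_T)$ is $\mathcal{F}_T$-measurable and bounded (hence square-integrable) since $g$ is continuous on the finite set $E\times\mathcal{S}$ — or, more to the point, $E$ is finite and $\bp$ takes values in the compact $\mathcal{S}$. Thus Lemma \ref{lem:bsde_ex_un} applies and yields the unique solution $(\bY,\bZ)$ with the stated square-integrability.

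For the identification of $\mathbb{E}^{\mathbb{P}}[Y_0]$ with $J(\balpha,\bp,\bnu)$, the idea is to rewrite the BSDE so that, under the measure $\mathbb{Q}^{(\balpha,\bp,\bnu)}$, the dynamics of $\bY$ lose the Hamiltonian's correction term and reduce to a running-cost term plus a $\mathbb{Q}$-martingale increment. Recall from \eqref{eq:hamiltonian_def} that $H(s,X_{s-},Z_s,\alpha_s,p_s,\nu_s) = f(s,X_{s-},\alpha_s,p_s,\nu_s) + X_{s-}^*\cdot(Q(s,\alpha_s,p_s,\nu_s)-Q^0)\cdot Z_s$. Substituting into \eqref{eq:bsde_total_cost} and using the change-of-measure formula \eqref{eq:martingale_M}, namely $d\mathcal{M}_s = d\mathcal{M}^{(\balpha,\bp,\bnu)}_s + (Q^*(s,\alpha_s,p_s,\nu_s)-Q^0)\cdot X_{s-}\,ds$, the term $\int_t^T Z_s^*\cdot d\mathcal{M}_s$ becomes $\int_t^T Z_s^*\cdot d\mathcal{M}^{(\balpha,\bp,\bnu)}_s + \int_t^T Z_s^*\cdot(Q^*(s,\alpha_s,p_s,\nu_s)-Q^0)\cdot X_{s-}\,ds$. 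The second integral is exactly $\int_t^T X_{s-}^*\cdot(Q(s,\alpha_s,p_s,\nu_s)-Q^0)\cdot Z_s\,ds$ (transpose of a scalar), which cancels the correction term in $H$. Hence
\begin{equation*}
Y_t = g(X_T,p_T) + \int_t^T f(s,X_{s-},\alpha_s,p_s,\nu_s)\,ds - \int_t^T Z_s^*\cdot d\mathcal{M}^{(\balpha,\bp,\bnu)}_s.
\end{equation*}
Taking $t=0$, applying $\mathbb{E}^{\mathbb{Q}^{(\balpha,\bp,\bnu)}}$ to both sides, and noting that $X_{s-}=X_s$ for Lebesgue-a.e.\ $s$ (so the running cost integral agrees with that in \eqref{eq:total_cost}), we obtain $\mathbb{E}^{\mathbb{Q}^{(\balpha,\bp,\bnu)}}[Y_0] = \mathbb{E}^{\mathbb{Q}^{(\balpha,\bp,\bnu)}}\bigl[\int_0^T f(s,X_s,\alpha_s,p_s,\nu_s)\,ds + g(X_T,p_T)\bigr] = J(\balpha,\bp,\bnu)$, provided the stochastic integral against $\mathcal{M}^{(\balpha,\bp,\bnu)}$ is a true $\mathbb{Q}^{(\balpha,\bp,\bnu)}$-martingale with zero expectation. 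Finally, since $Y_0$ is $\mathcal{F}_0$-measurable and $\mathbb{Q}^{(\balpha,\bp,\bnu)}$ coincides with $\mathbb{P}$ on $\mathcal{F}_0$, we have $\mathbb{E}^{\mathbb{Q}^{(\balpha,\bp,\bnu)}}[Y_0] = \mathbb{E}^{\mathbb{P}}[Y_0]$, which completes the proof.

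The main obstacle is justifying that $\int_0^\cdot Z_s^*\cdot d\mathcal{M}^{(\balpha,\bp,\bnu)}_s$ is a genuine $\mathbb{Q}^{(\balpha,\bp,\bnu)}$-martingale (not merely a local one), so that its expectation under $\mathbb{Q}^{(\balpha,\bp,\bnu)}$ vanishes. The subtlety is that the a priori integrability of $\bZ$ furnished by Lemma \ref{lem:bsde_ex_un} is stated under the reference measure $\mathbb{P}$, namely $\mathbb{E}^{\mathbb{P}}[\int_0^T\|Z_t\|_{X_{t-}}^2\,dt]<\infty$. To transfer this to an integrability statement under $\mathbb{Q}^{(\balpha,\bp,\bnu)}$ sufficient for the martingale property, one uses that the density $\mathcal{E}(\bL^{(\balpha,\bp,\bnu)})_T$ has moments of all orders — this follows from $\mathbb{E}^{\mathbb{P}}[\exp(\langle\bL^{(\balpha,\bp,\bnu)},\bL^{(\balpha,\bp,\bnu)}\rangle_T)]<\infty$ established above (indeed the predictable quadratic variation of $\bL^{(\balpha,\bp,\bnu)}$ is bounded by a deterministic constant, by Assumption \ref{hypo:boundedness}) — and then a Cauchy--Schwarz / Hölder argument combined with the Burkholder--Davis--Gundy inequality under $\mathbb{P}$ controls $\mathbb{E}^{\mathbb{Q}^{(\balpha,\bp,\bnu)}}[\sup_{t\le T}|\int_0^t Z_s^*\cdot d\mathcal{M}^{(\balpha,\bp,\bnu)}_s|]$ by the product of $\|\mathcal{E}(\bL^{(\balpha,\bp,\bnu)})_T\|_{L^2(\mathbb{P})}$ and $\|(\int_0^T\|Z_s\|_{X_{s-}}^2\,ds)^{1/2}\|_{L^2(\mathbb{P})}$; since $\langle\mathcal{M}^{(\balpha,\bp,\bnu)},\mathcal{M}^{(\balpha,\bp,\bnu)}\rangle$ and $\langle\mathcal{M},\mathcal{M}\rangle$ have the same bracket (a bounded process), the latter quantity is finite. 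Everything else in the argument is routine bookkeeping with the change-of-measure identity \eqref{eq:martingale_M}.
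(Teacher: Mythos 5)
Your proof is correct and follows essentially the same route as the paper: verify the Lipschitz condition on the driver to invoke Lemma \ref{lem:bsde_ex_un}, then rewrite the stochastic integral against $\bcM$ as one against $\bcM^{(\balpha,\bp,\bnu)}$ so the Hamiltonian's correction term cancels, take expectation under $\mathbb{Q}^{(\balpha,\bp,\bnu)}$, and use that $\mathbb{Q}^{(\balpha,\bp,\bnu)}$ agrees with $\mathbb{P}$ on $\mathcal{F}_0$. Your final paragraph justifying that the integral against $\bcM^{(\balpha,\bp,\bnu)}$ is a true $\mathbb{Q}^{(\balpha,\bp,\bnu)}$-martingale is a point the paper leaves implicit, and your argument for it is sound.
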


\begin{proof}
From the boundedness of the transition rate function $q$ guaranteed by Assumption \ref{hypo:boundedness}, it is easy to check that the driver function $H$ of the BSDE (\ref{eq:bsde_total_cost}) is Lipschitz in $z$ with respect to the semi-norm $\|\cdot\|_{X_{t-}}$. Therefore by Lemma \ref{lem:bsde_ex_un}, it admits a unique solution $(\bY,\bZ)$. Moreover, we have:
\begin{align*}
Y_0 =&\;\; g(X_T, p_T) + \int_0^T H(t, X_{t-}, Z_t, \alpha_t, p_t, \nu_s) dt - \int_0^T Z_t^*\cdot d\mathcal{M}_t\\
= &\;\; g(X_T, p_T) + \int_0^T f(t, X_{t-}, \alpha_t, p_t, \nu_t) dt 
- \int_0^T Z_t^*\cdot (d\mathcal{M}_t - ( Q^*(t, \alpha_t, p_t, \nu_t) - Q^0)\cdot  X_{t-}dt)\\
= &\;\; g(X_T, p_T)+ \int_0^T f(t, X_{t-}, \alpha_t, p_t, \nu_t) dt - \int_0^T Z_t^*\cdot d\mathcal{M}^{(\balpha,\bp,\bnu)}_t.
\end{align*}
Since $\mathcal{M}^{(\balpha,\bp,\bnu)}$ is a martingale under the measure $\mathbb{Q}^{(\balpha,\bp,\bnu)}$, we take expectation under $\mathbb{Q}^{(\balpha,\bp,\bnu)}$ and obtain $J(\balpha,\bp,\bnu) = \mathbb{E}^{\mathbb{Q}^{(\balpha,\bp,\bnu)}}[Y_0]$. Now since $Y_0$ is $\mathcal{F}_0$-measurable, and $\mathbb{Q}^{(\balpha,\bp,\bnu)}$ coincides with $\mathbb{P}$ on $\mathcal{F}_0$, we obtain $J(\alpha,p,\nu) = \mathbb{E}^{\mathbb{P}}[Y_0]$.
\end{proof}

\vskip 6pt
Now we consider the following BSDE:
\begin{equation}\label{eq:bsde_optimality}
Y_t = g(X_T, p_T) + \int_t^T \hat H(s, X_{s-}, Z_s, p_s, \nu_s) ds - \int_t^T Z_s^*\cdot d\mathcal{M}_s,
\end{equation}
and we show that it characterizes the optimality of the control problem (\ref{eq:optimization_pb}).

\begin{proposition}
\label{prop:bsed_optimal_control}
For any measurable function $\bp$ from $[0,T]$ to $\mathcal{S}$ and any measurable function $\bnu$ from $[0,T]$ to $\mathcal{P}(A)$, the BSDE (\ref{eq:bsde_optimality}) admits a unique solution $(\bY,\bZ)$. The value function of the optimal control problem (\ref{eq:optimization_pb}) is given by $V(\bp,\bnu) =  \mathbb{E}^{\mathbb{P}}[Y_0]$ and the process $\hat{\balpha}^{(p,\nu)}$ defined by:
\begin{equation}\label{eq:optimal_control}
\hat \alpha_t^{(\bp,\bnu)} := \hat a(t, X_{t-}, Z_t, p_t)
\end{equation}
is an optimal control. In addition, if $\balpha'\in\mathbb{A}$ is an optimal control, we have $\alpha'_t = \hat \alpha_t^{(\bp,\bnu)}$, $dt \otimes d\mathbb{P}$-a.e.
\end{proposition}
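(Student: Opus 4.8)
The plan is to combine the existence/uniqueness theory for the BSDE with a verification argument based on the linear comparison principle of Lemma \ref{lem:linear_bsde_comp}. First I would establish that the BSDE \eqref{eq:bsde_optimality} is well-posed: by Lemma \ref{lem:lip_h_hat}, the driver $z \mapsto \hat H(s, X_{s-}, z, p_s, \nu_s)$ is Lipschitz with respect to the stochastic seminorm $\|\cdot\|_{X_{s-}}$ (the terms involving $p_s$, $\nu_s$ only affect the part of the driver not depending on $z$, and since $\bp$ and $\bnu$ are fixed bounded measurable flows, that part is a bounded predictable process, hence defines a square-integrable terminal-type perturbation), and $g(X_T, p_T)$ is bounded hence square-integrable. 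So Lemma \ref{lem:bsde_ex_un} gives a unique solution $(\bY, \bZ)$. I would also note that $\hat\alpha^{(\bp,\bnu)}_t := \hat a(t, X_{t-}, Z_t, p_t)$ is $\mathbb{F}$-predictable (as $\bZ$ is left-continuous and adapted, $X_{t-}$ is predictable, and $\hat a$ is measurable) and $A$-valued, hence $\hat\alpha^{(\bp,\bnu)} \in \mathbb{A}$.

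Next I would show $\mathbb{E}^{\mathbb{P}}[Y_0] = J(\hat\alpha^{(\bp,\bnu)}, \bp, \bnu) = V(\bp,\bnu)$ and optimality. The key point is that by definition of $\hat a$ and $\hat H$ (item (i) of Assumption \ref{hypo:lipschitz_optimizer} and \eqref{eq:def_h_hat}, \eqref{eq:def_a_hat}), we have the pointwise identity $\hat H(s, X_{s-}, Z_s, p_s, \nu_s) = H(s, X_{s-}, Z_s, \hat\alpha^{(\bp,\bnu)}_s, p_s, \nu_s)$. Therefore $(\bY, \bZ)$ also solves the BSDE \eqref{eq:bsde_total_cost} associated with the control $\hat\alpha^{(\bp,\bnu)}$, and by uniqueness and Lemma \ref{lem:bsde_total_cost} we get $\mathbb{E}^{\mathbb{P}}[Y_0] = J(\hat\alpha^{(\bp,\bnu)}, \bp, \bnu)$. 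For the optimality, fix an arbitrary $\balpha \in \mathbb{A}$ and let $(\bY^{\balpha}, \bZ^{\balpha})$ be the solution of \eqref{eq:bsde_total_cost} for that control, so $J(\balpha, \bp, \bnu) = \mathbb{E}^{\mathbb{P}}[Y^{\balpha}_0]$ by Lemma \ref{lem:bsde_total_cost}. I would form the difference $\tilde Y_t := Y^{\balpha}_t - Y_t$, $\tilde Z_t := Z^{\balpha}_t - Z_t$; using $H(s,X_{s-},Z^{\balpha}_s,\alpha_s,p_s,\nu_s) \ge \hat H(s,X_{s-},Z^{\balpha}_s,p_s,\nu_s)$, one writes the driver difference as a sum of a nonnegative term $\phi_s \ge 0$ plus $H(s, X_{s-}, Z^{\balpha}_s, \alpha_s, p_s, \nu_s) - H(s, X_{s-}, Z_s, \alpha_s, p_s, \nu_s)$, and the latter is linear in $\tilde Z_s$: it equals $X_{s-}^* \cdot (Q(s,\alpha_s,p_s,\nu_s) - Q^0) \cdot \tilde Z_s =: \gamma_s^* \cdot \tilde Z_s$ with $\gamma_s^* := X_{s-}^* \cdot (Q(s,\alpha_s,p_s,\nu_s) - Q^0)$, a bounded predictable row vector by Assumption \ref{hypo:boundedness}. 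Thus $\tilde Y$ solves a linear BSDE of the form \eqref{eq:linear_bsde} with $\beta \equiv 0$, nonnegative $\bphi$, and nonnegative terminal condition $\tilde Y_T = 0$. The sign condition of Lemma \ref{lem:linear_bsde_comp} holds because for $X_{t-} = e_i$ and $j$ with $e_j^* \cdot Q^0 \cdot e_i > 0$ (i.e. $j \ne i$), $1 + \gamma_t^* \cdot \psi_t^+ \cdot (e_j - X_{t-}) = q(t,i,j,\alpha_t,p_t,\nu_t) > 0$ by the computation already carried out in \eqref{eq:jump_of_l} and Assumption \ref{hypo:boundedness}(ii). Hence $\tilde Y \ge 0$, so $\mathbb{E}^{\mathbb{P}}[Y^{\balpha}_0] \ge \mathbb{E}^{\mathbb{P}}[Y_0]$, giving $J(\balpha,\bp,\bnu) \ge \mathbb{E}^{\mathbb{P}}[Y_0]$ for all $\balpha$; combined with the previous paragraph this yields $V(\bp,\bnu) = \mathbb{E}^{\mathbb{P}}[Y_0]$ and optimality of $\hat\alpha^{(\bp,\bnu)}$.

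For the uniqueness-of-optimal-control statement, I would suppose $\balpha' \in \mathbb{A}$ is optimal, so $\mathbb{E}^{\mathbb{P}}[\tilde Y_0] = 0$ with the above notation (replacing $\balpha$ by $\balpha'$). Running the linear-BSDE estimate again: $\tilde Y$ is the solution of a linear BSDE with nonnegative driver term $\phi_s := H(s,X_{s-},Z^{\balpha'}_s,\alpha'_s,p_s,\nu_s) - \hat H(s,X_{s-},Z^{\balpha'}_s,p_s,\nu_s) \ge 0$, and Lemma \ref{lem:linear_bsde_comp} (or rather an explicit representation of the linear BSDE solution via a change of measure, which one can also extract from its proof) forces $\phi_s = 0$ for $dt \otimes d\mathbb{P}$-a.e. $(s,\omega)$. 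Since $\hat a(t, X_{t-}, Z^{\balpha'}_t, p_t)$ is the \emph{unique} minimizer of $\alpha \mapsto H(t, X_{t-}, Z^{\balpha'}_t, \alpha, p_t, \nu_t)$ by Assumption \ref{hypo:lipschitz_optimizer}(i), the vanishing of $\phi$ gives $\alpha'_t = \hat a(t, X_{t-}, Z^{\balpha'}_t, p_t)$, $dt\otimes d\mathbb{P}$-a.e. Then $(\bY^{\balpha'}, \bZ^{\balpha'})$ solves the optimality BSDE \eqref{eq:bsde_optimality}, so by uniqueness $\bZ^{\balpha'} = \bZ$ in the $\|\cdot\|_{X_{t-}}$-sense, and therefore $\alpha'_t = \hat a(t, X_{t-}, Z_t, p_t) = \hat\alpha^{(\bp,\bnu)}_t$, $dt\otimes d\mathbb{P}$-a.e., as claimed.

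The main obstacle I anticipate is the uniqueness-of-optimal-control part, specifically deducing $\phi_s \equiv 0$ from $\mathbb{E}^{\mathbb{P}}[\tilde Y_0] = 0$ in a clean way. Lemma \ref{lem:linear_bsde_comp} as stated only gives nonnegativity, not strict positivity, so I would need either to invoke the explicit Girsanov-type representation of the solution of the linear BSDE \eqref{eq:linear_bsde} (whose existence is implicit in the cited Theorem 3.16 of \cite{cohen2010}), under which $\tilde Y_0 = \mathbb{E}^{\tilde{\mathbb{Q}}}[\int_0^T \phi_s \, ds]$ with $\tilde{\mathbb{Q}}$ an equivalent measure, or to redo a short direct computation: apply Itô/the BSDE formula to $\mathcal{E}(\int_0^\cdot \gamma_s^* \cdot \psi_s^+ \, d\mathcal{M}_s)_t \tilde Y_t$ — the stochastic exponential is a genuine (not merely local) martingale thanks to the boundedness from Assumption \ref{hypo:boundedness} exactly as in the uniform-integrability argument preceding \eqref{eq:q_measure} — to obtain $\tilde Y_0 = \mathbb{E}^{\mathbb{P}}[\int_0^T \mathcal{E}(\cdots)_s \phi_s \, ds]$, whence $\phi \equiv 0$. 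A secondary technical point is the measurability/predictability of $\hat\alpha^{(\bp,\bnu)}$, but this follows routinely from the regularity in Lemma \ref{lem:lip_h_hat} and the left-continuity of $\bZ$; I would dispatch it in one line.
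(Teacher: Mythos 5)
Your proof follows essentially the same route as the paper's: well-posedness from Lemma \ref{lem:lip_h_hat} and Lemma \ref{lem:bsde_ex_un}, the identification $\mathbb{E}^{\mathbb{P}}[Y_0]=J(\hat\balpha^{(\bp,\bnu)},\bp,\bnu)$ via Lemma \ref{lem:bsde_total_cost}, optimality via the linear comparison principle (Lemma \ref{lem:linear_bsde_comp}) with $\gamma_t=(Q^*(t,\alpha_t,p_t,\nu_t)-Q^0)\cdot X_{t-}$ and the same verification that $1+\gamma_t^*\cdot\psi_t^+\cdot(e_j-X_{t-})=q(t,i,j,\alpha_t,p_t,\nu_t)\ge 0$, and uniqueness by forcing the nonnegative Hamiltonian gap to vanish under an equivalent measure. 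Your proposed representation $\tilde Y_0=\mathbb{E}^{\tilde{\mathbb{Q}}}[\int_0^T\phi_s\,ds]$, with $\tilde{\mathbb{Q}}$ built from the stochastic exponential of $\int\gamma_s^*\cdot\psi_s^+\,d\mathcal{M}_s$, is exactly the paper's step of taking $\mathbb{Q}^{(\balpha',\bp,\bnu)}$-expectations of the difference of the two BSDEs, so this is the same argument in a different packaging.

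One imprecision in the uniqueness paragraph: you define $\phi_s$ with both Hamiltonians evaluated at $Z^{\balpha'}_s$. With that choice the remaining part of the driver difference is $\hat H(s,X_{s-},Z^{\balpha'}_s,p_s,\nu_s)-\hat H(s,X_{s-},Z_s,p_s,\nu_s)$, which is \emph{not} of the linear form $\gamma_s^*\cdot\tilde Z_s$ without an additional linearization of the minimized Hamiltonian (this is fillable, since each $\hat H_i$ is an infimum of affine functions of $z$ whose coefficients are Q-matrix rows, so the difference is realized by a convex combination of such rows and the sign condition of Lemma \ref{lem:linear_bsde_comp} survives; but you do not address it). The immediate fix is to use the same decomposition as in your optimality paragraph, i.e. $\phi_s:=H(s,X_{s-},Z_s,\alpha'_s,p_s,\nu_s)-\hat H(s,X_{s-},Z_s,p_s,\nu_s)\ge 0$ evaluated at $Z_s$; then $\phi\equiv 0$ together with uniqueness of the minimizer yields $\alpha'_t=\hat a(t,X_{t-},Z_t,p_t)=\hat\alpha^{(\bp,\bnu)}_t$ directly, and your final detour through $\bZ^{\balpha'}=\bZ$ becomes unnecessary. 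This is precisely how the paper argues.
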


\begin{proof}
The existence and uniqueness of the solution to (\ref{eq:bsde_optimality}) is easily verified by using the Lipschitz property of $\hat H$ provided by Lemma \ref{lem:lip_h_hat}. Let $(\bY,\bZ)$ be this unique solution and define the process $\hat{\balpha}$ by $\hat\alpha_t := \hat a(t, X_{t-}, Z_t, p_t)$. Recall the definition of $\hat a$ in equation (\ref{eq:def_a_hat}). We have:
\[
\hat a(t, X_{t-}, Z_t, p_t) = \sum_{i=1}^m \mathbbm{1}(X_{t-} = e_i)\hat a_i(t,Z_t, p_t) = X_{t-}^* \cdot \left(\sum_{i=1}^m \hat a_i(t,Z_t, p_t) e_i\right).
\]
Since $\hat a_i$ is measurable for each $i \in E$, we see that $\hat a$ is a measurable mapping from $[0,T]\times \mathbb{R}^m \times \mathbb{R}^m\times \mathcal{S}$ to $A$. Since both the processes $t \rightarrow X_{t-}$ and $\bZ$ are predictable, we conclude that $\hat\balpha$ is a predictable process and therefore an admissible control.

Now let us fix an arbitrary admissible control $\balpha \in \mathbb{A}$, and denote by $(\bY^\balpha, \bZ^\balpha)$ the solution of the corresponding BSDE (\ref{eq:bsde_total_cost}),
and by $(\bY, \bZ)$ the unique solution of:
\begin{equation}\label{eq:proof_bsde_control_optimal1}
Y _t = \int_t^T H(s,X_{s-}, Z_s, \hat\alpha_s, p_s, \nu_s)ds - \int_t^T Z_s^* \cdot d\mathcal{M}_s.
\end{equation}
Setting $\Delta \bY := \bY^\balpha - \bY$ and $\Delta \bZ := \bZ^\balpha - \bZ$ and computing the difference of the two BSDEs, we notice that $\Delta \bY$ and $\Delta \bZ$ solve the following BSDE:
\[
\Delta Y _t = \int_t^T [H(s,X_{s-}, Z_s^\balpha, \alpha_s, p_s, \nu_s) - H(s,X_{s-}, Z_s, \hat\alpha_s, p_s, \nu_s)]ds - \int_t^T \Delta Z_s^* \cdot d\mathcal{M}_s.
\]
We can further decompose the driver of the above BSDE as:
\begin{align*}
&H(s,X_{s-}, Z_s^\balpha, \alpha_s, p_s, \nu_s) - H(s,X_{s-}, Z_s, \hat\alpha_s, p_s, \nu_s)\\
&\hskip 45pt
=H(s,X_{s-}, Z_s^\balpha, \alpha_s, p_s, \nu_s) - H(s,X_{s-}, Z_s, \alpha_s, p_s, \nu_s) + H(s,X_{s-}, Z_s, \alpha_s, p_s, \nu_s)\\
&\hskip 95pt - H(s,X_{s-}, Z_s, \hat\alpha_s, p_s, \nu_s)\\
&\hskip 45pt
=[H(s,X_{s-}, Z_s, \alpha_s, p_s, \nu_s) - H(s,X_{s-}, Z_s, \hat\alpha_s, p_s, \nu_s)] + X_{s-}^*\cdot(Q(s, \alpha_s, p_s, \nu_s) - Q^0)\cdot \Delta Z.
\end{align*}
Define the processes $\bpsi$ and $\bgamma$ by $\psi_t := H(t,X_{t-}, Z_t, \alpha_t, p_t, \nu_t) - H(t,X_{t-}, Z_t, \hat\alpha_t, p_t, \nu_t)$ and $\gamma_t := (Q^*(t, \alpha_t, p_t, \nu_t) - Q^0) \cdot X_{t-}$. Therefore $(\Delta \bY, \Delta \bZ)$ appears as the solution to a linear BSDE of the form (\ref{eq:linear_bsde}) with $\bpsi$ and $\bgamma$ defined previously and $\bbeta = 0$. Clearly $\bpsi$ and $\bgamma$ are both predictable. Since $\hat\alpha_t$ minimizes the Hamiltonian, $\bpsi$ is nonnegative. The boundedness of $\bgamma$ follows from the boundedness of the transition rate function $q$. It remains to check that $1 + \gamma_t^*\cdot \psi_t^+\cdot (e_j - X_{t-}) \ge 0$.

When $X_{t-} = e_j$, the above inequality holds clearly. So we assume that $X_{t-} = e_i \neq e_j$. We have $\psi_t^+\cdot (e_j - X_{t-}) =\frac{m-1}{m} e_j - \sum_{i\neq j}\frac{1}{m} e_i $. Therefore when $X_{t-} = e_i \neq e_j$, we have:
\begin{align*}
&\gamma_t^*\cdot \psi_t^+\cdot (e_j - X_{t-}) = X_{t-}^*\cdot (Q(t, \alpha_t, p_t, \nu_t) - Q^0)\cdot \psi_t^+\cdot (e_j - X_{t-})\\
&\hskip 45pt
= e_i^*\cdot (Q(t, \alpha_t, p_t, \nu_t) - Q^0)\cdot (\frac{m-1}{m} e_j - \sum_{k\neq j}\frac{1}{m} e_k)\\
&\hskip 45pt
= \frac{m-1}{m} (q(t,i,j,\alpha_t,p_t,\nu_t) - q^0_{i,j}) - \frac{1}{m} \sum_{k\neq j}(q(t,i,k,\alpha_t,p_t,\nu_t) - q^0_{i,k})\\
&\hskip 45pt
= q(t,i,j,\alpha_t,p_t,\nu_t) - q^0_{i,j},
\end{align*}
where the last equality is due to the fact that $\sum_{k}(q(t,i,k,\alpha_t,p_t,\nu_t) - q^0_{i,k}) = 0$. Therefore we have:
\[
1 + \gamma_t^*\cdot \psi_t^+\cdot (e_j - X_{t-}) = 1 + q(t,i,j,\alpha_t,p_t,\nu_t) - q^0_{i,j} = q(t,i,j,\alpha_t,p_t,\nu_t) \ge 0.
\]
By Lemma \ref{lem:linear_bsde_comp}, we conclude that $\Delta \bY$ is nonnegative and in particular $Y_0^\balpha \ge Y_0$. Since $\balpha$ is an arbitrary admissible control, in light of Lemma \ref{lem:bsde_total_cost}, this means that $\mathbb{E}^{\mathbb{P}}[Y_0] \le \inf_{\balpha\in\mathbb{A}}J(\balpha,\bp,\bnu) = V(\bp,\bnu)$. Finally, we notice that $Y_0$ is the expected total cost when the control is $\hat\balpha$. We conclude that $\hat\balpha$ is an optimal control and $\mathbb{E}^{\mathbb{P}}[Y_0] = V(p,\nu)$.

Now we show that $\hat\balpha$ is the unique optimal control. Let $\balpha'$ be another optimal control. We consider the solution $(\bY',\bZ')$ to the following BSDE:
\begin{equation}\label{eq:proof_bsde_control_optimal2}
Y'_t = \int_t^T H(s,X_{s-}, Z'_s, \alpha'_s, p_s, \nu_s)ds - \int_t^T (Z'_s)^* \cdot d\mathcal{M}_s.
\end{equation}
Since $\balpha'$ is optimal, we have $\mathbb{E}^{\mathbb{P}}[Y_0'] = J(\balpha', \bp, \bnu) = V(\bp, \bnu) = \mathbb{E}^{\mathbb{P}}[Y_0]$. Now taking the difference of the BSDE (\ref{eq:proof_bsde_control_optimal1}) and (\ref{eq:proof_bsde_control_optimal2}), we obtain:
\begin{align*}
Y_0 - Y'_0
=&  \int_0^T \big[H(t, X_{t-}, Z_t, \hat\alpha_t, p_t, \nu_t) - H(t, X_{t-}, Z'_t, \alpha'_t, p_t, \nu_t)\big]dt - \int_0^T (Z_t - Z'_t)^*\cdot d\mathcal{M}_t\\
=& \int_0^T \big[X_{t-}^*\cdot(Q(t,\hat\alpha_t, p_t, \nu_t) - Q^0)\cdot Z_t - X_{t-}^*\cdot(Q(t,\alpha'_t, p_t,\nu_t) - Q^0)\cdot Z'_t\big]dt\\
&+ \int_0^T \big[f(t, X_{t-}, \hat\alpha_t, p_t, \nu_t) - f(t, X_{t-}, \hat\alpha_t, p_t, \nu_t)\big] dt - \int_0^T (Z_t - Z'_t)^*\cdot d\mathcal{M}_t\\
=& \int_0^T \big[f(t, X_{t-}, \hat\alpha_t, p_t, \nu_t) - f(t, X_{t-}, \alpha'_t, p_t, \nu_t) + X_{t-}^*\cdot(Q(t,\hat \alpha_t, p_t, \nu_t) - Q(t,\alpha'_t, p_t, \nu_t))\cdot Z_t \big]dt\\
& -  \int_0^T (Z_t - Z'_t)^*\cdot \big[d\mathcal{M}_t - (Q^*(t,\alpha'_t, p_t,\nu_t) - Q^0)\cdot X_{t-}dt\big]\\
=& \int_0^T \big[H(t, X_{t-}, Z_t, \hat\alpha_t, p_t, \nu_t) - H(t, X_{t-}, Z_t, \alpha'_t, p_t, \nu_t)\big] dt-  \int_0^T (Z_t - Z'_t)^*\cdot d\mathcal{M}^{(\balpha',\bp,\bnu)}_t.
\end{align*}
Taking $\mathbb{Q}^{(\balpha',\bp,\bnu)}$-expectations and using the fact that $\mathbb{Q}^{(\balpha',\bp,\bnu)}$ coincides with $\mathbb{P}$ in $\mathcal{F}_0$, we get:
\begin{align*}
0=&\;\;\mathbb{E}^{\mathbb{P}}[Y_0 - Y'_0]=\mathbb{E}^{\mathbb{Q}^{(\balpha',\bp,\bnu)}}[Y_0 - Y'_0]\\
=&\;\;\mathbb{E}^{\mathbb{Q}^{(\balpha',\bp,\bnu)}}\left[ \int_0^T \big[H(t, X_{t-}, Z_t, \hat\alpha_t, p_t, \nu_t) - H(t, X_{t-}, Z_t, \alpha'_t, p_t, \nu_t)\big] dt \right] \le 0,
\end{align*}
where the last inequality is due to the fact that $\hat\alpha_t$ minimizes the Hamiltonian. In fact, we have $\hat\alpha_t = \alpha'_t$, $dt\otimes d\mathbb{Q}^{(\balpha',\bp,\bnu)}$-a.e. If we assume otherwise, the last inequality would be strict, since the minimizer of the Hamiltonian is unique by Assumption \ref{hypo:lipschitz_optimizer}. Since $\mathbb{P}$ is equivalent to $\mathbb{Q}^{(\balpha',\bp,\bnu)}$, we have $\hat\alpha_t = \alpha'_t$, $dt\otimes d\mathbb{P}$-a.e.
\end{proof}

\section{Existence of Nash Equilibria}
We state the main result of this section:

\begin{theorem}
\label{theo:existence_nash_equilibrium}
Under Assumptions \ref{hypo:boundedness}, \ref{hypo:lipschitz_cost} and \ref{hypo:lipschitz_optimizer}, there exists a Nash equilibrium $(\balpha^*,\bp^*,\bnu^*)$ for the weak formulation of the finite state mean field game in the sense of Definition \ref{def:equilibrium}.
\end{theorem}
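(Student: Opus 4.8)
The plan is to realize a Nash equilibrium as a fixed point of the ``best response, then update the mean field'' map and to invoke Schauder's theorem. Fix the domain $\mathcal{D}:=\mathcal{D}_1\times\mathcal{D}_2$, where $\mathcal{D}_1$ is the set of $L$-Lipschitz paths $\bp\colon[0,T]\to\cS$ (with $L$ a constant to be pinned down), topologized by uniform convergence, and $\mathcal{D}_2$ is the set of measurable maps $\bnu\colon[0,T]\to\cP(A)$, identified with the Borel probability measures $\bar\nu$ on $[0,T]\times A$ whose time marginal is $T^{-1}dt$, topologized by weak convergence. Both sets are convex and compact (Arzel\`a--Ascoli for $\mathcal{D}_1$; compactness of $[0,T]\times A$ for $\mathcal{D}_2$), and $\mathcal{D}$ is metrizable. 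Given $(\bp,\bnu)\in\mathcal{D}$, solve the BSDE \eqref{eq:bsde_optimality} for $(\bY,\bZ)$, let $\hat\balpha=\hat\balpha^{(\bp,\bnu)}$ be the optimal control of Proposition \ref{prop:bsed_optimal_control}, and set $\Phi(\bp,\bnu):=(\bp',\bnu')$ with $p'_t:=\{\mathbb{Q}^{(\hat\balpha,\bp,\bnu)}[X_t=e_i]\}_i$ and $\nu'_t:=\mathbb{Q}^{(\hat\balpha,\bp,\bnu)}_{\#\hat\alpha_t}$. Taking $\mathbb{Q}^{(\hat\balpha,\bp,\bnu)}$-expectation in \eqref{eq:X_decomp} gives $p'_t=\bp^{\circ}+\int_0^t\mathbb{E}^{\mathbb{Q}^{(\hat\balpha,\bp,\bnu)}}[Q^*(s,\hat\alpha_s,p_s,\nu_s)\cdot X_{s-}]\,ds$, which is $L$-Lipschitz for $L=L(m,C_2)$ by the boundedness in Assumption \ref{hypo:boundedness}; and $t\mapsto\nu'_t$ is measurable since $(t,\omega)\mapsto\hat\alpha_t(\omega)$ is jointly measurable. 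Hence $\Phi(\mathcal{D})\subseteq\mathcal{D}$, and, by Proposition \ref{prop:bsed_optimal_control} and Definition \ref{def:equilibrium}, any fixed point $(\bp^*,\bnu^*)$ of $\Phi$ yields the Nash equilibrium $(\hat\balpha^{(\bp^*,\bnu^*)},\bp^*,\bnu^*)$.

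The substance of the proof is the continuity of $\Phi$. Let $(\bp^n,\bnu^n)\to(\bp,\bnu)$ in $\mathcal{D}$. The engine is the stability estimate, Lemma \ref{lem:bsde_apriori}, applied to the BSDEs \eqref{eq:bsde_optimality} with drivers $F^n(\omega,s,z)=\hat H(s,X_{s-},z,p^n_s,\nu^n_s)$ and terminal conditions $g(X_T,p^n_T)$: the uniform Lipschitz bound in $z$ is Lemma \ref{lem:lip_h_hat}, and hypotheses (i) and (iii) of Lemma \ref{lem:bsde_apriori} follow from the Lipschitz bounds of Assumption \ref{hypo:lipschitz_cost} and Lemma \ref{lem:lip_h_hat} together with the a priori bound $\mathbb{E}[\int_0^T\|Z^0_s\|_{X_{s-}}^2\,ds]<\infty$ for the limit solution. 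The delicate point is hypothesis (ii): after bounding the integrand by $C(1+\|Z^0_s\|_{X_{s-}})(\|p^n_s-p_s\|+\mathcal{W}_1(\nu^n_s,\nu_s))$ via Lemma \ref{lem:lip_h_hat}, it splits into a $\bp$-term, which vanishes because $\bp^n\to\bp$ uniformly, and a $\bnu$-term, which is the heart of the matter: weak convergence of $\bar\nu^n$ controls only time-averaged quantities, not $\int_0^T\mathcal{W}_1(\nu^n_s,\nu_s)\,ds$, while $\hat H$ — through $f$ and $q$ — depends \emph{nonlinearly} on $\nu$, so the weak limit does not pass through the time integral. To handle it I would use Assumption \ref{hypo:lipschitz_optimizer}(i) to write $\hat H_i(s,z,p,\nu)=f(s,e_i,\hat a_i(s,z,p),p,\nu)+\sum_{j\neq i}(z_j-z_i)(q(s,i,j,\hat a_i(s,z,p),p,\nu)-1)$, isolating the $\nu$-dependence inside $f$ and $q$ evaluated at a control that does not depend on $\nu$; since $A$ is compact, $\mathcal{W}_1$ metrizes weak convergence on $\cP(A)$, so $f$ and $q$ are weakly continuous in $\nu$. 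The remaining gap — upgrading this pointwise-in-$\nu$ continuity to a statement about the time integral — I would close by restricting $\mathcal{D}_2$ to control-flows with a common modulus of continuity in time (so that weak convergence forces $\sup_s\mathcal{W}_1(\nu^n_s,\nu_s)\to0$) and checking that $\Phi$ preserves such a subdomain using the Lipschitz-in-time regularity of the value function solving the associated HJB ODE and the regularity of $\hat a$ in Assumption \ref{hypo:lipschitz_optimizer}(ii); if that regularity proves insufficient, the alternative is to pass to relaxed controls, for which the $\nu$-dependence becomes affine and the weak limit passes through directly.

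Granting hypothesis (ii), Lemma \ref{lem:bsde_apriori} gives $\mathbb{E}[\int_0^T\|Z^n_s-Z^0_s\|_{X_{s-}}^2\,ds]\to0$, and with Lemma \ref{lem:lip_h_hat} this yields $\hat\alpha^n_t=\hat a(t,X_{t-},Z^n_t,p^n_t)\to\hat a(t,X_{t-},Z^0_t,p_t)=\hat\alpha_t$ in $L^2(dt\otimes d\mathbb{P})$. Consequently $\langle\bL^{(\hat\balpha^n,\bp^n,\bnu^n)}-\bL^{(\hat\balpha,\bp,\bnu)}\rangle_T\to0$ in probability, so $\mathcal{E}(\bL^{(\hat\balpha^n,\bp^n,\bnu^n)})_T\to\mathcal{E}(\bL^{(\hat\balpha,\bp,\bnu)})_T$ in $L^1(\mathbb{P})$ and $\mathbb{Q}^n:=\mathbb{Q}^{(\hat\balpha^n,\bp^n,\bnu^n)}\to\mathbb{Q}^{(\hat\balpha,\bp,\bnu)}$ setwise. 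Passing to the limit in $p'^n_t=\mathbb{E}^{\mathbb{Q}^n}[X_t]$ (the equi-Lipschitz bound upgrades pointwise to uniform convergence) and in $\int\phi\,d\bar\nu'^n=\mathbb{E}^{\mathbb{Q}^n}[\int_0^T\phi(t,\hat\alpha^n_t)\,dt]$ for $\phi\in C([0,T]\times A)$ gives $\Phi(\bp^n,\bnu^n)\to\Phi(\bp,\bnu)$. Since $\Phi$ is then a continuous self-map of the compact convex metrizable set $\mathcal{D}$, Schauder's fixed point theorem provides a fixed point $(\bp^*,\bnu^*)$, and $(\hat\balpha^{(\bp^*,\bnu^*)},\bp^*,\bnu^*)$ is a Nash equilibrium in the sense of Definition \ref{def:equilibrium} by Proposition \ref{prop:bsed_optimal_control}.

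The step I expect to be the main obstacle is precisely the continuity of $\Phi$ in the mean-field-of-control variable — concretely, verifying hypothesis (ii) of the stability lemma — because it requires reconciling the weak (time-averaged) topology that makes $\mathcal{D}_2$ compact with the pointwise-in-time convergence that the stability estimate, and the nonlinearity of $f$ and $q$ in $\nu$, demand; the $\bp$-component and all the BSDE arguments are otherwise routine given Lemmas \ref{lem:bsde_apriori}, \ref{lem:lip_h_hat} and Proposition \ref{prop:bsed_optimal_control}.
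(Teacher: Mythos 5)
Your skeleton — best response, mean-field update, Schauder on a compact convex set — is the paper's, and your treatment of the state component by working with equi-Lipschitz flows $\bp$ is a legitimate (arguably simpler) substitute for the paper's device of working with measures on the Skorokhod space with an $L^2$-bounded density. But the step you yourself flag as the main obstacle is a genuine gap, and neither of your proposed patches closes it. First, restricting $\mathcal{D}_2$ to control flows with a common modulus of continuity in time fails because $\Phi$ does not preserve such a set: the output flow is $t\mapsto \mathbb{Q}^{(\hat\balpha,\bp,\bnu)}_{\#\hat\alpha_t}$ with $\hat\alpha_t=\hat a(t,X_{t-},Z_t,p_t)$, and Assumption \ref{hypo:lipschitz_optimizer} gives only measurability of $\hat a$ in $t$ — no time regularity at all, let alone a modulus uniform over the input $(\bp,\bnu)$. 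Second, your identification of $\mathcal{D}_2$ with measures on $[0,T]\times A$ relaxes at the level of $A$, which linearizes the dependence on the control value $\alpha$ but not on the measure $\nu$: $f$, $q$ and hence $\hat H$ remain nonlinear functionals of $\nu\in\mathcal{P}(A)$, so weak convergence of $\bar\nu^n$ in $\mathcal{P}([0,T]\times A)$ does not allow you to pass to the limit in $\int_0^T\hat H(s,X_{s-},Z^0_s,p^0_s,\nu^n_s)\,ds$, which is exactly hypothesis (ii) of Lemma \ref{lem:bsde_apriori}.

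The paper's resolution is to randomize one level higher: a flow $\bnu$ is embedded into $\mathcal{P}([0,T]\times\mathcal{P}(A))$ via $\eta(dt,dm)=\mathcal{L}(dt)\otimes\delta_{\nu_t}(dm)$; the data $f$, $q$, $\hat H$ are extended affinely to $\mathcal{P}(\mathcal{P}(A))$ by $\ubar{F}(m):=\int_{\mathcal{P}(A)}F(\nu)\,m(d\nu)$; and the set $\mathcal{R}_0$ of such measures with first marginal $\mathcal{L}$ is given the \emph{stable topology} of Jacod--M\'emin, under which $\eta\mapsto\int g(t,m)\,\eta(dt,dm)$ is continuous for every $g$ bounded, merely measurable in $t$ and continuous in $m$ — precisely the regularity available for the integrand $\kappa(s,\nu)=\hat H(s,X_{s-},Z^0_s,p^0_s,\nu)$. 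Note that the boundedness of $\kappa$ needed there is not supplied by your a priori bound $\mathbb{E}[\int_0^T\|Z^0_s\|^2_{X_{s-}}ds]<\infty$; the paper establishes the pointwise bound $\|Z^0_t\|_{X_{t-}}\le C$ via an ODE representation of $Z^0$ (Lemma \ref{lem:boundedness_z}), an ingredient absent from your argument. The price of the randomization is that the fixed point lives a priori in the larger set $\mathcal{R}_0$ of not-necessarily-Dirac disintegrations, but $\Phi^\eta$ is checked to map into Dirac-type elements, so the fixed point does yield a genuine flow $\nu^*_t$. Your closing remark about making the $\nu$-dependence affine is the right instinct, but it must be implemented at the level of $\mathcal{P}(\mathcal{P}(A))$, not $\mathcal{P}(A)$; as written, the proof is incomplete at its central step.
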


The rest of this section is devoted to the proof of Theorem \ref{theo:existence_nash_equilibrium}. As in the case of diffusion-based mean field games, we shall rely on a fixed point argument to show  existence of Nash equilibria. We start from a measurable function $\bp:[0,T]\rightarrow \mathcal{S}$ and a measurable function $\bnu:[0,T]\rightarrow\mathcal{P}(A)$ where we recall that $\mathcal{S}$ is the $m$-dimensional simplex which we identify with the space of probability measures on $E$, while $\mathcal{P}(A)$ is the space of probability measures on $A$. We then solve the BSDE (\ref{eq:bsde_optimality}), and obtain the solution $(\bY^{(\bp,\bnu)}, \bZ^{(\bp,\bnu)})$ as well as the optimal control $\hat \balpha^{(\bp,\bnu)}$ given by (\ref{eq:optimal_control}). Finally, we compute the probability measure $\hat{\mathbb{Q}}^{(\bp,\bnu)} := \mathbb{Q}^{(\hat \balpha^{(\bp,\bnu)}, \bp,\bnu)}$ as defined in (\ref{eq:q_measure}), and consider the push-forward measures of $\hat{\mathbb{Q}}^{(\bp,\bnu)}$ by $(X_t,\hat \alpha^{(\bp,\bnu)}_t)$. Clearly, we identified a Nash equilibrium if we find a fixed point for the mapping $(\bp,\bnu)\rightarrow \hat{\mathbb{Q}}^{(\bp,\bnu)}_{\# (X_t, \hat \alpha^{(\bp,\bnu)}_t)}$.

\vspace{3mm}
In practice however, the implementation of the fixed-point argument mentioned above is prone to several difficulties. The foremost challenge lies in the lack of results allowing us to identify compact subsets of the spaces of measurable functions from $[0,T]$ to $\mathcal{S}$ or $\mathcal{P}(A)$. This makes it difficult to apply Schauder's theorem or similar versions of fixed point theorems.
For this reason, we shall resort to different descriptions of the mean field for the state and the control. For the mean field of the state, since we have assumed from the very beginning that $\bX$ is a c\`adl\`ag process, we will directly deal with its probability law on the space $D$ of all c\`adl\`ag functions from $[0,T]$ to $E = \{e_1,\dots,e_m\}$ endowed with the Skorokhod topology. The space of probability measures on $D$ and its topological properties have been studied thoroughly (see \cite{jacod1987} for a detailed account), and a simple criterion for compactness is available. 

\vspace{3mm}
Unfortunately, resolving the corresponding issue for the control is more involved. Here, we adopt the  technique   based on the \emph{stable topology} used in \cite{carmona2015probabilistic}. Indeed, a measurable mapping from $[0,T]$ to $\mathcal{P}(A)$ can be viewed as a random variable defined on the space $([0,T], \mathcal{B}([0,T]), \mathcal{L})$ taking values in $\mathcal{P}(A)$. Here, $\mathcal{B}([0,T])$ is the Borel $\sigma$-field of $[0,T]$, $\mathcal{L}$ is the uniform probability measure on $[0,T]$ and $\mathcal{P}(A)$ is endowed with the Wasserstein-1 distance. To obtain compactness, the idea is to use randomization. We consider the space of probability measures on $[0,T]\times\mathcal{P}(A)$, denoted by $\mathcal{P}([0,T]\times\mathcal{P}(A))$. Then for each measurable mapping $\bnu$ from $[0,T]$ to $\mathcal{P}(A)$, we consider the measure $\eta$ on $[0,T]\times\mathcal{P}(A)$ given by $\eta (dt, dm) := \mathcal{L}(dt)\times \delta_{\nu_t}(dm)$ where $\delta$ is the Dirac measure. We may endow the space $\mathcal{P}([0,T]\times\mathcal{P}(A))$ with the so-called \emph{stable topology} introduced in \cite{jacod1981}, for which convenient results on compactness are readily available.

\vspace{3mm}
In the following, we detail the steps that lead to the existence of  Nash equilibria. We start by specifying the topology we use for the space of mean fields on the state as well as the control. We then properly define the mapping compatible with the definition of Nash equilibrium, we show its continuity, and construct a stable compact. Once these ingredients are in place, we apply Schauder's fixed point theorem to conclude.

\subsection{Topology for the space of mean fields}
We first consider the mean field for the state by endowing the state space $E:=\{e_1, \dots, e_m\}$ with the discrete metric $d_E(x,y) := \mathbbm{1}(x\neq y)$. Then it is well known that $(E, d_E)$ is a Polish space. Then, the Skorokhod space:
\begin{equation}\label{eq:def_skorokhod}
D := \{x: [0,T] \rightarrow E, \text{$x$ is c\`adl\`ag and left continuous on $T$}\}
\end{equation}
is endowed with the J1 metric:
\begin{equation}\label{eq:def_skorokhod_metric}
d_D(x,y) := \inf_{\lambda \in \Lambda} \max\{ \sup_{t\le T} |\lambda (t) - t|, \sup_{t\le T} |y(\lambda (t)) - x(t)|\}
\end{equation}
where $\Lambda$ is the set of all strictly increasing, continuous bijections from $[0,T]$ to itself. It can be proved that $d_D$ is a metric on $D$ and the metric space $(D, d_D)$ is a Polish space. Let us denote by $\mathcal{P}$ the collection of probability measures on $(D, d_D)$ endowed with the weak topology. Recall that the reference measure $\mathbb{P}$ is an element of $\mathcal{P}$. Let $\mathcal{P}_0$ be the subset of $\mathcal{P}$ defined by:
\begin{equation}\label{eq:def_p_0}
\mathcal{P}_0 := \{ \mathbb{Q}: \frac{d\mathbb{Q}}{d\mathbb{P}} = L, \text{\;\;with\;\;} \mathbb{E}^{\mathbb{P}}[L^2] \le C_0\}.
\end{equation}
where $C_0$ is a constant which we will specify later (see the proof of Proposition \ref{prop:stable_subset_by_phi}). We have the following result:

\begin{proposition}
\label{prop:topo_law_of_state}
$\mathcal{P}_0$ is convex and relatively compact in $\mathcal{P}$.
\end{proposition}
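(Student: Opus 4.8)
The plan is to dispatch convexity with a one-line computation and to derive relative compactness from Prokhorov's theorem, using only the $L^2$-bound on the Radon--Nikodym densities together with the automatic tightness of the single reference measure $\mathbb{P}$ on the Polish space $(D, d_D)$. No delicate Skorokhod modulus-of-continuity estimate will be needed.

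For convexity, I would take $\mathbb{Q}_1, \mathbb{Q}_2 \in \mathcal{P}_0$ with densities $L_1 = d\mathbb{Q}_1/d\mathbb{P}$, $L_2 = d\mathbb{Q}_2/d\mathbb{P}$, and $\lambda \in [0,1]$. Then $\lambda \mathbb{Q}_1 + (1-\lambda)\mathbb{Q}_2$ is absolutely continuous with respect to $\mathbb{P}$ with density $\lambda L_1 + (1-\lambda) L_2$, and by convexity of $x \mapsto x^2$ one gets
\[
\mathbb{E}^{\mathbb{P}}\bigl[(\lambda L_1 + (1-\lambda)L_2)^2\bigr] \le \lambda\, \mathbb{E}^{\mathbb{P}}[L_1^2] + (1-\lambda)\,\mathbb{E}^{\mathbb{P}}[L_2^2] \le C_0,
\]
so $\lambda\mathbb{Q}_1 + (1-\lambda)\mathbb{Q}_2 \in \mathcal{P}_0$ and $\mathcal{P}_0$ is convex.

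For relative compactness, the central observation is a uniform absolute-continuity bound from Cauchy--Schwarz: for any $\mathbb{Q}\in\mathcal{P}_0$ with density $L$ and any Borel set $B\subseteq D$,
\[
\mathbb{Q}(B) = \mathbb{E}^{\mathbb{P}}[L\,\mathbbm{1}_B] \le \bigl(\mathbb{E}^{\mathbb{P}}[L^2]\bigr)^{1/2}\,\mathbb{P}(B)^{1/2} \le \sqrt{C_0}\;\mathbb{P}(B)^{1/2}.
\]
Since $(D,d_D)$ is Polish, the single Borel probability measure $\mathbb{P}$ is tight (Ulam's theorem), so for every $\eta>0$ there is a compact $\mathcal{K}_\eta\subseteq D$ with $\mathbb{P}(D\setminus\mathcal{K}_\eta) < \eta^2/C_0$; feeding $B = D\setminus\mathcal{K}_\eta$ into the bound above yields $\sup_{\mathbb{Q}\in\mathcal{P}_0}\mathbb{Q}(D\setminus\mathcal{K}_\eta) \le \eta$. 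Hence $\mathcal{P}_0$ is tight, and Prokhorov's theorem --- applicable because $D$ is Polish --- shows $\mathcal{P}_0$ is relatively compact in $\mathcal{P}$ for the weak topology.

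I do not anticipate a genuine obstacle. The only points that deserve care are the two general facts invoked: that a finite Borel measure on a Polish space is inner regular by compact sets, and that relative compactness in the weak topology is equivalent to tightness on a Polish space --- both available since $(D, d_D)$ was already shown to be Polish. If convexity of the limiting object is needed for a later Schauder argument, one should additionally note that the weak closure $\overline{\mathcal{P}_0}$ is convex, being the closure of the convex set $\mathcal{P}_0$.
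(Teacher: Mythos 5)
Your proof is correct, and it takes a genuinely shorter route than the paper on the key step. The paper constructs an explicit family of compact subsets $D_{\delta,K}$ of $D$ (paths with at most $K$ jumps, all separated by at least $\delta$, the first after $\delta$ and the last before $T-\delta$), proves their compactness by hand via a finite-dimensional parametrization of such paths, and then estimates $\mathbb{P}(D_{\delta,K})$ explicitly using the i.i.d.\ exponential inter-jump times of the reference Markov chain; only after that does it run the same Cauchy--Schwarz and Prokhorov argument you give in your final step. You instead observe that tightness of the \emph{single} measure $\mathbb{P}$ is automatic from Ulam's theorem on the Polish space $(D,d_D)$, which makes the explicit construction unnecessary. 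Your argument is more general (it uses nothing about the Markov chain structure of $\mathbb{P}$, only that $D$ is Polish and the densities are uniformly bounded in $L^2$) and avoids the somewhat delicate verification that $D_{\delta,K}$ is compact for the $J1$ topology; the paper's construction buys concrete, quantitative compact exhausting sets, though these are not reused elsewhere in the paper. Your convexity argument coincides with the paper's (which simply declares it trivial), and your closing remark that the closure $\overline{\mathcal{P}_0}$ remains convex is exactly what the later Schauder argument needs.
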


\begin{proof}
The convexity of $\mathcal{P}_0$ is trivial. Let us show that $\mathcal{P}_0$ is relatively compact. We proceed in three steps.

\vspace{3mm}
\noindent\emph{Step 1. } For $K\in\mathbb{N}$ and $\delta >0$, we define $D_{\delta,K}$ as the collection of paths in $D$ which meet the following criteria: (a) the path has no more than $K$ discontinuities, (b) the first jump time, if any, happens on or after $\delta$, (c) the last jump happens on or before $T-\delta$, and (d) the amounts of time between jumps are greater or equal than $\delta$. We now show that $D_{\delta,K}$ is compact in $D$. Since $D$ is Polish space it is enough to show the sequential compactness. Let us fix a sequence $x_n$ in $D_{\delta,K}$. For each $x_n$, we use the following notation: $k_n$ is the number of its jumps, $\delta\le t^1_n < t^2_n < \dots < t^{k_n}_n \le T-\delta$ are the times of its jumps. $\Delta t^1_n := t^1_n$ and $\Delta t^i_n := t^i_n - t^{i-1}_n$ for $i = 2,\dots, k_n$ are the time elapsed between consecutive jumps and $x_n^0, x_n^1, \dots, t^{k_n}_n$ are the value taken by $x_n$ in each interval defined by the jumps. Then we can represent $x_n$ using the vector $y_n$ of dimension $2(K+1)$:
\[
y_n = [k_n, \Delta t^1_n, \Delta t^2_n, \dots, \Delta t^{k_n}_n, 0, \dots, 0, x_n^0, x_n^1, \dots, x^{k_n}_n, 0, \dots, 0].
\]
In the above representation, the first coordinate of $y_n$ is the number of jumps. Coordinate $2$ to $K+1$ are the times elapsed between jumps defined above, and if there are fewer than $K$ jumps, we complete the vector by $0$. Coordinates $K+2$ to $2(K+1)$ are the values taken by the path $x$ and completed with $0$. Clearly there is a bijection from $x_n$ to $y_n$ by this representation. By the definition of the set $D_{\delta,K}$, we have $\Delta t_n^i \in [\delta, T]$ for $i\le k_n$ and $\sum_{i=1}^{k_n} \Delta t_n^i \le T-\delta$, whereas the rest of the coordinates of $y_n$ belongs to a finite set. This implies that $y_n$ lives in a compact and therefore we can extract a converging subsequence which we still denote by $y_n$. Again, since $k_n$ and the last $K+1$ components can only take finitely many values by their definition, therefore there exists $N_0$ such that for $n\ge N_0$, we have $k_n = k$ and $x_n^i = x^i$ for all $i\le k$. In addition we have $\Delta t_n^i$ converges to $\Delta t^i$ for all $i\le k$, where $\Delta t^i \ge \delta$ for all $i \le k$ and $\sum_{i=1}^k \Delta t^i \le T-\epsilon$. We consider the path represented by the vector $y$:
\[
y = [k, \Delta t^1, \Delta t^2, \dots, \Delta t^k, 0, \dots, 0, x^0, x^1, \dots, x^k, 0, \dots, 0].
\]
Clearly $x$ belongs to the set $D_{\delta. K}$ and it is straightforward to verify that $x_n$ converge to $x$ in J1 metric, where $x_n$ is the path represented by the vector $y_n$. This implies that $D_{\delta. K}$ is compact.

\vspace{3mm}
\noindent\emph{Step 2. }
Now we show that for any $\epsilon >0$, there exists $\delta >0$ and $K \in \mathbb{N}$ such that $\mathbb{P}(D_{\delta. K}) \ge 1- \epsilon$. Recall that $\mathbb{P}$ is the reference measure and under $\mathbb{P}$ the canonical process $\bX$ is a continuous-time Markov chain with transition rate matrix $Q^0$. Therefore the time of first jump, as well as the time between consecutive jumps thereafter, which we denote by $\Delta t_1, \Delta_2, \dots$ are i.i.d. exponential random variables of parameter $(m-1)$ under the measure $\mathbb{P}$. We have:
$$
\mathbb{P}(D_{\delta, K}) =\mathbb{P}[\Delta t_1 > T]
 + \sum_{k=1}^K \mathbb{P}\left[\{\Delta t_1 \ge \delta\} \cap \dots \cap \{\Delta t_k \ge \delta\} \cap \{\sum_{i=1}^{k+1} \Delta t_i > T \}\cap \{\sum_{i=1}^k \Delta t_i \le {T-\delta} \}\right].
$$
For each $k = 1,\dots,K$, we have:
\begin{align*}
&\mathbb{P}\left[\{\Delta t_1 \ge \delta\} \cap \dots  \cap \{\Delta t_k \ge \delta\} \cap \{\sum_{i=1}^{k+1} \Delta t_i > T \}\cap \{\sum_{i=1}^k \Delta t_i \le {T-\delta} \}\right]\\
&\hskip 45pt
\ge\;\; \mathbb{P}\left[\{\Delta t_1 \ge \delta\} \cap \dots \cap \{\Delta t_k \ge \delta\}\right] +  \mathbb{P}\left[ \{\sum_{i=1}^{k+1} \Delta t_i > T \}\cap \{\sum_{i=1}^k \Delta t_i \le {T-\delta} \}\right] - 1\\
&\hskip 45pt
= \;\; (\mathbb{P}[\Delta t_1 \ge \delta])^k +  \mathbb{P}\left[ \{\sum_{i=1}^{k+1} \Delta t_i > T \}\cap \{\sum_{i=1}^k \Delta t_i \le {T-\delta} \}\right] - 1\\
&\hskip 45pt
= \;\; (\exp(-k(m-1)\delta) - 1) + \exp(-(m-1)T)\frac{(m-1)^k(T-\delta)^k}{k!}.
\end{align*}
It follows that:
\begin{align*}
\mathbb{P}(D_{\delta, K})\ge&\;\;\sum_{k=1}^K (\exp(-k(m-1)\delta) - 1) +  \exp(-(m-1)T) \sum_{k=0}^K \frac{(m-1)^k(T-\delta)^k}{k!}\\
\ge&\;\;\sum_{k=1}^K (\exp(-k(m-1)\delta) - 1)
+ \exp(-(m-1)T) \sum_{k=0}^K \frac{(m-1)^kT^k}{k!} - (1-\exp(-(m-1)\delta)).
\end{align*}
We can first pick $K$ greater enough such that $(\exp(-(m-1)T)\sum_{k=0}^K(m-1)^kT^k / k!)$ is greater than $(1-\epsilon/2)$ and then pick $\delta$ small enough to make the rest of the terms greater than $-\epsilon/2$, which eventually makes $\mathbb{P}(D_{\delta, K})$ greater than $(1-\epsilon)$.

\vspace{3mm}
\noindent\emph{Step 3. }
Finally we show that $\mathcal{P}_0$ is tight. For any $\epsilon > 0$, by Step 2, we can pick $\delta >0$ and $K \in \mathbb{N}$ such that $\mathbb{P}(D \setminus D_{\delta, K}) \le (\epsilon/C_0)^2$. For all $\mathbb{Q} \in \mathcal{P}_0$, we have $d\mathbb{Q}/d\mathbb{P} = L$ and $(\mathbb{E}^{\mathbb{P}}[L^2])^{1/2}\le C_0$ and by Cauchy-Schwartz inequality we obtain:
\[
\mathbb{Q}(D\setminus D_{\delta, K})=\mathbb{E}^{\mathbb{P}}[L \cdot 1_{x \in D\setminus D_{\delta, K}}] \le (\mathbb{E}^{\mathbb{P}}[L^2])^{1/2} \mathbb{P}(D\setminus D_{\delta,K})^{1/2} \le \epsilon.
\]
This implies the tightness of $\mathcal{P}_0$. Finally by Prokhorov's Theorem we conclude that $\mathcal{P}_0$ is relatively compact.
\end{proof}

\vspace{3mm}
We now need to link the convergence of measures on path space to the convergence in $\mathcal{S}$, i.e. measures on state space. We define the function $\pi$ by:
$$
\pi: [0,T] \times \mathcal{P}(\mathbb{E}) \ni (t,\mu)\rightarrow [\mu_{\#X_t}(\{e_1\}), \mu_{\#X_t}(\{e_2\}), \dots, \mu_{\#X_t}(\{e_m\})]\in \mathcal{S}
$$
and prove the following result:

\begin{lemma}
\label{lem:convergence_in_simplex}
If $\mu^n\mapsto \mu$ in $\mathcal{P}$, there exists a subset $\mathcal{D}(\mu)$ of $[0,T)$ at most countable such that for all $t \not\in \mathcal{D}(\mu)$:
\begin{equation}\label{eq:converge_measure_state}
\lim_{n\rightarrow +\infty}\pi(t,\mu^n) = \pi(t,\mu).
\end{equation}
\end{lemma}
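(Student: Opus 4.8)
The plan is to exploit the standard fact that weak convergence on the Skorokhod space $(D,d_D)$ gives convergence of finite-dimensional distributions at all continuity times, i.e. at all times $t$ such that the limiting measure $\mu$ assigns zero mass to the set of paths that are discontinuous at $t$. Concretely, for a fixed $t\in[0,T)$ and each $i\in\{1,\dots,m\}$, the map $x\mapsto x_t$ (evaluation at time $t$) from $D$ to $E$ is continuous at every $x\in D$ that is continuous at $t$, because the $J1$ metric controls both the time-deformation and the value. Hence $x\mapsto \mathbbm{1}(x_t = e_i)$ is a bounded function on $D$ whose set of discontinuities is contained in $J_t:=\{x\in D:\ x_{t-}\neq x_t\}$. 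By the portmanteau theorem, if $\mu(J_t)=0$ then $\int \mathbbm{1}(x_t=e_i)\,d\mu^n(x)\to\int \mathbbm{1}(x_t=e_i)\,d\mu(x)$, which is exactly the $i$-th coordinate of \eqref{eq:converge_measure_state}. So the lemma reduces to showing that the set $\cald(\mu):=\{t\in[0,T):\ \mu(J_t)>0\}$ is at most countable.

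First I would make precise the measurability/continuity claim: fix $t$, and suppose $x^{(k)}\to x$ in $d_D$ with $x$ continuous at $t$; choose $\lambda_k\in\Lambda$ with $\sup_s|\lambda_k(s)-s|\to 0$ and $\sup_s|x^{(k)}(\lambda_k(s))-x(s)|\to 0$. Evaluating at $s=\lambda_k^{-1}(t)$ shows $x^{(k)}(t)=x(x(k)$-value at $\lambda_k(\lambda_k^{-1}(t))=t)$, and since $\lambda_k^{-1}(t)\to t$ and $x$ is continuous at $t$, right/left continuity of $x$ forces $x^{(k)}(t)\to x(t)$ for all large $k$; since $E$ is discrete this means $x^{(k)}(t)=x(t)$ eventually. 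Thus evaluation at $t$ is continuous at every $x\notin J_t$, giving the portmanteau step above. (One must also note that for $t\in[0,T)$ the value $x_t$ is the relevant quantity; the left-continuity at $T$ imposed in \eqref{eq:def_skorokhod} is only needed to exclude $t=T$ pathologies.)

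The main obstacle, and the only genuinely nontrivial step, is the countability of $\cald(\mu)$. I would argue as follows. For a fixed path $x\in D$, the set of its discontinuity times is finite (paths in $D$ are c\`adl\`ag $E$-valued, hence have finitely many jumps on $[0,T]$ since $E$ is finite and $x$ is left-continuous at $T$ — actually one should note c\`adl\`ag functions into a discrete space automatically have only finitely many jumps on a compact interval). Therefore $\int_{[0,T]}\bigl(\sum_{x\text{-jumps}}1\bigr)d\mu(x)$ may be infinite in general, so a direct Fubini bound on $\sum_t \mu(J_t)$ need not work; instead I would use a dyadic/stopping argument. For $\varepsilon>0$, let $\cald_\varepsilon(\mu):=\{t:\mu(J_t)\ge\varepsilon\}$; it suffices to show each $\cald_\varepsilon(\mu)$ is finite. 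If $t_1<\dots<t_N$ are distinct points of $\cald_\varepsilon(\mu)$, consider for each $x$ the number $N(x):=\#\{r\le N: x\text{ is discontinuous at }t_r\}$; then $\int N(x)\,d\mu(x)=\sum_r \mu(J_{t_r})\ge N\varepsilon$. On the other hand, under $\mu\in\mathcal{P}$ there is no uniform bound on $N(x)$, so this alone does not cap $N$. The correct route is the classical one (see e.g. Billingsley, or \cite{jacod1987}): the function $t\mapsto \mu\circ x_t^{-1}$ from $[0,T)$ to $\mathcal{S}$ has, for each coordinate $i$, one-sided limits everywhere and is therefore continuous off an at most countable set; equivalently the finitely-many-jumps structure of each path plus dominated convergence shows $t\mapsto \mathbb{E}^\mu[\mathbbm{1}(X_t=e_i)]$ is right-continuous with left limits in $t$, and a real-valued function with left and right limits everywhere on $[0,T)$ has at most countably many discontinuities; the discontinuity set of $\pi(\cdot,\mu)$ is the union over $i$ of these, hence countable, and it contains $\cald(\mu)$ because $\mu(J_t)>0$ forces a coordinate of $t\mapsto \mu\circ x_t^{-1}$ to jump at $t$ (by left-continuity of $t\mapsto X_{t-}$ along $\mu$-a.e.\ path). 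I would take $\cald(\mu)$ to be precisely this countable discontinuity set of $\pi(\cdot,\mu)$ and combine it with the portmanteau step to conclude.
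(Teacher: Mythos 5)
Your first step (evaluation at $t$ is continuous at every path that is continuous at $t$, hence the continuous-mapping/portmanteau theorem gives $\mu^n(X_t=e_i)\to\mu(X_t=e_i)$ whenever $\mu(J_t)=0$) is correct, and it is exactly the content of Proposition 3.14 in \cite{jacod1987}, which the paper simply cites. The gap is in your treatment of the countability of the exceptional set. You end by \emph{defining} $\mathcal{D}(\mu)$ to be the discontinuity set of $t\mapsto\pi(t,\mu)$ and asserting that it contains $\{t:\mu(J_t)>0\}$ because ``$\mu(J_t)>0$ forces a coordinate of $t\mapsto\mu\circ X_t^{-1}$ to jump at $t$''. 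That implication is false: mass can be exchanged between states at a fixed time without moving the one-dimensional marginals. Concretely, let $\mu$ give probability $1/2$ to the path equal to $e_1$ on $[0,1/2)$ and $e_2$ on $[1/2,T]$, and probability $1/2$ to the path with the roles of $e_1,e_2$ swapped. Then $\pi(\cdot,\mu)\equiv(1/2,1/2,0,\dots)$ is constant, so your $\mathcal{D}(\mu)=\emptyset$, yet $\mu(J_{1/2})=1$. Moreover the conclusion of the lemma genuinely fails at $t=1/2$ for a suitable approximating sequence: shift the first path's jump time to $1/2+1/n$ and the second's to $1/2-1/n$; then $\mu^n\to\mu$ in $\mathcal{P}$ but $\pi(1/2,\mu^n)=(1,0,\dots)\not\to(1/2,1/2,0,\dots)=\pi(1/2,\mu)$. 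So the exceptional set must be $\{t:\mu(J_t)>0\}$ itself, not the (possibly strictly smaller) discontinuity set of the marginal flow.

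The missing ingredient is therefore a direct proof that $\{t\in[0,T]:\mu(J_t)>0\}$ is at most countable; this is Lemma 3.12 in \cite{jacod1987}, which is what the paper's proof invokes. An elementary argument in the present finite-state setting: if $t_1,t_2,\dots$ were infinitely many distinct times with $\mu(J_{t_r})\ge\epsilon$, then reverse Fatou for the finite measure $\mu$ gives $\mu(\limsup_r J_{t_r})\ge\epsilon$, i.e.\ a set of paths of positive measure each having infinitely many discontinuities on $[0,T]$ --- impossible, since a c\`adl\`ag path into the finite discrete space $E$ has only finitely many jumps on a compact interval. Hence $\{t:\mu(J_t)\ge\epsilon\}$ is finite for every $\epsilon>0$, and taking the union over $\epsilon=1/k$ shows $\{t:\mu(J_t)>0\}$ is countable. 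With this replacement (together with your portmanteau step and the observation that $T\notin\mathcal{D}(\mu)$ because all paths in $D$ are left-continuous at $T$), your argument becomes a self-contained version of the paper's proof.
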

\begin{proof}
Define $\mathcal{D}(\mu) := \{0\le t\le T;\, \mu(X_t-X_{t-}\neq 0) >0\}$. By Lemma 3.12 in \cite{jacod1987}, the set $\mathcal{D}(\mu)$ is at most countable. In addition, we have $T\not\in\mathcal{D}(\mu)$ since all the paths in $D$ is left-continuous on $T$. In light of Proposition 3.14 in \cite{jacod1987}, we have $\mu^n_{\#X_t}$ converges to $\mu_{\#X_t}$ weakly for all $t \not\in\mathcal{D}(\mu)$. To conclude, we use the fact that $\mu^n_{\#X_t}$ for all $t\in[0,T]$ and $n$ are counting measure on the discrete set $E$. 
\end{proof}

We now turn to the mean field of control. Let $(\mathcal{P}(A), \mathcal{W}_1)$ be the space of probability measures on the compact set $A\subset\mathbb{R}^l$ endowed with the weak topology and metricized by the Wasserstein-1 distance. $(\mathcal{P}(A), \mathcal{W}_1)$ is  a Polish space. Since $A$ is compact, it is easy to show that $\mathcal{P}(A)$ is tight and therefore by Prokhorov's theorem $(\mathcal{P}(A), \mathcal{W}_1)$ is in fact compact. We endow $\mathcal{P}(A)$ with its Borel $\sigma-$algebra denoted by $\mathcal{B}(\mathcal{P}(A))$. We endow $[0,T]$ with its Borel $\sigma-$algebra $\mathcal{B}([0,T])$ and the (normalized) Lebesgue measure $\mathcal{L}(dt) := \frac{1}{T}dt$. Finally, we construct the product space $[0,T]\times \mathcal{P}(A)$ endowed with the $\sigma$-algebra $\mathcal{B}([0,T])\otimes \mathcal{B}(\mathcal{P}(A))$. The space of probability measures on $[0,T]\times \mathcal{P}(A)$ can be viewed as a randomized version of the space of mean field of control. We introduce the stable topology on this space:

\begin{definition}\label{def:stable_topology}
Let us denote by $\mathcal{R}$ the space of probability measures on $([0,T]\times \mathcal{P}(A), \mathcal{B}([0,T])\otimes \mathcal{B}(\mathcal{P}(A)))$. We call the stable topology of $\mathcal{R}$ the coarsest topology such that the mappings $\eta\rightarrow\int g(t,m) \eta(dt,dm)$ are continuous for all bounded and measurable mappings $g$ defined on $[0,T]\times \mathcal{P}(A)$ such that $m \rightarrow g(t,m)$ is continuous for each fixed $t\in[0,T]$.
\end{definition}
We collect a few useful results on the space $\mathcal{R}$ endowed with the stable topology.

\begin{proposition}
\label{prop:stable_topology_polish}
The topology space $\mathcal{R}$ is compact, metrizable, and Polish.
\end{proposition}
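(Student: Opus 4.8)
The plan is to exploit two structural features of $\mathcal{R}$: that $\mathcal{P}(A)$ is compact metric (as already noted, since $A$ is compact), so $C(\mathcal{P}(A))$ is separable; and that $([0,T],\mathcal{B}([0,T]),\mathcal{L})$ is a standard Lebesgue probability space, so every $\eta\in\mathcal{R}$ — which, consistently with its construction from a flow $\bnu$ and with \cite{carmona2015probabilistic}, we take to have first marginal $\mathcal{L}$ — disintegrates as $\eta(dt,dm)=\mathcal{L}(dt)\,\rho^{\eta}_t(dm)$ for a $\mathcal{B}([0,T])$-measurable family $(\rho^{\eta}_t)_t$ in $\mathcal{P}(\mathcal{P}(A))$, unique up to $\mathcal{L}$-null sets. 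In other words, $\mathcal{R}$ is the set of Young measures over $([0,T],\mathcal{L})$ with the compact target $\mathcal{P}(A)$, and the topology of Definition \ref{def:stable_topology} is the usual stable topology on Young measures; the statement is then a classical fact about that topology (see \cite{jacod1981} and \cite{carmona2015probabilistic}), and the work is to assemble the standard argument.

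For metrizability and separability, I would fix a countable family $\{h_k\}_{k\ge1}$ dense in the closed unit ball of $C(\mathcal{P}(A))$ for the uniform norm with $h_1\equiv1$, and a countable family $\{r_j\}_{j\ge1}$ of bounded Borel functions on $[0,T]$ dense in $L^1([0,T],\mathcal{L})$, e.g. the rational step functions on dyadic subintervals. Each $g_{j,k}(t,m):=r_j(t)h_k(m)$ is bounded, Borel, and continuous in $m$, hence an admissible test integrand in Definition \ref{def:stable_topology}. The analytic core is an approximation lemma: for every admissible integrand $g$, the quantity $\iint g\,d\eta$ is approximated \emph{uniformly over} $\eta\in\mathcal{R}$ by $\iint g_N\,d\eta$ with $g_N$ a finite rational linear combination of the $g_{j,k}$ — this uses that $m\mapsto g(t,m)$ is uniformly continuous on the compact $\mathcal{P}(A)$, the $L^1$-density in $t$, and the fact that all $\eta\in\mathcal{R}$ share the marginal $\mathcal{L}$ (so $|\iint(g-g_N)\,d\eta|\le\int_0^T\sup_m|g(t,m)-g_N(t,m)|\,dt$). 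It follows that the stable topology coincides with the initial topology of the countable family $\Lambda_{j,k}:\eta\mapsto\iint g_{j,k}\,d\eta$, so $\Phi:=(\Lambda_{j,k})_{j,k}$ maps $\mathcal{R}$ continuously into the compact metrizable product $\mathbb{K}:=\prod_{j,k}[-\|r_j\|_\infty,\|r_j\|_\infty]$; $\Phi$ is injective, since $\Phi(\eta)=\Phi(\eta')$ gives $\int_0^T r_j(t)\bigl(\int h_k\,d\rho^{\eta}_t-\int h_k\,d\rho^{\eta'}_t\bigr)dt=0$ for all $j,k$, whence $\rho^{\eta}_t=\rho^{\eta'}_t$ for $\mathcal{L}$-a.e. $t$ by $L^1$-density and density of $\{h_k\}$, i.e. $\eta=\eta'$; and $\Phi$ is a homeomorphism onto its image by the approximation lemma. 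Thus $\mathcal{R}$ embeds into a compact metrizable space and is metrizable and separable.

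For compactness it then suffices, by metrizability, to prove sequential compactness: given $(\eta_n)\subset\mathcal{R}$, I would invoke the compactness of the set of Young measures over a standard base with compact target — here $\mathcal{P}(A)$ — to extract a subsequence $(\eta_{n_l})$ converging stably to some Young measure $\eta$ over $([0,T],\mathcal{L})$ with target $\mathcal{P}(A)$, i.e. $\iint g\,d\eta_{n_l}\to\iint g\,d\eta$ for every admissible $g$; this is exactly the compactness part of the classical theory (see \cite{jacod1981}) and the mechanism used for the control variable in \cite{carmona2015probabilistic}. Testing against $g$ independent of $m$ shows $\eta$ has marginal $\mathcal{L}$, so $\eta\in\mathcal{R}$ and $\eta_{n_l}\to\eta$ in $\mathcal{R}$; hence $\mathcal{R}$ is compact. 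Finally, a compact metrizable space is automatically separable and completely metrizable, hence Polish — equivalently, $\Phi(\mathcal{R})$ is closed in the Polish space $\mathbb{K}$.

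The main obstacle is that the two facts just invoked — (a) the stable topology is generated by countably many product Carathéodory functionals, and (b) the set of Young measures with compact target is stably compact — are precisely the non-elementary part of Young-measure theory, resting on the disintegration theorem over a standard base and a diagonal/selection argument. I do not expect to reprove them; rather, the burden is to check that our data meet the hypotheses (compactness of $\mathcal{P}(A)$, since $A$ is compact; $([0,T],\mathcal{L})$ standard) and to reconcile the present formulation with that of \cite{jacod1981} and with the parallel construction in \cite{carmona2015probabilistic}. No new estimates beyond the elementary inequality $|\iint(g-g_N)\,d\eta|\le\|g-g_N\|_{L^1([0,T];C(\mathcal{P}(A)))}$ are needed.
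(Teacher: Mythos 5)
There is a genuine gap: your proof establishes the result for the wrong space. In Definition \ref{def:stable_topology}, $\mathcal{R}$ is the set of \emph{all} probability measures on $[0,T]\times\mathcal{P}(A)$; the subset of those whose first marginal equals $\mathcal{L}$ is introduced only afterwards as $\mathcal{R}_0$, and Proposition \ref{prop:stable_topology_polish} is stated for $\mathcal{R}$ (it is used to assert that $\Gamma=\mathcal{P}\times\mathcal{R}$ is Polish before one restricts to $\bar{\mathcal{P}}_0\times\mathcal{R}_0$). Your opening move --- ``we take $\eta$ to have first marginal $\mathcal{L}$'' --- silently replaces $\mathcal{R}$ by $\mathcal{R}_0$, and this assumption is load-bearing everywhere afterwards: the disintegration $\eta(dt,dm)=\mathcal{L}(dt)\rho^\eta_t(dm)$ does not exist for general $\eta\in\mathcal{R}$; the key estimate $|\iint(g-g_N)\,d\eta|\le\int_0^T\sup_m|g(t,m)-g_N(t,m)|\,dt$ fails when the $[0,T]$-marginal of $\eta$ is not $\mathcal{L}$ (it is not dominated by an $L^1(\mathcal{L})$-norm); the map $\Phi=(\Lambda_{j,k})_{j,k}$ is not injective on $\mathcal{R}$, since two measures such as $\delta_{t_0}\otimes m_0$ and $\delta_{t_1}\otimes m_0$ agree against every $r_j(t)h_k(m)$ once the $r_j$ are only identified as elements of $L^1([0,T],\mathcal{L})$; and the Young-measure compactness theorem you invoke is precisely the statement about the fixed-marginal class. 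So what you have written is essentially a (reasonable, more self-contained) proof that $\mathcal{R}_0$ is compact and metrizable --- i.e., the content of Lemma \ref{lem:stable_topology_subset} together with the restriction of the present proposition --- not a proof of the proposition itself.

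The paper's own proof handles the general case by citing the abstract results of \cite{jacod1981} directly: metrizability of the stable topology on all of $\mathcal{R}$ follows from Proposition 2.10 there because $\mathcal{B}([0,T])\otimes\mathcal{B}(\mathcal{P}(A))$ is separable, and compactness follows from Theorem 2.8 there because both marginal families (probability measures on the compact spaces $[0,T]$ and $\mathcal{P}(A)$) are tight, hence relatively compact. To repair your argument you would either need to prove the statement as written for $\mathcal{R}$ (which your countable product test functions cannot do, since they do not separate points of $\mathcal{R}$), or argue that the proposition may legitimately be weakened to a statement about $\mathcal{R}_0$ and adjust the downstream use in the proof of Theorem \ref{theo:existence_nash_equilibrium} accordingly; as submitted, neither is done.
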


\begin{proof}
Notice that both $[0,T]$ and $\mathcal{P}(A)$ are Polish for their respective topologies. This implies that the $\sigma$-algebra $\mathcal{B}([0,T])\otimes \mathcal{B}(\mathcal{P}(A)))$ is separable. It follows from Proposition 2.10 in \cite{jacod1981} that $\mathcal{R}$ is metrizable. 

We now show that $\mathcal{R}$ is compact. Notice that for an element $\eta$ in $\mathcal{R}$, its first marginal is a probability measure on $[0,T]$ and its second marginal is a probability measure on $\mathcal{P}(A)$. It is trivial to see that both the spaces of probability measures on $[0,T]$ and on $\mathcal{P}(A)$ are tight and therefore relatively compact by Prokhorov's theorem. We then apply Theorem 2.8 in \cite{jacod1981} and obtain the compactness of $\mathcal{R}$.

Having showed that $\mathcal{R}$ is compact and metrizable, we see that $\mathcal{R}$ is separable. Compactness also leads to completeness. Therefore $\mathcal{R}$ is Polish space. Finally, we notice that $\mathcal{R}$ is also sequential compact since $\mathcal{R}$ is metrizable.
\end{proof}

The following result provides a more convenient way to characterize the convergence in the stable topology.

\begin{lemma}\label{lem:characterization_conv_stable_topo}
Denote by $\mathcal{H}$ the collection of mappings $f$ of the form $f(t,\nu) = 1_B(t) \cdot g(\nu)$ where $B$ is a Borel subset of $[0,T]$ and $g:\mathcal{P}(A)\rightarrow \mathbb{R}$ is a bounded Lipschitz function (with respect to the Wasserstein-1 distance on $\mathcal{P}(A)$). Then the stable topology introduced in Definition \ref{def:stable_topology} is the coarsest topology which makes the mappings $\eta \rightarrow \int_{[0,T]\times\mathcal{P}(A)} f(t,\nu) \eta(dt,d\nu)$ continuous for all $f \in \mathcal{H}$.
\end{lemma}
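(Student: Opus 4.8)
The plan is to show that the two topologies agree by establishing inclusions in both directions. Denote by $\tau_{\mathcal{H}}$ the topology generated by the maps $\eta\mapsto\int f\,d\eta$ for $f\in\mathcal{H}$ and by $\tau_{\mathrm{st}}$ the stable topology from Definition \ref{def:stable_topology}. One inclusion, $\tau_{\mathcal{H}}\subseteq\tau_{\mathrm{st}}$, is immediate: every $f(t,\nu)=1_B(t)g(\nu)$ with $g$ bounded Lipschitz is a bounded measurable function on $[0,T]\times\mathcal{P}(A)$ which is continuous in $\nu$ for each fixed $t$ (a Lipschitz function is continuous, and $1_B(t)$ is just a fixed scalar once $t$ is frozen), hence $\eta\mapsto\int f\,d\eta$ is $\tau_{\mathrm{st}}$-continuous by definition, so $\tau_{\mathrm{st}}$ makes all the $\mathcal{H}$-maps continuous and therefore contains $\tau_{\mathcal{H}}$.

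For the reverse inclusion $\tau_{\mathrm{st}}\subseteq\tau_{\mathcal{H}}$ the plan is to show that for every bounded measurable $g\colon[0,T]\times\mathcal{P}(A)\to\mathbb{R}$ with $m\mapsto g(t,m)$ continuous for each $t$, the evaluation $\eta\mapsto\int g\,d\eta$ is continuous for $\tau_{\mathcal{H}}$. The natural route is an approximation argument in two stages. First, since $\mathcal{P}(A)$ is compact metric, for each fixed $t$ the continuous function $g(t,\cdot)$ can be uniformly approximated by bounded Lipschitz functions on $\mathcal{P}(A)$; one then wants to do this with a measurable-in-$t$ choice, e.g. by taking $g_k(t,m):=\inf_{m'}\bigl(g(t,m')+k\,\mathcal{W}_1(m,m')\bigr)$, the Moreau–Yosida / inf-convolution regularization, which is $k$-Lipschitz in $m$, increases to $g(t,\cdot)$ pointwise by continuity and compactness, is bounded by the same bound as $g$, and is jointly measurable (as an infimum over a countable dense set of $m'$ of a Carathéodory function). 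Second, for each fixed $k$ one approximates the $t$-dependence: $g_k$ is $\mathcal{B}([0,T])\otimes\mathcal{B}(\mathcal{P}(A))$-measurable and uniformly bounded, and functions of the form $\sum_{i}1_{B_i}(t)h_i(\nu)$ with $h_i$ bounded Lipschitz (i.e. finite linear combinations of elements of $\mathcal{H}$) are dense — in an appropriate sense, e.g. for the bounded–pointwise or $L^1$-type convergence needed here — in the space of such Carathéodory functions. Passing to the limit through both approximations, using uniform boundedness to control the errors uniformly over all $\eta\in\mathcal{R}$ (which is what lets us conclude continuity of the limit functional, not merely pointwise convergence), yields that $\eta\mapsto\int g\,d\eta$ is $\tau_{\mathcal{H}}$-continuous.

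The main obstacle is the second approximation step, namely justifying that the $t$-variable can be discretized \emph{uniformly over $\eta$} while only using indicator-times-Lipschitz test functions. The subtlety is that a crude pointwise approximation of $g_k$ in $t$ would only give convergence of $\int g_k\,d\eta$ for each fixed $\eta$, which is not enough to transfer continuity. One clean way around this is to invoke the known structure theory of the stable topology directly: by Proposition \ref{prop:stable_topology_polish} (or the cited results of \cite{jacod1981}) $\mathcal{R}$ is compact metrizable, so it suffices to check sequential convergence, and for a stably convergent sequence $\eta_n\to\eta$ one has $\int h(t,\nu)\,\eta_n(dt,d\nu)\to\int h(t,\nu)\,\eta(dt,d\nu)$ for every $h$ in $\mathcal{H}$ by hypothesis; then a monotone-class / dominated-convergence argument over the $\pi$-system of rectangles $B\times(\text{Lipschitz})$ extends this to all bounded Carathéodory $g$, using that the first marginals $\eta_n(\cdot\times\mathcal{P}(A))$ are all probability measures on $[0,T]$ and hence uniformly tight. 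This reduces the problem to a standard functional-monotone-class statement once the rectangle case is in hand, and that is the form of the argument I would write out in detail.
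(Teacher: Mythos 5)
Your easy inclusion is fine, and you correctly identify the crux of the hard direction: any approximation of the test function must be controlled \emph{uniformly over} $\eta$. The problem is that your proposed workaround does not actually achieve this. A functional monotone class argument over the $\pi$-system of rectangles cannot extend the statement ``$\int h\,d\eta^n\to\int h\,d\eta^0$'' from $h\in\mathcal{H}$ to all bounded Carath\'eodory $h$: the class of integrands for which convergence of integrals along a fixed sequence of measures holds is a vector space containing the constants, but it is \emph{not} closed under bounded monotone pointwise limits, precisely because passing $g_k\uparrow g$ inside $\int g_k\,d\eta^n$ requires interchanging the limits in $k$ and $n$ (this is the same reason weak convergence of measures does not give convergence of $\mu^n(B)$ for every Borel $B$). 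Your two-stage approximation suffers from the companion defect: the inf-convolution $g_k(t,\cdot)$ converges to $g(t,\cdot)$ uniformly on the compact space $\mathcal{P}(A)$ for each fixed $t$, but with a rate depending on the modulus of continuity of $g(t,\cdot)$, hence not uniformly in $t$ for a general Carath\'eodory $g$; uniform tightness of the first marginals does not control $\int_{[0,T]}\sup_{m}|g_k(t,m)-g(t,m)|\,\eta^n(dt\times\mathcal{P}(A))$ uniformly in $n$. So as written the reverse inclusion is not established.

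The paper sidesteps all of this by quoting Proposition 2.4 of \cite{jacod1981}, which already characterizes the stable topology as the coarsest topology making $\eta\mapsto\int 1_B(t)g(\nu)\,\eta(dt,d\nu)$ continuous for $g$ bounded and \emph{uniformly continuous}; the only remaining step is that on the compact metric space $\mathcal{P}(A)$ every bounded uniformly continuous $g$ is a \emph{uniform} limit of bounded Lipschitz functions (\cite{georgano1967}, which is essentially your inf-convolution applied to a single $g(\nu)$ with no $t$-dependence), and the uniformity of that approximation is exactly what makes the $\epsilon/3$ estimate valid simultaneously for all $\eta\in\mathcal{R}$. If you want a self-contained repair that avoids citing the Jacod--M\'emin characterization, a cleaner route than monotone class is topological: you already have $\tau_{\mathcal{H}}\subseteq\tau_{\mathrm{st}}$, $(\mathcal{R},\tau_{\mathrm{st}})$ is compact by Proposition \ref{prop:stable_topology_polish}, and $\tau_{\mathcal{H}}$ is Hausdorff because the functionals indexed by $\mathcal{H}$ separate points of $\mathcal{R}$ (bounded Lipschitz functions are measure-determining on $\mathcal{P}(A)$, and a $\pi$--$\lambda$ argument on rectangles then determines $\eta$ on the product $\sigma$-field); a continuous bijection from a compact space onto a Hausdorff space is a homeomorphism, so the two topologies coincide. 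Note that here the monotone class reasoning is applied to the \emph{equality} of two fixed measures, where it is legitimate, not to convergence of a sequence of measures.
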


\begin{proof}
Let $\mathcal{H}_0$ be the collection of mappings $f$ of the form $f(t,\nu) = 1_B(t) \cdot g(\nu)$ where $B$ is a Borel subset of $[0,T]$ and $g:\mathcal{P}(A)\rightarrow \mathcal{R}$ is a bounded and uniformly continuous function. Then clearly we have $\mathcal{H}\subset\mathcal{H}_0$. By Proposition 2.4 in \cite{jacod1981}, the stable topology is the coarsest topology under which the mappings $\eta \rightarrow \int_{[0,T]\times\mathcal{P}(A)} f(t,\nu) \eta(dt,d\nu)$ are continuous for all $f \in \mathcal{H}_0$. Therefore, we only need to show that if $\eta^n$ is a sequence of elements in $\mathcal{R}$ such that $\int f(t,\nu) \eta^n(dt,d\nu) \rightarrow \int f(t,\nu) \eta^0(dt,d\nu)$ for all $f \in \mathcal{H}$, then we have $\int f(t,\nu) \eta^n(dt,d\nu) \rightarrow \int f(t,\nu) \eta^0(dt,d\nu)$ for all $f \in \mathcal{H}_0$ as well.

\vspace{3mm}
Now let us fix $f \in \mathcal{H}_0$ with $f(t,\nu) = 1_B(t)\cdot g(\nu)$, Note that $\mathcal{P}(A)$ is a compact metric space and $g$ is a bounded, uniformly continuous and real-valued function. A famous result from \cite{georgano1967} (see also \cite{miculescu2000}) shows that $g$ can be approximated uniformly by bounded Lipschitz continuous function. That is, for all $\epsilon>0$, we can find $g_{\epsilon} \in \mathcal{H}$ such that $\sup_{\nu \in \mathcal{P}(A)} |g_{\epsilon}(\nu) - g(\nu)| \le \epsilon/3$. By our assumption we have $\int 1_B(t)g_{\epsilon}(\nu) \eta^n(dt,d\nu) \rightarrow \int 1_B(t)g_{\epsilon}(\nu) \eta^0(dt,d\nu)$. Therefore there exists $N_0$ such that $|\int 1_B(t) g_{\epsilon}(\nu) \eta^n(dt,d\nu) - \int 1_B(t) g_{\epsilon}(\nu) \eta^0(dt,d\nu)| \le \epsilon/3$ for all $n \ge N_0$. Combining these facts we have, for $n\ge N_0$:
\begin{align*}
|\int 1_B(t) g(\nu) \eta^n(dt,d\nu) - \int 1_B(t) g(\nu) \eta^0(dt,d\nu)|
 \le& |\int 1_B(t) g_{\epsilon}(\nu) \eta^n(dt,d\nu) - \int 1_B(t) g_{\epsilon}(\nu) \eta^0(dt,d\nu)| \\
 & \hskip -45pt
 + \int 1_B(t)|g_{\epsilon}(\nu) - g(\nu)|\eta^n(dt,d\nu) + \int 1_B(t)|g_{\epsilon}(\nu) - g(\nu)|\eta^0(dt, d\nu)\\
 \le& \epsilon/3 + \epsilon/3 + \epsilon/3 = \epsilon,
\end{align*}
which shows that $\int f(t,\nu) \eta^n(dt,d\nu) \rightarrow \int f(t,\nu) \eta^0(dt,d\nu)$.
\end{proof}

Now we consider the following subset of $\mathcal{R}$:
\[
\mathcal{R}_0:= \{\eta\in\mathcal{R};\, \text{the marginal distribution of $\eta$ on $[0,T]$ is $\mathcal{L}$}\}.
\]
We have the following result:
\begin{lemma}\label{lem:stable_topology_subset}
$\mathcal{R}_0$ is a convex and compact subset of $\mathcal{R}$.
\end{lemma}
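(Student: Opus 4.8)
The plan is to dispose of convexity by a one-line computation and to obtain compactness by exhibiting $\mathcal{R}_0$ as a closed subset of the compact space $\mathcal{R}$ (Proposition \ref{prop:stable_topology_polish}), so that it inherits compactness. For convexity, if $\eta_1,\eta_2\in\mathcal{R}_0$ and $\lambda\in[0,1]$, then $\lambda\eta_1+(1-\lambda)\eta_2$ is again a probability measure on $[0,T]\times\mathcal{P}(A)$, and its marginal on $[0,T]$ is $\lambda\mathcal{L}+(1-\lambda)\mathcal{L}=\mathcal{L}$; hence it lies in $\mathcal{R}_0$.

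For compactness, the key observation is that for every Borel set $B\subseteq[0,T]$ the evaluation map $\eta\mapsto\eta(B\times\mathcal{P}(A))=\int_{[0,T]\times\mathcal{P}(A)}\mathbbm{1}_B(t)\,\eta(dt,d\nu)$ is continuous on $\mathcal{R}$ for the stable topology. Indeed, $g(t,m):=\mathbbm{1}_B(t)$ is bounded and measurable on $[0,T]\times\mathcal{P}(A)$, and for each fixed $t$ the map $m\mapsto g(t,m)$ is constant, hence continuous, so $g$ is an admissible test function in Definition \ref{def:stable_topology}. (Equivalently, one may invoke Lemma \ref{lem:characterization_conv_stable_topo} with the constant function $g\equiv 1$ on $\mathcal{P}(A)$, which is bounded and Lipschitz.) Consequently, for each $B$ the set $\{\eta\in\mathcal{R};\,\eta(B\times\mathcal{P}(A))=\mathcal{L}(B)\}$ is the preimage of the closed singleton $\{\mathcal{L}(B)\}$ under a continuous map, hence closed. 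Writing
\[
\mathcal{R}_0=\bigcap_{B\in\mathcal{B}([0,T])}\{\eta\in\mathcal{R};\,\eta(B\times\mathcal{P}(A))=\mathcal{L}(B)\}
\]
exhibits $\mathcal{R}_0$ as an intersection of closed sets, hence closed; a closed subset of the compact space $\mathcal{R}$ is compact. If one prefers a sequential argument, available since $\mathcal{R}$ is metrizable: whenever $\eta^n\to\eta$ stably with each $\eta^n\in\mathcal{R}_0$, one gets $\eta(B\times\mathcal{P}(A))=\lim_n\eta^n(B\times\mathcal{P}(A))=\mathcal{L}(B)$ for every Borel $B$, so $\eta\in\mathcal{R}_0$.

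There is no serious obstacle here; the only point requiring a moment's care is recognising that an indicator $\mathbbm{1}_B$ in the time variable alone is an admissible test function for the stable topology — equivalently, that $\eta\mapsto\eta(\,\cdot\times\mathcal{P}(A))$, the projection onto the first marginal, is continuous — after which the conclusion is immediate.
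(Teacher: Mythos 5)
Your proof is correct, but it takes a different route from the paper's. The paper handles compactness by applying Theorem 2.8 of Jacod--M\'emin directly to $\mathcal{R}_0$: it checks that the first marginals form the compact singleton $\{\mathcal{L}\}$ and that the second marginals lie in the relatively compact set $\mathcal{P}(\mathcal{P}(A))$, and invokes that theorem's tightness-type criterion for the stable topology. You instead exhibit $\mathcal{R}_0$ as a closed subset of $\mathcal{R}$, whose compactness was already established in Proposition \ref{prop:stable_topology_polish}, by observing that for each Borel $B\subseteq[0,T]$ the function $g(t,m)=\mathbbm{1}_B(t)$ is an admissible test function in Definition \ref{def:stable_topology} (bounded, measurable, and constant hence continuous in $m$), so that $\eta\mapsto\eta(B\times\mathcal{P}(A))$ is stably continuous and $\mathcal{R}_0$ is an intersection of closed level sets. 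Your argument is more self-contained: the marginal criterion of Jacod--M\'emin yields only \emph{relative} compactness, so the paper's one-line proof implicitly still needs the closedness of $\mathcal{R}_0$ that you prove explicitly; your version supplies exactly that missing step and also records the (trivial but omitted) convexity computation. The paper's approach buys brevity by leaning on the external theorem; yours buys transparency by reusing the compactness of $\mathcal{R}$ already in hand.
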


\begin{proof}
We apply Theorem 2.8 in \cite{jacod1981}. In particular, we verify without difficulty that $\{\eta^{[0,T]};\, \eta\in\mathcal{R}_0\} = \{\mathcal{L}\}$ is compact and $\{\eta^{\mathcal{P}(A)};\, \eta\in\mathcal{R}_0\}$ is a subset of $\mathcal{P}(\mathcal{P}(A))$, which is relatively compact as well.
\end{proof}

For any $\eta \in \mathcal{R}_0$, since its first marginal is $\mathcal{L}$, by disintegration we can write $\eta(dt,dm) = \mathcal{L}(dt) \times \eta_t(dm)$ where the mapping $[0,T]\ni t\rightarrow \eta_t(\cdot) \in \mathcal{P}(\mathcal{P}(A))$ is a measurable mapping and the decomposition is unique up to almost everywhere equality.
On the other hand, for any measurable function $\nu:[0,T]\rightarrow\mathcal{P}(A)$, we may construct an element $\Psi(\nu)$ in $\mathcal{R}_0$ by:
\begin{equation}
\Psi(\nu)(dt, dm) := \mathcal{L}(dt)\times \delta_{\nu_t}(dm).
\end{equation}
Since we have changed the way we represent the mean field of control, we need to modify accordingly the definition of transition rate matrix as well as the cost functionals in order to make them compatible with the randomization procedure. For any function $F:\mathcal{P}(A)\rightarrow\mathbb{R}$ possibly containing other arguments, we denote $\ubar F: \mathcal{P}(\mathcal{P}(A)) \rightarrow \mathbb{R}$ by $\ubar F(m) := \int_{\nu \in \mathcal{P}(A)}F(\nu) m(d\nu)$, which we call the randomized version of $F$. Obviously we have $\ubar F(\delta_{\nu}) = F(\nu)$. In this way, we define without any ambiguity  the randomized version $\ubar q$ of the rate function $q$, as well as its matrix representation $\ubar Q$. We also define $\ubar f$ as the randomized version of cost functional $f$. Since the terminal cost $g$ does not depend on the mean field of control, we do not need to consider its randomized version.

\vspace{3mm}
Recall from Assumption \ref{hypo:lipschitz_optimizer} that the minimizer $\hat a_i$ of the reduced Hamiltonian is only a function of $t$, $z$ and $p$. Consequently, for $\ubar H$, $\ubar H_i$, $\ubar{\hat{H}}$ and $\ubar{\hat{H}}^i$, which are  the randomized version of $H$, $H_i$, $\hat H$ and $\hat H_i$ respectively, we still have:
\begin{align*}
\ubar{\hat{H}}(t, x, z, p, m) =& \inf_{\alpha\in A} \ubar H(t, x, z, \alpha, p, m),\\
\hat a(t, x, z, p) =& \arg\inf_{\alpha\in A} \ubar H(t, x, z, \alpha, p, m).
\end{align*}
In addition, we have the following result on the Lipschitz property of $\ubar{\hat H}$ and $\hat a$:

\begin{lemma}
\label{lem:lip_h_hat_randomized}
There exists a constant $C>0$ such that for all $(\omega,t)\in\Omega\times(0,T]$, $p,p'\in\mathcal{S}$, $\alpha, \alpha' \in A$, $z,z'\in\mathbb{R}^m$ and $m,m' \in \mathcal{P}(\mathcal{P}(A))$, we have:
\begin{equation}
\label{eq:lipschitz_q_hat_randomized}
|\ubar{q}(t,i,j,\alpha,p,m) - \ubar{q}(t,i,j,\alpha',p',m')| \le C(\|\alpha - \alpha'\| + \|p - p'\| + \bar{\mathcal{W}}_1(m, m')),
\end{equation}
and
\begin{equation}
\label{eq:lipschitz_h_hat_randomized}
|\ubar{\hat{H}}(t,X_{t-},z,p,m) - \ubar{\hat{H}}(t,X_{t-},z',p',m')| 
\le C\|z - z'\|_{X_{t-}} + C(1 +  \|z\|_{X_{t-}})(\|p - p'\| + \bar{\mathcal{W}}_1(m, m')).
\end{equation}
\end{lemma}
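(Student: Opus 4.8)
The strategy is to reduce the lemma to the Lipschitz estimates already recorded for $q$ in Assumption \ref{hypo:boundedness} and for $\hat H$ in Lemma \ref{lem:lip_h_hat}, together with the elementary observation that averaging a Lipschitz function against a probability measure on $\mathcal{P}(A)$ is itself a Lipschitz operation for the Wasserstein-1 distance $\bar{\mathcal{W}}_1$ on $\mathcal{P}(\mathcal{P}(A))$: if $h:(\mathcal{P}(A),\mathcal{W}_1)\to\mathbb{R}$ is $L$-Lipschitz, then $|\int h\,dm - \int h\,dm'|\le L\,\bar{\mathcal{W}}_1(m,m')$ for all $m,m'\in\mathcal{P}(\mathcal{P}(A))$, by the Kantorovich--Rubinstein duality. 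Before starting I would note that both $\ubar{q}(t,i,j,\alpha,p,m)=\int_{\mathcal{P}(A)}q(t,i,j,\alpha,p,\nu)\,m(d\nu)$ and $\ubar{\hat H}(t,x,z,p,m)=\int_{\mathcal{P}(A)}\hat H(t,x,z,p,\nu)\,m(d\nu)$ are well defined: $q$ is bounded and continuous by Assumption \ref{hypo:boundedness}, and $\hat H$ is continuous (Berge's theorem, as in the proof of Lemma \ref{lem:lip_h_hat}) with at most linear growth $C(1+\|z\|_{X_{t-}})$ in its $z$-argument, so the integrands are bounded measurable functions of $\nu$ once the remaining variables are fixed.

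For \eqref{eq:lipschitz_q_hat_randomized} I would split the difference through the intermediate term $\ubar{q}(t,i,j,\alpha',p',m)$. For the passage from $(\alpha,p)$ to $(\alpha',p')$ I integrate the pointwise bound of Assumption \ref{hypo:boundedness}(iii) against $m$, obtaining $C(\|\alpha-\alpha'\|+\|p-p'\|)$. For the passage from $m$ to $m'$, the same assumption says that $\nu\mapsto q(t,i,j,\alpha',p',\nu)$ is $C$-Lipschitz for $\mathcal{W}_1$, so the averaging principle above bounds the difference by $C\,\bar{\mathcal{W}}_1(m,m')$. The triangle inequality then gives \eqref{eq:lipschitz_q_hat_randomized}.

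For \eqref{eq:lipschitz_h_hat_randomized} I would use the analogous split, this time through $\ubar{\hat H}(t,X_{t-},z,p,m')$. The first difference, from $m$ to $m'$ with $(z,p)$ held fixed, is controlled by Lemma \ref{lem:lip_h_hat} with $z'=z$ and $p'=p$: the map $\nu\mapsto\hat H(t,X_{t-},z,p,\nu)$ is Lipschitz for $\mathcal{W}_1$ with constant $C(1+\|z\|_{X_{t-}})$, so the averaging principle bounds it by $C(1+\|z\|_{X_{t-}})\bar{\mathcal{W}}_1(m,m')$. The second difference, from $(z,p)$ to $(z',p')$ with $m'$ fixed, is handled by integrating estimate \eqref{eq:lipschitz_h_hat} of Lemma \ref{lem:lip_h_hat} (taking $\nu'=\nu$) against $m'$, which produces $C\|z-z'\|_{X_{t-}}+C(1+\|z\|_{X_{t-}})\|p-p'\|$. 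Adding the two bounds yields \eqref{eq:lipschitz_h_hat_randomized}.

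I do not expect a genuine obstacle here; the argument is a routine bookkeeping exercise combining the prior Lipschitz bounds with the $\mathcal{W}_1$-duality. The one point that needs mild attention is the order of the two-term split in the proof of \eqref{eq:lipschitz_h_hat_randomized}: because the Lipschitz constant of $\nu\mapsto\hat H(t,X_{t-},z,p,\nu)$ is the $z$-dependent quantity $C(1+\|z\|_{X_{t-}})$, the $m$-to-$m'$ step must be carried out with the $z$-argument (not the $z'$-argument) frozen, so that this constant ends up multiplying $\bar{\mathcal{W}}_1(m,m')$ exactly as in the target inequality; performing the split the other way would leave a $\|z'\|_{X_{t-}}$ factor that does not match the statement.
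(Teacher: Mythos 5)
Your proposal is correct and follows essentially the same route as the paper: the same two-term split through $\ubar{\hat H}(t,X_{t-},z,p,m')$, the Kantorovich--Rubinstein duality to bound the $m$-to-$m'$ step by $C(1+\|z\|_{X_{t-}})\bar{\mathcal{W}}_1(m,m')$, and integration of the pointwise bounds from Lemma \ref{lem:lip_h_hat} against $m'$ for the remaining step (the paper treats \eqref{eq:lipschitz_q_hat_randomized} as "proved in the same way," which your explicit argument supplies). Your closing remark about freezing the $z$-argument rather than $z'$ in the $m$-to-$m'$ step matches exactly the ordering the paper uses.
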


\begin{proof}
We have:
\begin{align*}
&\hskip -45pt
|\ubar{\hat{H}}(t,X_{t-},z,p,m) - \ubar{\hat{H}}(t,X_{t-},z',p',m')| \\
\le& |\ubar{\hat{H}}(t,X_{t-},z,p,m) - \ubar{\hat{H}}(t,X_{t-},z,p,m')| + |\ubar{\hat{H}}(t,X_{t-},z,p,m') - \ubar{\hat{H}}(t,X_{t-},z',p',m')|\\
\le&\left| \int_{\nu\in\mathcal{P}(A)} \hat{H}(t,X_{t-},z,p,\nu) (m(d\nu) - m'(d\nu))\right| \\
&+ \int_{\nu\in\mathcal{P}(A)}| \hat{H}(t,X_{t-},z,p,\nu) -  \hat{H}(t,X_{t-},z',p',\nu)| m'(d\nu)\\
\le& \left| \int_{\nu\in\mathcal{P}(A)} \hat{H}(t,X_{t-},z,p,\nu) (m(d\nu) - m'(d\nu))\right| + C(1+\|z\|_{X_{t-}}) \|p - p'\|+ C\|z - z'\|_{X_{t-}}.
\end{align*}
Since the space $\mathcal{P}(A)$ is compact and the mapping $\nu \rightarrow \hat{H}(t,X_{t-},z,p,\nu)$ is Lipschitz, with Lipschitz constant equal to $C(1+\|z\|_{X_{t-}})$, Kantorovich-Rubinstein duality theory implies:
\[
\left| \int_{\nu\in\mathcal{P}(A)} \hat{H}(t,X_{t-},z,p,\nu) (m(d\nu) - m'(d\nu))\right| \le C(1+\|z\|_{X_{t-}}) \bar{\mathcal{W}_1}(m,m'),
\]
where $\bar{\mathcal{W}_1}(m,m')$ is the Wasserstein-1 distance on the space of probability measure on $\mathcal{P}(A)$ whose definition we recall for the sake of definiteness:
\begin{equation}\label{eq:def_w1_ppa}
\bar{\mathcal{W}_1}(m,m') := \inf_{\pi\in\mathcal{P}(\mathcal{P}(A)\times\mathcal{P}(A))}\int_{\mathcal{P}(A)\times\mathcal{P}(A)} \mathcal{W}_1(\nu, \nu') \pi(d\nu, d\nu').
\end{equation}
Combined with the estimation above, we obtain the desired inequality for $\ubar{\hat H}$. The Lipschitz property for $\ubar q$ can be proved in the same way.
\end{proof}

\subsection{Mapping fixed points}
We now define the mapping whose fixed points characterize the Nash equilibria of the mean field game in its weak formulation. For any $(\mu, \eta) \in \mathcal{P} \times \mathcal{R}_0$, where $\eta$ has the disintegration $\eta(dt,dm) = \mathcal{L}(dt) \times \eta_t(dm)$, we consider the solution $(\bY^{(\mu,\eta)}, \bZ^{(\mu,\eta)})$ to the BSDE:
\begin{equation}\label{eq:mapping_bsde}
Y_t = g(X_T, p_T) + \int_t^T \ubar{\hat H}(s, X_{s-}, Z_s, \pi(s,\mu), \eta_s) ds - \int_t^T Z_s^*\cdot d\mathcal{M}_s.
\end{equation}
Denote by $\hat\balpha^{(\mu,\eta)}$ the predictable process $t\rightarrow\hat a(t, X_{t-}, Z^{(\mu,\eta)}_t, \pi(t,\mu))$, which is the optimal control of the player faced with the mean field $(\mu,\eta)\in \mathcal{P}(E) \times \mathcal{R}_0$. Next, we consider the scalar martingale $\bL^{(\mu,\eta)}$ defined by:
\begin{equation}\label{eq:scalar_martingale_phi}
L^{(\mu,\eta)}_t := \int_0^t X_{s-}^*\cdot(\ubar Q(s,\hat\alpha^{(\mu,\eta)}_s,\pi(s,\mu), \eta_s) - Q^0)\cdot d\mathcal{M}_s.
\end{equation}
Define the probability measure $\hat{\mathbb{Q}}^{(\mu,\eta)}$ by:
\begin{equation}\label{eq:doleans_dade_exponential_phi}
\frac{d\hat{\mathbb{Q}}^{(\mu,\eta)}}{d\mathbb{P}} := \mathcal{E}(\bL^{(\mu,\eta)})_T,
\end{equation}
where $\mathcal{E}(\bL^{(\mu,\eta)})$ is the Dol\'eans-Dade exponential of the martingale $\bL^{(\mu,\eta)}$. Finally we define the mappings $\Phi^\mu$, $\Phi^\eta$ and $\Phi$ respectively by:

\begin{equation}
\label{eq:def_mapping_fixed_point1}
\Phi^\mu: \mathcal{P} \times \mathcal{R}_0 \ni  (\mu,\eta) \rightarrow \hat{\mathbb{Q}}^{(\mu,\eta)} \in \mathcal{P}\\
\end{equation}

\begin{equation}
\label{eq:def_mapping_fixed_point2}
\Phi^\eta: \mathcal{P} \times \mathcal{R}_0 \ni (\mu,\eta) \rightarrow  \mathcal{L}(dt)\times\delta_{\hat{\mathbb{Q}}^{(\mu,\eta)}_{\#\hat\alpha^{(\mu,\eta)}_t}}(d\nu) \in\mathcal{R}_0
\end{equation}

\begin{equation}
\label{eq:def_mapping_fixed_point}
\Phi: \mathcal{P} \times \mathcal{R}_0 \ni (\mu,\eta) \rightarrow \big(\Phi^\mu(\mu,\eta), \Phi^\eta(\mu,\eta)\big) \in \mathcal{P} \times \mathcal{R}_0.
\end{equation}

\begin{remark}\label{rem:well_posedness_mapping_phi}
Before delving into its properties of $\Phi$, we first need to show that the mapping $\Phi$ is well-defined. More specifically, we need to show that given $(\mu, \eta) \in \mathcal{P} \times \mathcal{R}_0$, the outputs $\hat{\mathbb{Q}}^{(\mu,\eta)}$ and $\mathcal{L}(dt)\times\delta_{\hat{\mathbb{Q}}^{(\mu,\eta)}_{\#\hat\alpha^{(\mu,\eta)}_t}}(d\nu))$ does not depend on which solution to the BSDE (\ref{eq:mapping_bsde}) we use to construct  $\hat\balpha^{(\mu,\eta)}$, $\bL^{(\mu,\eta)}$ and $\mathcal{E}(L^{(\mu,\eta)})$. To this end, let us consider $(\bY, \bZ)$ and $(\bY', \bZ')$ two solutions to BSDE (\ref{eq:mapping_bsde}), $\hat\balpha$ and $\hat\balpha'$ the corresponding optimal controls, $\bL$ and $\bL'$ the corresponding martingales defined in (\ref{eq:scalar_martingale_phi}), and $\mathbb{Q}$ and $\mathbb{Q}'$ the resulting probability measures defined in (\ref{eq:doleans_dade_exponential_phi}). By uniqueness of solution to (\ref{eq:mapping_bsde}), we have $\mathbb{E}\left[\int_0^T \|Z'_t - Z_t\|_{X_{t-}}^2 dt\right] = 0$. Using the Lipschitz continuity of $\hat a$ and $q$, it is straightforward to show $\mathbb{E}\left[\int_0^T \|\alpha'_t - \alpha_t\|^2 dt\right] = 0$ and eventually $\mathbb{Q} = \mathbb{Q}'$.
\end{remark}

\begin{proposition}
\label{prop:stable_subset_by_phi}
Let us denote by $\bar{\mathcal{P}}_0$ the closure of the set $\mathcal{P}_0$ defined in (\ref{eq:def_p_0}). Then the set $\bar{\mathcal{P}}_0 \times \mathcal{R}_0$ is stable for the mapping $\Phi$.
\end{proposition}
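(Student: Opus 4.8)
The plan is to check the two coordinates of $\Phi$ separately. For the control coordinate there is nothing to do: by its very definition in \eqref{eq:def_mapping_fixed_point2}, $\Phi^\eta(\mu,\eta) = \mathcal{L}(dt)\times\delta_{\hat{\mathbb{Q}}^{(\mu,\eta)}_{\#\hat\alpha^{(\mu,\eta)}_t}}(d\nu)$ has first marginal equal to $\mathcal{L}$, hence lies in $\mathcal{R}_0$. All of the work is therefore in showing $\Phi^\mu(\mu,\eta) = \hat{\mathbb{Q}}^{(\mu,\eta)} \in \bar{\mathcal{P}}_0$; and since $\mathcal{P}_0 \subseteq \bar{\mathcal{P}}_0$, it suffices to exhibit a single constant $C_0$, \emph{independent of $(\mu,\eta) \in \bar{\mathcal{P}}_0 \times \mathcal{R}_0$}, such that $\mathbb{E}^{\mathbb{P}}[\mathcal{E}(\bL^{(\mu,\eta)})_T^2] \le C_0$. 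This is exactly the constant to be inserted in the definition \eqref{eq:def_p_0} of $\mathcal{P}_0$.

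First I would record the uniform controls on $\bL^{(\mu,\eta)}$ that come for free from Assumption \ref{hypo:boundedness}. Whatever the $A$-valued predictable process $\hat\alpha^{(\mu,\eta)}$ happens to be, the matrix $\ubar Q(s,\hat\alpha^{(\mu,\eta)}_s,\pi(s,\mu),\eta_s)$ is a $Q$-matrix (an average over $\eta_s$ of $Q$-matrices) whose off-diagonal entries lie in $(C_1, C_2)$. Hence, exactly as in the computations preceding \eqref{eq:q_measure}: (a) at a jump of $\bX$ from $e_i$ to $e_j$ the jump of $\bL^{(\mu,\eta)}$ equals $\ubar q(t,i,j,\hat\alpha^{(\mu,\eta)}_t,\pi(t,\mu),\eta_t)-1$, so $1 + \Delta\bL^{(\mu,\eta)}_t \in (C_1, C_2)$; and (b) $\langle \bL^{(\mu,\eta)}, \bL^{(\mu,\eta)}\rangle_T = \int_0^T X_{s-}^*\cdot(\ubar Q(s,\cdot) - Q^0)\cdot\psi_s^+\cdot(\ubar Q^*(s,\cdot) - Q^0)\cdot X_{s-}\,ds$ has a nonnegative integrand bounded by a constant $\kappa = \kappa(C_1, C_2, m)$. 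In particular $\mathbb{E}^{\mathbb{P}}[\exp(\langle\bL^{(\mu,\eta)},\bL^{(\mu,\eta)}\rangle_T)] \le e^{\kappa T} < \infty$, so by Theorem III.45 of \cite{protter2005} the process $\mathcal{E}(\bL^{(\mu,\eta)})$ is a uniformly integrable martingale and $\hat{\mathbb{Q}}^{(\mu,\eta)}$ is a genuine element of $\mathcal{P}$.

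The heart of the proof is then a Gr\"onwall estimate for the second moment. Writing $\mathcal{E} := \mathcal{E}(\bL^{(\mu,\eta)})$, which is nonnegative by (a), integration by parts together with $d\mathcal{E}_t = \mathcal{E}_{t-}\,d\bL^{(\mu,\eta)}_t$ gives
$$
\mathcal{E}_t^2 = 1 + 2\int_0^t \mathcal{E}_{s-}^2\, d\bL^{(\mu,\eta)}_s + \int_0^t \mathcal{E}_{s-}^2\, d[\bL^{(\mu,\eta)}, \bL^{(\mu,\eta)}]_s .
$$
Compensating $[\bL^{(\mu,\eta)},\bL^{(\mu,\eta)}]$ into $\langle\bL^{(\mu,\eta)},\bL^{(\mu,\eta)}\rangle$ alters the right-hand side by a local martingale, and I would localize along $\tau_n := \inf\{t \ge 0;\ \mathcal{E}_t \ge n\}$, on $[0,\tau_n]$ the predictable process $\mathcal{E}_{s-}$ being bounded by $n$ and $\mathcal{E}$ itself by $\max(1,C_2)\,n$, so that the two stopped stochastic integrals are true martingales with zero expectation. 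Taking expectations at $t\wedge\tau_n$, using $d\langle\bL^{(\mu,\eta)},\bL^{(\mu,\eta)}\rangle_s \le \kappa\,ds$ from (b) and $\mathcal{E}_{s-} \le \max(1,C_1^{-1})\,\mathcal{E}_s$ from (a), leaves
$$
\mathbb{E}^{\mathbb{P}}[\mathcal{E}_{t\wedge\tau_n}^2] \le 1 + \kappa\max(1,C_1^{-2})\int_0^t \mathbb{E}^{\mathbb{P}}[\mathcal{E}_{s\wedge\tau_n}^2]\, ds ,
$$
so Gr\"onwall's inequality yields $\mathbb{E}^{\mathbb{P}}[\mathcal{E}_{t\wedge\tau_n}^2] \le e^{\kappa\max(1,C_1^{-2}) t}$ for every $n$. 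Since $\mathcal{E}$ is c\`adl\`ag on the compact interval $[0,T]$ we have $\tau_n\to\infty$ almost surely, so letting $n\to\infty$ and invoking Fatou's lemma gives $\mathbb{E}^{\mathbb{P}}[\mathcal{E}(\bL^{(\mu,\eta)})_T^2] \le e^{\kappa\max(1,C_1^{-2}) T} =: C_0$. As $\kappa$, $C_1$ and $T$ do not depend on $(\mu,\eta)$, this proves $\hat{\mathbb{Q}}^{(\mu,\eta)} \in \mathcal{P}_0 \subseteq \bar{\mathcal{P}}_0$ and finishes the argument.

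I expect the only genuinely delicate point to be the bookkeeping around the localization: one must make sure that, once stopped, both $\int \mathcal{E}_{s-}^2\, d\bL^{(\mu,\eta)}_s$ and $\int \mathcal{E}_{s-}^2\, d([\bL^{(\mu,\eta)},\bL^{(\mu,\eta)}]_s - \langle\bL^{(\mu,\eta)},\bL^{(\mu,\eta)}\rangle_s)$ are honest martingales (this uses the boundedness of $\mathcal{E}_{s-}$ on $[0,\tau_n]$ together with the fact that $[\bL^{(\mu,\eta)},\bL^{(\mu,\eta)}]_T$ is dominated by a bounded multiple of the total number of jumps of $\bX$ on $[0,T]$, which has finite moments of all orders under $\mathbb{P}$), and that every pointwise estimate surviving the stopping carries a constant that does not see $(\mu,\eta)$. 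Conceptually, everything rests on Assumption \ref{hypo:boundedness}: it is precisely the two-sided bound $0 < C_1 < \ubar q < C_2$ that keeps both $\kappa$ and the ratio $\mathcal{E}_{s-}/\mathcal{E}_s$ under control uniformly in the mean-field arguments, which is what is needed for a single $C_0$ to work on all of $\bar{\mathcal{P}}_0 \times \mathcal{R}_0$.
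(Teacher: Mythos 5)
Your proof is correct and follows essentially the same route as the paper's: reduce to a uniform second-moment bound on $\mathcal{E}(\bL^{(\mu,\eta)})_T$, apply It\^o's formula to the square of the Dol\'eans-Dade exponential, compensate the quadratic variation, localize so the stochastic integrals are true martingales, bound the predictable quadratic variation uniformly via Assumption \ref{hypo:boundedness}, and conclude with Gr\"onwall and Fatou. The only differences are cosmetic — you use the hitting times $\tau_n$ and the bound $\mathcal{E}_{s-}\le\max(1,C_1^{-1})\mathcal{E}_s$ where the paper uses a generic localizing sequence and works with $[\bcM,\bcM]$ rather than $[\bL,\bL]$ directly — and your choice of $C_0$ is exactly the constant the paper intends to insert into \eqref{eq:def_p_0}.
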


\begin{proof}
It suffices to show that for all $(\mu,\eta) \in  \mathcal{P} \times \mathcal{R}_0$, we have $\Phi^\mu(\mu,\eta) \in \mathcal{P}_0$. By the definition of $\mathcal{P}_0$ in (\ref{eq:def_p_0}), this boils down to showing that there exists a constant $C_0 > 0$ such that for all $(\mu,\eta)$, we have:
\[
\mathbb{E}^{\mathbb{P}}[(\mathcal{E}(\bL^{(\mu,\eta)})_T)^2] \le C_0.
\]
Let us denote $W_t := \mathcal{E}(\bL^{(\mu,\eta)})_t$. By It\^o's lemma we have:
\[
d(W_t^2) = 2 W_{t-} dW_t + d[W, W]_t
\]
since $dL^{(\mu,\eta)}_t = X_{t-}^*\cdot(\ubar Q(t,\hat\alpha^{(\mu,\eta)}_t,\pi(t,\mu), \eta_t) - Q^0)\cdot \psi^+_t\cdot d\mathcal{M}_t$ and $dW_t = W_{t-} dL^{(\mu,\eta)}_t$, denoting $I_t:= \psi^+_t\cdot(\ubar Q^*(t,\hat\alpha^{(\mu,\eta)}_t,\pi(t,\mu), \eta_t) - Q^0)\cdot X_{t-}$ we have:
\[
d(W_t^2) = 2 W^2_{t-} dL^{(\mu,\eta)}_t + W^2_{t-} I^*_t \cdot d[\mathcal{M}, \mathcal{M}]_t \cdot I_t.
\]
We know that the optional quadratic variation of $\bcM$ can be decomposed as:
\[
[\bcM, \bcM]_t = G_t + \langle \bcM, \bcM \rangle_t =G_t + \int_0^t \psi_s ds,
\]
where $G$ is a martingale. Therefore we have:
\[
d(W_t^2) = 2 W^2_{t-} dL^{(\mu,\eta)}_t + W^2_{t-} I^*_t \cdot dG_t \cdot I_t + W^2_{t-} I^*_t \cdot \psi_t \cdot I_t dt.
\]
Let $T_n$ be a sequence of stopping time converging to $+\infty$ which localizes both the local martingales $\int_0^t W^2_{s-} dL^{(\mu,\eta)}_s$ and $\int_0^t W^2_{s-} I^*_s \cdot dG_s \cdot I_s$. Then integrating the above SDE between $0$ and $T\wedge T_n$ and taking the expectation under $\mathbb{P}$ we obtain:
\begin{align*}
\mathbb{E}^{\mathbb{P}}[W_{T\wedge T_n}^2] =& 1 + \mathbb{E}^{\mathbb{P}}\left[\int_0^{T\wedge T_n}W^2_{t-} I^*_t \cdot \psi_t \cdot I_t dt\right] = 1 + \mathbb{E}^{\mathbb{P}}\left[\int_0^{T\wedge T_n}W^2_{t-} I^*_t \cdot \psi_t \cdot I_t dt\right]\\
 \le &1 + \int_0^T\mathbb{E}^{\mathbb{P}}\left[W^2_{t\wedge T_n} I^*_{t\wedge T_n} \cdot \psi_{t\wedge T_n} \cdot I_{t\wedge T_n}\right]dt \le 1 + C_0 \int_0^T\mathbb{E}^{\mathbb{P}}[W^2_{t\wedge T_n}].
\end{align*}
Here we have used Tonelli's theorem as well as the fact that $I^*_s \cdot \psi_s \cdot I_s$ is bounded by a constant $C_0$ independent of $\mu,\eta$ and $n$, which is a consequence of the boundedness of the transition rate function $q$. Now applying Gronwall's lemma we obtain:
\[
\mathbb{E}^{\mathbb{P}}[W_{T\wedge T_n}^2] \le C_0.
\]
where the constant $C_0$ does not depend on $n$, $\mu$ or $\eta$. Notice that $W_{T\wedge T_n}^2$ converges to $W_t^2$ almost surely, we apply Fatou's lemma and obtain $\mathbb{E}^{\mathbb{P}}[W_T^2] \le C_0$.
\end{proof}

\subsection{Existence of Nash equilibria}
The last missing piece in applying Schauder's fixed point theorem is to show the continuity of the mapping $\Phi$ on $\mathcal{P} \times \mathcal{R}_0$ for the product topology. To this end, we show the continuity of the mappings $\Phi^\mu$ and $\Phi^\eta$. Notice that both $\mathcal{P}$ and $\mathcal{R}_0$ are metrizable, so we only need to show sequential continuity.

Let us fix a sequence $(\mu^{(n)}, \eta^{(n)})_{n\ge 1}$ converging to $(\mu^{(0)},\eta^{(0)})$  in $\mathcal{P} \times \mathcal{R}_0$, with the decomposition $\eta^{(n)}(dt, d\nu) = \mathcal{L}(dt)\times \eta_t^{(n)} (d\nu)$. To simplify the notation we denote $\bY^{(\mu^{(n)}, \eta^{(n)})}$, $\bZ^{(\mu^{(n)}, \eta^{(n)})}$, $\hat \balpha^{(\mu^{(n)}, \eta^{(n)})}$,  $\bL^{(\mu^{(n)}, \eta^{(n)})}$, $\hat{\mathbb{Q}}^{(\mu^{(n)},\eta^{(n)})}$ respectively by $\bY^{(n)}$, $\bZ^{(n)}$, $\hat\balpha^{(n)}$, $\bL^{(n)}$ and $\mathbb{Q}^{(n)}$ for $n\ge 0$. We also denote by $\mathbb{E}^{(n)}$ the expectation under $\mathbb{Q}^{(n)}$ and $p^{(n)}_t = \pi(t, \mu^{(n)})$, whereas $\mathbb{E}$ still denotes the expectation under the reference measure $\mathbb{P}$. 

We start by proving the continuity of $\Phi^{\mu}$, or equivalently the convergence of $\mathbb{Q}^{(n)}$ toward $\mathbb{Q}^{(0)}$. We divide the proof into several intermediary results.

\begin{lemma}
\label{lem:boundedness_z}
Without any loss of generality, we may assume that there exists a constant $C$ such that  $\|Z_t^{(0)}\|_{X_{t-}} \le C$ for all $(\omega,t) \in \Omega\times[0,T]$.
\end{lemma}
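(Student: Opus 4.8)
The plan is to exploit that, once the mean‑field arguments are frozen at $(\mu^{(0)},\eta^{(0)})$, the BSDE \eqref{eq:mapping_bsde} is \emph{Markovian}: its driver $(t,x,z)\mapsto\ubar{\hat H}(t,x,z,\pi(t,\mu^{(0)}),\eta^{(0)}_t)$ and its terminal value $g(X_T,\pi(T,\mu^{(0)}))$ depend on $\omega$ only through the state process $\bX$. Such a BSDE has a solution of the form $Y^{(0)}_t=\langle X_t,v_t\rangle$ with $v:[0,T]\to\mathbb{R}^m$ deterministic solving an ODE, and $Z^{(0)}$ is recoverable from the differences $v_t^j-v_t^i$; these are bounded because $v$ is bounded, and they govern $\|Z^{(0)}_t\|_{X_{t-}}$. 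Since $\hat\balpha^{(0)}$, $\bL^{(0)}$ and $\mathbb{Q}^{(0)}$ depend on $Z^{(0)}$ only through its $dt\otimes d\mathbb{P}$‑equivalence class for the seminorm $\|\cdot\|_{X_{t-}}$ (Remark \ref{rem:well_posedness_mapping_phi}), replacing $Z^{(0)}$ by such a bounded representative changes none of the objects of interest, which is precisely the content of the \emph{``without loss of generality''}.

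Concretely, I would set $v_T:=(g(e_i,\pi(T,\mu^{(0)})))_{i=1,\dots,m}$ and let $v$ solve the $\mathbb{R}^m$‑valued ODE
\[
\dot v_t=-\bigl(\ubar{\hat H}(t,e_i,v_t,\pi(t,\mu^{(0)}),\eta^{(0)}_t)\bigr)_{i=1,\dots,m}-Q^0 v_t,
\]
obtained by writing $Y_t=\langle X_t,v_t\rangle$, $Z_t=v_t$ and matching the finite‑variation parts after an integration by parts. By Lemma \ref{lem:lip_h_hat_randomized} the right‑hand side is measurable in $t$, globally Lipschitz in $v_t$, and has a bounded zeroth‑order term $\ubar{\hat H}(t,e_i,0,\cdot,\cdot)=\inf_{\alpha}\ubar f(t,e_i,\alpha,\cdot,\cdot)$ (using boundedness of $f$; that of $g$ being automatic, as $g$ is continuous on the compact set $E\times\mathcal{S}$), so the ODE has a unique absolutely continuous solution on $[0,T]$ and Gronwall's inequality bounds $\sup_{t\le T}|v_t|$ by a constant depending only on $\|f\|_\infty,\|g\|_\infty,T,m$ and the Lipschitz constants, hence independent of $(\mu^{(0)},\eta^{(0)})$. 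An integration by parts, using that $v$ is continuous of finite variation while $\bX$ is pure jump so $[\bX,v]\equiv0$, shows that $(\langle X_t,v_t\rangle,v_t)_{t\in[0,T]}$ is an adapted, square‑integrable solution of \eqref{eq:mapping_bsde}; by the uniqueness statement of Lemma \ref{lem:bsde_ex_un}, $Y^{(0)}$ is then indistinguishable from $\langle X_\cdot,v_\cdot\rangle$ and $Z^{(0)}_t=v_t$ holds $dt\otimes d\mathbb{P}$‑a.e. in the seminorm $\|\cdot\|_{X_{t-}}$. Since $\|v_t\|^2_{X_{t-}}=\sum_{j\ne i}|v_t^j-v_t^i|^2\le 4(m-1)\sup_{s\le T}|v_s|^2$ on $\{X_{t-}=e_i\}$, redefining $Z^{(0)}_t:=v_t$ yields the claimed bound $\|Z^{(0)}_t\|_{X_{t-}}\le C$ for all $(\omega,t)\in\Omega\times[0,T]$.

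The main obstacle I anticipate is exactly the passage from the boundedness of $Y^{(0)}$ to that of $Z^{(0)}$: the jump identity $\Delta Y^{(0)}_t=-(Z^{(0)}_t)^{*}\cdot\Delta X_t$ only constrains $Z^{(0)}$ on a countable set of times, so one genuinely needs the ODE‑plus‑uniqueness representation above, rather than a soft estimate, to control $Z^{(0)}$ at Lebesgue‑almost every time. A convenient shortcut for the boundedness of $Y^{(0)}$ itself is to repeat the computation in the proof of Proposition \ref{prop:bsed_optimal_control}: writing $\ubar{\hat H}=\ubar f+x^{*}\cdot(\ubar Q-Q^0)\cdot z$ and changing measure to $\mathbb{Q}^{(0)}$ via Girsanov gives $Y^{(0)}_t=\mathbb{E}^{(0)}[g(X_T,\pi(T,\mu^{(0)}))+\int_t^T\ubar f(s,X_{s-},\hat\alpha^{(0)}_s,\pi(s,\mu^{(0)}),\eta^{(0)}_s)\,ds\mid\mathcal{F}_t]$, so $|Y^{(0)}_t|\le\|g\|_\infty+T\|f\|_\infty$, and then $\sup_{s}|v_s|\le\|g\|_\infty+T\|f\|_\infty$ follows from $v_t^i=Y^{(0)}_t$ on the $\mathbb{P}$‑nonnull event $\{X_t=e_i\}$ for $t\in(0,T]$ together with continuity of $v$; one would still invoke the Markovian/ODE structure to pass this bound to $Z^{(0)}$.
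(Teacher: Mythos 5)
Your proposal is correct and follows essentially the same route as the paper: the ODE you write for $v$ is exactly the paper's system \eqref{eq:boundedness_z_ode}, since $(Q^0 v)_i=\sum_{j\neq i}(v_j-v_i)$, and the paper likewise verifies via It\^o's formula that $(\langle X_t,v_t\rangle, v_t)$ solves \eqref{eq:mapping_bsde}, invokes uniqueness (Remark \ref{rem:well_posedness_mapping_phi}) to identify $Z^{(0)}$ with $v$, and concludes boundedness from continuity of $v$ on $[0,T]$. Your additional remarks on the uniformity of the bound and the probabilistic representation of $Y^{(0)}$ are consistent with, but not needed for, the paper's argument.
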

\begin{proof}
We consider the following ODE of unknown $V_t = [V_1(t), \dots, V_m(t)] \in \mathbb{R}^m$:
\begin{equation}\label{eq:boundedness_z_ode}
\begin{array}{rl}
0 &=\displaystyle \frac{d V_i(t)}{dt} + \ubar{\hat H}_i(t, V(t), p_t^{(0)}, \eta_t^{(0)}) + \sum_{j\neq i}[V_j(t) - V_i(t)],\\
V_i(T) &= g(e_i, p_T^{(0)}),\quad i = 1,\dots, m.
\end{array}
\end{equation}
Set $\zeta:[0,T]\times \mathbb{R}^m \ni (t,v) \rightarrow [\zeta_1(t,v), \dots, \zeta_m(t,v)] \in \mathbb{R}^m$ where $\zeta_i(t,v) := \ubar{\hat H}_i(t, v, p_t^{(0)}, \eta_t^{(0)}) + \sum_{j\neq i}[v_j - v_i]$. By Lemma \ref{lem:lip_h_hat_randomized}, we see that $t \rightarrow \zeta(t,v)$ is measurable for all $v\in \mathbb{R}^m$ and $v \rightarrow \zeta(t,v)$ is Lipschitz in $v$ uniformly in $t$. By Theorem 1 and Theorem 2 in \cite{filippov2013}, the ODE (\ref{eq:boundedness_z_ode}) admits a unique solution on the interval $[0,T]$, which is absolutely continuous. Now we define $Y_t = \sum_{i=1}^m \mathbbm{1}(X_t = e_i)V_i(t)$ and $Z_t = V_t$. By continuity of $V$, we have $\Delta Y_t := Y_t - Y_{t-} = V_t^* (X_t - X_{t-}) = Z_t^* \cdot\Delta X_t$. Applying Ito's formula to $\bY$, we obtain:
\begin{align*}
Y_t =& Y_T - \int_t^T  \sum_{i=1}^m \mathbbm{1}(X_t = e_i) \dot{ V}_i(s) ds - \sum_{t < s\le T} \Delta Y_s \\
=& g(X_T, p_T^{(0)}) + \int_t^T \sum_{i=1}^m \mathbbm{1}(X_t = e_i)  \ubar{\hat H}^i(t, V(t), p_t^{(0)}, \eta_t^{(0)})\\
& + \int_t^T \sum_{i=1}^m \mathbbm{1}(X_t = e_i)\sum_{j\neq i}[V_j(t) - V_i(t)] - \int_t^T Z_s^* \cdot d X_s\\
= & g(X_T, p_T^{(0)}) + \int_t^T \ubar{\hat H}^i(s, X_s, Z_s, p_s^{(0)}, \eta_s^{(0)}) ds - \int_t^T Z_s^*\cdot d\mathcal{M}_s,
\end{align*}
where in the last equality we used the fact that $d X_s = Q^0 \cdot X_{s-} ds + d\mathcal{M}_s$ and $V_t = Z_t$. Therefore $(\bY, \bZ)$ and $(\bY^{(0)}, \bZ^{(0)})$ solves the same BSDE. As we have discussed in Remark \ref{rem:well_posedness_mapping_phi}, we may assume that $\bZ^{(0)} = \bZ$. Therefore $Z^{(0)}_t = V(t)$. It follows from the continuity of $t\rightarrow V(t)$ that $\|Z_t^{(0)}\|_{X_{t-}}$ is bounded for all $\omega\in\Omega$ and $t\in[0,T]$ by a uniform constant.
\end{proof}

Now we show that $\bZ^{(n)}$ converges toward $\bZ^{(0)}$.

\begin{proposition}
\label{prop:convergence_bsde_solution}
We have:
\begin{equation}
\label{eq:convergence_bsde_solution}
\lim_{n\to\infty}\mathbb{E}\left[\int_0^T \|Z_t^{(n)} - Z_t^{(0)}\|_{X_{t-}}^2 dt\right] =0.
\end{equation}
\end{proposition}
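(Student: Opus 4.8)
The plan is to exploit the deterministic and bounded representation of the $Z$-component furnished by Lemma~\ref{lem:boundedness_z}, which converts \eqref{eq:convergence_bsde_solution} into the uniform convergence of solutions of a sequence of ODEs, and then to close the argument by compactness and uniqueness for ODEs. To begin, the argument in the proof of Lemma~\ref{lem:boundedness_z} applies verbatim with the index $0$ replaced by any $n\ge1$: for each $n$ one may choose the solution of \eqref{eq:mapping_bsde} so that $Z^{(n)}_t=V^{(n)}(t)$, where $V^{(n)}$ is the unique absolutely continuous solution on $[0,T]$ of the ODE \eqref{eq:boundedness_z_ode} with $(p^{(0)}_t,\eta^{(0)}_t)$ replaced by $(p^{(n)}_t,\eta^{(n)}_t)$. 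Since the quantity $\mathbb{E}[\int_0^T\|Z^{(n)}_t-Z^{(0)}_t\|^2_{X_{t-}}dt]$ appearing in \eqref{eq:convergence_bsde_solution} is unchanged if $Z^{(n)},Z^{(0)}$ are replaced by other solutions of the corresponding BSDEs, it suffices to show $\sup_{t\in[0,T]}\|V^{(n)}(t)-V^{(0)}(t)\|\to0$; indeed then $\|Z^{(n)}_t-Z^{(0)}_t\|^2_{X_{t-}}\le C\sup_s\|V^{(n)}(s)-V^{(0)}(s)\|^2$ uniformly in $(\omega,t)$, so \eqref{eq:convergence_bsde_solution} follows. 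This route deliberately bypasses the BSDE stability result of Lemma~\ref{lem:bsde_apriori}, whose hypothesis~(ii) is not at our disposal here since $\eta^{(n)}\to\eta^{(0)}$ only in the stable topology.

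I would next collect two ingredients. First, uniform a priori bounds: by Lemma~\ref{lem:lip_h_hat_randomized} and the boundedness of the data (Assumptions~\ref{hypo:boundedness}, \ref{hypo:lipschitz_cost} and \ref{hypo:lipschitz_optimizer}), the right-hand side of \eqref{eq:boundedness_z_ode} grows at most affinely in $V$ with constants independent of $n$, so Gronwall's inequality applied to the ODE gives $\sup_n\sup_t\|V^{(n)}(t)\|<\infty$, hence also $\sup_n\sup_t\|\dot V^{(n)}(t)\|<\infty$; thus $\{V^{(n)}\}_{n\ge1}$ is equi-Lipschitz and uniformly bounded on $[0,T]$, and by the Arzel\`a--Ascoli theorem it is relatively compact in $C([0,T];\mathbb{R}^m)$. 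Second, convergence of the data: since $\mu^{(n)}\to\mu^{(0)}$ in $\mathcal{P}$, Lemma~\ref{lem:convergence_in_simplex} gives $p^{(n)}_s\to p^{(0)}_s$ for all $s$ outside an at most countable subset of $[0,T]$, in particular at $s=T$, and as the $p^{(n)}_s$ lie in the simplex, dominated convergence yields $\int_0^T\|p^{(n)}_s-p^{(0)}_s\|\,ds\to0$ and $g(e_i,p^{(n)}_T)\to g(e_i,p^{(0)}_T)$; moreover, applying Definition~\ref{def:stable_topology} to test functions of the form $g(s,m)=1_{[t,T]}(s)h(s,m)$, the stable convergence $\eta^{(n)}\to\eta^{(0)}$ yields, for every $t\in[0,T]$ and every bounded jointly measurable $h:[0,T]\times\mathcal{P}(A)\to\mathbb{R}$ with $\nu\mapsto h(s,\nu)$ continuous,
\[
\int_t^T\!\!\int_{\mathcal{P}(A)}h(s,\nu)\,\eta^{(n)}_s(d\nu)\,ds\ \longrightarrow\ \int_t^T\!\!\int_{\mathcal{P}(A)}h(s,\nu)\,\eta^{(0)}_s(d\nu)\,ds.
\]

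With these in hand I would pass to the limit along subsequences. Let $(n_k)$ be an arbitrary subsequence; by the relative compactness above, extract a further subsequence, not relabelled, such that $V^{(n_k)}\to V^\infty$ uniformly on $[0,T]$ for some $V^\infty\in C([0,T];\mathbb{R}^m)$. Writing \eqref{eq:boundedness_z_ode} for $V^{(n_k)}$ in integral form and letting $k\to\infty$, the terminal value and the term $\sum_{j\neq i}(V^{(n_k)}_j-V^{(n_k)}_i)$ converge by the previous paragraph and by uniform convergence, while for the Hamiltonian term I split
\[
\ubar{\hat H}_i\bigl(s,V^{(n_k)}(s),p^{(n_k)}_s,\eta^{(n_k)}_s\bigr)
=\Bigl[\ubar{\hat H}_i\bigl(s,V^{(n_k)}(s),p^{(n_k)}_s,\eta^{(n_k)}_s\bigr)-\ubar{\hat H}_i\bigl(s,V^\infty(s),p^{(0)}_s,\eta^{(n_k)}_s\bigr)\Bigr]
+\ubar{\hat H}_i\bigl(s,V^\infty(s),p^{(0)}_s,\eta^{(n_k)}_s\bigr).
\]
By Lemma~\ref{lem:lip_h_hat_randomized} the bracketed term is dominated, uniformly in $s$, by $C\sup_r\|V^{(n_k)}(r)-V^\infty(r)\|+C'\|p^{(n_k)}_s-p^{(0)}_s\|$, whose integral over $[t,T]$ tends to $0$; and the last term equals $\int_{\mathcal{P}(A)}h^\infty_i(s,\nu)\,\eta^{(n_k)}_s(d\nu)$ with $h^\infty_i(s,\nu):=\hat H_i(s,V^\infty(s),p^{(0)}_s,\nu)$, which is bounded (since $V^\infty$, $f$, $q$ are bounded and $A$ is compact), jointly measurable, and continuous in $\nu$ by Lemma~\ref{lem:lip_h_hat}, so by the displayed convergence its integral over $[t,T]$ converges to $\int_t^T\ubar{\hat H}_i(s,V^\infty(s),p^{(0)}_s,\eta^{(0)}_s)\,ds$. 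Hence $V^\infty$ solves the integral form of \eqref{eq:boundedness_z_ode} with the $n=0$ data, which admits $V^{(0)}$ as its unique absolutely continuous solution (as noted in the proof of Lemma~\ref{lem:boundedness_z}); therefore $V^\infty=V^{(0)}$. Since every subsequence of $\{V^{(n)}\}$ has a further subsequence converging uniformly to $V^{(0)}$, the whole sequence converges uniformly to $V^{(0)}$, and \eqref{eq:convergence_bsde_solution} follows.

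The hard point is this last step, and the structural reason it goes through is that the coefficient of the ODE \eqref{eq:boundedness_z_ode} depends on the control mean field only through the \emph{linear} map $m\mapsto\int_{\mathcal{P}(A)}\hat H_i(\cdot,\nu)\,m(d\nu)$, so one never needs more than convergence of \emph{signed} time-integrals of bounded, $\nu$-continuous integrands — exactly what the stable topology delivers — whereas a naive stability estimate for \eqref{eq:mapping_bsde} would require $\int_0^T|\ubar{\hat H}_i(s,\cdot,\eta^{(n)}_s)-\ubar{\hat H}_i(s,\cdot,\eta^{(0)}_s)|\,ds\to0$, which can genuinely fail for stably convergent $\eta^{(n)}$. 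The uniform a priori bounds from Lemma~\ref{lem:boundedness_z} then convert this subsequential statement into convergence of the full sequence through uniqueness for the limiting ODE.
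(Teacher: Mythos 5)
Your proof is correct, but it takes a genuinely different route from the paper's. The paper proves Proposition \ref{prop:convergence_bsde_solution} by invoking the BSDE stability result of Lemma \ref{lem:bsde_apriori}: it verifies $L^2$-convergence of the terminal conditions $g(X_T,p^{(n)}_T)$, uniform boundedness of the driver increments (using Lemma \ref{lem:boundedness_z} to dominate $\|Z^{(0)}\|_{X_{t-}}$), and convergence to zero of $\mathbb{E}[(\int_t^T(\ubar{\hat H}(s,X_{s-},Z^{(0)}_s,p^{(n)}_s,\eta^{(n)}_s)-\ubar{\hat H}(s,X_{s-},Z^{(0)}_s,p^{(0)}_s,\eta^{(0)}_s))\,ds)^2]$ — the latter by splitting into the $p$-part (a.e.\ convergence from Lemma \ref{lem:convergence_in_simplex} plus dominated convergence) and the $\eta$-part (stable convergence applied pathwise to the bounded, $\nu$-continuous integrand $\kappa(s,\nu)$, then dominated convergence). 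Note that the paper verifies the \emph{signed}-integral version of hypothesis (ii), which is what the Hu--Peng stability theorem actually requires; your worry that (ii) as literally stated (with $|F^n-F^0|$ inside the integral) is unobtainable under stable convergence is a fair reading of the lemma's wording, though it is not what the paper's argument uses. Your alternative — representing every $Z^{(n)}$ as the deterministic solution $V^{(n)}$ of the ODE \eqref{eq:boundedness_z_ode} with data $(p^{(n)},\eta^{(n)})$, obtaining uniform bounds and equi-Lipschitz continuity with constants independent of $n$, extracting uniform limits by Arzel\`a--Ascoli, identifying any limit with $V^{(0)}$ through the integral equation (where the stable topology enters exactly through the linear functional $m\mapsto\int\hat H_i(\cdot,\nu)\,m(d\nu)$ and the test function is now deterministic), and concluding by uniqueness of the limiting ODE — is sound, and each step checks out against the lemmas it cites. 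What it buys is a self-contained, entirely deterministic argument that sidesteps the BSDE stability machinery and in fact yields the stronger conclusion $\sup_{(\omega,t)}\|Z^{(n)}_t-Z^{(0)}_t\|_{X_{t-}}\to0$ rather than mere $L^2(dt\otimes d\mathbb{P})$ convergence; what it gives up is generality, since it is tied to the Markovian ODE representation of Lemma \ref{lem:boundedness_z} and would not survive a setting where $Z^{(0)}$ is genuinely random, whereas the paper's stability route would.
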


\begin{proof}
By Lemma \ref{lem:bsde_apriori}, it suffices to check that:
\[
I_n(t) := \mathbb{E}\left[(\int_t^T\ubar{\hat H}(s, X_{s-}, Z_s^{(0)}, p^{(n)}_s, \eta^{(n)}_s) - \ubar{\hat H}(s, X_{s-}, Z_s^{(0)}, p^{(0)}_s, \eta^{(0)}_s) ds)^2 \right]
\]
converges to $0$ for all $t\le T$, and that $I_n(t)$ is bounded by $C$ uniformly in $t$ and $n$. We also need to check $J_n := \mathbb{E}[|g(X_T, p^{(n)}_T) - g(X_T, p^{(0)}_T)|^2]$ converges to $0$. By Lipschitz property of the cost functional $g$ and Lemma \ref{lem:convergence_in_simplex}, we have:
\[
J_n \le C \|p^{(n)}_T - p^{(0)}_T\|^2 = C\| \pi(\mu^{(n)}, T) - \pi(\mu^{(0)}, T)\|^2 \rightarrow 0, 
\]
as $n\rightarrow +\infty$. To check the uniform boundedness of $I_n(t)$, we recall from Lemma \ref{lem:lip_h_hat} that:
$$
|\ubar{\hat H}(t, X_{t-}, Z_t^{(0)}, p^{(n)}_t, \eta^{(n)}_t) - \ubar{\hat H}(t, X_{t-}, Z_t^{(0)}, p^{(0)}_t, \eta^{(0)}_t)| 
\le C(1 + \|Z_t^{(0)}\|_{X_{t-}}) ( \|p^{(n)}_t - p^{(0)}_t\| + \bar{\mathcal{W}}_1(\eta^{(n)}_t, \eta^{(0)}_t) ),
$$
where $\bar{\mathcal{W}}_1$ is the Wasserstein distance on the space $\mathcal{P}(\mathcal{P}(A))$. Clearly $\|p^{(n)}_t - p^{(0)}_t\|$ can be bounded by a constant since $p^{(n)}_t$ is in the simplex $\mathcal{S}$. On the other hand, we have:
\[
\bar{\mathcal{W}}_1(\eta^{(n)}_t, \eta^{(0)}_t) \le \int_{(\nu_1, \nu_2)\in\mathcal{P}(A)^2} \mathcal{W}_1(\nu_1, \nu_2) \eta^{(n)}_t(d\nu_1)\eta^{(0)}_t(d\nu_2).
\]
Since $A$ is compact, $\mathcal{W}_1(\nu_1, \nu_2)$ for $(\nu_1, \nu_2)\in\mathcal{P}(A)^2$ is bounded, which implies that $\bar{\mathcal{W}}_1(\eta^{(n)}_t, \eta^{(0)}_t)$ is also bounded by a constant uniformly in $n$ and $t$. This implies:
\[
I_n(t) \le C\mathbb{E}[\int_t^T (1 + \|Z_s^{(0)}\|_{X_{s-}}) ds] \le C( 1+ (\mathbb{E}[\int_0^T \|Z_s^{(0)}\|^2_{X_{s-}} ds])^{1/2}) < +\infty,
\]
which means that $I_n(t)$ is uniformly bounded in $n$ and $t$. To show that $I_n(t)$ converges to $0$, we write:
\begin{align*}
I_n(t) \le&\;\; 2 \mathbb{E}\left[\left(\int_t^T (\ubar{\hat H}(s, X_{s-}, Z_s^{(0)}, p^{(n)}_s, \eta^{(n)}_s) - \ubar{\hat H}(s, X_{s-}, Z_s^{(0)}, p^{(0)}_s, \eta^{(n)}_s))dt\right)^2 \right]\\
&\;\; + 2 \mathbb{E}\left[\left(\int_t^T( \ubar{\hat H}(s, X_{s-}, Z_s^{(0)}, p^{(0)}_s, \eta^{(n)}_s) - \ubar{\hat H}(s, X_{s-}, Z_s^{(0)}, p^{(0)}_s, \eta^{(0)}_s))dt\right)^2 \right]\\
\le&\;\; 2C \mathbb{E}\left[\int_t^T (1 + \|Z_s^{(0)}\|_{X_{s-}})^2\|p_s^{(n)} - p_s^{(0)}\|^2 ds\right]\\
&\;\; + 2 \mathbb{E}\left[\left(\int_t^T (\ubar{\hat H}(s, X_{s-}, Z_s^{(0)}, p^{(0)}_s, \eta^{(n)}_s) - \ubar{\hat H}(s, X_{s-}, Z_s^{(0)}, p^{(0)}_s, \eta^{(0)}_s))dt\right)^2 \right].
\end{align*}
By Lemma \ref{lem:convergence_in_simplex}, we have $ (1 + \|Z_s^{(0)}\|_{X_{s-}})^2\|p_s^{(n)} - p_s^{(0)}\|^2 \rightarrow 0$, $dt\otimes d\mathbb{P}$-a.e. On the other hand, we have:
\[
(1 + \|Z_s^{(0)}\|_{X_{s-}})^2\|p_s^{(n)} - p_s^{(0)}\|^2 \le C (1 + \|Z_s^{(0)}\|_{X_{s-}})^2,
\]
where the right hand side is $ds\otimes d\mathbb{P}$-integrable. Therefore by the dominated convergence theorem, we obtain:
\[
 \mathbb{E}\left[\int_t^T (1 + \|Z_s^{(0)}\|_{X_{s-}})^2\|p_s^{(n)} - p_s^{(0)}\|^2 ds\right] \rightarrow 0, 
\]
as $n\rightarrow +\infty$. It remains to show that:
\[
K_n := \mathbb{E}\left[\left(\int_t^T (\ubar{\hat H}(s, X_{s-}, Z_s^{(0)}, p^{(0)}_s, \eta^{(n)}_s) - \ubar{\hat H}(s, X_{s-}, Z_s^{(0)}, p^{(0)}_s, \eta^{(0)}_s))dt\right)^2\right]
\]
converges to $0$. For a fix $w\in\Omega$ and $s \le T$, we have:
\begin{align*}
&\hskip -45pt
\int_t^T(\ubar{\hat H}(s, X_{s-}, Z_s^{(0)}, p^{(0)}_s, \eta^{(n)}_s) - \ubar{\hat H}(s, X_{s-}, Z_s^{(0)}, p^{(0)}_s, \eta^{(0)}_s))ds \\
=& \int_t^T\int_{\nu \in \mathcal{P}(A)}\hat H(s, X_{s-}, Z_s^{(0)}, p^{(0)}_s, \nu) (\eta^{(n)}_s - \eta^{(0)}_s)(d\nu)ds\\
=&\int_{[0,T]\times \mathcal{P}(A)}\kappa(s,\nu)\eta^{(n)}(ds, d\nu) - \int_{[0,T]\times \mathcal{P}(A)}\kappa(s,\nu)\eta^{(0)}(ds, d\nu),
\end{align*}
where we defined  $\kappa(s,\nu) := 1_{t\le s\le T}H(s, X_{s-}, Z_s^{(0)}, p^{(0)}_s, \nu)$. Clearly $\kappa$ is continuous in $\nu$ for all $s$. On the other hand, by inequality (\ref{eq:lipschitz_h_hat}) in Lemma \ref{lem:lip_h_hat}, for all $t\le s\le T$ and $\nu \in \mathcal{P}(A)$ we have:
$$
|H(s,X_{s-},Z_s^{(0)},p^{(0)}_s,\nu)|
\le  | H(s,X_{s-},0,0,0)| + C\|Z_s^{(0)}\|_{X_{s-}} + C(1 +  \|Z_s^{(0)}\|_{X_{s-}})(\|p^{(0)}_s\| + \mathcal{W}_1(\nu, 0)).
$$
Therefore by Lemma \ref{lem:boundedness_z} and the boundedness of $\mathcal{P}(A)$, we conclude that the mapping $(s,\nu) \rightarrow \kappa(s,\nu)$ is bounded. It follows from the definition of stable topology and $\eta^{(n)}\rightarrow\eta^{(0)}$ that:
\[
\lim_{n\rightarrow +\infty}\int_t^T(\ubar{\hat H}(s, X_{s-}, Z_s^{(0)}, p^{(0)}_s, \eta^{(n)}_s) - \ubar{\hat H}(s, X_{s-}, Z_s^{(0)}, p^{(0)}_s, \eta^{(0)}_s))ds = 0,
\]
for all $w\in\Omega$. In addition, we have:
\begin{align*}
&\hskip -45pt
\left(\int_t^T(\ubar{\hat H}(s, X_{s-}, Z_s^{(0)}, p^{(0)}_s, \eta^{(n)}_s) - \ubar{\hat H}(s, X_{s-}, Z_s^{(0)}, p^{(0)}_s, \eta^{(0)}_s))ds\right)^2\\
\le& (T-t) \int_t^T|\ubar{\hat H}(s, X_{s-}, Z_s^{(0)}, p^{(0)}_s, \eta^{(n)}_s) - \ubar{\hat H}(s, X_{s-}, Z_s^{(0)}, p^{(0)}_s, \eta^{(0)}_s)|^2 ds \\
\le &C \int_t^T (1 + \|Z_s^{(0)}\|_{X_{s-}})^2 (\bar{\mathcal{W}}_1(\eta^{(n)}_s, \eta^{(0)}_s))^2 ds \le C \int_t^T (1 + \|Z_s^{(0)}\|_{X_{s-}})^2 ds
\end{align*}
and $\int_t^T (1 + \|Z_s^{(0)}\|_{X_{s-}})^2 ds$ is integrable. Apply once again the dominated convergence theorem, we conclude that $K_n$ converges to $0$. This completes the proof.
\end{proof}

We will also need a result on a more convenient representation of the Dol\'eans-Dade exponential of $\bL^{(n)}$.

\begin{lemma}
\label{lem:dd_sde}
Denote by $\bW^{(n)}$ the Dol\'eans-Dade exponential of $\bL^{(n)}$. Then the It\^o differential of $\log(\bW^{(n)})$ satisfies:
$$
d[\log(W^{(n)}_t)] =X_{t-}^*\cdot (\ubar Q(t,\hat\alpha^{(n)}_t,p^{(n)}_t, \eta^{(n)}_t) - Q^0 + \ubar O(t,\hat\alpha^{(n)}_t,p^{(n)}_t, \eta^{(n)}_t)\cdot  Q^0)\cdot X_{t-} dt
+ X_{t-}^*\cdot  \ubar O(t,\hat\alpha^{(n)}_t,p^{(n)}_t, \eta^{(n)}_t)\cdot  d\mathcal{M}_t,
$$
where $\ubar O(t,\hat\alpha^{(n)}_t,p^{(n)}_t, \eta^{(n)}_t)$ is the matrix with $\log(\ubar q(t,i,j,\hat\alpha^{(n)}_t,p^{(n)}_t, \eta^{(n)}_t))$ as off-diagonal elements and zeros on the diagonal.
\end{lemma}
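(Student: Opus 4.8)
\emph{Plan.} The key observation is that $\bcM$, and hence $\bL^{(n)}$, is a finite-variation pure-jump martingale, so that $\bW^{(n)}=\mathcal{E}(\bL^{(n)})$ is, pathwise and with almost surely finitely many factors, a product over the jump times of $\bX$. Taking logarithms reduces the statement to explicit jump bookkeeping plus one algebraic identity involving the pseudo-inverse $\psi_s^+$. I would proceed in three steps.

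\emph{Step 1 (jumps of $\bL^{(n)}$ and product form of $\bW^{(n)}$).} Repeating verbatim the computation that follows \eqref{eq:jump_of_l}, but with $Q(t,\alpha_t,p_t,\nu_t)$ replaced by $\ubar Q(t,\hat\alpha^{(n)}_t,p^{(n)}_t,\eta^{(n)}_t)$, one gets $\Delta L^{(n)}_t=\ubar q(t,i,j,\hat\alpha^{(n)}_t,p^{(n)}_t,\eta^{(n)}_t)-1$ whenever $X_{t-}=e_i\neq e_j=X_t$ and $\Delta L^{(n)}_t=0$ otherwise; in particular $1+\Delta L^{(n)}_t=\ubar q(t,i,j,\ldots)\in(C_1,C_2)$ by Assumption \ref{hypo:boundedness}. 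Since $\bL^{(n)}$ is a stochastic integral against the finite-variation martingale $\bcM$, we have $[\bL^{(n)},\bL^{(n)}]^c\equiv0$, and since under $\mathbb{P}$ the chain $\bX$ has almost surely finitely many jumps on $[0,T]$, the Dol\'eans-Dade formula gives $W^{(n)}_t=\exp(L^{(n)}_t)\prod_{0<s\le t}(1+\Delta L^{(n)}_s)e^{-\Delta L^{(n)}_s}$ with all factors bounded away from $0$ and $\infty$, whence
\[
\log W^{(n)}_t=L^{(n)}_t-\sum_{0<s\le t}\Delta L^{(n)}_s+\sum_{0<s\le t}\log\!\bigl(1+\Delta L^{(n)}_s\bigr).
\]

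\emph{Step 2 (identify the three terms).} Write $H_s:=X_{s-}^*\cdot(\ubar Q(s,\hat\alpha^{(n)}_s,p^{(n)}_s,\eta^{(n)}_s)-Q^0)\cdot\psi_s^+$, the integrand of $\bL^{(n)}$ as in the proof of Proposition \ref{prop:stable_subset_by_phi}. Because $\bcM$ and $\bX-X_0$ are of finite variation, $L^{(n)}_t=\int_0^t H_s\,d\mathcal{M}_s$ is a pathwise Stieltjes integral, and \eqref{eq:canonical_representation} gives $L^{(n)}_t=\sum_{0<s\le t}H_s\Delta X_s-\int_0^t H_s\cdot Q^0X_{s-}\,ds=\sum_{0<s\le t}\Delta L^{(n)}_s-\int_0^t H_s\cdot Q^0X_{s-}\,ds$, so $L^{(n)}_t-\sum_{0<s\le t}\Delta L^{(n)}_s=-\int_0^t H_s\cdot Q^0X_{s-}\,ds$. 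For the last sum, when $X_{s-}=e_i\neq e_j=X_s$ we have $\log(1+\Delta L^{(n)}_s)=\log\ubar q(s,i,j,\ldots)=\bigl(\ubar O(s,\ldots)\bigr)_{ij}=X_{s-}^*\cdot\ubar O(s,\ldots)\cdot X_s=X_{s-}^*\cdot\ubar O(s,\ldots)\cdot\Delta X_s$, the last two equalities using that $\ubar O$ has zero diagonal; the identity also holds trivially when $X_{s-}=X_s$, so $\sum_{0<s\le t}\log(1+\Delta L^{(n)}_s)=\int_0^t X_{s-}^*\cdot\ubar O(s,\ldots)\cdot dX_s=\int_0^t X_{s-}^*\cdot\ubar O(s,\ldots)\cdot Q^0X_{s-}\,ds+\int_0^t X_{s-}^*\cdot\ubar O(s,\ldots)\cdot d\mathcal{M}_s$.

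\emph{Step 3 (the algebraic identity) and the main obstacle.} It remains to show $H_s\cdot Q^0X_{s-}=-X_{s-}^*\cdot(\ubar Q(s,\ldots)-Q^0)\cdot X_{s-}$. Fixing $X_{s-}=e_i$ and writing $Q^0e_i=\sum_{j\neq i}(e_j-e_i)$, each summand $X_{s-}^*\cdot(\ubar Q-Q^0)\cdot\psi_s^+\cdot(e_j-e_i)$ equals $\ubar q(s,i,j,\ldots)-q^0_{i,j}=\ubar q(s,i,j,\ldots)-1$ by exactly the computation performed for $\Delta L^{(\balpha,\bp,\bnu)}_t$ in the excerpt; summing over $j\neq i$ and using $\sum_j\ubar q(s,i,j,\ldots)=0$ (Assumption \ref{hypo:boundedness}(i)) and $q^0_{i,i}=-(m-1)$ gives $H_s\cdot Q^0e_i=-\ubar q(s,i,i,\ldots)-(m-1)=-\bigl(\ubar q(s,i,i,\ldots)-q^0_{i,i}\bigr)=-e_i^*\cdot(\ubar Q-Q^0)\cdot e_i$. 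Substituting this and the expressions from Step 2 into the display of Step 1 and collecting the $ds$-terms and the $d\mathcal{M}$-term yields the claimed formula. I expect the one genuinely non-routine point to be this identity of Step 3, since it is what produces the $\ubar Q-Q^0$ contribution to the drift and it hinges on combining the $Q$-matrix constraint on $\ubar Q$ with the pseudo-inverse computation already carried out in the excerpt; the pure-jump bookkeeping of Steps 1--2 is otherwise standard, provided one is careful that $\bL^{(n)}$ has no continuous martingale part and only finitely many jumps, so that $\log\bW^{(n)}$ decomposes pathwise with no hidden continuous component.
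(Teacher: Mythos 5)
Your proof is correct and follows essentially the same route as the paper: Itô/Doléans--Dade bookkeeping for the pure-jump exponential giving $\log W^{(n)}=L^{(n)}-\sum\Delta L^{(n)}+\sum\log(1+\Delta L^{(n)})$, identification of $\log(1+\Delta L^{(n)}_t)=X_{t-}^*\cdot\ubar O\cdot\Delta X_t$, substitution of $\Delta X_t=Q^0\cdot X_{t-}\,dt+d\mathcal{M}_t$, and the pseudo-inverse identity $X_{t-}^*\cdot(\ubar Q-Q^0)\cdot\psi_t^+\cdot Q^0\cdot X_{t-}=-X_{t-}^*\cdot(\ubar Q-Q^0)\cdot X_{t-}$. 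Your Step 3 in fact supplies an explicit verification of that last identity, which the paper only asserts.
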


\begin{proof}
Since $\bW^{(n)}$ is the Dol\'eans-Dade exponential of $\bL^{(n)}$, $\bW^{(n)}$ satisfies the SDE $dW^{(n)}_t = W^{(n)}_{t-} dL^{(n)}_t$. Applying Ito's formula and noticing that the continuous martingale part of $\bL^n$ is zero, we have:
{\small
\[
d\log(W^{(n)}_t) = dL^{(n)}_t -\Delta L^{(n)}_t + \log(1 + \Delta L^{(n)}_t).
\]
}
Then using $dL^{(n)}_t = X_{t-}^*\cdot(\ubar Q(t,\hat\alpha^{(n)}_t,p^{(n)}_t, \eta^{(n)}_t) - Q^0)\cdot\psi^+_t\cdot d\mathcal{M}_t$ and noticing that the jumps of $\bL^n$ are driven by the jumps of $\mathcal{M}$, and hence $X$, we obtain:
\begin{align*}
d\log(W^{(n)}_t) =& -X_{t-}^*\cdot (\ubar Q(t,\hat\alpha^{(n)}_t,p^{(n)}_t, \eta^{(n)}_t) - Q^0)\cdot \psi^+\cdot Q^0\cdot X_{t-} dt +  \log(1 + \Delta L^{(n)}_t) \\
=& X_{t-}^*\cdot (\ubar Q(t,\hat\alpha^{(n)}_t,p^{(n)}_t, \eta^{(n)}_t) - Q^0)\cdot X_{t-} dt +  \log(1 + \Delta L^{(n)}_t),
\end{align*}
where we have used the fact that for all $q$-matrices $A$, we have $X^*_{t-}\cdot A\cdot  \psi^+\cdot Q^0\cdot X_{t-} = -X_{t-}^*\cdot A\cdot  X_{t-}$. Piggybacking on the derivation following equation (\ref{eq:jump_of_l}), for $X_{t-} = e_i$ and $X_t = e_j$ we have:
\[
\log(1 + \Delta L^{(n)}_t) = \log(\ubar q(t,i,j,\hat\alpha^{(n)}_t,p^{(n)}_t, \eta^{(n)}_t)).
\]
Using matrix notation and recalling the definition of $\ubar O$ in the statement of Lemma \ref{lem:dd_sde}, we may write:
\[
\log(1 + \Delta L^{(n)}_t) = X_{t-}^* \cdot \ubar O(t,\hat\alpha^{(n)}_t,p^{(n)}_t, \eta^{(n)}_t) \cdot \Delta X_t.
\]
Using again the equality $\Delta X_t = dX_t = Q^0 \cdot X_{t-} dt + d\mathcal{M}_t$, we arrive at the desired representation of the differential of $\log(W_t)$.
\end{proof}

We now show the first component of the mapping $\Phi$ is sequentially continuous.

\begin{proposition}\label{prop:phi_mu_continuous}
$\mathbb{Q}^{(n)}$ converges to $\mathbb{Q}^{(0)}$ in $\mathcal{P}$.
\end{proposition}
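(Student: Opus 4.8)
The plan is to bypass the Radon--Nikodym densities altogether and argue by compactness, identifying every subsequential weak limit of $(\mathbb{Q}^{(n)})$ with $\mathbb{Q}^{(0)}$ through a martingale problem. Since by Proposition~\ref{prop:stable_subset_by_phi} every $\mathbb{Q}^{(n)}$ (and $\mathbb{Q}^{(0)}$) lies in $\mathcal{P}_0$, which is relatively compact in $\mathcal{P}$ by Proposition~\ref{prop:topo_law_of_state}, and $\mathcal{P}$ is metrizable, it suffices to show: if $\mathbb{Q}^{(n_k)}\to\mathbb{Q}^*$ weakly then $\mathbb{Q}^*=\mathbb{Q}^{(0)}$. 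Two reductions are used throughout. First, by Lemma~\ref{lem:boundedness_z} I take $\bZ^{(0)}$ to be the deterministic, bounded, absolutely continuous solution of the ODE there, so the limiting intensity $\ubar Q^{(0)}_r:=\ubar Q(r,\hat a(r,X_{r-},Z^{(0)}_r,p^{(0)}_r),p^{(0)}_r,\eta^{(0)}_r)$ depends on $\omega$ only through $X_{r-}(\omega)$. Second, combining Proposition~\ref{prop:convergence_bsde_solution}, the Lipschitz bound \eqref{eq:lipschitz_full_optimizer} with the boundedness of $\bZ^{(0)}$, Lemma~\ref{lem:convergence_in_simplex}, and dominated convergence gives $\mathbb{E}[\int_0^T\|\hat\alpha^{(n)}_t-\hat\alpha^{(0)}_t\|^2dt]\to0$ and $\int_0^T\|p^{(n)}_t-p^{(0)}_t\|^2dt\to0$.

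The core step is to pass to the limit in the martingale property. By \eqref{eq:X_decomp}, under $\mathbb{Q}^{(n)}$ the process $M^{(n)}_t:=X_t-X_0-\int_0^t(\ubar Q^{(n)}_r)^*X_{r-}\,dr$ is a martingale, so $\mathbb{E}^{\mathbb{Q}^{(n)}}[\Phi(X)(M^{(n)}_t-M^{(n)}_s)]=0$ for $0\le s<t\le T$ and $\Phi$ in the generating class $\Phi(\omega)=\phi(\omega_{r_1},\dots,\omega_{r_k})$, $r_i\le s$ outside the countable set $\mathcal{D}(\mathbb{Q}^*)$, $\phi$ bounded continuous. I would then show $\mathbb{E}^{\mathbb{Q}^{(n_k)}}[\Phi(X)(X_t-X_s)]\to\mathbb{E}^{\mathbb{Q}^*}[\Phi(X)(X_t-X_s)]$ (extended continuous mapping theorem, valid for $s,t\notin\mathcal{D}(\mathbb{Q}^*)$) and $\mathbb{E}^{\mathbb{Q}^{(n_k)}}[\Phi(X)\int_s^t(\ubar Q^{(n_k)}_r)^*X_{r-}dr]\to\mathbb{E}^{\mathbb{Q}^*}[\Phi(X)\int_s^t(\ubar Q^{(0)}_r)^*X_{r-}dr]$. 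For the latter I split, using $\mathbb{E}^{\mathbb{Q}^{(n)}}[\cdot]=\mathbb{E}^{\mathbb{P}}[(d\mathbb{Q}^{(n)}/d\mathbb{P})\cdot]$, into the term with $\ubar Q^{(0)}$ kept but $\mathbb{Q}^{(n)}$ still in force, plus $\mathbb{E}^{\mathbb{P}}[(d\mathbb{Q}^{(n)}/d\mathbb{P})\Phi(X)\int_s^t((\ubar Q^{(n)}_r)^*-(\ubar Q^{(0)}_r)^*)X_{r-}dr]$. The first term equals $\mathbb{E}^{\mathbb{Q}^{(n)}}[\Psi(X)]$ with $\Psi(\omega)=\Phi(\omega)\int_s^t(\ubar Q^{(0)}_r)^*\omega_{r-}dr$ bounded and $\mathbb{Q}^*$-a.s.\ continuous on $D$ (because $\bZ^{(0)}$ is deterministic and $E$ is discrete, so $J_1$-convergence to $\omega$ is eventually a time reparametrization of $\omega$, plus dominated convergence in $r$), hence it converges to $\mathbb{E}^{\mathbb{Q}^*}[\Psi(X)]$ by weak convergence. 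In the second term, apply Cauchy--Schwarz with the uniform $L^2$-bound on $d\mathbb{Q}^{(n)}/d\mathbb{P}$ from Proposition~\ref{prop:stable_subset_by_phi}, and split $(\ubar Q^{(n)}_r)^*-(\ubar Q^{(0)}_r)^*$ into a part changing only $(\hat\alpha,p)$ (keeping $\eta^{(n)}$) and a part changing only $\eta$ (keeping $(\hat\alpha^{(0)},p^{(0)})$): the first is bounded in $L^2(\mathbb{P})$ by $C\int_s^t(\|\hat\alpha^{(n)}_r-\hat\alpha^{(0)}_r\|+\|p^{(n)}_r-p^{(0)}_r\|)dr$ via \eqref{eq:lipschitz_q_hat_randomized} and the reductions above; for the second, for each fixed $\omega$ the integral has the form $\int_{[0,T]\times\mathcal{P}(A)}\kappa_\omega(r,\nu)(\eta^{(n)}-\eta^{(0)})(dr,d\nu)$ with $\kappa_\omega$ bounded, measurable in $r$ and continuous in $\nu$, so it tends to $0$ by the very definition of the stable topology (Definition~\ref{def:stable_topology}) and, being uniformly bounded, also in $L^2(\mathbb{P})$ by dominated convergence.

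Having reached $\mathbb{E}^{\mathbb{Q}^*}[\Phi(X)(X_t-X_s-\int_s^t(\ubar Q^{(0)}_r)^*X_{r-}dr)]=0$ for $s<t$ outside $\mathcal{D}(\mathbb{Q}^*)$ and $\Phi$ in the generating class, a monotone-class argument and the right-continuity of $X$ upgrade this to: $M_t:=X_t-X_0-\int_0^t(\ubar Q^{(0)}_r)^*X_{r-}dr$ is a $\mathbb{Q}^*$-martingale; moreover $\mathbb{Q}^*\circ X_0^{-1}=\mathbf{p}^{\circ}$ because $\omega\mapsto\omega_0$ is continuous and this holds for each $\mathbb{Q}^{(n)}$. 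Thus under $\mathbb{Q}^*$ the canonical process is a pure-jump process with initial law $\mathbf{p}^{\circ}$ and intensity matrix $\ubar Q^{(0)}_\cdot$, which is bounded, has off-diagonal entries in $[C_1,C_2]$ (Assumption~\ref{hypo:boundedness}), and depends only on $(r,X_{r-})$; by construction of the change of measure $\mathbb{Q}^{(0)}$ admits exactly the same description. Since the law of a finite-state pure-jump process is uniquely determined by a bounded intensity and the initial distribution --- equivalently, the density $\mathcal{E}(\bL)_T$ realizing a prescribed bounded intensity relative to the reference measure $\mathbb{P}$ is unique --- we get $\mathbb{Q}^*=\mathbb{Q}^{(0)}$, and, the subsequence being arbitrary, $\mathbb{Q}^{(n)}\to\mathbb{Q}^{(0)}$ in $\mathcal{P}$.

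The step I expect to be the real obstacle is seeing that the naive route is doomed and hence why one must argue in the weak topology. If one expands $\log\mathcal{E}(\bL^{(n)})_T-\log\mathcal{E}(\bL^{(0)})_T$ using Lemma~\ref{lem:dd_sde}, the absolutely continuous (in $t$) part does vanish in the limit by the stable-topology argument above, but the remaining part is a sum, over the randomly located jump times of $X$, of increments $\log\ubar q^{(n)}-\log\ubar q^{(0)}$; the stable topology controls only time-averaged functionals of $\eta^{(n)}$, never its values at specific (random) times, so this jump sum --- and therefore the density sequence itself --- need not converge at all. Within the weak-convergence route, the delicate technical point is the continuity on the Skorokhod space $D$ of the drift functional $\omega\mapsto\int_s^t(\ubar Q^{(0)}_r(\omega))^*\omega_{r-}dr$, which is exactly where the normalization "$\bZ^{(0)}$ deterministic" from Lemma~\ref{lem:boundedness_z} is essential.
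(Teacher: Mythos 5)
Your argument is essentially correct, but it takes a genuinely different route from the paper. The paper proves the \emph{stronger} statement that $d_{TV}(\mathbb{Q}^{(n)},\mathbb{Q}^{(0)})\to 0$: it bounds the squared total variation distance by the relative entropy via Pinsker's inequality, computes $\log\mathcal{E}(\bL^{(n)})_T$ explicitly through Lemma \ref{lem:dd_sde}, and sends each resulting term to zero using Proposition \ref{prop:convergence_bsde_solution}, the Lipschitz estimates, and the stable topology --- exactly the same three ingredients you deploy, but applied to the entropy rather than to a martingale problem. Your compactness-plus-identification scheme (relative compactness of $\mathcal{P}_0$ from Proposition \ref{prop:topo_law_of_state}, passage to the limit in $\mathbb{E}^{\mathbb{Q}^{(n)}}[\Phi(X)(M^{(n)}_t-M^{(n)}_s)]=0$, and uniqueness of the law of a finite-state pure-jump process with bounded deterministic intensity depending only on $(t,X_{t-})$, which is where Lemma \ref{lem:boundedness_z} is indeed essential) is sound; the continuity on $(D,d_D)$ of the drift functional is unproblematic because the state space is discrete, so $J_1$-convergence eventually forces a time reparametrization. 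What your route buys is the avoidance of any manipulation of the densities; what it costs is (i) an appeal to well-posedness of the time-inhomogeneous martingale problem, which is classical but is the one step you assert rather than prove, and, more importantly, (ii) the loss of the total variation estimate. That loss matters downstream: the proof of Proposition \ref{prop:phi_eta_continuous} explicitly invokes ``the proof of Proposition \ref{prop:phi_mu_continuous}'' to get $d_{TV}(\mathbb{Q}^{(n)},\mathbb{Q}^{(0)})\to 0$, and weak convergence alone does not control the pushforwards $\mathbb{Q}^{(n)}_{\#\hat\alpha^{(n)}_t}$ under the merely measurable maps $\hat\alpha^{(n)}_t$. So your proof establishes the proposition as literally stated but could not be substituted for the paper's proof without reworking the continuity of $\Phi^\eta$. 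Finally, your closing diagnosis that ``the density route is doomed'' is not accurate: the paper never needs pointwise convergence of the densities; after compensation the jump sum contributes an absolutely continuous part plus a stochastic integral whose expectation is controlled, which is precisely how Lemma \ref{lem:dd_sde} is used.
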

\begin{proof}
For two probability measures $\mathbb{Q}$ and $\mathbb{Q}'$ in $\mathcal{P}$, the total variation distance $d$ between $\mathbb{Q}$ and $\mathbb{Q}'$ is:
\begin{equation}\label{eq:total_variation_dist}
d_{TV}(\mathbb{Q}, \mathbb{Q}') := \sup\{ |\mathbb{Q}(A)- \mathbb{Q}'(A)|, A \in \mathcal{B}(D)\}.
\end{equation}
It is well-known that convergence in total variation implies weak convergence,  hence  convergence in the topological space $\mathcal{P}$. Therefore our aim is to show that $d_{TV}(\mathbb{Q}^{(n)}, \mathbb{Q}^{(0)}) \rightarrow 0 $ as $n\rightarrow +\infty$.
By Pinsker's inequality, we have:
\[
d_{TV}^2(\mathbb{Q}^{(0)}, \mathbb{Q}^{(n)}) \le \frac{1}{2} \mathbb{E}^{(0)}\left[\log\left(\frac{d\mathbb{Q}^{(0)}}{d\mathbb{Q}^{(n)}}\right)\right].
\]
Since $\frac{d\mathbb{Q}^{(n)}}{d\mathbb{P}} = \mathcal{E}(\bL^{(n)})_T$, we have:
\[
d_{TV}^2(\mathbb{Q}^{(0)}, \mathbb{Q}^{(n)}) \le \mathbb{E}^{(0)}[\log(\mathcal{E}(L^{(0)})_T) - \log(\mathcal{E}(L^{(n)})_T)].
\]
Using Lemma \ref{lem:dd_sde}, we have:
\begin{align*}
&\hskip -45pt
\mathbb{E}^{(0)}[\log(\mathcal{E}(\bL^{(0)})_T) - \log(\mathcal{E}(\bL^{(n)})_T)] \\
=&\mathbb{E}^{(0)}\left[\int_0^T X_{t-}^*\cdot (\ubar Q(t,\hat\alpha_t^{(0)},p^{(0)}_t, \eta^{(0)}_t) - \ubar Q(t,\hat\alpha_t^{(n)},p^{(n)}_t, \eta^{(n)}_t))\cdot X_{t-} dt\right]\\
 &+\mathbb{E}^{(0)}\left[\int_0^T X_{t-}^*\cdot (\ubar O(t,\hat\alpha_t^{(0)},p^{(0)}_t, \eta^{(0)}_t) - \ubar O(t,\hat\alpha_t^{(n)},p^{(n)}_t, \eta^{(n)}_t))\cdot Q^0\cdot X_{t-} dt\right] \\
&+ \mathbb{E}^{(0)}\left[\int_0^T  X_{t-}^* \cdot (\ubar O(t,\hat\alpha_t^{(0)},p^{(0)}_t, \eta^{(0)}_t) - \ubar O(t,\hat\alpha_t^{(n)},p^{(n)}_t, \eta^{(n)}_t)) \cdot  d\mathcal{M}_t\right].
\end{align*}
By Assumption \ref{hypo:boundedness}, the process $t \rightarrow \int_0^t  X_{s-}^* \cdot (O(s,\hat\alpha_s^{(0)},p^{(0)}_s, \nu^{(n)}_s) - O(s,\hat\alpha_s^{(n)},p^{(n)}_s, \nu^{(n)}_s)) \cdot  d\mathcal{M}_s$ is a true martingale therefore have zero expectation. We now deal with the convergence of the term $\mathbb{E}^{0}[\int_0^T X_{t-}^*\cdot (\ubar Q(t,\hat\alpha_t^{(n)},p^{(n)}_t, \eta^{(n)}_t) - \ubar Q(t,\hat\alpha_t^{(0)},p^{(0)}_t, \eta^{(0)}_t))\cdot X_{t-} dt]$, whereas the term $\mathbb{E}^{0}[\int_0^T X_{t-}^*\cdot (\ubar O(t,\hat\alpha_t^{(n)},p^{(n)}_t, \eta^{(n)}_t) - \ubar O(t,\hat\alpha_t^{(0)},p^{(0)}_t, \eta^{(0)}_t))\cdot Q^0\cdot X_{t-} dt]$ can be dealt with in the exact the same way. Using the Lipschitz property of $\hat a$ and $\ubar Q$ in Lemma \ref{lem:lip_h_hat} and Lemma \ref{lem:lip_h_hat_randomized}, we obtain:
\begin{align*}
&\hskip -45pt
\mathbb{E}^{(0)}\left[\int_0^T X_{t-}^*\cdot(\ubar Q(t,\hat\alpha_t^{(0)},p^{(0)}_t, \eta^{(0)}_t) - \ubar Q(t,\hat\alpha_t^{(n)},p^{(n)}_t, \eta^{(n)}_t))\cdot X_{t-} dt\right]\\
\le&\mathbb{E}^{(0)}\left[\int_0^T X_{t-}^*\cdot(\ubar Q(t,\hat\alpha_t^{(0)},p^{(0)}_t, \eta^{(n)}_t) - \ubar Q(t,\hat\alpha_t^{(n)},p^{(n)}_t, \eta^{(n)}_t))\cdot X_{t-} dt\right] \\
&+ \mathbb{E}^{(0)}\left[\int_0^T X_{t-}^*\cdot(\ubar Q(t,\hat\alpha_t^{(0)},p^{(0)}_t, \eta^{(0)}_t) - \ubar Q(t,\hat\alpha_t^{(0)},p^{(0)}_t, \eta^{(n)}_t))\cdot X_{t-} dt\right]\\
\le&\;\;\mathbb{E}^{(0)}\left[\int_0^T C( \|\hat\alpha^{(n)}_t - \hat\alpha^{(0)}_t\| + \|p^{(n)}_t - p^{(0)}_t\|)dt\right] \\
&+ \mathbb{E}^{(0)}\left[\int_0^T X_{t-}^*\cdot (\ubar Q(t,\hat\alpha^{(0)}_t,p^{(0)}_t, \eta^{(n)}_t) - \ubar Q(t,\hat\alpha^{(0)}_t,p^{(0)}_t, \eta^{(0)}_t))\cdot X_{t-} dt\right]\\
\le&\mathbb{E}^{(0)}\left[\int_0^T C\|Z^{(n)}_t - Z^{(0)}_t\|_{X_{t-}}dt\right] + \mathbb{E}^{(0)}\left[\int_0^T C(1 + \|Z^{(0)}_t\|_{X_{t-}})\|p^{(n)}_t - p^{(0)}_t\|dt\right]\\
& + \mathbb{E}^{(0)}\left[\int_0^T X_{t-}^*\cdot(\ubar Q(t,\hat\alpha_t^{(0)},p^{(0)}_t, \eta^{(n)}_t) - \ubar Q(t,\hat\alpha_t^{(0)},p^{(0)}_t, \eta^{(0)}_t))\cdot X_{t-} dt\right].
\end{align*}
We deal with these terms separately. For the first expectation, by Cauchy-Schwartz inequality, we have:
\begin{align*}
\left(\mathbb{E}^{(0)}\left[\int_0^T C \|Z^{(n)}_t - Z^{(0)}_t\|_{X_{t-}} dt \right]\right)^2 &= \left(\mathbb{E}\left[W^{(0)}_T \int_0^T C \|Z^{(n)}_t - Z^{(0)}_t\|_{X_{t-}} dt \right]\right)^2\\
&\le\mathbb{E}[(W^{(0)}_T)^2]\mathbb{E}\left[ \left(\int_0^T C \|Z^{(n)}_t - Z^{(0)}_t\|_{X_{t-}} dt\right)^2\right]\\
& \le C\mathbb{E}[(W^{(0)}_T)^2]\mathbb{E}\left[ \int_0^T \|Z^{(n)}_t - Z^{(0)}_t\|^2_{X_{t-}} dt\right].
\end{align*}
This converges to $0$ by Proposition \ref{prop:convergence_bsde_solution}. For the second expectation, we notice from Lemma \ref{lem:boundedness_z} that $\|Z^{(0)}_t\|_{X_{t-}}$ is bounded by a constant for all $(\omega,t) \in \Omega\times [0,T]$. Therefore we have:
\begin{align*}
\mathbb{E}^{(0)}\left[\int_0^T C(1 + \|Z^{(0)}_t\|_{X_{t-}})\|p^{(n)}_t - p^{(0)}_t\|dt\right] \le C \int_0^T C\|p^{(n)}_t - p^{(0)}_t\|dt,
\end{align*}
where the right-hand side converges to $0$ by dominated convergence theorem. Finally for the third expectation, we rewrite the integrand as:
\begin{align*}
&\hskip -75pt
\int_0^T X_{t-}^*\cdot(\ubar Q(t,\hat\alpha_t^{(0)},p^{(0)}_t, \eta^{(n)}_t) - \ubar Q(t,\hat\alpha_t^{(0)},p^{(0)}_t, \eta^{(0)}_t))\cdot X_{t-} dt \\
=&\;\; \int_{[0,T]\times\mathcal{P}(A)} X_{t-}^*\cdot Q(t,\hat\alpha_t^{(0)},p^{(0)}_t, \nu)\cdot X_{t-} (\eta^{(n)}(dt,d\nu) - \eta^{(0)}(dt,d\nu)).
\end{align*}
This converges to $0$, since $\eta^{(n)}$ converges to $\eta^{(0)}$ in stable topology and the mapping $\nu \rightarrow Q(t,\hat\alpha_t^{(0)},p^{(0)}_t, \nu)$ is continuous for all $t$. Notice also that the integrand is bounded by a constant, since $q$ is bounded according to Assumption \ref{hypo:boundedness}. Then by dominated converges theorem the thrid expectation converges to $0$ as well. This completes the proof.
\end{proof}

To show the continuity of $\Phi^\eta$, we need the following lemma.

\begin{lemma}
\label{lem:conv_in_stable_topo_r0}
Let $(\nu^{(n)}_t)_{t\le T}$ be a sequence of measurable functions from $[0,T]$ to $\mathcal{P}(A)$ such that $\int_0^T \mathcal{W}_1(\nu^{(n)}_t, \nu^{(0)}_t)\rightarrow 0$. Then $\mathcal{L}(dt)\times \delta_{\nu^{(n)}_t}(d\nu)$ converges to $\mathcal{L}(dt)\times \delta_{\nu^{(0)}_t}(d\nu)$ in $\mathcal{R}_0$ in the sense of the stable topology.
\end{lemma}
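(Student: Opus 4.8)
The plan is to invoke Lemma \ref{lem:characterization_conv_stable_topo} to reduce the verification of stable convergence to testing against the elementary class $\mathcal{H}$ of functions $f(t,\nu) = 1_B(t)\,g(\nu)$ with $B$ a Borel subset of $[0,T]$ and $g:\mathcal{P}(A)\to\mathbb{R}$ bounded and Lipschitz for $\mathcal{W}_1$, and then to exploit the Dirac structure of the disintegration together with the Lipschitz property of $g$. So the content is essentially all carried by that lemma; what remains is a one-line estimate.

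First I would record that $\eta^{(n)} := \mathcal{L}(dt)\times \delta_{\nu^{(n)}_t}(d\nu) = \Psi(\nu^{(n)})$ is a well-defined element of $\mathcal{R}_0$ for each $n\ge 0$: measurability of $t\mapsto \nu^{(n)}_t$ and continuity of $\nu \mapsto \delta_\nu$ make $t\mapsto \delta_{\nu^{(n)}_t}$ measurable, so the product measure is legitimate, and its first marginal is $\mathcal{L}$ by construction, so indeed $\eta^{(n)}\in\mathcal{R}_0$.

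Next, fix $f\in\mathcal{H}$, say $f(t,\nu) = 1_B(t)\,g(\nu)$ with $g$ bounded and $L_g$-Lipschitz with respect to $\mathcal{W}_1$. Using the disintegration and the Dirac masses,
\[
\int_{[0,T]\times\mathcal{P}(A)} f(t,\nu)\,\eta^{(n)}(dt,d\nu) = \frac{1}{T}\int_B g(\nu^{(n)}_t)\,dt ,
\]
hence
\[
\left| \int f\,d\eta^{(n)} - \int f\,d\eta^{(0)} \right| \le \frac{1}{T}\int_0^T \bigl|g(\nu^{(n)}_t) - g(\nu^{(0)}_t)\bigr|\,dt \le \frac{L_g}{T}\int_0^T \mathcal{W}_1(\nu^{(n)}_t,\nu^{(0)}_t)\,dt ,
\]
which tends to $0$ by hypothesis. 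Since this holds for every $f\in\mathcal{H}$, Lemma \ref{lem:characterization_conv_stable_topo} yields $\eta^{(n)}\to\eta^{(0)}$ in the stable topology, which is the claim.

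There is no genuine obstacle here: the role of Lemma \ref{lem:characterization_conv_stable_topo} is precisely to replace the awkward class of test functions in the definition of the stable topology (bounded and measurable in $t$, merely continuous in $\nu$) by the tractable class $\mathcal{H}$, on which the assumed $L^1$-in-time convergence of the Wasserstein distances applies verbatim. The only point deserving a line of care is the well-posedness of the product measure $\mathcal{L}(dt)\times\delta_{\nu^{(n)}_t}(d\nu)$, i.e.\ the measurability of $t\mapsto\delta_{\nu^{(n)}_t}$, which is immediate.
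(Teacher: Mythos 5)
Your proof is correct and follows essentially the same route as the paper: reduce to test functions $f(t,\nu)=1_B(t)g(\nu)$ via Lemma \ref{lem:characterization_conv_stable_topo}, then use the Dirac disintegration and the Lipschitz property of $g$ to bound the difference of integrals by a constant times $\int_0^T \mathcal{W}_1(\nu^{(n)}_t,\nu^{(0)}_t)\,dt$. The extra remark on the measurability of $t\mapsto\delta_{\nu^{(n)}_t}$ is a harmless addition the paper leaves implicit.
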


\begin{proof}
Set $\lambda^{(n)}(dt,d\nu) := \mathcal{L}(dt)\times \delta_{\nu^{(n)}_t}(d\nu)$ for $n\ge 0$, let $f : [0,T]\times\mathcal{P}(A)\rightarrow\mathbb{R}$ be a mapping of the form $f(t,\nu) = 1_{t \in B}\cdot g(\nu)$ where $B$ is measurable subset of $[0,T]$ and $g$is  a bounded Lipschitz function on $\mathcal{P}(A)$. We then have:
$$
\left|\int_{[0,T]\times\mathcal{P}(A)} f(t,\nu) \lambda^{(n)}(dt, d\nu) - \int_{[0,T]\times\mathcal{P}(A)} f(t,\nu) \lambda^{(0)}(dt, d\nu)\right|
 \le \int_{t \in B}|g(\nu^{(n)}_t) - g(\nu^{(0)}_t)| dt \le C \int_0^T \mathcal{W}_1(\nu^{(n)}_t, \nu^{(0)}_t)dt.
$$
By Lemma \ref{lem:characterization_conv_stable_topo}, we conclude that $\lambda^{(n)}$ converges to $\lambda^{(0)}$ for the stable topology.
\end{proof}

\begin{proposition}
\label{prop:phi_eta_continuous}
$\mathcal{L}(\cdot)\times\delta_{\mathbb{Q}^{(n)}_{\#\hat\alpha^{(n)}_t}}(\cdot)$ converges to $\mathcal{L}(\cdot)\times\delta_{\mathbb{Q}^{(0)}_{\#\hat\alpha^{(0)}_t}}(\cdot)$ in $\mathcal{R}_0$ in the sense of the stable topology.
\end{proposition}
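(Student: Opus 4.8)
The plan is to reduce the statement to the convergence criterion of Lemma~\ref{lem:conv_in_stable_topo_r0}. Write $\nu^{(n)}_t := \mathbb{Q}^{(n)}_{\#\hat\alpha^{(n)}_t}$ for $n\ge 0$; this is a measurable function of $t$ because $\hat\balpha^{(n)}$ is a predictable, hence jointly measurable, process valued in the Polish space $A$. By Lemma~\ref{lem:conv_in_stable_topo_r0} it then suffices to prove
\[
\int_0^T \mathcal{W}_1\bigl(\nu^{(n)}_t,\nu^{(0)}_t\bigr)\,dt \;\longrightarrow\; 0 \qquad (n\to\infty),
\]
and the claimed stable convergence of $\mathcal{L}(dt)\times\delta_{\nu^{(n)}_t}(d\nu)$ to $\mathcal{L}(dt)\times\delta_{\nu^{(0)}_t}(d\nu)$ follows at once.

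First I would split, for each fixed $t$, using the triangle inequality for $\mathcal{W}_1$:
\[
\mathcal{W}_1\bigl(\mathbb{Q}^{(n)}_{\#\hat\alpha^{(n)}_t},\mathbb{Q}^{(0)}_{\#\hat\alpha^{(0)}_t}\bigr)
\le \mathcal{W}_1\bigl(\mathbb{Q}^{(n)}_{\#\hat\alpha^{(n)}_t},\mathbb{Q}^{(0)}_{\#\hat\alpha^{(n)}_t}\bigr)
+ \mathcal{W}_1\bigl(\mathbb{Q}^{(0)}_{\#\hat\alpha^{(n)}_t},\mathbb{Q}^{(0)}_{\#\hat\alpha^{(0)}_t}\bigr).
\]
In the first term the two measures are the laws of the \emph{same} random variable $\hat\alpha^{(n)}_t$ under two different probability measures, and since $\hat\alpha^{(n)}_t$ takes values in the bounded set $A$, testing against $1$-Lipschitz functions yields $\mathcal{W}_1(\mathbb{Q}^{(n)}_{\#\hat\alpha^{(n)}_t},\mathbb{Q}^{(0)}_{\#\hat\alpha^{(n)}_t}) \le C_A\, d_{TV}(\mathbb{Q}^{(n)},\mathbb{Q}^{(0)})$ for a constant $C_A$ depending only on $A$. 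This bound is independent of $t$, and it tends to $0$ because the proof of Proposition~\ref{prop:phi_mu_continuous} in fact establishes $d_{TV}(\mathbb{Q}^{(n)},\mathbb{Q}^{(0)})\to 0$; hence the $t$-integral of this term is at most $T\,C_A\,d_{TV}(\mathbb{Q}^{(n)},\mathbb{Q}^{(0)})\to 0$.

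In the second term the probability measure $\mathbb{Q}^{(0)}$ is fixed while the random variable varies, so coupling through the identity map gives $\mathcal{W}_1(\mathbb{Q}^{(0)}_{\#\hat\alpha^{(n)}_t},\mathbb{Q}^{(0)}_{\#\hat\alpha^{(0)}_t}) \le \mathbb{E}^{(0)}\bigl[\|\hat\alpha^{(n)}_t-\hat\alpha^{(0)}_t\|\bigr]$. Since $\hat\alpha^{(n)}_t = \hat a(t,X_{t-},Z^{(n)}_t,p^{(n)}_t)$, the Lipschitz estimate \eqref{eq:lipschitz_full_optimizer} together with the uniform bound on $\|Z^{(0)}_t\|_{X_{t-}}$ from Lemma~\ref{lem:boundedness_z} yields $\|\hat\alpha^{(n)}_t-\hat\alpha^{(0)}_t\| \le C\|Z^{(n)}_t-Z^{(0)}_t\|_{X_{t-}} + C\|p^{(n)}_t-p^{(0)}_t\|$. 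Integrating in $t$, rewriting $\mathbb{E}^{(0)}[\,\cdot\,]=\mathbb{E}^{\mathbb{P}}[\mathcal{E}(\bL^{(0)})_T\,\cdot\,]$ and applying the Cauchy--Schwarz inequality exactly as in the proof of Proposition~\ref{prop:phi_mu_continuous} (using $\mathbb{E}^{\mathbb{P}}[(\mathcal{E}(\bL^{(0)})_T)^2]\le C_0$ from Proposition~\ref{prop:stable_subset_by_phi}), the $Z$-contribution is dominated by a constant multiple of $\bigl(\mathbb{E}^{\mathbb{P}}[\int_0^T\|Z^{(n)}_t-Z^{(0)}_t\|_{X_{t-}}^2\,dt]\bigr)^{1/2}$, which vanishes by Proposition~\ref{prop:convergence_bsde_solution}; the $p$-contribution equals $C\int_0^T\|p^{(n)}_t-p^{(0)}_t\|\,dt$ (the integrand being deterministic) and vanishes by dominated convergence, since $p^{(n)}_t=\pi(t,\mu^{(n)})\to\pi(t,\mu^{(0)})=p^{(0)}_t$ for a.e.\ $t$ by Lemma~\ref{lem:convergence_in_simplex} while $\|p^{(n)}_t-p^{(0)}_t\|$ stays bounded on the simplex.

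Adding the two estimates gives $\int_0^T \mathcal{W}_1(\nu^{(n)}_t,\nu^{(0)}_t)\,dt\to 0$, and Lemma~\ref{lem:conv_in_stable_topo_r0} concludes. I expect the only step requiring genuine care to be the passage from $\mathbb{Q}^{(0)}$-expectations to $\mathbb{P}$-expectations in the second term, which is what makes the $L^2(\mathbb{P})$-type convergence of $\bZ^{(n)}$ of Proposition~\ref{prop:convergence_bsde_solution} available; this is, however, precisely the change-of-measure maneuver already carried out in the proof of Proposition~\ref{prop:phi_mu_continuous}, legitimized by the uniform square-integrability of the Radon--Nikodym densities $\mathcal{E}(\bL^{(0)})_T$, so no new obstacle arises and everything else is routine triangle-inequality and dominated-convergence bookkeeping.
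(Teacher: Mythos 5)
Your proposal is correct and follows essentially the same route as the paper's own proof: the same reduction via Lemma \ref{lem:conv_in_stable_topo_r0}, the same triangle-inequality split, the bound $\mathcal{W}_1\le C\,d_{TV}$ on the bounded set $A$ together with $d_{TV}(\mathbb{Q}^{(n)}_{\#\hat\alpha^{(n)}_t},\mathbb{Q}^{(0)}_{\#\hat\alpha^{(n)}_t})\le d_{TV}(\mathbb{Q}^{(n)},\mathbb{Q}^{(0)})$ for the first term, and the coupling bound plus Cauchy--Schwarz with the square-integrable density $W^{(0)}_T$ and Proposition \ref{prop:convergence_bsde_solution} for the second. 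Your slightly more explicit separation of the $Z$- and $p$-contributions in the Lipschitz estimate for $\hat a$ is only a bookkeeping refinement of what the paper does.
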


\begin{proof}
By Lemma \ref{lem:conv_in_stable_topo_r0}, we only need to show $\int_0^T \mathcal{W}_1(\mathbb{Q}^{(n)}_{\#\hat\alpha^{(n)}_t}, \mathbb{Q}^{(0)}_{\#\hat\alpha^{(0)}_t})dt$ converges to $0$. Notice that:
\[
\int_0^T \mathcal{W}_1(\mathbb{Q}^{(n)}_{\#\hat\alpha^{(n)}_t}, \mathbb{Q}^{(0)}_{\#\hat\alpha^{(0)}_t}) dt \le \int_0^T \mathcal{W}_1(\mathbb{Q}^{(n)}_{\#\hat\alpha^{(n)}_t}, \mathbb{Q}^{(0)}_{\#\hat\alpha^{(n)}_t})dt + \int_0^T \mathcal{W}_1(\mathbb{Q}^{(0)}_{\#\hat\alpha^{(n)}_t}, \mathbb{Q}^{(0)}_{\#\hat\alpha^{(0)}_t})dt.
\]
By the very definition of the total variation distance (recall equation \ref{eq:total_variation_dist}), we have clearly:
\[
d_{TV}(\mathbb{Q}^{(n)}_{\#\hat\alpha^{(n)}_t}, \mathbb{Q}^{(0)}_{\#\hat\alpha^{(n)}_t}) \le d_{TV}(\mathbb{Q}^{(n)}, \mathbb{Q}^{(0)}),
\]
which converges to $0$ according to the proof of Proposition \ref{prop:phi_mu_continuous}. By Theorem 6.16 in \cite{villani2008}, since $A$ is bounded and $\mathbb{Q}^{(n)}_{\#\hat\alpha^{(n)}_t} \in \mathcal{P}(A)$, there exists a constant $C$ such that:
\[
\mathcal{W}_1(\mathbb{Q}^{(n)}_{\#\hat\alpha^{(n)}_t}, \mathbb{Q}^{(0)}_{\#\hat\alpha^{(n)}_t}) \le C \cdot d_{TV}(\mathbb{Q}^{(n)}_{\#\hat\alpha^{(n)}_t}, \mathbb{Q}^{(0)}_{\#\hat\alpha^{(n)}_t}).
\]
This shows that $\mathcal{W}_1(\mathbb{Q}^{(n)}_{\#\hat\alpha^{(n)}_t}, \mathbb{Q}^{(0)}_{\#\hat\alpha^{(n)}_t})$ converges to $0$. In addition, it is also bounded since $A$ is bounded. The dominated convergence theorem then implies that:
\[
\int_0^T \mathcal{W}_1(\mathbb{Q}^{(n)}_{\#\hat\alpha^{(n)}_t}, \mathbb{Q}^{(0)}_{\#\hat\alpha^{(n)}_t})dt \rightarrow 0, n\rightarrow +\infty.
\]
Now for the other term, we have:
\begin{align*}
int_0^T \mathcal{W}_1(\mathbb{Q}^{(0)}_{\#\hat\alpha^{(n)}_t}, \mathbb{Q}^{(0)}_{\#\hat\alpha^{(0)}_t})dt &\le \int_0^T\mathbb{E}^{(0)}[\|\hat\alpha^{(n)}_t - \hat\alpha^{(0)}_t\|]dt\\
& = \mathbb{E}\left[W^{(0)}_T\int_0^T\|\hat\alpha^{(n)}_t - \hat\alpha^{(0)}_t\| dt\right]\\
& \le (\mathbb{E}[(W^{(0)}_T)^2])^{1/2}\left(\mathbb{E}\left[T\int_0^T\|\hat\alpha^{(n)}_t - \hat\alpha^{(0)}_t\|^2 dt\right]\right)^{1/2}.
\end{align*}
The Lipschitz property of $\hat a$ (see Lemma \ref{lem:lip_h_hat}) and Proposition \ref{prop:convergence_bsde_solution} imply that $\mathbb{E}\left[T\int_0^T\|\hat\alpha^{(n)}_t - \hat\alpha^{(0)}_t\|^2 dt\right]\to 0$.
\end{proof}
We are now ready to show the existence of  Nash equilibria.

\begin{proof}(of Theorem \ref{theo:existence_nash_equilibrium})
Consider the product space $\Gamma:=\mathcal{P}\times\mathcal{R}$ endowed with the product topology of the weak topology on $\mathcal{P}$ and the stable topology on $\mathcal{R}$. By Proposition \ref{prop:stable_topology_polish}, $\Gamma$ is a Polish space. By Proposition \ref{prop:topo_law_of_state} and Lemma \ref{lem:stable_topology_subset}, $\Gamma_0 := \bar{\mathcal{P}}_0\times\mathcal{R}_0$ is a compact and convex subset of $\Gamma$ and it is stable by the mapping $\Phi$ defined in (\ref{eq:def_mapping_fixed_point}). In addition, we see from Proposition \ref{prop:phi_mu_continuous} and Proposition \ref{prop:phi_eta_continuous} that $\Phi$ is continuous. Therefore applying Schauder's fixed point theorem, we conclude that $\Phi$ admits a fixed point $(\mu^*,\eta^*) \in \bar{\mathcal{P}}_0\times\mathcal{R}_0$.

\vspace{3mm}
Now let us define $p_t^* := \pi(t,\mu^*) \in \mathcal{S}$ and  $\alpha^*_t := \hat a(t, X_{t-}, Z_t^*, \pi(t,\mu^*))$ where $(\bY^*, \bZ^*)$ is the solution to the BSDE (\ref{eq:mapping_bsde}) with $\mu = \mu^*$ and $\eta = \eta^*$. We then define $\mathbb{P}^* := \mathbb{P}^{\mu^*, \eta^*}$ and $\nu^*_t := \mathbb{P}^*_{\#\alpha^*_t}$. Since $(\mu^*,\eta^*)$ is the fixed point of the mapping $\Phi$, we have $\eta^*_t = \delta_{\nu^*_t}$ and $\mu^* = \mathbb{P}^*$. It follows that $p_t^* = \pi(t,\mathbb{P}^*) = [\mathbb{P}^*(X_t = e_i)]_{1\le i \le m}$. By Proposition \ref{prop:bsed_optimal_control}, we see that $\balpha^*$ is the solution to the optimal control problem (\ref{eq:optimization_pb}) when the mean field of state is $\bp^*$ and the mean field of control is $\bnu^*$. This implies that $(\balpha^*, \bp^*, \bnu^*)$ is a Nash equilibrium.
\end{proof}

\section{Uniqueness of Nash equilibrium}
Uniqueness of Nash equilibria will be proven under the following conditions.

\begin{hypothesis}
\label{hypo:uniqueness}
(i) The transition rate function $q$ does depend neither on the mean field of state $\bp$ nor on the mean field of control $\bnu$. The cost functional $f$ is separable in the sense that it is of the form:
\begin{equation}\label{eq:f_unique_form}
f(t,x,\alpha,p,\nu) = f_0(t, x,\alpha) + f_1(t, X, p) + f_2(t,p,\nu).
\end{equation}
(ii) For all $t\in[0,T]$, $i\in\{1,\dots,m\}$, $z\in\mathbb{R}^m$, $p\in\mathcal{S}$ and $\nu\in\mathcal{P}(A)$, the mapping $\alpha \rightarrow H_i(t,z,\alpha,p,\nu)$ admits a unique minimizer, whichbecause of assumption (i), only depends on $t$ and $z$. We denote it by $\hat a_i(t,z)$. In addition, we assume that $\hat a_i$ is a measurable  from $[0,T]\times \mathbb{R}^m$ into $A$, and that there exists a constant $C>0$ such that for all $i\in\{1,\dots,m\}$, and $z, z'\in\mathbb{R}^m$:
\begin{equation}\label{eq:optimizer_lipschitz1}
\|\hat a_i(t,z) - \hat a_i(t,z')\| \le C\|z - z'\|_{e_i}.
\end{equation}
(iii) For all $p, p' \in \mathcal{S}$ and $t\in[0,T]$, we have:
\begin{align}
\sum_{i=1}^m (g(e_i, p) - g(e_i, p'))(p_i - p'_i) \ge 0,\label{eq:monotonocity1}\\
\sum_{i=1}^m (f_1(t,e_i, p) - f_1(t,e_i, p'))(p_i - p'_i) \ge 0.\label{eq:monotonocity2}
\end{align}
\end{hypothesis}

\begin{remark}
Item (ii) of Assumption \ref{hypo:uniqueness} holds if we impose additional conditions of linearity and strong convexity on the transition rate function and the cost function, for example:
\end{remark}

\begin{hypothesis}\label{hypo:uniqueness_add}
\noindent (i)The transition rate function $q$ takes the form $q(t,i,j,\alpha) = q_0(t, i, j) + q_1(t,i,j)\cdot\alpha$, where the mappings $q_0:[0,T]\times E^2 \rightarrow \mathbb{R}$ and $q_1:[0,T]\times E^2 \rightarrow \mathbb{R}^l$ are continuous.

\noindent (ii) $f_0$ is $\gamma-$strongly convex in $\alpha$, i.e., for all $(t,i)\in[0,T]\times E$ and $\alpha, \alpha' \in A$, we have:
\begin{equation}\label{eq:strong_convex_f0}
f_0(t,e_i,\alpha) - f_0(t,e_i,\alpha') - (\alpha - \alpha') \cdot \nabla_{\alpha} f_0(t, e_i, \alpha) \ge \gamma \|\alpha' - \alpha\|^2
\end{equation}
\end{hypothesis}

\begin{theorem}
\label{theo:uniqueness_nash_equilibrium}
Under Assumptions \ref{hypo:boundedness}, \ref{hypo:lipschitz_cost} and \ref{hypo:uniqueness}, there exists at most one Nash equilibrium for the weak formulation of the finite state mean field game.
\end{theorem}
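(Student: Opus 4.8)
The plan is to run the classical Lasry--Lions cross‑comparison (monotonicity) argument, taking advantage of a simplification that is special to Assumption \ref{hypo:uniqueness}: since the transition rate $q$ depends neither on $\bp$ nor on $\bnu$, the martingale $\bL^{(\balpha,\bp,\bnu)}$ of \eqref{eq:martingale_L}, and hence the controlled measure $\mathbb{Q}^{(\balpha,\bp,\bnu)}$ of \eqref{eq:q_measure}, depend on the strategy $\balpha$ alone; I will write $\mathbb{Q}^{\balpha}$. (One may also note that the separable term $f_2(t,p_t,\nu_t)$ in \eqref{eq:f_unique_form} is deterministic, so it enters the BSDE \eqref{eq:bsde_optimality} only as an additive finite‑variation term, making the optimal control of Proposition \ref{prop:bsed_optimal_control} insensitive to $\bnu$; this is convenient but will not actually be needed.)

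Suppose $(\balpha,\bp,\bnu)$ and $(\balpha',\bp',\bnu')$ are two Nash equilibria in the sense of Definition \ref{def:equilibrium}, and set $\mathbb{Q}:=\mathbb{Q}^{\balpha}$, $\mathbb{Q}':=\mathbb{Q}^{\balpha'}$. First I would record the two cross‑inequalities produced by the optimality condition \eqref{eq:def_nasheq_optimality} together with Proposition \ref{prop:bsed_optimal_control}, namely $J(\balpha',\bp,\bnu)\ge V(\bp,\bnu)=J(\balpha,\bp,\bnu)$ and $J(\balpha,\bp',\bnu')\ge V(\bp',\bnu')=J(\balpha',\bp',\bnu')$, so that
\[
\Delta := \big[J(\balpha',\bp,\bnu) - J(\balpha',\bp',\bnu')\big] + \big[J(\balpha,\bp',\bnu') - J(\balpha,\bp,\bnu)\big] \ge 0 .
\]

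Next I would expand $\Delta$ using the separable form \eqref{eq:f_unique_form} and the identity $\mathbb{E}^{\mathbb{Q}}[\phi(X_t)] = \sum_{i=1}^m \phi(e_i)\,\mathbb{Q}(X_t=e_i)$ (and likewise for $\mathbb{Q}'$). In the first bracket both costs are computed under $\mathbb{Q}'$ with the same control $\balpha'$, so the $f_0$ contributions cancel, and similarly in the second bracket under $\mathbb{Q}$; the two $f_2$ contributions are the deterministic integrals $\pm\int_0^T\big(f_2(t,p_t,\nu_t)-f_2(t,p'_t,\nu'_t)\big)\,dt$ and cancel across the brackets. What survives involves only $f_1$ and $g$, and should collapse to
\[
\Delta = -\int_0^T \sum_{i=1}^m \big(f_1(t,e_i,p_t)-f_1(t,e_i,p'_t)\big)\big(p_{t,i}-p'_{t,i}\big)\,dt - \sum_{i=1}^m \big(g(e_i,p_T)-g(e_i,p'_T)\big)\big(p_{T,i}-p'_{T,i}\big),
\]
where, by the consistency condition \eqref{eq:def_nasheq_cons1}, $p_{t,i}=\mathbb{Q}(X_t=e_i)$ and $p'_{t,i}=\mathbb{Q}'(X_t=e_i)$ are the equilibrium marginals. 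The monotonicity assumptions \eqref{eq:monotonocity1}--\eqref{eq:monotonocity2} make the integrand and the terminal sum nonnegative, so $\Delta\le 0$, hence $\Delta=0$.

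To finish, I would note that $\Delta$ is the sum of the two nonnegative optimality gaps $J(\balpha',\bp,\bnu)-V(\bp,\bnu)$ and $J(\balpha,\bp',\bnu')-V(\bp',\bnu')$; since they sum to zero, each vanishes, so $\balpha'$ is itself an optimal control for the mean field $(\bp,\bnu)$. The uniqueness part of Proposition \ref{prop:bsed_optimal_control} then forces $\balpha'=\balpha$, $dt\otimes d\mathbb{P}$‑a.e., whence $\bL^{\balpha}$ and $\bL^{\balpha'}$ are indistinguishable, $\mathbb{Q}=\mathbb{Q}'$, and the consistency conditions \eqref{eq:def_nasheq_cons1}--\eqref{eq:def_nasheq_cons2} give $\bp=\bp'$ and $\bnu_t=\bnu'_t$ for a.e.\ $t$. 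The point to get right is that the monotonicity in \eqref{eq:monotonocity1}--\eqref{eq:monotonocity2} is only assumed to be \emph{weak}: strict monotonicity, which would directly force $\bp=\bp'$, is not available, so one must instead squeeze the optimality gaps to zero and invoke uniqueness of the best response; the rest is bookkeeping — tracking which measure each expectation uses and which terms cancel in the expansion of $\Delta$.
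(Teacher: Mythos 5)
Your proof is correct, and it reaches the conclusion by a cleaner route than the paper's. The paper works at the level of the BSDEs: it writes $Y_0^{(1)}-Y_0^{(2)}$ in two ways using the martingales $\mathcal{M}^{(1)}$ and $\mathcal{M}^{(2)}$, takes expectations under $\mathbb{Q}^{(1)}$ and $\mathbb{Q}^{(2)}$ to obtain an identity between two cross-expectations, and then injects the pointwise Hamiltonian-minimization inequalities before invoking monotonicity; at the very end it re-runs a strictness argument (unique minimizer of $\alpha\mapsto H$) inside that chain of inequalities to force $\hat\alpha^{(1)}_t=\hat\alpha^{(2)}_t$. You instead extract the inequality $\Delta\ge 0$ directly from condition (i) of Definition \ref{def:equilibrium} applied to each equilibrium with the other's strategy — no BSDE manipulation needed — and then obtain $\Delta\le 0$ from the same expansion of the cost difference via the separable structure of $f$, the fact that $\mathbb{Q}^{(\balpha,\bp,\bnu)}$ depends on $\balpha$ alone under Assumption \ref{hypo:uniqueness}(i), and the consistency conditions; finally you conclude that both optimality gaps vanish and delegate the identification of the controls to the uniqueness clause of Proposition \ref{prop:bsed_optimal_control} (which is itself proved by the strictness argument the paper repeats inline). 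The two arguments are the same Lasry--Lions cross-comparison in substance, but yours cleanly separates the "monotonicity forces the gaps to zero" step from the "unique best response" step, and correctly flags that only weak monotonicity is available so one cannot conclude $\bp=\bp'$ directly from the monotone terms. The only points worth being slightly more explicit about are (a) that Assumption \ref{hypo:uniqueness}(ii) supplies the hypotheses under which Proposition \ref{prop:bsed_optimal_control} (stated under Assumption \ref{hypo:lipschitz_optimizer}) remains applicable, and (b) that equality of the controls $dt\otimes d\mathbb{P}$-a.e.\ yields indistinguishable stochastic integrals $\bL^{\balpha}$ and $\bL^{\balpha'}$, hence $\mathbb{Q}=\mathbb{Q}'$ — both of which you assert correctly and the paper treats with the same level of detail.
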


\begin{proof}
Let $(\balpha^{(1)}, \bp^{(1)}, \bnu^{(1)})$ and $(\balpha^{(2)}, \bp^{(2)}, \bnu^{(2)})$ be two Nash equilibria of the mean field game. For $i = 1,2$, we denote by $(\bY^{(i)}, \bZ^{(i)})$ the solution to the BSDE (\ref{eq:bsde_optimality}) with $\bp = \bp^{(i)}$, $\bnu = \bnu^{(i)}$, which is written as:
\begin{align*}
Y_0^{(i)} = g(X_T, p_T^{(i)}) + \int_0^T \hat H(t, X_{t-}, Z_t^{(i)}, p_t^{(i)}, \nu_t^{(i)}) dt - \int_0^T (Z_t^{(i)})^*\cdot d\mathcal{M}_t
\end{align*}
we have $\hat\alpha^{(i)}_t := \hat a(t,Z^{(i)}_t)$, $d\mathbb{P}\otimes dt$-a.e. Let us denote $\mathbb{Q}^{(i)} := \mathbb{Q}^{(\hat\balpha^{(i)}, \bp^{(i)}, \bnu^{(i)})}$, the controlled probability measure defined in (\ref{eq:q_measure}), under which $\mathcal{M}^{(i)} := \mathcal{M}^{(\hat\balpha^{(i)}, \bp^{(i)}, \bnu^{(i)})}$ is a martingale. In addition, we use the abbreviation $f_t^{(i)} := f(t, X_{t-}, \hat\alpha^{(i)}_t, p_t^{(i)}, \nu_t^{(i)})$, $g^{(i)} = g(X_T, p_T^{(i)})$ and $Q_t^{(i)} := Q(t, \hat\alpha_t^{(i)})$. Taking the difference of the BSDEs we obtain:
\begin{align*}
Y_0^{(1)} - Y_0^{(2)} 
=&g^{(1)} -  g^{(2)} +\int_0^T (f_t^{(1)} - f_t^{(2)} + X^*_{t-} \cdot (Q_t^{(1)} - Q^0) \cdot  Z_t^{(1)} - X^*_{t-} \cdot (Q_t^{(2)} - Q^0) \cdot Z_t^{(2)}) dt\\
& \hskip 45pt
+ \int_0^T (Z_t^{(1)} - Z_t^{(2)})^* \cdot d\mathcal{M}_t\\
=& g^{(1)} -  g^{(2)} + \int_0^T (f_t^{(1)} - f_t^{(2)} + X^*_{t-} \cdot (Q_t^{(1)} - Q_t^{(2)}) \cdot Z_t^{(1)}) dt + \int_0^T (Z_t^{(1)} - Z_t^{(2)})^* \cdot d\mathcal{M}^{(2)}_t\\
=& g^{(1)} -  g^{(2)} +\int_0^T (f_t^{(1)} - f_t^{(2)} + X^*_{t-} \cdot (Q_t^{(1)} - Q_t^{(2)}) \cdot Z_t^{(2)}) dt + \int_0^T (Z_t^{(1)} - Z_t^{(2)})^* \cdot d\mathcal{M}^{(1)}_t.
\end{align*}
Taking expectations with respect to $\mathbb{Q}^{(1)}$ and $\mathbb{Q}^{(2)}$ and using the fact that 
$$
\mathbb{E}^{\mathbb{P}}[Y_0^{(1)} - Y_0^{(2)}] = \mathbb{E}^{\mathbb{Q}^{(1)}}[Y_0^{(1)} - Y_0^{(2)}] = \mathbb{E}^{\mathbb{Q}^{(2)}}[Y_0^{(1)} - Y_0^{(2)}], 
$$
we obtain the following equality:
\begin{equation}
\label{eq:proof_uniqueness_1}
\begin{aligned}
&\mathbb{E}^{\mathbb{Q}^{(1)}}\left[g^{(1)} -  g^{(2)} + \int_0^T (f_t^{(1)} - f_t^{(2)} + X^*_{t-} \cdot (Q_t^{(1)} - Q_t^{(2)}) \cdot Z_t^{(2)}) dt\right] \\
&\hskip 45pt
= \mathbb{E}^{\mathbb{Q}^{(2)}}\left[g^{(1)} -  g^{(2)} + \int_0^T (f_t^{(1)} - f_t^{(2)} + X^*_{t-} \cdot (Q_t^{(1)} - Q_t^{(2)}) \cdot Z_t^{(1)}) dt\right].
\end{aligned}
\end{equation}
Next we notice that:
\begin{equation*}
\begin{aligned}
f_t^1 + X^*_{t-} \cdot Q_t^{(1)}\cdot Z_t^2
&= f(t, X_{t-}, \hat\alpha_t^{(1)}, p_t^{(1)}, \nu_t^{(1)}) + X_{t-}^* \cdot Q(t, \hat\alpha_t^{(1)}) \cdot Z_t^{(2)} \\
&=H(t, X_{t-}, \hat\alpha_t^{(1)}, Z_t^{(2)}, p_t^{(1)}, \nu_t^{(1)})\ge H(t, X_{t-}, \hat\alpha_t^{(2)}, Z_t^{(2)}, p_t^{(1)}, \nu_t^{(1)})\\
&= H(t, X_{t-}, \hat\alpha_t^{(2)}, Z_t^{(2)}, p_t^{(2)}, \nu_t^{(2)}) + (f_1(t, X_{t-}, p_t^{(1)}) - f_1(t,  X_{t-}, p_t^{(2)}))\\
&\hskip 45pt
+ (f_2(t, p_t^{(1)}, \nu_t^{(1)}) - f_2(t,  p_t^{(2)}, \nu_t^{(2)}))
\end{aligned}
\end{equation*}
and using the inequality:
\[
H(t, X_{t-}, \hat\alpha_t^{(1)}, Z_t^{(2)}, p_t^{(1)}, \nu_t^{(1)}) \ge H(t, X_{t-},\hat \alpha_t^{(2)}, Z_t^{(2)}, p_t^{(1)}, \nu_t^{(1)})
\]
which is due to the fact that $\hat\alpha_t^{(2)}$ minimizes the Hamiltonian $\alpha \rightarrow H(t, X_{t-}, \alpha, Z_t^{(2)}, p_t^{(1)}, \nu_t^{(1)})$ and Assumption \ref{hypo:uniqueness} that the minimizer does not depend on the mean field terms, we get:
\[
f_t^{(1)} - f_t^{(2)} + X^*_{t-} (Q_t^{(1)} - Q_t^{(2)}) Z_t^{(2)} \ge (f_1(t, X_{t-}, p_t^{(1)}) - f_1(t,  X_{t-}, p_t^{(2)})) + (f_2(t, p_t^{(1)}, \nu_t^{(1)}) - f_2(t,  p_t^{(2)}, \nu_t^{(2)})).
\]
Interchanging the indices we obtain:
\[
f_t^{(2)} - f_t^{(1)} + X^*_{t-} \cdot (Q_t^{(2)} - Q_t^{(1)}) \cdot Z_t^{(1)} \ge (f_1(t, X_{t-}, p_t^{(2)}) - f_1(t,  X_{t-}, p_t^{(1)})) + (f_2(t, p_t^{(2)}, \nu_t^{(2)}) - f_2(t,  p_t^{(1)}, \nu_t^{(1)})).
\]
Injecting these inequalities into equation (\ref{eq:proof_uniqueness_1}) we have:
\begin{align*}
0 = &\;\mathbb{E}^{\mathbb{Q}^{(1)}}\left[g^{(1)} -  g^{(2)} + \int_0^T (f_t^{(1)} - f_t^{(2)} + X^*_{t-} \cdot (Q_t^{(1)} - Q_t^{(2)}) \cdot Z_t^{(2)}) dt\right]\\
&\; - \mathbb{E}^{\mathbb{Q}^{(2)}}\left[g^{(1)} -  g^{(2)} +\int_0^T (f_t^{(1)} - f_t^{(2)} + X^*_{t-} \cdot (Q_t^{(1)} - Q_t^{(2)}) \cdot Z_t^{(1)}) dt\right] \\
\ge &\;\mathbb{E}^{\mathbb{Q}^{(1)}}\left[g^{(1)} -  g^{(2)}+ \int_0^T (f_1(t, X_{t-}, p_t^{(1)}) - f_1(t,  X_{t-}, p_t^{(2)}) + f_2(t, p_t^{(1)}, \nu_t^{(1)}) - f_2(t,  p_t^{(2)}, \nu_t^{(2)})) dt\right]\\
&\;-\mathbb{E}^{\mathbb{Q}^{(2)}}\left[g^{(1)} -  g^{(2)} + \int_0^T (f_1(t, X_{t-}, p_t^{(1)}) - f_1(t,  X_{t-}, p_t^{(2)}) + f_2(t, p_t^{(1)}, \nu_t^{(1)}) - f_2(t,  p_t^{(2)}, \nu_t^{(2)})) dt\right]\\
=&\;\mathbb{E}^{\mathbb{Q}^{(1)}}\left[g^{(1)} -  g^{(2)} +\int_0^T (f_1(t, X_{t-}, p_t^{(1)}) - f_1(t,  X_{t-}, p_t^{(2)}))dt\right]\\
&\; - \mathbb{E}^{\mathbb{Q}^{(2)}}\left[g^{(1)} -  g^{(2)} +\int_0^T (f_1(t, X_{t-}, p_t^{(1)}) - f_1(t,  X_{t-}, p_t^{(2)}))dt\right]
\end{align*}
where the last equality is due to the fact that $[f_2(t, p_t^{(2)}, \nu_t^{(2)}) -  f_2(t, p_t^{(1)}, \nu_t^{(1)})]$ is deterministic. 

From Proposition \ref{prop:bsed_optimal_control}, since $\balpha^{(i)}$ is the optimal control with regard to the mean field $\bp^{(i)}$ and $\bnu^{(i)}$, we have $\alpha^{(i)}_t = \hat\alpha^{(i)}_t$, $dt \otimes d\mathbb{P}$-a.e. This implies that $\mathbb{Q}^{(i)}[X_{t-} = e_k] = \mathbb{Q}^{(\balpha^{(i)}, \bp^{(i)}, \bnu^{(i)})}[X_{t-} = e_k]$ for all $i=1,2$ and $k = 1,\dots,m$. Since $(\balpha^{(i)}, \bp^{(i)}, \bnu^{(i)})$ is a Nash equilibrium, we have $ \mathbb{Q}^{(\balpha^{(i)}, \bp^{(i)}, \bnu^{(i)})}[X_{t-} = e_k] = [p_t^{(i)}]_k$. Therefore we obtain $\mathbb{Q}^{(i)}[X_{t-} = e_k] = [p_t^{(i)}]_k$ for all $i=1,2$ and $k = 1,\dots,m$. Now using item (iii) of Assumption \ref{hypo:uniqueness}, we have:
\begin{equation}\label{eq:proof_uniqueness_3}
\begin{array}{ll}
0 \ge&\displaystyle\sum_{i=1}^m (g(e_i, p_T^{(1)}) - g(e_i, p_T^{(2)}))([p_T^{(1)}]_i - [p_T^{(2)}]_i)\\
&\displaystyle+ \int_0^T \sum_{i=1}^m (f_0(t,e_i, p_t^{(1)}) - f_0(t,e_i, p_t^{(2)}))([p_t^{(1)}]_i - [p_t^{(2)}]_i) dt \ge 0
\end{array}
\end{equation}
Assume that there exists a measurable subset $N$ of $[0,T]\times\Omega$ with strictly positve $dt\otimes d\mathbb{Q}^{(1)}$ measure, such that $\hat\alpha_t^{(1)} \neq \hat\alpha_t^{(2)}$ on $N$. By Assumption \ref{hypo:uniqueness}, the mapping $\alpha \rightarrow H(t, X_{t-},\alpha, Z_t^{(2)}, p_t^{(1)}, \nu_t^{(1)})$ admits a unique minimizer and therefore for all $(t,w) \in N$, we have:
\[
H(t, X_{t-}, \hat\alpha_t^{(1)}, Z_t^{(2)}, p_t^{(1)}, \nu_t^{(1)}) > H(t, X_{t-}, \hat\alpha_t^{(2)}, Z_t^{(2)}, p_t^{(1)}, \nu_t^{(1)})
\]
Piggybacking on the argument laid out above, we see that the first inequality is strict in (\ref{eq:proof_uniqueness_3}) which leads to a contradiction. Therefore we have $\hat\alpha_t^{(1)} = \hat\alpha_t^{(2)}$, $dt\otimes d\mathbb{Q}^1$-a.e., and $dt\otimes d\mathbb{P}$-a.e., since $\mathbb{P}$ is equivalent to $\mathbb{Q}^{(1)}$. It follows that $\alpha_t^{(1)} = \alpha_t^{(2)}$, $dt\otimes d\mathbb{P}$-a.e. Finally, using the same type of argument as in the proof of Proposition \ref{prop:phi_mu_continuous}, we obtain $\mathbb{Q}^{(1)} = \mathbb{Q}^{(2)}$ which finally leads to $(\bp^{(1)}, \bnu^{(1)}) = (\bp^{(2)}, \bnu^{(2)})$. This completes the proof of the uniqueness.
\end{proof}

\section{Approximate Nash Equilibrium for Games with Finite Many Players}
\label{sec:approx_nash}
In this section we show that the solution of a mean field game can be used to construct approximate Nash equilibria for games with finitely many players. We first set the stage for the weak formulation of the game with $N$ players in finite state spaces. Recall that $\Omega$ is the space of c\`adl\'ag mappings from $[0,T]$ to $E = \{e_1,\dots,e_M\}$ which are continuous on $T$, $t\rightarrow X_t$ is the canonical process and $\mathbb{F} := (\mathcal{F}_t)_{t\le T}$ is the natural filtration generated by $\bX$. Let us fix $p^{\circ} \in \mathcal{S}$ a probability distribution on the state space $E$. Let $\mathbb{P}$ be the probability on $(\Omega, \mathcal{F}_T)$ under which $\bX$ is a continuous-time Markov chain with transition rate matrix $Q^0$ and initial distribution $p^{\circ}$. Let $\Omega^{N}$ be the product space of $N$ copies of $\Omega$, and ${\mathbb{P}}^N$ be the product probability measure of $N$ identical copies of $\mathbb{P}$. For $n=1,\dots,N$, define the process $X_t^n (w) := w^n_t$ of which the natural filtration is denoted by $\mathbb{F}^{n,N} := (\mathcal{F}^{n,N}_t)_{t\in[0,T]}$. We also denote by $\mathbb{F}^{N} :=(\mathcal{F}^N_t)_{t\in[0,T]}$ the natural filtration generated by the process $(\bX^{1}, \bX^{2},\dots, \bX^N)$. Denote $\mathcal{M}_t^n := X_t^n - X_t^0 - \int_0^t Q^0\cdot X_{s-}^n ds$. It is clear that under ${\mathbb{P}}^N$, $\bX^1, \dots, \bX^N$ are $N$ independent continuous-time Markov chains with initial distribution $\mathbf{p}^{\circ}$ and $Q^0$ as the transition rate matrix, and $\mathcal{M}^1,\dots, \mathcal{M}^N$ are independent $\mathcal{F}^N$-martingales. For later use, for $i=1,\dots,N$, we define the matrix $\psi_t^n$ by $\psi_t^n := diag(Q^0 \cdot X^n_{t-}) - Q^0 \cdot diag( X^n_{t-}) - diag( X^n_{t-}) \cdot Q^0$.

\vspace{3mm}
Throughout this section, we let Assumptions \ref{hypo:boundedness}, \ref{hypo:lipschitz_cost} and \ref{hypo:lipschitz_optimizer} hold. In addition, we adopt the following assumption:
\begin{hypothesis}\label{hypo:propagation_chaos}
The transition rate function $q$ does not depend on the mean field of state, nor the mean field of control.
\end{hypothesis}

We assume that each player can observe the entire past history of every player's state. We denote by $\mathbb{A}^N$ the collection of $\mathbb{F}^N$-predictable processes taking values in $A$. Each player $n$ chooses a strategy $\balpha^n\in\mathbb{A}^N$. We define the martingale $L^{(\balpha^1, \dots,\balpha^N)}$ by:
\begin{equation}\label{eq:martingale_L_nplayer}
L^{(\balpha^1, \dots,\balpha^N)}_t := \int_0^t \sum_{n=1}^N(X^n_{s^-})^* \cdot(Q(s, \alpha^n_s) - Q^0)\cdot(\psi^n_s)^+ \cdot d\mathcal{M}^n_s,
\end{equation}
and the probability measure $\mathbb{Q}^{(\balpha^1, \dots,\balpha^N)}$ by:
\begin{equation}\label{eq:q_measure_nplayer}
\frac{d\mathbb{Q}^{(\balpha^1, \dots,\balpha^N)}}{d\mathbb{P}^N} = \mathcal{E}^{(\balpha^1, \dots,\balpha^N)}_T,
\end{equation}
where we denote by  $\mathcal{E}^{(\balpha^1, \dots,\balpha^N)}$ the Dol\'eans-Dade exponential of $L^{(\balpha^1, \dots,\balpha^N)}$. Finally we introduce the empirical distribution of the states:
\begin{equation}\label{eq:emp_dist_states}
p_t^N := \frac{1}{N}\left[\sum_{n=1}^N \mathbbm{1}(X_t^n = e_1), \sum_{n=1}^N \mathbbm{1}(X_t^n = e_2), \dots, \sum_{n=1}^N \mathbbm{1}(X_t^n = e_{m})\right] \in \mathcal{S},
\end{equation}
as well as the empirical distribution of the controls:
\begin{equation}\label{eq:emp_dist_controls}
\nu(\alpha^1_t, \dots, \alpha^N_t) := \frac{1}{N} \sum_{n=1}^N \delta_{\alpha^n_t} \in \mathcal{P}(A),
\end{equation}
where $\delta_a(\cdot)$ is the Dirac measure on $a$. The total expected cost of player $n$ in the game with $N$ players, denoted by $J^{n,N}(\alpha^1,\dots,\alpha^N)$, is defined as:
\begin{equation}\label{eq:cost_nplayer}
J^{n,N}(\balpha^1,\dots,\balpha^N) 
:= \mathbb{E}^{\mathbb{Q}^{(\balpha^1, \dots,\balpha^N)}}\left[\int_0^T f(t, X_t^n, \alpha^n_t, p_t^N, \nu(\alpha^1_t, \dots, \alpha^N_t)) dt + g(X_T^n, p_T^N)\right].
\end{equation}
Now let us consider a Nash equilibrium $(\balpha^*, \bp^*, \bnu^*)$ of the mean field game  in the sense of Definition \ref{def:equilibrium}. Recall that $\\balpha^*$ is a predictable process with respect to the natural filtration generated by the canonical process $\bX$. For each $n = 1,\dots, N$, we may define the control $\hat\balpha^n$ of player $n$ by:
\begin{equation}
\hat\balpha^n(w^1, \dots, w^N) := \balpha^*(w^n).
\end{equation}
Clearly, $\hat\balpha^n$ is $\mathcal{F}^{n,N}$-predictable. In other words, it only depends on the observation of player $n$'s own path. Therefore the strategy profile $\hat{\boldsymbol{\alpha}}^{(N)} := (\hat\balpha^1, \dots, \hat\balpha^N)$ is a distributed strategy profile, which means that every player's strategy is only based on the observation of its own path.  

In the following, we will show that $\hat{\boldsymbol{\alpha}}^{(N)}$ is an approximate Nash equilibrium in a sense to be made clear later on. To this end, we first give a result on the propagation of chaos, which compare players $n$'s total expected cost in the mean field game versus its total expected cost in the finite player game. To simplify the notations, we use the abbreviation $(\bbeta, \hat\balpha^{-n, N})$ for $(\hat\balpha^1, \dots, \hat\balpha^{n-1}, \bbeta, \hat\balpha^{n+1}, \dots, \hat\balpha^N)$, $\hat{\mathbb{Q}}^{(N)}$ for $\mathbb{Q}^{\hat{\boldsymbol{\alpha}}^{(N)}} $,  $\hat{\mathbb{E}}^{(N)} $for $\mathbb{E}^{\hat{\mathbb{Q}}^{(N)}}$, and finally $\hat{\mathcal{E}}^{(N)}$ for $\mathcal{E}^{\hat{\boldsymbol{\alpha}}^{(N)}}$. We start from the following lemmas:

\begin{lemma}
\label{lem:propagation_of_chaos1}
There exists a sequence $(\delta_N)_{N\ge 0}$ such that $\delta_N \rightarrow 0$ as $ N\rightarrow +\infty$, and such that for all $N\ge 1$, $n \le N$ and $t\le T$ we have:
\begin{equation}\label{eq:propagation_of_chaos_lemma}
\max\big\{\hat{\mathbb{E}}^{(N)}[\mathcal{W}_1^2(\nu(\beta_t, \hat\alpha_t^{-1,N}), \nu^*_t)],\;\;\hat{\mathbb{E}}^N[\|p^N_t - p^*_t\|^2]\big\} \le \delta_N.
\end{equation}
\end{lemma}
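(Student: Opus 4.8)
The plan is to establish the propagation of chaos estimate \eqref{eq:propagation_of_chaos_lemma} by comparing the $N$-player controlled system (under $\hat{\mathbb{Q}}^{(N)}$) with $N$ independent copies of the mean field equilibrium dynamics. The key observation is that under Assumption~\ref{hypo:propagation_chaos}, the transition rate function $q$ depends neither on $\bp$ nor on $\bnu$, so the change-of-measure density $\hat{\mathcal E}^{(N)}_T = \mathcal E(\bL^{(\hat\balpha^1,\dots,\hat\balpha^N)})_T$ factorizes as a product $\prod_{n=1}^N \mathcal E(\bL^{(\balpha^*)}(\bX^n))_T$ over the $N$ coordinates. Consequently, under $\hat{\mathbb{Q}}^{(N)}$ the processes $\bX^1,\dots,\bX^N$ are \emph{independent}, each distributed as the canonical process under the equilibrium measure $\mathbb{Q}^{(\balpha^*,\bp^*,\bnu^*)}$. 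In particular, for each fixed $t$, the law of $X_t^n$ under $\hat{\mathbb{Q}}^{(N)}$ is exactly $p_t^*$, and the law of $\hat\alpha_t^n = \alpha_t^*(\bX^n)$ is exactly $\nu_t^*$; the coordinates are i.i.d.

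\textbf{Step 1: the state empirical measure.} For the term $\hat{\mathbb{E}}^{(N)}[\|p_t^N - p_t^*\|^2]$, write $p_t^N = \frac1N\sum_{n=1}^N \mathbbm 1(X_t^n = e_i)_{i}$ and note that under $\hat{\mathbb{Q}}^{(N)}$ each summand is a Bernoulli$([p_t^*]_i)$ random variable, and these are independent across $n$. A direct variance computation (each coordinate has variance $[p_t^*]_i(1-[p_t^*]_i)/N \le 1/(4N)$) gives $\hat{\mathbb{E}}^{(N)}[\|p_t^N - p_t^*\|^2] \le m/(4N)$, uniformly in $t \le T$ and $n \le N$.

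\textbf{Step 2: the control empirical measure.} For the term $\hat{\mathbb{E}}^{(N)}[\mathcal W_1^2(\nu(\beta_t,\hat\alpha_t^{-1,N}),\nu_t^*)]$, first observe that replacing one coordinate $\hat\alpha_t^1$ by an arbitrary $A$-valued $\beta_t$ changes the empirical measure by at most $\mathrm{diam}(A)/N$ in $\mathcal W_1$, since $A$ is compact; so up to an $O(1/N)$ error it suffices to control $\hat{\mathbb{E}}^{(N)}[\mathcal W_1^2(\frac1N\sum_{n=1}^N \delta_{\hat\alpha_t^n}, \nu_t^*)]$, where the $\hat\alpha_t^n$ are i.i.d.\ with common law $\nu_t^*$ on the compact set $A \subset \mathbb{R}^l$. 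This is a classical empirical-measure convergence statement: by the quantitative Glivenko--Cantelli / empirical Wasserstein bounds (e.g.\ the Fournier--Guillin rates), $\mathbb{E}[\mathcal W_1^2(\frac1N\sum \delta_{\xi^n},\mu)] \le C_{l}\, r_N$ for some rate $r_N \to 0$ depending only on $l$ and $\mathrm{diam}(A)$, \emph{uniformly over all probability measures $\mu$ supported in $A$}. Combining with Step~1 and absorbing the $O(1/N)$ perturbation from the single-coordinate change yields \eqref{eq:propagation_of_chaos_lemma} with $\delta_N := C(m/N + r_N + \mathrm{diam}(A)/N) \to 0$.

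\textbf{The main obstacle} is making the factorization of $\hat{\mathcal E}^{(N)}_T$ and the resulting conditional independence of the $\bX^n$ under $\hat{\mathbb{Q}}^{(N)}$ fully rigorous: one must check that $\bL^{(\hat\balpha^1,\dots,\hat\balpha^N)}$ decomposes as a sum of martingales driven by the mutually orthogonal martingales $\bcM^n$ (their quadratic covariations vanish since the $\bX^n$ jump at disjoint times $\mathbb{P}^N$-a.s.), so that its Dol\'eans--Dade exponential is the product of the individual exponentials, and then that $\hat\balpha^n$ depending only on $\bX^n$ keeps the $n$-th factor a function of $\bX^n$ alone. Once independence under $\hat{\mathbb{Q}}^{(N)}$ is in hand, together with the marginal identities $\hat{\mathbb{Q}}^{(N)}\circ (X_t^n)^{-1} = p_t^*$ and $\hat{\mathbb{Q}}^{(N)}\circ(\hat\alpha_t^n)^{-1} = \nu_t^*$ (which follow from the consistency conditions \eqref{eq:def_nasheq_cons1}--\eqref{eq:def_nasheq_cons2} of the Nash equilibrium), Steps~1 and~2 are routine. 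A secondary point requiring a word of care is the uniformity in $t$: the bounds in both steps depend on $t$ only through $p_t^*$ and $\nu_t^*$, and the empirical-measure rate is uniform over all target measures on the fixed compact $A$, so no uniformity issue arises.
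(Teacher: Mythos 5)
Your proposal is correct and follows essentially the same route as the paper: independence and the identification of the marginals $p_t^*$, $\nu_t^*$ under $\hat{\mathbb{Q}}^{(N)}$, a variance computation giving $m/(4N)$ for the state term, an $O(1/N)$ single-coordinate perturbation to remove $\beta_t$, and the Fournier--Guillin empirical Wasserstein rate for the control term. The only (welcome) addition is that you spell out the product factorization of $\hat{\mathcal E}^{(N)}_T$ underlying the independence claim, which the paper asserts without detail.
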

\begin{proof}
Since $\hat{\mathbb{Q}}^{(N)} = \mathbb{Q}^{\hat{\boldsymbol{\alpha}}^{(N)}}$ and the fact that $(\balpha^*,\bp^*,\bnu^*)$ is an equilibrium of the mean field game, we deduce that under the measure $\hat{\mathbb{Q}}^{(N)}$, the states  $X^1_t,\dots, X^N_t$ are independent and have the same distribution characterized by $p^*_t$, and that the controls $\alpha^1_t, \dots, \alpha^N_t$ are independent and have the same distribution $\nu^*_t$. Therefore, for $i \in \{1,\dots,M\}$, we have:
$$
\hat{\mathbb{E}}^{(N)}\left[\left( \frac{1}{N}\sum_{n=1}^N \mathbbm{1}(X_t^n = e_i) - \hat{\mathbb{Q}}^N[X_t^1 = e_i]\right)^2\right]
=\frac{1}{N} (\hat{\mathbb{Q}}^{(N)}[X_t^1 = e_i] -  (\hat{\mathbb{Q}}^{(N)}[X_t^1 = e_i])^2) \le \frac{1}{4N},
$$
which leads to:
$$
\hat{\mathbb{E}}^{(N)}[\|p^N_t - p^*_t\|^2] = \sum_{i=1}^{M} \hat{\mathbb{E}}^{(N)}\left[\left( \frac{1}{N}\sum_{n=1}^N \mathbbm{1}(X_t^n = e_i) - \hat{\mathbb{Q}}^N[X_t^1 = e_i]\right)^2\right]\le\frac{M}{4N}.
$$
On the other hand, $\nu(\beta_t, \hat\alpha_t^{-1,N})$ and $\nu^*_t$ are in $\mathcal{P}(A)$ with $A$ being a compact subset of $\mathbb{R}^d$. We have:
\begin{align*}
\hat{\mathbb{E}}^{(N)}[\mathcal{W}_1^2(\nu(\beta_t, \hat\alpha_t^{-1,N}), \nu^*_t)]
& \le C \hat{\mathbb{E}}^{(N)}[\mathcal{W}_1(\nu(\beta_t, \hat\alpha_t^{-1,N}), \nu^*_t)]\\
& \le C \hat{\mathbb{E}}^{(N)}[\mathcal{W}_1(\nu(\beta_t, \hat\alpha_t^{-1,N}), \nu(\hat{\alpha}_t^{(N)})) + \mathcal{W}_1(\nu(\hat{\alpha}_t^{(N)}), \nu^*_t)]\\
 &\le C(\hat{\mathbb{E}}^{(N)}[\frac{1}{N} \|\beta_t - \hat\alpha_t^{1,N}\|] + \hat{\mathbb{E}}^{(N)}[\mathcal{W}_1(\nu(\hat{\alpha}_t^{(N)}), \nu^*_t)])\\
 & \le C(\frac{1}{N} + \hat{\mathbb{E}}^{(N)}[\mathcal{W}_1(\nu(\hat{\alpha}_t^{(N)}), \nu^*_t)]),
\end{align*}
where $C$ is a constant only depending on $\sup_{a\in A} \|a\|$ which changes its value from line to line. Now applying Theorem 1 in \cite{fournier2015}, we have:
$$
\hat{\mathbb{E}}^{(N)}[\mathcal{W}_1(\nu(\hat{\alpha}_t^{(N)}), \nu^*_t)]
 \le\;\; \sup_{a\in A} \|a\| \cdot [\mathbbm{1}(d\le 2) (N^{-1/2} \log(1+N) + N^{-2/3}) + \mathbbm{1}(d>2) (N^{-1/d} + N^{-1/2})].
$$
Combining with the estimates previously shown, we obtain the desired result.
\end{proof}

\begin{lemma}
\label{lem:propagation_of_chaos2}
There exists a constant $C$ which only depends on the bound of the transition rate $q$, such that for all $N>0$ and $\bbeta \in \mathbb{A}$ we have:
\begin{equation}\label{eq:lem_propagation_of_chaos2}
\hat{\mathbb{E}}^{(N)}\left[\left(\frac{\mathcal{E}^{(\bbeta, \hat\balpha^{-1,N})}_T}{\hat{\mathcal{E}}^{(N)}_T}\right)^2\right] \le C.
\end{equation}
\end{lemma}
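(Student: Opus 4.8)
The plan is to exploit Assumption~\ref{hypo:propagation_chaos}: since the transition rates depend only on a player's own control, replacing $\hat\balpha^1$ by $\bbeta$ changes only the ``player-$1$ block'' of the Girsanov martingale in \eqref{eq:martingale_L_nplayer}, leaving the contributions of players $2,\dots,N$ untouched. Concretely, writing $\hat L^n_t:=\int_0^t (X^n_{s-})^*(Q(s,\hat\alpha^n_s)-Q^0)(\psi^n_s)^+\,d\mathcal{M}^n_s$ and $L^{1,\beta}_t:=\int_0^t (X^1_{s-})^*(Q(s,\beta_s)-Q^0)(\psi^1_s)^+\,d\mathcal{M}^1_s$, one has $L^{(\bbeta,\hat\balpha^{-1,N})}=L^{1,\beta}+\sum_{n=2}^N\hat L^n$ and $L^{\hat{\boldsymbol{\alpha}}^{(N)}}=\hat L^1+\sum_{n=2}^N\hat L^n$. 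Because $\bX^1,\dots,\bX^N$ are independent under $\mathbb{P}^N$ they almost surely never jump simultaneously, so these $N$ martingales have pairwise vanishing quadratic covariation; iterating the product formula $\mathcal{E}(U)\mathcal{E}(V)=\mathcal{E}(U+V+[U,V])$ then gives $\mathcal{E}^{(\bbeta,\hat\balpha^{-1,N})}_T=\mathcal{E}(L^{1,\beta})_T\prod_{n=2}^N\mathcal{E}(\hat L^n)_T$ and $\hat{\mathcal{E}}^{(N)}_T=\mathcal{E}(\hat L^1)_T\prod_{n=2}^N\mathcal{E}(\hat L^n)_T$. The common factors cancel (each being a.s.\ positive, since all jumps of the $L$'s strictly exceed $-1$ by $q>C_1$), leaving
\[
\frac{\mathcal{E}^{(\bbeta,\hat\balpha^{-1,N})}_T}{\hat{\mathcal{E}}^{(N)}_T}=\frac{\mathcal{E}(L^{1,\beta})_T}{\mathcal{E}(\hat L^1)_T}.
\]

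Next I would evaluate this ratio in closed form. The chain $\bX^1$ has finitely many jumps on $[0,T]$, at times $\tau_1<\cdots<\tau_{N^1_T}$ (so $N^1_T$ denotes its number of jumps), jumping from $e_{i_l}$ to $e_{j_l}$; since $L^{1,\beta}$ is purely discontinuous, $\mathcal{E}(L^{1,\beta})_T=\exp\!\big(L^{1,\beta}_T-\sum_{s\le T}\Delta L^{1,\beta}_s\big)\prod_{s\le T}(1+\Delta L^{1,\beta}_s)$. The jump computation already carried out after \eqref{eq:jump_of_l} gives $1+\Delta L^{1,\beta}_{\tau_l}=q(\tau_l,i_l,j_l,\beta_{\tau_l})$, while using $d\mathcal{M}^1_s=dX^1_s-Q^0X^1_{s-}\,ds$ together with the identity $X^*B(\psi^1)^+Q^0X=-X^*BX$ (valid for any $Q$-matrix $B$, as in the proof of Lemma~\ref{lem:dd_sde}, hence for their differences by linearity) yields $L^{1,\beta}_T-\sum_{s\le T}\Delta L^{1,\beta}_s=\int_0^T (X^1_{s-})^*(Q(s,\beta_s)-Q^0)X^1_{s-}\,ds$. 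The same formulas hold with $\bbeta$ replaced by $\hat\balpha^1$, so dividing,
\[
\frac{\mathcal{E}(L^{1,\beta})_T}{\mathcal{E}(\hat L^1)_T}=\exp\!\Big(\int_0^T (X^1_{s-})^*\big(Q(s,\beta_s)-Q(s,\hat\alpha^1_s)\big)X^1_{s-}\,ds\Big)\prod_{l=1}^{N^1_T}\frac{q(\tau_l,i_l,j_l,\beta_{\tau_l})}{q(\tau_l,i_l,j_l,\hat\alpha^1_{\tau_l})}.
\]
For $X^1_{s-}=e_i$ the integrand equals $\sum_{j\ne i}\big(q(s,i,j,\hat\alpha^1_s)-q(s,i,j,\beta_s)\big)$, which by Assumption~\ref{hypo:boundedness}(ii) lies in $[-(m-1)(C_2-C_1),(m-1)(C_2-C_1)]$, and each factor in the product lies in $(C_1/C_2,C_2/C_1)$. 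Hence
\[
0<\frac{\mathcal{E}^{(\bbeta,\hat\balpha^{-1,N})}_T}{\hat{\mathcal{E}}^{(N)}_T}\le e^{(m-1)C_2T}\,\Big(\tfrac{C_2}{C_1}\Big)^{N^1_T},
\]
a bound in which the deviation $\bbeta$ has disappeared entirely --- precisely because the two-sided bound $C_1<q<C_2$ is uniform in the control.

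It then remains to control $\hat{\mathbb{E}}^{(N)}[(C_2/C_1)^{2N^1_T}]$. Under $\hat{\mathbb{Q}}^{(N)}$ the process $\bX^1$ has $\mathcal{F}^N$-intensity matrix $Q(t,\hat\alpha^1_t)$, so the compensator of the counting process $N^1$ is $\int_0^\cdot\!\big(\sum_{j\ne i_s}q(s,i_s,j,\hat\alpha^1_s)\big)\,ds$, which is dominated by $(m-1)C_2\,t$. Applying It\^o's formula to $e^{\theta N^1_t}$, localizing the resulting local martingale, and then using Fatou's lemma and Gronwall's lemma exactly as in the proof of Proposition~\ref{prop:stable_subset_by_phi}, one gets $\hat{\mathbb{E}}^{(N)}[e^{\theta N^1_T}]\le e^{(m-1)C_2T(e^\theta-1)}$ for every $\theta>0$ (i.e.\ $N^1_T$ is stochastically dominated by a Poisson$((m-1)C_2T)$ random variable). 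Taking $\theta=2\log(C_2/C_1)$ and combining with the previous step gives
\[
\hat{\mathbb{E}}^{(N)}\!\Big[\big(\mathcal{E}^{(\bbeta,\hat\balpha^{-1,N})}_T/\hat{\mathcal{E}}^{(N)}_T\big)^2\Big]\le e^{2(m-1)C_2T}\,\hat{\mathbb{E}}^{(N)}\!\big[(C_2/C_1)^{2N^1_T}\big]\le e^{2(m-1)C_2T}\,e^{(m-1)C_2T((C_2/C_1)^2-1)}=:C,
\]
which depends only on $C_1$, $C_2$ (and the fixed $m$, $T$), as claimed. I expect the main obstacle to be the bookkeeping in the closed-form evaluation of the Dol\'eans--Dade ratio --- getting the drift/compensator parts right, which is where the pseudo-inverses $(\psi^n_s)^+$ and the identity $X^*B(\psi^1)^+Q^0X=-X^*BX$ enter --- together with the (conceptually important but technically light) observation that the uniform two-sided bound on $q$ makes the estimate valid for an \emph{arbitrary} admissible deviation $\bbeta$; the exponential-moment bound at the end is the routine localization-and-Gronwall argument already used elsewhere in the paper.
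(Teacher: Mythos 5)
Your argument is correct, but it takes a genuinely different route from the paper's. The paper works directly with the ratio $W_t:=\mathcal{E}^{(\bbeta,\hat\balpha^{-1,N})}_t/\hat{\mathcal{E}}^{(N)}_t$, derives its It\^o differential, checks that the drift cancels, and concludes that $dW_t=W_{t-}(X^1_{t})^*\cdot\Xi^\beta_t\cdot d\hat{\mathcal{M}}^1_t$ is a $\hat{\mathbb{Q}}^{(N)}$-local martingale; it then repeats the localization--Gronwall--Fatou estimate of Proposition \ref{prop:stable_subset_by_phi} on $W_t^2$, using only the boundedness of the integrand $I_t^*\cdot\psi_t\cdot I_t$. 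You instead factor the two Dol\'eans--Dade exponentials via Yor's formula (legitimate here, since the players' martingales are purely discontinuous, never jump simultaneously, and hence have vanishing quadratic covariation), cancel the a.s.\ positive common factor, and evaluate the remaining single-player ratio in closed form, arriving at the pathwise bound $e^{(m-1)C_2T}(C_2/C_1)^{N^1_T}$; the second moment is then controlled by an exponential moment of the jump count $N^1_T$, whose $\hat{\mathbb{Q}}^{(N)}$-compensator is dominated by $(m-1)C_2\,t$. All the individual steps check out: the jump identity $1+\Delta L^{1,\beta}_{\tau_l}=q(\tau_l,i_l,j_l,\beta_{\tau_l})$ and the drift identity via $X^*\cdot B\cdot(\psi^1)^+\cdot Q^0\cdot X=-X^*\cdot B\cdot X$ are exactly the computations the paper performs after \eqref{eq:jump_of_l} and in Lemma \ref{lem:dd_sde}, and the Poisson domination of $N^1_T$ is routine. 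What your approach buys is an explicit, pathwise bound on the Radon--Nikodym ratio that makes completely transparent why the estimate is uniform in the deviation $\bbeta$ (namely, the two-sided bound $C_1<q<C_2$); what the paper's approach buys is uniformity of method --- it reuses the martingale-plus-Gronwall machinery already set up for Proposition \ref{prop:stable_subset_by_phi} and avoids the closed-form bookkeeping entirely. Either proof yields a constant depending only on $C_1$, $C_2$, $m$ and $T$, as required.
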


\begin{proof}
Let us denote $W_t := \mathcal{E}^{(\bbeta, \hat\balpha^{-1,N})}_t / \hat{\mathcal{E}}^{(N)}_t$. By Ito's formula we have:
\begin{align*}
dW_t =&  \frac{d\mathcal{E}^{(\bbeta, \hat\balpha^{-1,N})}_t - \Delta \mathcal{E}^{(\bbeta, \hat\balpha^{-1,N})}_t}{\hat{\mathcal{E}}^{(N)}_{t-}} - \frac{\mathcal{E}^{(\bbeta, \hat\balpha^{-1,N})}_{t-}(d\hat{\mathcal{E}}^{(N)}_t - \Delta \hat{\mathcal{E}}^{(N)}_t)}{(\hat{\mathcal{E}}^{(N)}_{t-})^2} + \Delta W_t\\
=& W_{t-} \left(\frac{d\mathcal{E}^{(\bbeta, \hat\balpha^{-1,N})}_t - \Delta \mathcal{E}^{(\bbeta, \hat\balpha^{-1,N})}_t}{\mathcal{E}^{(\bbeta, \hat\balpha^{-1,N})}_{t-}} - \frac{d\hat{\mathcal{E}}^{(N)}_t - \Delta \hat{\mathcal{E}}^{(N)}_t}{\hat{\mathcal{E}}^{(N)}_{t-}} \right)+ \Delta W_t
\end{align*}
Recall that:
\begin{align*}
&\frac{d\hat{\mathcal{E}}^{(N)}_t}{\hat{\mathcal{E}}^{(N)}_{t-}} =\sum_{n=1}^N(X^n_{t^-})^* \cdot(Q(t, \hat\alpha^n_t) - Q^0)\cdot(\psi^n_t)^+ \cdot d\mathcal{M}^n_t,\\
&\frac{d\mathcal{E}^{(\bbeta, \hat\balpha^{-1,N})}_t}{\mathcal{E}^{(\bbeta, \hat\balpha^{-1,N})}_{t-}} = (X^1_{t^-})^* \cdot (Q(t, \beta_t) - Q^0)\cdot(\psi^1_t)^+\cdot d\mathcal{M}^1_t +\sum_{n=2}^N(X^n_{t^-})^* \cdot (Q(t, \hat\alpha^n_t) - Q^0)\cdot(\psi^n_t)^+ \cdot d\mathcal{M}^n_t,
\end{align*}
and $d\mathcal{M}^n_t = \Delta \mathcal{M}^n_t - Q^0 X^n_{t-}dt$. Noticing that for $n\neq 1$, the jumps of $\mathcal{M}^n_t$ do not result in the jumps of $W_t$, we obtain:
\begin{align*}
\Delta W_t =&\;\; \Delta \left(\frac{\mathcal{E}^{(\bbeta, \hat\balpha^{-1,N})}_t}{\mathcal{E}^{\hat{\boldsymbol{\alpha}}}_t}\right) = \frac{\mathcal{E}^{(\bbeta, \hat\balpha^{-1,N})}_{t-}}{\mathcal{E}^{\hat{\boldsymbol{\alpha}}}_
{t-}} \cdot \left(\frac{1 + (X^1_{t^-})^* \cdot (Q(t, \beta_t) - Q^0) \cdot (\psi^1_t)^+ \cdot \Delta\mathcal{M}^1_t}{1 + (X^1_{t^-})^* \cdot (Q(t, \hat\alpha^1_t) - Q^0)\cdot(\psi^1_t)^+\cdot \Delta\mathcal{M}^1_t} - 1\right)\\
=&\;\;W_{t-}\frac{(X^1_{t^-})^* \cdot (Q(t, \beta_t) - Q(t, \hat\alpha^1_t))\cdot(\psi^1_t)^+\cdot\Delta\mathcal{M}^1_t}{1 + (X^1_{t^-})^* \cdot(Q(t, \hat\alpha^1_t) - Q^0)\cdot(\psi^1_t)^+ \cdot\Delta\mathcal{M}^1_t}.
\end{align*}
Piggybacking on the computation in equation (\ref{eq:jump_of_l}), we see that when $X_{t-}^1 = e_i \neq e_j = X_{t}$, we have $\Delta\mathcal{M}^1_t = \Delta X^1_t = e_j - e_i$ and:
\[
\Delta W_t = W_{t-} \frac{q(t,i,j,\beta_t) - q(t,i,j,\hat\alpha^1_t)}{q(t,i,j,\hat\alpha^1_t)}.
\]
Let us define $\Xi_t^\beta$ to be an $m$ by $m$ matrix where the diagonal elements are $0$ and the element on the $i$-th row and the $j$-th column is $\frac{q(t,i,j,\beta_t) - q(t,i,j,\hat\alpha^1_t)}{q(t,i,j,\hat\alpha^1_t)}$. Then it is clear that $\Delta W_t = e_i^*\cdot \Xi_t^\beta \cdot (e_j - e_i)$. It follows that:
\[
\Delta W_t = W_{t-} \cdot (X_{t-}^1)^*\cdot \Xi_t^\beta \cdot \Delta \mathcal{M}^1_t.
\]
Injecting the above equation into the It\^o decomposition of $W_t$, we obtain:
\begin{align*}
dW_t = &\;\;W_{t-} [(Q(t,\hat\alpha_t^1) - Q(t,\beta_t))\cdot (\psi_t^1)^+ \cdot Q^0 \cdot X_{t-}^1 dt +(X_{t-}^1)^*\cdot \Xi_t^\beta \cdot \Delta \mathcal{M}^1_t]\\
=& \;\; W_{t-} [(Q(t,\hat\alpha_t^1) - Q(t,\beta_t))\cdot (\psi_t^1)^+ \cdot Q^0 \cdot X_{t-}^1 dt +(X_{t-}^1)^* \cdot \Xi_t^\beta \cdot (d\hat{\mathcal{M}}^1_t + Q^*(t,\hat\alpha_t^1) \cdot X_{t-}^1 dt)].
\end{align*}
In the second equality, we use the fact that under the measure $\hat{\mathbb{Q}}^{(N)}$, the state process $X_t^1$ has the canonical decomposition $dX_t^1 = d\hat{\mathcal{M}}^1_t + Q^*(t,\hat\alpha_t^1) \cdot X_t^1 dt$ where $\hat{\mathcal{M}}^1$ is a  $\hat{\mathbb{Q}}^{(N)}$-martingale. We also use the equality $\Delta \mathcal{M}^1_t = \Delta X_t^1 = dX_t^1$. In addition, by replacing $X_t^1$ with $e_i$ for $i=1,\dots,M$, it is plain to check the following equality:
\[
(Q(t,\hat\alpha_t^1) - Q(t,\beta_t))\cdot (\psi_t^1)^+ \cdot Q^0 \cdot  X_t^1 + (X_t^1)^* \cdot \Xi_t^\beta \cdot Q^*(t,\hat\alpha_t^1) \cdot X_t^1 = 0.
\]
This leads to the following representation of $W_t$:
\[
dW_t = W_{t-}(X_t^1)^*\cdot \Xi_t^\beta \cdot d\hat{\mathcal{M}}^1_t,
\]
which is a local martingale under the measure $\hat{\mathbb{Q}}^{(N)}$. At this stage, the rest of the proof is exactly the same as the proof of Proposition \ref{prop:stable_subset_by_phi}. In particular, we make use of Assumption \ref{hypo:boundedness}, that is the transition rate $q$ being bounded uniformly with regard to the controls.
\end{proof}

We are now ready to prove the form of the propagation of chaos result which we need.

\begin{proposition}
\label{prop:propagation_of_chaos}
There exists a sequence $(\epsilon_N)_{N\ge 0}$ such that $\epsilon_N \rightarrow 0$ as $N\rightarrow +\infty$ and such that for all $N \ge 0$, $n \le N$ and $\bbeta \in \mathbb{A}$:
\begin{equation}\label{eq:propagation_of_chaos}
\left|J^{n,N}(\bbeta, \hat\balpha^{-n,N}) - \mathbb{E}^{\mathbb{Q}^{(\bbeta, \hat\balpha^{-n,N})}}\left[\int_0^T f(t, X_t^n, \beta_t, p^*_t, \nu^*_t) dt + g(X_T^n, p^*_T)\right]\right|\le\epsilon_N.
\end{equation}
\end{proposition}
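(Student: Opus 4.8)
The plan is to compare, for fixed $N$, $n$ and $\bbeta$, player $n$'s cost $J^{n,N}(\bbeta,\hat\balpha^{-n,N})$ in the $N$-player game, which involves the empirical quantities $p^N_t$ and $\nu(\beta_t,\hat\alpha_t^{-n,N})$, with the same expectation in which these empirical quantities are frozen at the mean field equilibrium flows $\bp^*$ and $\bnu^*$. By the exchangeability of the players under $\mathbb{P}^N$ (hence, after relabelling, under every $\mathbb{Q}^{(\bbeta,\hat\balpha^{-n,N})}$), it suffices to treat $n=1$, which is precisely the configuration for which Lemmas \ref{lem:propagation_of_chaos1} and \ref{lem:propagation_of_chaos2} are stated. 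Viewing $\bbeta\in\mathbb{A}$ as a function of $\bX^1$, it is $\mathbb{F}^{1,N}$-predictable, so $\mathbb{Q}^{(\bbeta,\hat\balpha^{-1,N})}$ is well defined.

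First I would bound the left-hand side of \eqref{eq:propagation_of_chaos} (with $n=1$) by the $\mathbb{Q}^{(\bbeta,\hat\balpha^{-1,N})}$-expectation of
\[
\int_0^T \bigl|f(t,X^1_t,\beta_t,p^N_t,\nu(\beta_t,\hat\alpha_t^{-1,N})) - f(t,X^1_t,\beta_t,p^*_t,\nu^*_t)\bigr|\,dt + \bigl|g(X^1_T,p^N_T) - g(X^1_T,p^*_T)\bigr|,
\]
and then, using the Lipschitz continuity of $f$ and $g$ from Assumption \ref{hypo:lipschitz_cost} (the arguments $X^1_t$ and $\beta_t$ being the same on both sides), by $C\,\mathbb{E}^{\mathbb{Q}^{(\bbeta,\hat\balpha^{-1,N})}}[G_N]$, where $G_N := \int_0^T \bigl(\|p^N_t - p^*_t\| + \mathcal{W}_1(\nu(\beta_t,\hat\alpha_t^{-1,N}),\nu^*_t)\bigr)\,dt + \|p^N_T - p^*_T\|$. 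The conceptual obstacle is that the chaos estimates of Lemma \ref{lem:propagation_of_chaos1} are available under $\hat{\mathbb{Q}}^{(N)}$, not under $\mathbb{Q}^{(\bbeta,\hat\balpha^{-1,N})}$; the resolution is a change of measure. Since Assumption \ref{hypo:boundedness} makes both $\mathbb{Q}^{(\bbeta,\hat\balpha^{-1,N})}$ and $\hat{\mathbb{Q}}^{(N)}$ equivalent to $\mathbb{P}^N$ (the relevant Dol\'eans--Dade exponentials being strictly positive because $1+\Delta L_t = q(t,i,j,\cdot)\ge C_1>0$), one has $d\mathbb{Q}^{(\bbeta,\hat\balpha^{-1,N})}/d\hat{\mathbb{Q}}^{(N)} = \mathcal{E}^{(\bbeta,\hat\balpha^{-1,N})}_T/\hat{\mathcal{E}}^{(N)}_T$, and Cauchy--Schwarz gives
\[
\mathbb{E}^{\mathbb{Q}^{(\bbeta,\hat\balpha^{-1,N})}}[G_N] = \hat{\mathbb{E}}^{(N)}\!\Bigl[\frac{\mathcal{E}^{(\bbeta,\hat\balpha^{-1,N})}_T}{\hat{\mathcal{E}}^{(N)}_T}\,G_N\Bigr] \le \Bigl(\hat{\mathbb{E}}^{(N)}\Bigl[\Bigl(\frac{\mathcal{E}^{(\bbeta,\hat\balpha^{-1,N})}_T}{\hat{\mathcal{E}}^{(N)}_T}\Bigr)^2\Bigr]\Bigr)^{1/2}\bigl(\hat{\mathbb{E}}^{(N)}[G_N^2]\bigr)^{1/2}.
\]

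The first factor is bounded by a constant independent of $N$ and $\bbeta$ by Lemma \ref{lem:propagation_of_chaos2}. For the second, I would use $(\int_0^T h_t\,dt)^2 \le T\int_0^T h_t^2\,dt$ together with $(a+b)^2\le 2a^2+2b^2$ to obtain $\hat{\mathbb{E}}^{(N)}[G_N^2] \le C\int_0^T \hat{\mathbb{E}}^{(N)}\bigl[\|p^N_t-p^*_t\|^2 + \mathcal{W}_1^2(\nu(\beta_t,\hat\alpha_t^{-1,N}),\nu^*_t)\bigr]\,dt + C\,\hat{\mathbb{E}}^{(N)}[\|p^N_T-p^*_T\|^2]$, and Lemma \ref{lem:propagation_of_chaos1} bounds each term in the integrand, and the terminal term, by $\delta_N$, so $\hat{\mathbb{E}}^{(N)}[G_N^2]\le C(T+1)\delta_N$. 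Combining everything gives \eqref{eq:propagation_of_chaos} with $\epsilon_N := C\sqrt{\delta_N}\to 0$, and the bound is uniform in $n$ and $\bbeta$ because all the constants and both lemmas are. In short, once the change-of-measure identity and the uniform $L^2$ control of the Radon--Nikodym ratio (both already secured by Assumption \ref{hypo:boundedness} and Lemma \ref{lem:propagation_of_chaos2}) are in place, the proof is a routine combination of the two preceding lemmas with the Lipschitz bounds on the costs, so no genuine difficulty remains.
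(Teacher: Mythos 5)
Your proposal is correct and follows essentially the same route as the paper's proof: reduce to $n=1$ by symmetry, rewrite the $\mathbb{Q}^{(\bbeta,\hat\balpha^{-1,N})}$-expectation as a $\hat{\mathbb{Q}}^{(N)}$-expectation weighted by the Radon--Nikodym ratio, apply Cauchy--Schwarz together with Lemma \ref{lem:propagation_of_chaos2} for the ratio and Lemma \ref{lem:propagation_of_chaos1} plus the Lipschitz bounds of Assumption \ref{hypo:lipschitz_cost} for the other factor, yielding $\epsilon_N = C\sqrt{\delta_N}$. The only difference is the immaterial order in which you apply the Lipschitz estimate and Cauchy--Schwarz.
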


\begin{proof}
Due to symmetry, we only need to show the claim for $n = 1$. Let $N>0$ and $\bbeta \in \mathbb{A}$. Using successively Cauchy-Schwartz inequality, Assumption \ref{hypo:lipschitz_cost}, Lemma \ref{lem:propagation_of_chaos1} and Lemma \ref{lem:propagation_of_chaos2}, we have:
{\footnotesize
\begin{align*}
&\left|J^{n,N}(\bbeta, \hat\balpha^{-1,N}) - \mathbb{E}^{\mathbb{Q}^{(\bbeta, \hat\balpha^{-1,N})}}\left[\int_0^T f(t, X_t^1, \beta_t, p^*_t, \nu^*_t) dt + g(X_T^1, p^*_T)\right]\right|\\
&\hskip 15pt
\le\mathbb{E}^{\mathbb{Q}^{(\bbeta, \hat\balpha^{-1,N})}}\left[\int_0^T |f(t, X_t^1, \beta_t, p^*_t, \nu^*_t) -  f(t, X_t^1, \beta_t, p^N_t, \nu(\beta_t, \hat\alpha_t^{-1,N} ))| dt + |g(X_T^1, p^*_T) - g(X_T^1, p^N_T)|\right]\\
&\hskip 15pt
=\hat{\mathbb{E}}^{(N)}\left[\frac{\mathcal{E}^{(\bbeta, \hat\balpha^{-1,N})}_T}{\hat{\mathcal{E}}^{(N)}_T}\int_0^T |f(t, X_t^1, \beta_t, p^*_t, \nu^*_t) -  f(t, X_t^1, \beta_t, p^N_t, \nu(\beta_t, \hat\alpha_t^{-1,N} ))| dt + |g(X_T^1, p^*_T) - g(X_T^1, p^N_T)|\right]\\
&\hskip 15pt
\le \hat{\mathbb{E}}^{(N)}\left[\left(\frac{\mathcal{E}^{(\bbeta, \hat\balpha^{-1,N})}_T}{\hat{\mathcal{E}}^{(N)}_T}\right)^2\right]^{1/2}
\hat{\mathbb{E}}^{(N)}\left[\left(\int_0^T |f(t, X_t^1, \beta_t, p^*_t, \nu^*_t) -  f(t, X_t^1, \beta_t, p^N_t, \nu(\beta_t, \hat\alpha_t^{-1,N} ))| dt + |g(X_T^1, p^*_T) - g(X_T^1, p^N_T)|\right)^2\right]^{1/2}\\
&\hskip 15pt
\le C\hat{\mathbb{E}}^{(N)}\left[\left(\frac{\mathcal{E}^{(\bbeta, \hat\balpha^{-1,N})}_T}{\hat{\mathcal{E}}^{(N)}_T}\right)^2\right]^{1/2}
\left[\int_0^T (\hat{\mathbb{E}}^{(N)}[\|p^N_t - p^*_t\|^2] + \hat{\mathbb{E}}^{(N)}[\mathcal{W}_1^2(\nu(\beta_t, \hat\alpha_t^{-1,N}), \nu^*_t)])dt + \hat{\mathbb{E}}^{(N)}[\|p^N_T - p^*_T\|^2]\right]^{1/2}\\
&\hskip 15pt
\le C\sqrt{\delta_N},
\end{align*}
}
where $\delta_N$ is as appeared in Lemma \ref{lem:propagation_of_chaos1}, and $C$ is a constant only depending on $T$, the Lipschitz constant of $f$ and $g$ and the constant appearing in Lemma \ref{lem:propagation_of_chaos2}. this gives us the desired inequality.
\end{proof}

As a direct consequence of the above result on the propagation of chaos, we show that the Nash equilibrium of the mean field game consists of an approximate Nash equilibrium for the game with finite many players.

\begin{theorem}
\label{thm:approx_nash_eq}
There exists a sequence $\epsilon_N$ converging to $0$ such that for all $N>0$, $\bbeta \in \mathbb{A}$ and $n \le N$, we have:
\[
J^{n,N}(\bbeta, \hat\balpha^{-n, N}) \le J^{n,N}(\hat\balpha^{(N)}) + \epsilon_N.
\]
\end{theorem}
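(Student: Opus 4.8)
The plan is to derive the estimate by combining the propagation-of-chaos bound of Proposition~\ref{prop:propagation_of_chaos} with the optimality of $\balpha^*$ in the mean field game. The conceptual heart of the argument is to recognise the quantity on the right-hand side of \eqref{eq:propagation_of_chaos} as the single representative player's mean field game cost $J(\bbeta,\bp^*,\bnu^*)$ from \eqref{eq:total_cost}, evaluated against the frozen equilibrium mean field $(\bp^*,\bnu^*)$.

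First I would show that, under $\mathbb{Q}^{(\bbeta,\hat\balpha^{-n,N})}$, the coordinate process $\bX^n$ has exactly the law that the canonical process has under the one-player measure $\mathbb{Q}^{(\bbeta,\bp^*,\bnu^*)}$ of \eqref{eq:q_measure}. The reason is that the martingale \eqref{eq:martingale_L_nplayer} is additive in the independent martingales $\mathcal{M}^1,\dots,\mathcal{M}^N$, so the density \eqref{eq:q_measure_nplayer} factorises into per-player Dol\'eans-Dade exponentials, and by Assumption~\ref{hypo:propagation_chaos} the $n$-th factor depends only on $\bbeta$ and on $\bX^n$. Granting this, \eqref{eq:propagation_of_chaos} reads
\[
\bigl|J^{n,N}(\bbeta,\hat\balpha^{-n,N}) - J(\bbeta,\bp^*,\bnu^*)\bigr| \le \epsilon_N \qquad \text{for all } \bbeta \in \mathbb{A},\ n\le N.
\]

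Next I would apply this bound twice. Taking $\bbeta=\balpha^*$ recovers the equilibrium profile, since $\hat\balpha^n=\balpha^*(w^n)$ gives $J^{n,N}(\balpha^*,\hat\balpha^{-n,N}) = J^{n,N}(\hat\balpha^{(N)})$; taking $\bbeta$ arbitrary controls the deviation. By item~(i) of Definition~\ref{def:equilibrium} and Proposition~\ref{prop:bsed_optimal_control}, $\balpha^*$ is a minimiser of the cost against $(\bp^*,\bnu^*)$, whence $J(\balpha^*,\bp^*,\bnu^*) = V(\bp^*,\bnu^*) \le J(\bbeta,\bp^*,\bnu^*)$. Chaining the two chaos bounds with this optimality inequality yields
\[
J^{n,N}(\hat\balpha^{(N)}) \le V(\bp^*,\bnu^*) + \epsilon_N \le J(\bbeta,\bp^*,\bnu^*) + \epsilon_N \le J^{n,N}(\bbeta,\hat\balpha^{-n,N}) + 2\epsilon_N,
\]
which is the approximate Nash estimate after renaming $2\epsilon_N$ as $\epsilon_N$; the optimality step $V(\bp^*,\bnu^*)\le J(\bbeta,\bp^*,\bnu^*)$ is exactly what fixes the direction of the comparison.

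The main obstacle, and the step deserving the most care, is the marginal-law identification above: I must check that conditioning the other players on $\hat\balpha^{-n,N}$ does not disturb the law of $\bX^n$, so that player $n$'s $N$-player controlled dynamics collapse onto the one-player weak formulation of Section~2. This rests on the product structure of $\mathbb{P}^N$, the additivity of \eqref{eq:martingale_L_nplayer}, and crucially on Assumption~\ref{hypo:propagation_chaos}; once these are in place the Girsanov computation running from \eqref{eq:martingale_M} to \eqref{eq:X_decomp} transfers verbatim to the coordinate $\bX^n$. A lesser point is that $\epsilon_N$ must be uniform in $\bbeta$ and $n$, which is automatic because the bound in Proposition~\ref{prop:propagation_of_chaos} depends only on $\delta_N$ and on the Lipschitz and boundedness constants of $f$, $g$ and $q$.
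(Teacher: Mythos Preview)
Your argument is correct and takes a genuinely different, more elementary route than the paper. You reduce the $N$-player comparison to the one-player optimality of Proposition~\ref{prop:bsed_optimal_control} by observing that, since $\bbeta\in\mathbb{A}$ depends only on $w^n$ and Assumption~\ref{hypo:propagation_chaos} decouples $q$ from the mean field, the density \eqref{eq:q_measure_nplayer} factorises over players and the marginal law of $\bX^n$ under $\mathbb{Q}^{(\bbeta,\hat\balpha^{-n,N})}$ is exactly the single-player law $\mathbb{Q}^{(\bbeta,\bp^*,\bnu^*)}$; hence the quantity inside Proposition~\ref{prop:propagation_of_chaos} is literally $J(\bbeta,\bp^*,\bnu^*)$, and the optimality inequality $J(\balpha^*,\bp^*,\bnu^*)\le J(\bbeta,\bp^*,\bnu^*)$ is already proved. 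The paper instead stays in the $N$-player world: it represents both $K^{n,N}(\hat\balpha^{(N)})$ and $K^{n,N}(\bbeta,\hat\balpha^{-n,N})$ as initial values of BSDEs driven by all $N$ martingales $\mathcal{M}^1,\dots,\mathcal{M}^N$ (equations \eqref{eq:proof_thm_approx_nash_eq2}--\eqref{eq:proof_thm_approx_nash_eq3}) and invokes the multi-chain comparison principle of the Appendix (Theorem~\ref{theo:linear_bsde_comp_multi}). Your route bypasses the Appendix entirely, but it hinges on the factorisation and therefore on $\bbeta\in\mathbb{A}$; the paper's BSDE-comparison argument would extend without change to deviations $\bbeta\in\mathbb{A}^N$ that use the full filtration, which your marginal-law identification cannot handle. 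Finally, note that your chain of inequalities yields $J^{n,N}(\hat\balpha^{(N)})\le J^{n,N}(\bbeta,\hat\balpha^{-n,N})+2\epsilon_N$, the natural approximate-Nash direction; the inequality as printed in the theorem statement (and the corresponding signs in the paper's proof, where $\hat\alpha^1$ \emph{minimises} $H$) appear to be reversed.
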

\begin{proof}
Recall that the strategy profile is $\hat{\boldsymbol{\alpha}}^{(N)} = (\hat\balpha^1,\dots, \hat\balpha^N)$ is defined as:
\[
\hat\balpha^n(w^1,w^2,\dots,w^N) := \balpha^*(w^n),
\]
where $\balpha^*$ is the strategy of the mean field game equilibrium, together with $\bp^*$ as the mean field of states and $\bnu^*$ as the mean field of control. For a strategy profile $(\balpha^1,\dots, \balpha^N)$ we use the notation:
\[
K^{n,N}(\balpha^1,\dots,\balpha^N) := \mathbb{E}^{\mathbb{Q}^{(\balpha^1,\dots,\balpha^N)}}\left[\int_0^T f(t, X_t^n, \alpha_t^n, p^*_t, \nu^*_t) dt + g(X_T^n, p^*_T)\right].
\]
Now taking $n=1$, we observe that $K^{1,N}(\hat{\boldsymbol{\alpha}}^{(N)}) = \mathbb{E}^{\mathbb{P}^N}[Y_0^{(\hat{\boldsymbol{\alpha}}^{(N)})}]$, where $Y_0^{(\hat{\boldsymbol{\alpha}}^N)}$ is the solution (at time $t=0$) of the following BSDE:
\begin{equation}
\label{eq:proof_thm_approx_nash_eq1}
Y_t = g(X_T^1, p^*_T) + \int_t^T H(s, X^1_{s-}, Z^1_s, \hat\alpha^1_s, p^*_s, \nu^*_s) ds - \int_t^T (Z^1_s)^*\cdot d\mathcal{M}^1_s.
\end{equation}
By the optimality of the equilibrium, we know that for all $t\in[0, T]$, $\hat\alpha^1_t$ minimizes the mapping $\alpha\rightarrow H(t, X^1_{t-}, Z^1_t, \alpha, p^*_t, \nu^*_t)$. Clearly, the solution of the above BSDE (\ref{eq:proof_thm_approx_nash_eq1}) is also the unique solution to the following BSDE:
\begin{equation}
\label{eq:proof_thm_approx_nash_eq2}
Y_t =g(X_T^1, p^*_T) + \int_t^T \left[H(s, X^1_{s-}, Z^1_s, \hat\alpha^1_s, p^*_s, \nu^*_s) + \sum_{n = 2}^N (X^{n}_t)^*\cdot(Q(s, \hat\alpha^{n}_s) - Q^0)\cdot Z^{n}_s \right]ds
 -\int_t^T \sum_{n=1}^N \int_t^T (Z_s^n)^* \cdot d\mathcal{M}^n_s,
\end{equation}
with $Z_t^n = 0$ for $n=2,\dots,N$. Indeed, the existence and uniqueness of the BSDE (\ref{eq:proof_thm_approx_nash_eq2}) can be checked easily by applying Theorem \ref{thm:bsde_ex_un_multi}. On the other hand, by following exactly the same argument as in the proof of Lemma \ref{lem:bsde_total_cost}, we can show that $K^{1,N}(\bbeta, \hat\balpha^{-1, N}) = \mathbb{E}^{\mathbb{P}^N}[Y_0^{(\bbeta, \hat\balpha^{-1, N})}]$, where $Y_0^{(\bbeta, \hat\balpha^{-1, N})}$ is the solution (at time $t=0$) of:
\begin{equation}
\label{eq:proof_thm_approx_nash_eq3}
Y_t =g(X_T^1, p^*_T) + \int_t^T \left[H(s, X^1_{s-}, Z^1_s, \beta_s, p^*_s, \nu^*_s) + \sum_{n = 2}^N (X^{n}_t)^*\cdot(Q(s, \hat\alpha^{n}_s) - Q^0)\cdot Z^{n}_s \right]ds
-\int_t^T \sum_{n=1}^N \int_t^T (Z_s^n)^* \cdot d\mathcal{M}^n_s.
\end{equation}
Notice that $H(s, X^1_{s-}, Z^1_s, \alpha, p^*_s, \nu^*_s) = f(s, X^1_{s-}, \alpha, p^*_s, \nu^*_s) + (X^{1}_{s-})^*\cdot(Q(s, \alpha) - Q^0)\cdot Z^{1}_s$, and $H(s, X^1_{s-}, Z^1_s, \hat\alpha^1_s, p^*_s, \nu^*_s) \ge H(s, X^1_{s-}, Z^1_s, \beta_s, p^*_s, \nu^*_s)$. Applying the comparison principle as stated in Theorem \ref{theo:linear_bsde_comp_multi} to the BSDEs (\ref{eq:proof_thm_approx_nash_eq2}) and (\ref{eq:proof_thm_approx_nash_eq3}), we conclude that $K^{1,N}(\bbeta, \hat\balpha^{-1, N}) \le K^{1,N}(\hat{\boldsymbol{\alpha}}^{(N)})$ for all $\bbeta \in \mathbb{A}$. Now thanks to symmetry, we have $K^{n,N}(\bbeta, \hat\balpha^{-n, N}) \le K^{1,N}(\hat{\boldsymbol{\alpha}}^{(N)})$ for all $\bbeta \in \mathbb{A}$ and $n=1,\dots,N$. The desired results immediately follows by applying Proposition \ref{prop:propagation_of_chaos}.
\end{proof}

\section*{Appendix: BSDEs Driven by Multiple Independent Continuous-Time Markov Chains}\label{sec:appendix}
Let us consider a probability space $(\Omega, \mathcal{F}, \mathbb{P})$ supporting $N$ independent continuous-time Markov chains $\bX^1, \dots, \bX^N$. For each $n=1,\dots,N$, we assume that $\bX^n$ takes only $m_n$ states, which are represented by the basis vectors of the space $\mathbb{R}^{m_n}$. We assume that under $\mathbb{P}$, the transition rate matrix of $\bX^n$ is $Q^{0,n}$, which is an $m_n\times m_n$ matrix where all the diagonal elements equal $-(m_n-1)$ and all the off-diagonal elements equal $1$. We denote by $\FF=(\mathcal{F}_t)_{t\in[0,T]}$ the natural filtration generated by $(\bX^1, \dots, \bX^N)$. It is clear that for each $n$, we can decompose the Markov chain $\bX^n$ as $X_t^n = X_0^n + \int_{0}^t Q^{0,n}\cdot X_{s-}^n ds + d\mathcal{M}^n_t$, where $\bcM^n$ is an $\FF$-martingale. In addition, due to the independence of the Markov chains, for all $n_1 \neq n_2$ and $t \le T$, $\mathbb{P}$-almost surely we have $\Delta X^{n_1}_t = 0$ or $\Delta X^{n_2}_t = 0$. In other words, any two Markov chains cannot jump simultaneously. 

Let us consider the process $\tilde \bX$ defined by $\tilde X_t := X_t^1 \otimes X_t^2 \otimes \dots \otimes X_t^N$ where $\otimes$ stands for the Kronecker product. Indeed, $\tilde\bX$ is a Markov chain encoding the joint states of the the $N$ independent Markov chains, and $\tilde\bX$ only takes values among the unit vectors of the space $\mathbb{R}^{m_1\times \dots \times m_N}$. We have the following result on the decomposition of $\tilde\bX$.

\begin{lemma}
\label{lem:N_ind_mc_decomp}
$\tilde\bX$ is a continuous-time Markov chain with transition rate matrix $\tilde{Q}^0$ given by:
\begin{equation}\label{eq:N_ind_mc_q_matrix}
\tilde{Q}^0 := \sum_{n=1}^{N} I_{m_1}\otimes \dots \otimes I_{m_{n-1}} \otimes Q^{0,n} \otimes I_{m_{n+1}} \otimes \dots \otimes I_{m_{N}}.
\end{equation}
In addition it has the canonical decomposition:
\begin{equation}\label{eq:N_ind_mc_decomp}
d\tilde X_t = \tilde{Q}^0\cdot \tilde{X}_{t-} dt + d\tilde{\mathcal{M}}_t,
\end{equation}
where $\tilde\bcM$ is a $\FF$-martingale which satisfies:
\begin{equation}
\label{eq:N_ind_mc_decomp}
d\tilde{\mathcal{M}}_t = \sum_{n=1}^{N} (X_{t-}^{1}\otimes \dots \otimes X_{t-}^{n-1} \otimes I_{m_n} \otimes X_{t-}^{n+1} \otimes \dots \otimes X_{t-}^{N}) \cdot d\mathcal{M}^n_t.
\end{equation}
\end{lemma}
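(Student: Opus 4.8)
The plan is to read off the canonical decomposition \reff{eq:N_ind_mc_decomp} directly from the individual decompositions $dX^n_t = Q^{0,n}\cdot X^n_{t-}\,dt + d\mathcal{M}^n_t$ by applying the integration-by-parts formula for semimartingales to the Kronecker product, and then to identify $\tilde\bX$ as a Markov chain with rate matrix $\tilde Q^0$ via the representation theory already used for \reff{eq:canonical_representation}. First I would establish the product rule for $\tilde\bX$. Each coordinate of $\tilde X_t$ is a product $(X^1_t)_{i_1}\cdots(X^N_t)_{i_N}$ of $N$ scalar, finite-variation semimartingales, so applying the integration-by-parts formula of \cite{protter2005} coordinatewise and using the fact recalled above — that distinct chains $\bX^{n_1},\bX^{n_2}$ with $n_1\neq n_2$ never jump simultaneously, so that every mixed quadratic-covariation term $[\mathcal{M}^{n_1},\mathcal{M}^{n_2}]$, and more generally all higher-order covariation terms among distinct factors, vanish identically — one obtains
\[
d\tilde X_t = \sum_{n=1}^N X^1_{t-}\otimes\cdots\otimes X^{n-1}_{t-}\otimes dX^n_t\otimes X^{n+1}_{t-}\otimes\cdots\otimes X^N_{t-}.
\]
For $N=2$ this is just $d(X^1_t\otimes X^2_t)=X^1_{t-}\otimes dX^2_t + dX^1_t\otimes X^2_{t-}$, and the general case follows by an immediate induction on $N$.

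Next I would substitute $dX^n_t=Q^{0,n}\cdot X^n_{t-}\,dt+d\mathcal{M}^n_t$ into this identity and invoke the mixed-product property $(A\otimes B)(C\otimes D)=(AC)\otimes(BD)$ of the Kronecker product. Each drift block becomes $X^1_{t-}\otimes\cdots\otimes(Q^{0,n}X^n_{t-})\otimes\cdots\otimes X^N_{t-}=(I_{m_1}\otimes\cdots\otimes Q^{0,n}\otimes\cdots\otimes I_{m_N})\cdot\tilde X_{t-}$, so that summing over $n$ produces exactly $\tilde Q^0\cdot\tilde X_{t-}\,dt$ with $\tilde Q^0$ as in \reff{eq:N_ind_mc_q_matrix}, while the remaining terms are $\sum_{n=1}^N(X^1_{t-}\otimes\cdots\otimes I_{m_n}\otimes\cdots\otimes X^N_{t-})\cdot d\mathcal{M}^n_t$, which is the claimed $d\tilde{\mathcal M}_t$. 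Each of these $N$ stochastic integrals has a bounded, left-continuous, piecewise-constant (hence predictable) integrand and is driven by the bounded, square-integrable martingale $\mathcal{M}^n$, so it is a true $\FF$-martingale; equivalently, $\tilde{\mathcal M}=\tilde\bX-\tilde X_0-\int_0^{\cdot}\tilde Q^0\cdot\tilde X_{s-}\,ds$ is a bounded local martingale and therefore a genuine martingale. This yields \reff{eq:N_ind_mc_decomp}.

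It then remains to justify the Markov-chain claim. One checks first that $\tilde Q^0$ is a $Q$-matrix: each summand $I_{m_1}\otimes\cdots\otimes Q^{0,n}\otimes\cdots\otimes I_{m_N}$ has nonnegative off-diagonal entries, and its row sums vanish because row sums of a Kronecker product are products of the factors' row sums and $Q^{0,n}$ has row sums $0$; a finite sum of $Q$-matrices is again a $Q$-matrix, and $\tilde Q^0$ is moreover symmetric since every $Q^{0,n}$ is, which is why \reff{eq:N_ind_mc_decomp} may be written with $\tilde Q^0$ rather than its transpose. Since $(\bX^1,\dots,\bX^N)$ is Markov under $\mathbb{P}$ and the map $(e_{i_1},\dots,e_{i_N})\mapsto e_{i_1}\otimes\cdots\otimes e_{i_N}$ is a bijection onto the standard basis of $\RR^{m_1\cdots m_N}$, the process $\tilde\bX$ is Markov and generates the same filtration $\FF$; combining this with the decomposition just derived and the representation theory of finite-state Markov chains in \cite{elliott1995} identifies $\tilde\bX$ as a continuous-time Markov chain with transition rate matrix $\tilde Q^0$.

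I expect no serious obstacle: the content is entirely Kronecker-product bookkeeping. The only two points that genuinely require care are the vanishing of all mixed quadratic-covariation terms in the multivariate product formula — precisely where the almost-sure absence of simultaneous jumps of distinct chains is used — and the upgrade of $\tilde{\mathcal M}$ from a local to a true martingale, which follows from the uniform boundedness of $\tilde\bX$ and of its compensator on $[0,T]$.
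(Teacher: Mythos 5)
Your proposal is correct and follows essentially the same route as the paper: integration by parts applied to the Kronecker product, with the mixed quadratic-covariation terms killed by the almost-sure absence of simultaneous jumps, followed by the mixed-product identity $(A\otimes B)(C\otimes D)=(AC)\otimes(BD)$ to collect the drift into $\tilde Q^0\cdot\tilde X_{t-}$ and the martingale terms into $d\tilde{\mathcal M}_t$. The extra details you supply (the $Q$-matrix and symmetry check for $\tilde Q^0$, the upgrade from local to true martingale, and the identification of the Markov property) are points the paper leaves implicit, but they do not change the argument.
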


\begin{proof}
In order to keep the notation to a reasonable level of complexity, we only argue the proof for $N=2$. Applying It\^o's formula to $X_t^1 \otimes X_t^2$ and noticing that $X_t^1$ and $X_t^2$ have no simultaneous jumps, we obtain:
\begin{align*}
d(X_t^1 \otimes X_t^2) =&\;\; dX_t^1 \otimes X_{t-}^2 + X_{t-}^1 \otimes dX_t^2 \\
=&\;\; (Q^{0,1}\cdot X_{t-}^1)\otimes X_{t-}^2 dt + d\mathcal{M}^1_t \otimes X_{t-}^2 + X_{t-}^1 \otimes (Q^{0,2}\cdot X_{t-}^2)dt + X_{t-}^1 \otimes d\mathcal{M}^2_t.
\end{align*}
Using the properties of the Kronecker product, we have:
\begin{align*}
(Q^{0,1}\cdot X_{t-}^1)\otimes X_{t-}^2 =&\;\; (Q^{0,1}\cdot X_{t-}^1)\otimes (I_{m_2}\cdot X_{t-}^2)= (Q^{0,1}\otimes I_{m_2})\cdot(X_{t-}^1\otimes X_{t-}^2)\\
d\mathcal{M}^1_t \otimes X_{t-}^2 =&\;\; (I_{m_1} \cdot d\mathcal{M}^1_t )\otimes (X_{t-}^2 \cdot 1) \\
=&\;\; (I_{m_1}\otimes X_{t-}^2) \cdot (d\mathcal{M}^1_t \otimes 1) = (I_{m_1}\otimes X_{t-}^2) \cdot d\mathcal{M}^1_t\\
X_{t-}^1 \otimes (Q^{0,2}\cdot X_{t-}^2) =&\;\; (I_{m_1}\cdot X_{t-}^1) \otimes (Q^{0,2}\cdot X_{t-}^2)=(I_{m_1} \otimes Q^{0,2}) \cdot(X_{t-}^1\otimes X_{t-}^2)\\
X_{t-}^1\otimes d\mathcal{M}^2_t  =&\;\; (X_{t-}^1 \cdot 1)\otimes (I_{m_2} \cdot d\mathcal{M}^2_t )\\
 =&\;\; (X_{t-}^1 \otimes I_{m_2}) \cdot (1 \otimes d\mathcal{M}^2_t) = (X_{t-}^1 \otimes I_{m_2}) \cdot d\mathcal{M}^2_t.
\end{align*}
Plugging the above equalities into the It\^o decomposition yields the desired result for $N=2$. The case $N>2$ can be treated by applying a simple argument of induction, which we will not detail here.
\end{proof}

As in the case of a single Markov chain, we define the stochastic matrix $\psi_t^n := diag(Q^{0,n}\cdot X_{t-}^n) - Q^{0,n} \cdot diag(X_{t-}^n) - diag(X_{t-}^n) \cdot Q^{0,n}$ for $n=1,\dots,N$ as well as $\tilde{\psi}_t := diag(\tilde{Q}^{0} \cdot \tilde{X}_{t-}) - \tilde{Q}^{0} \cdot diag(\tilde{X}_{t-}) - diag(\tilde{X}_{t-}) \cdot \tilde{Q}^{0}$. For $n = 1,\dots,N$, we define the stochastic seminorm $\|\cdot\|_{X_{t-}^n}$ by $\|Z\|^2_{X_{t-}^n} := Z^* \cdot \psi_t^n \cdot Z$ where $Z \in \mathbb{R}^{m_n}$. We then define the stochastic seminorm $\|\cdot\|_{\tilde{X}_{t-}}$ by $\|\tilde{Z}\|^2_{\tilde{X}_{t-}} := \tilde{Z}^*\cdot \tilde{\psi}_t\cdot \tilde{Z}$ where $\tilde{Z} \in \mathbb{R}^{m_1\times \dots \times m_N}$.
Our objective is to show existence and uniqueness of the following BSDE:
\begin{equation}
\label{eq:bsed_markov_multi}
Y_t = \xi + \int_t^T F(w,s,Y_s,Z_s^1, \dots, Z_s^n)ds - \sum_{n=1}^N \int_t^T (Z_s^n)^* \cdot d\mathcal{M}^n_s.
\end{equation}
Here $\xi$ is a $\mathcal{F}_T$-measurable $\mathbb{P}$-square integrable random variable and the driver $F: \Omega\times[0,T]\times \mathbb{R} \times \mathbb{R}^{m_1} \times \dots \times \mathbb{R}^{m_N} \rightarrow \mathbb{R}$ is a function such that the process $t \rightarrow F(w,t,y,z_1,\dots,z_N)$ is predictable for all $y,z_1,\dots, z_N \in  \mathbb{R} \times \mathbb{R}^{m_1} \times \dots \times \mathbb{R}^{m_N}$. The unknowns of the equation are a c\`adl\`ag process $\bY$ taking values in $\mathbb{R}$ and predictable processes $\bZ^1, \dots, \bZ^N$ taking valus in $\mathbb{R}^{m_1},\dots, \mathbb{R}^{m_N}$ respectively.
\begin{theorem}\label{thm:bsde_ex_un_multi}
Assume that there exists a constant $C > 0$ such that $dt\times\mathbb{P}$-a.s., we have:
\begin{equation}
\label{eq:bsde_ex_un_multi_condition}
|F(w,t,y,z_1,\dots,z_N) - F(w,t,\tilde y,\tilde z_1,\dots, \tilde z_N)| 
\le C \left(|y - \tilde y| + \sum_{n=1}^N\|z_n - \tilde z_n\|_{X^n_{t-}}\right).
\end{equation}
Then the BSDE (\ref{eq:bsed_markov_multi}) admits a solution $(\bY,\bZ^1,\dots,\bZ^N)$ satisfying:
\[
\mathbb{E}\left[\int_0^T |Y_t|^2 dt\right] < +\infty,\quad\quad\mathbb{E}\left[\sum_{n=1}^N\int_0^T \|Z_t^n\|_{X^n_{t-}}^2 dt\right] < +\infty
\]
Moreover, the solution is unique in the sense that if $(\bY^{(1)}, \bZ^{(1),1},\dots,\bZ^{(1),N})$ and $(\bY^{(2)}, \bZ^{(2),1},\dots,\bZ^{(2),N})$ are two solutions, then $\bY^{(1)}$ and $\bY^{(2)}$ are indistinguishable and we have $\mathbb{E}[\int_0^T \|\tilde Z^{(1)}_t- \tilde Z^{(2)}_t\|^2_{\tilde X_{t-}}dt] = 0$.
\end{theorem}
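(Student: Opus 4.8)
The plan is to reduce the multi-chain BSDE (\ref{eq:bsed_markov_multi}) to a BSDE driven by the single Markov chain $\tilde\bX=\bX^1\otimes\cdots\otimes\bX^N$ of Lemma \ref{lem:N_ind_mc_decomp}, and then quote Lemma \ref{lem:bsde_ex_un}. The one point requiring care at the outset is that $\tilde\bX$ has transition rate matrix $\tilde Q^0$, which forbids simultaneous jumps of two chains and so is \emph{not} of the ``all rates equal to $1$'' form; but this is precisely the modified-reference-measure situation described in the Remark following Assumption \ref{hypo:boundedness}, to which Lemma \ref{lem:bsde_ex_un}, the quadratic-variation formula (\ref{eq:quad_var}), and the predictable representation property of a finite-state Markov chain all extend verbatim. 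Note also that the natural filtration of $(\bX^1,\dots,\bX^N)$ coincides with that of $\tilde\bX$, since each $X^n_t$ is a linear projection of $\tilde X_t$.

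Writing $B^n_t:=X^1_{t-}\otimes\cdots\otimes X^{n-1}_{t-}\otimes I_{m_n}\otimes X^{n+1}_{t-}\otimes\cdots\otimes X^N_{t-}$, so that Lemma \ref{lem:N_ind_mc_decomp} reads $d\tilde{\mathcal{M}}_t=\sum_{n=1}^N B^n_t\cdot d\mathcal{M}^n_t$, I would first record the elementary seminorm identity
\[
\|\tilde Z\|^2_{\tilde X_{t-}}=\sum_{n=1}^N\big\|(B^n_t)^*\cdot\tilde Z\big\|^2_{X^n_{t-}},\qquad \tilde Z\in\mathbb{R}^{m_1\cdots m_N},
\]
which follows from the explicit formula $\|Z\|^2_{e_i}=\sum_{j\neq i}|Z_j-Z_i|^2$ once one observes that the states reachable from $\tilde X_{t-}$ under $\tilde Q^0$ are exactly those differing from it in a single coordinate (and the corresponding rates are all $1$). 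This identity is the engine of the reduction. Given a candidate $\tilde\bZ$, one sets $Z^n:=(B^n)^*\tilde Z$; conversely, any $L^2(\mathbb{F})$-martingale --- in particular $t\mapsto\sum_n\int_0^t(Z^n_s)^*\cdot d\mathcal{M}^n_s$ --- is of the form $\int_0^t\tilde Z_s^*\cdot d\tilde{\mathcal{M}}_s$ by the representation property of $\tilde\bX$, and comparing with $\sum_n\int_0^t((B^n_s)^*\tilde Z_s)^*\cdot d\mathcal{M}^n_s$ and using orthogonality $[\mathcal{M}^{n_1},\mathcal{M}^{n_2}]=0$ (no simultaneous jumps) one obtains $\|Z^n_s-(B^n_s)^*\tilde Z_s\|_{X^n_{s-}}=0$, $dt\otimes d\mathbb{P}$-a.e.

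Existence then follows immediately: the map $\tilde F(w,t,y,\tilde z):=F\big(w,t,y,(B^1_t)^*\tilde z,\dots,(B^N_t)^*\tilde z\big)$ is a predictable driver (each $B^n_t$ is predictable) and, by the displayed identity together with Cauchy--Schwarz, satisfies $|\tilde F(w,t,y,\tilde z)-\tilde F(w,t,\tilde y,\tilde z')|\le C|y-\tilde y|+C\sqrt N\,\|\tilde z-\tilde z'\|_{\tilde X_{t-}}$; applying Lemma \ref{lem:bsde_ex_un} to $\tilde\bX$ yields a unique $(\bY,\tilde\bZ)$ with the stated integrability, and then $(\bY,(B^1)^*\tilde Z,\dots,(B^N)^*\tilde Z)$ solves (\ref{eq:bsed_markov_multi}) because $d\tilde{\mathcal{M}}=\sum_n B^n d\mathcal{M}^n$, with $\mathbb{E}[\int_0^T\sum_n\|Z^n_t\|^2_{X^n_{t-}}dt]=\mathbb{E}[\int_0^T\|\tilde Z_t\|^2_{\tilde X_{t-}}dt]<+\infty$. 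For uniqueness, I would run the correspondence backwards: two solutions of (\ref{eq:bsed_markov_multi}) produce, via the representation property and the a.e.\ identification above, two solutions $(\bY^{(k)},\tilde\bZ^{(k)})$ of the $\tilde F$-BSDE with the same terminal value $\xi$ (here one uses that the driver Lipschitz bound is stated in the seminorms, so $F$ is unchanged by replacing $Z^n$ with $(B^n)^*\tilde Z$); Lemma \ref{lem:bsde_ex_un} then forces $\bY^{(1)}$ and $\bY^{(2)}$ indistinguishable and $\mathbb{E}[\int_0^T\|\tilde Z^{(1)}_t-\tilde Z^{(2)}_t\|^2_{\tilde X_{t-}}dt]=0$, which is exactly the asserted uniqueness.

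The main (and essentially the only) obstacle is the bookkeeping around the forbidden transitions of $\tilde Q^0$ --- i.e.\ checking that the single-chain toolkit genuinely survives the passage to this rate matrix --- together with the verification of the seminorm identity; neither is deep but both must be stated carefully. If one prefers to sidestep the representation theorem in the uniqueness part, one can instead apply It\^o's formula to $e^{\beta t}|Y^{(1)}_t-Y^{(2)}_t|^2$ for the difference of two solutions of (\ref{eq:bsed_markov_multi}) directly, using $[\mathcal{M}^{n_1},\mathcal{M}^{n_2}]=0$ to split the quadratic-variation term into $\sum_n\mathbb{E}[\int\|\delta Z^n_s\|^2_{X^n_{s-}}ds]$ before a Gronwall estimate with $\beta$ large --- which is the proof of Theorem 1.1 in \cite{cohen2010} carried out with $N$ martingales.
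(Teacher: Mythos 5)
Your proposal is correct and follows essentially the same route as the paper: the reduction to the product chain $\tilde\bX$ via the Kronecker decomposition of Lemma \ref{lem:N_ind_mc_decomp}, the seminorm identity $\|\tilde Z\|^2_{\tilde X_{t-}}=\sum_n\|(B^n_t)^*\tilde Z\|^2_{X^n_{t-}}$, the induced driver $\tilde F$, and the appeal to Lemma \ref{lem:bsde_ex_un} are exactly the paper's argument. You are in fact slightly more careful than the paper on two points it leaves implicit --- that Lemma \ref{lem:bsde_ex_un} must be invoked for the modified reference rate matrix $\tilde Q^0$ (forbidden simultaneous jumps), and the reverse correspondence needed for the uniqueness assertion --- both of which you handle correctly.
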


\begin{proof}
For simplicity of the presentation, we give the proof for $N=2$. It can be easily generalized to any $N>2$. Our first step is to show that the following equality holds for all $\tilde{Z} \in \mathbb{R}^{m_1 \times m_2}$:
\begin{equation}\label{eq:proof_bsde_ex_un_multi1}
\|\tilde{Z}\|^2_{\tilde{X}_{t-}} = \|(I_{m_1} \otimes (X_{t-}^2)^*)\cdot \tilde{Z}\|^2_{X^1_{t-}} + \|((X_{t-}^1)^* \otimes  I_{m_2})\cdot \tilde{Z}\|^2_{X^2_{t-}}
\end{equation}
By the definition of the semi-norm $\|\cdot\|_{X_{t-}^1}$, we have:
\begin{align*}
&\;\;\|(I_{m_1} \otimes (X_{t-}^2)^*)\cdot \boldsymbol{Z}\|^2_{X^1_{t-}} = \boldsymbol{Z}^* \cdot (I_{m_1} \otimes X_{t-}^2)\cdot \psi_t^1\cdot (I_{m_1} \otimes (X_{t-}^2)^*)\cdot \boldsymbol{Z}\\
=&\;\; \boldsymbol{Z}^* \cdot (I_{m_1} \otimes X_{t-}^2)\cdot (\psi_t^1\otimes 1)\cdot (I_{m_1} \otimes (X_{t-}^2)^*)\cdot \boldsymbol{Z} = \boldsymbol{Z}^* \cdot(\psi_t^1\otimes X_{t-}^2)\cdot (I_{m_1} \otimes (X_{t-}^2)^*)\cdot \boldsymbol{Z}\\
=&\;\; \boldsymbol{Z}^*\cdot[ \psi_t^1\otimes (X_{t-}^2\cdot (X_{t-}^2)^*)]\cdot \boldsymbol{Z} =\boldsymbol{Z}^*\cdot( \psi_t^1\otimes diag(X_{t-}^2)) \cdot \boldsymbol{Z}.
\end{align*}
Similarly we have $\|((X_{t-}^1)^* \otimes  I_{m_2})\cdot \boldsymbol{Z}\|^2_{X^2_{t-}} = \boldsymbol{Z}^*\cdot(diag(X_{t-}^1)\otimes \psi_t^2)\cdot \boldsymbol{Z}$. Now by the definition of $\tilde{\psi}_t$, we have:
\begin{align*}
\tilde{\psi}_t =&\;\; diag(\tilde{Q}^{0}\cdot \tilde{X}_{t-}) - \tilde{Q}^{0}\cdot diag(\tilde{X}_{t-}) - diag(\tilde{X}_{t-})\cdot \tilde{Q}^{0}\\
=&\;\;diag((I_{m_1}\otimes Q^{0,2} + Q^{0,1}\otimes I_{m_2}) \cdot (X^1_{t-}\otimes X^2_{t-}))- (I_{m_1}\otimes Q^{0,2} + Q^{0,1}\otimes I_{m_2})\cdot diag(X^1_{t-}\otimes X^2_{t-}) \\
&\hskip 50pt- diag(X^1_{t-}\otimes X^2_{t-}) \cdot (I_{m_1}\otimes Q^{0,2} + Q^{0,1}\otimes I_{m_2})\\
=&\;\;diag(X^1_{t-}\otimes(Q^{0,2}\cdot X^2_{t-})) + diag((Q^{0,1}\cdot X^1_{t-})\otimes X^2_{t-})\\
&\hskip 50pt-diag(X_{t-}^1)\otimes(Q^{0,2}\cdot diag(X^2_{t-})) - (Q^{0,1}\cdot diag(X^1_{t-}))\otimes diag(X_{t-}^2)\\
&\hskip 50pt-diag(X_{t-}^1)\otimes(diag(X^2_{t-})\cdot Q^{0,2}) - (diag(X^1_{t-})\cdot Q^{0,1})\otimes diag(X_{t-}^2)\\
=&\;\;\psi_t^1\otimes diag(X_{t-}^2) + diag(X_{t-}^1)\otimes \psi_t^2,
\end{align*}
where we have used the fact that for any two vectors $X^1, X^2$ we have $diag(X^1\otimes X^2) = diag(X^1)\otimes diag(X^2)$. This immediately leads to the equality (\ref{eq:proof_bsde_ex_un_multi1}). Now we consider the BSDE driven by the continuous-time Markov chain $\tilde{X}$ with terminal condition $\xi$ and the driver function $\tilde{F}$ defined by:
\[
\tilde{F}(w,t,Y,\tilde{Z}) := F(w,t,Y, (I_{m_1}\otimes (X^2_{t-})^*)\cdot \tilde{Z}, ((X^1_{t-})^*\otimes I_{m_2})\cdot \tilde{Z}).
\]
By equality (\ref{eq:proof_bsde_ex_un_multi1}) and the assumption on the regularity of $F$, we have:
\begin{align*}
&\;\;|\tilde{F}(w,t,Y_1,\tilde{Z}_1) - \tilde{F}(w,t,Y_2,\tilde{Z}_2)| \\
\le&\;\; C(|Y_1 - Y_2| + \|(I_{m_1}\otimes (X^2_{t-})^*)\cdot(\tilde{Z}_1-\tilde{Z}_2)\|_{X^1_{t-}} +  \|((X_{t-}^1)^* \otimes  I_{m_2})\cdot(\tilde{Z}_1-\tilde{Z}_2)\|_{X^2_{t-}})\\
\le &\;\; C(|Y_1 - Y_2| + \sqrt{2}\|\tilde{Z}_1-\tilde{Z}_2\|_{\tilde{X}_{t-}}).
\end{align*}
Applying Lemma \ref{lem:bsde_ex_un} we obtain the existence of the solution to the BSDE:
\[
Y_t = \xi + \int_t^T \tilde{F}(s,Y_s, \tilde{Z}_s) ds + \int_t^T \tilde{Z}_s^* \cdot d\tilde{\mathcal{M}}_s
\]
Now we set $Z^1_t := (I_{m_1}\otimes (X^2_{s-})^*)\cdot\tilde{Z}_s$ and $Z^2_t := ((X^1_{s-})^*\otimes I_{m_2})\cdot\tilde{Z}_s$. From the definition of the driver $\tilde{F}$ and $\tilde{\mathcal{M}}$ in equation (\ref{eq:N_ind_mc_decomp}), we see that:
\begin{align*}
Y_t =&\;\; \xi + \int_t^T F(w,s,Y_s, (I_{m_1}\otimes (X^2_{s-})^*)\cdot\tilde{Z}_s, ((X^1_{s-})^*\otimes I_{m_2})\cdot \tilde{Z}_s) ds \\
&\hskip 40mm + \int_t^T \tilde{Z}_s^* \cdot [(I_{m_1} \otimes X_{t-}^2) \cdot dM_t^1 + (X_{t-}^1 \otimes I_{m_2})\cdot dM_t^2]\\
= &\;\;\xi + \int_t^T F(w,s,Y_s, Z^1_s, Z^2_s) ds + \int_t^T (Z^1_s)^* \cdot dM_t^1 + \int_t^T (Z^2_s)^*\cdot dM_t^2
\end{align*}
This shows that $(Y,Z^1, Z^2)$ is a solution to BSDE (\ref{eq:bsed_markov_multi}).
\end{proof}
We also state a comparison principle for linear BSDEs driven by multiple independent Markov chains.
\begin{theorem}\label{theo:linear_bsde_comp_multi}
For each $n\in\{1,\dots,N\}$, let $\gamma^n$ be a bounded predictable process in $\mathbb{R}^{m_n}$ such that $\sum_{i=1}^{m_n}[\gamma^n_t]_i = 0$ for all $t \in [0,T]$, and $\beta$ a bounded predictable process in $\mathbb{R}$. Let $\phi$ be a non-negative predictable process in $\mathbb{R}$ such that $\mathbb{E}[\int_0^T \|\phi_t\|^2 dt] < +\infty$ and $\xi$ a non-negative square-integrable $\mathcal{F}_T$ measurable random variable in $\mathbb{R}$. Let $(Y,Z)$ be the solution of the linear BSDE:
\begin{equation}\label{eq:linear_bsde_multi}
Y_t = \xi + \int _t^T (\phi_s +\beta_s Y_s + \sum_{n=1}^N(\gamma^n_s)^*\cdot Z^n_s) ds - \sum_{n=1}^N \int_t^T (Z_s^n)^*\cdot d\mathcal{M}^n_s.
\end{equation}
Assume that for all $n=1,\dots,N$, $t\in(0,T]$ and $j$ such that $(e_j^n)^* \cdot Q^{0,n} \cdot X_{t-}^n > 0$, we have $1 + (\gamma_t^n)^*\cdot(\psi_t^n)^+\cdot(e^n_j - X^n_{t-}) \ge 0$ where $(\psi_t^n)^+$ is the Moore-Penrose inverse of the matrix $\psi_t^n$. Then $Y$ is nonnegative.
\end{theorem}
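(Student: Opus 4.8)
The plan is to reduce Theorem \ref{theo:linear_bsde_comp_multi} to the single-chain comparison principle of Lemma \ref{lem:linear_bsde_comp} by the same Kronecker-product device used in the proof of Theorem \ref{thm:bsde_ex_un_multi}. First I would introduce the joint chain $\tilde\bX$ with its canonical decomposition $d\tilde X_t = \tilde Q^0\cdot\tilde X_{t-}dt + d\tilde\bcM_t$ from Lemma \ref{lem:N_ind_mc_decomp}, and rewrite the linear BSDE \reff{eq:linear_bsde_multi} as a linear BSDE driven by the single martingale $\tilde\bcM$. Concretely, I would set $\tilde Z_t := \sum_{n=1}^N (X_{t-}^1\otimes\dots\otimes I_{m_n}\otimes\dots\otimes X_{t-}^N)\cdot Z_t^n$, i.e. the natural lift that matches the representation of $d\tilde\bcM_t$; then $\sum_n (Z_s^n)^*\cdot d\mathcal{M}^n_s = \tilde Z_s^*\cdot d\tilde\bcM_s$, so $(\bY,\tilde\bZ)$ solves $Y_t = \xi + \int_t^T(\phi_s + \beta_s Y_s + \tilde\gamma_s^*\cdot\tilde Z_s)\,ds - \int_t^T\tilde Z_s^*\cdot d\tilde\bcM_s$ for an appropriate bounded predictable $\tilde\gamma$ in $\mathbb{R}^{m_1\times\dots\times m_N}$ with zero coordinate sum, built so that $\tilde\gamma_s^*\cdot\tilde Z_s = \sum_n(\gamma_s^n)^*\cdot Z_s^n$ — for instance $\tilde\gamma_s := \sum_n \frac{1}{N}\gamma_s^n\otimes(\text{appropriate tensor of }X_{s-}^k)$, adjusted using the identities from the proof of Theorem \ref{thm:bsde_ex_un_multi} so that the contraction is exact.

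The crux is then to verify the sign hypothesis of Lemma \ref{lem:linear_bsde_comp} for the lifted data, namely that $1 + \tilde\gamma_t^*\cdot\tilde\psi_t^+\cdot(\tilde e - \tilde X_{t-})\ge 0$ for every unit vector $\tilde e$ with $\tilde e^*\cdot\tilde Q^0\cdot\tilde X_{t-} > 0$. Since the $N$ chains never jump simultaneously, every such reachable state $\tilde e$ differs from $\tilde X_{t-} = X_{t-}^1\otimes\dots\otimes X_{t-}^N$ in exactly one tensor factor: $\tilde e = X_{t-}^1\otimes\dots\otimes e_j^n\otimes\dots\otimes X_{t-}^N$ for some $n$ and some $e_j^n$ with $(e_j^n)^*\cdot Q^{0,n}\cdot X_{t-}^n > 0$. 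Using the block structure $\tilde\psi_t = \sum_n (\bigotimes_{k<n}\mathrm{diag}(X_{t-}^k))\otimes\psi_t^n\otimes(\bigotimes_{k>n}\mathrm{diag}(X_{t-}^k))$ established in the proof of Theorem \ref{thm:bsde_ex_un_multi}, together with the fact that $\tilde e - \tilde X_{t-}$ has zero coordinate sum (so it lies in the range of $\tilde\psi_t$), a direct computation should give $\tilde\psi_t^+\cdot(\tilde e-\tilde X_{t-}) = X_{t-}^1\otimes\dots\otimes\big((\psi_t^n)^+\cdot(e_j^n - X_{t-}^n)\big)\otimes\dots\otimes X_{t-}^N$, exactly as in the single-chain jump computation following \reff{eq:jump_of_l}. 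Contracting with $\tilde\gamma_t$ and using that the tensor factors $X_{t-}^k$ for $k\neq n$ are unit vectors then collapses the expression to $(\gamma_t^n)^*\cdot(\psi_t^n)^+\cdot(e_j^n - X_{t-}^n)$, so the lifted sign condition becomes precisely the assumed one for chain $n$.

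Once the sign condition is checked, Lemma \ref{lem:linear_bsde_comp} applied to $(\bY,\tilde\bZ)$ driven by $\tilde\bcM$ yields $\bY\ge 0$, which is the claim. The main obstacle I anticipate is bookkeeping: choosing the lift $\tilde\gamma$ so that the identity $\tilde\gamma_s^*\cdot\tilde Z_s = \sum_n(\gamma_s^n)^*\cdot Z_s^n$ holds pathwise (this requires using $X_{t-}^k(X_{t-}^k)^* = \mathrm{diag}(X_{t-}^k)$ and the Kronecker mixed-product rule carefully, since $\tilde Z_t$ as defined above is itself a contraction of the $Z_t^n$), and making sure the ``$dt\otimes d\mathbb{P}$-a.e.'' ambiguity in $\tilde Z$ is harmless — but since the identity only needs to hold against the martingale increments and inside the driver, and both $\tilde\psi_t^+\tilde\psi_t$ and the relevant vectors lie in the zero-sum subspace, this is the same kind of argument already carried out in the excerpt and should go through without new ideas. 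I would also need a one-line remark that $\tilde\gamma$ inherits boundedness and the zero-coordinate-sum property from the $\gamma^n$, which is immediate.
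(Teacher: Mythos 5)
Your overall route is the paper's: lift the equation to the joint chain $\tilde\bX = \bX^1\otimes\cdots\otimes \bX^N$, identify it with a scalar linear BSDE driven by $\tilde\bcM$, and invoke the single-chain comparison principle of Lemma \ref{lem:linear_bsde_comp}. However, two of the concrete formulas you propose are incorrect, and the second would make the verification of the sign hypothesis fail. First, the lift $\tilde Z_t := \sum_n(X^1_{t-}\otimes\cdots\otimes I_{m_n}\otimes\cdots\otimes X^N_{t-})\cdot Z^n_t$ does not satisfy $\tilde Z^*_s\cdot d\tilde{\mathcal{M}}_s = \sum_n(Z^n_s)^*\cdot d\mathcal{M}^n_s$: the mixed-product rule produces cross terms $\bigl((Z^{n'}_s)^*\cdot X^{n'}_{s-}\bigr)\bigl((X^n_{s-})^*\cdot d\mathcal{M}^n_s\bigr)$ for $n\neq n'$, and $(X^n_{s-})^*\cdot d\mathcal{M}^n_s$ is (up to sign) the compensated jump-counting martingale of chain $n$, hence not zero. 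The paper sidesteps this by arguing in the opposite direction: it solves the lifted BSDE with coefficient $\boldsymbol{\gamma}_t = \gamma^1_t\otimes X^2_{t-} + X^1_{t-}\otimes\gamma^2_t$ (note: no $1/N$ normalization), checks that the projections $Z^1_t = (I_{m_1}\otimes(X^2_{t-})^*)\cdot\tilde Z_t$, $Z^2_t = ((X^1_{t-})^*\otimes I_{m_2})\cdot\tilde Z_t$ solve the original equation exactly (for these the identity for the stochastic integrals does hold), and then uses uniqueness to identify $Y$.

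Second and more seriously, the claimed identity $\tilde\psi_t^+\cdot(\tilde e - \tilde X_{t-}) = X^1_{t-}\otimes\cdots\otimes\bigl((\psi^n_t)^+\cdot(e^n_j - X^n_{t-})\bigr)\otimes\cdots\otimes X^N_{t-}$ is false: the pseudo-inverse of $\tilde\psi_t = \psi^1_t\otimes diag(X^2_{t-}) + diag(X^1_{t-})\otimes\psi^2_t$ does not factor across the tensor product. For $N=2$ with $\tilde X_{t-}=e^1_i\otimes e^2_j$, applying $\tilde\psi_t$ to your candidate does not return $e^1_k\otimes e^2_j - e^1_i\otimes e^2_j$ (the obstruction is that $\psi^2_t\cdot e^2_j = -Q^{0,2}\cdot e^2_j\neq 0$); the correct expression, computed in the paper, is $\frac{1}{m_1+m_2-1}\bigl[(m_1+m_2-2)e^1_k\otimes e^2_j - \sum_{k_0\neq k}e^1_{k_0}\otimes e^2_j - \sum_{j_0\neq j}e^1_i\otimes e^2_{j_0}\bigr]$, which mixes both factors. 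Contracting your candidate with the correct $\boldsymbol{\gamma}_t$ yields $\gamma^1_k - \frac{1}{m_1}\gamma^2_j$ instead of the required $(e^1_k)^*\cdot\gamma^1_t = (\gamma^1_t)^*\cdot(\psi^1_t)^+\cdot(e^1_k-e^1_i)$, so the lifted sign condition would not reduce to the one assumed for the jumping chain. The collapse to a single-chain quantity does occur, but only for the true pseudo-inverse and only after invoking the zero-coordinate-sum hypothesis on \emph{every} $\gamma^n$ (the contributions of the non-jumping chains cancel precisely because $\sum_i[\gamma^n_t]_i=0$); this explicit computation is the real content of the paper's proof and cannot be replaced by the tensor factorization you anticipate.
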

\begin{proof}
As before we treat the case for $N=2$, for which the argument can be trivially generalized to any $N>2$. Since $\gamma^n$ and $\beta$ are bounded processes and $\sum_{i=1}^{m_n}[\gamma^n_t]_i = 0$ for all $t \le T$ and $n\le N$, we easily verify that the Lipschitz condition (\ref{eq:bsde_ex_un_multi_condition}) stated in Theorem \ref{thm:bsde_ex_un_multi} is satisfied and therefore the BSDE (\ref{eq:linear_bsde_multi}) admits a unique solution. Now consider the following BSDE driven by $\boldsymbol{\mathcal{M}}$:
\begin{equation}\label{eq:linear_bsde_multi}
Y_t = \xi + \int _t^T (\phi_s +\beta_s Y_s + \boldsymbol{\gamma}_s^* \cdot \boldsymbol{Z}_s) ds - \sum_{n=1}^2 \int_t^T \boldsymbol{Z}_s^* \cdot d\boldsymbol{\mathcal{M}}_s,
\end{equation}
where $\boldsymbol{\gamma}_t := (\gamma_t^1 \otimes X_{t-}^2) + (X_{t-}^1 \otimes \gamma_t^2)$. It is easy to verify the BSDE (\ref{eq:linear_bsde_multi}) admits a unique solution $(Y,Z)$ and following the same argument as in the proof of Theorem \ref{thm:bsde_ex_un_multi}, we verify that $(Y_t,Z^1_t, Z^2_t) := (Y_t, (I_{m_1}\otimes (X^2_{s-})^*)\cdot \boldsymbol{Z}_s,((X^1_{s-})^*\otimes I_{m_2})\cdot\boldsymbol{Z}_s)$ solves the BSDE (\ref{eq:linear_bsde}), which is also its unique solution. Therefore we only need to show that the solution $Y$ to BSDE (\ref{eq:linear_bsde}) is nonnegative.
To this ends, we need to apply the comparison principal for the case of a single Markov chain, as is stated in Lemma \ref{lem:linear_bsde_comp}. Note that $X^1$ and $X^2$ do not jump simultaneously and $\boldsymbol{X}_t = X^1_t \otimes X^2_t$. For the jump of $\boldsymbol{X}$ resulting from the jump of $X^1$, we need to show that for $k=1,\dots,m_1$:
\begin{equation}\label{eq:proof_linear_bsde_comp_multi1}
1 + \boldsymbol{\gamma}_t^*\cdot\boldsymbol{\psi}_t^+\cdot(e_k^1 \otimes X_{t-}^2 - X^1_{t-} \otimes X^2_{t-}) \ge 0.
\end{equation}
Let us assume that $X^1_{t-} = e_i^1$, $X^2_{t-} = e_j^2$. If $k = i$, the above equality is trivial. In the following, we consider the case $k\neq i$. Then by the assumption of the theorem, we have:
\begin{equation}\label{eq:proof_linear_bsde_comp_multi0}
1 + (\gamma_t^1)^*\cdot(\psi_t^1)^+\cdot(e_k^1 - e_i^1) \ge 0.
\end{equation}
It can be easily verified that:
\begin{align*}
&(diag(e_i^1)\otimes \psi_t^2 + \psi_t^1 \otimes diag(e_j^2))\cdot\left[(m_1+m_2-2) e_k^1 \otimes e_j^2 - \sum_{k_0 \neq k} e_{k_0}^1 \otimes e_j^2 - \sum_{j_0\neq j}e_i^1\otimes e_{j_0}^2\right]\\
 &=e_k^1 \otimes e_j^2 - e_i^1 \otimes e_j^2,
\end{align*}
so that we have:
\begin{align*}
&\boldsymbol{\psi}_t^+\cdot(e_k^1 \otimes X_{t-}^2 - X^1_{t-} \otimes X^2_{t-})\\
 =& \frac{1}{m_1+m_2-1}\left[(m_1+m_2-2) e_k^1 \otimes e_j^2 - \sum_{k_0 \neq k} e_{k_0}^1 \otimes e_j^2 - \sum_{j_0\neq j}e_i^1\otimes e_{j_0}^2\right].
\end{align*}
It follows that:
\begin{align*}
&\;\;\boldsymbol{\gamma}_t^*\cdot\boldsymbol{\psi}_t^+\cdot(e_k^1 \otimes X_{t-}^2 - X^1_{t-} \otimes X^2_{t-})\\
=&\;\;\frac{1}{m_1+m_2-1}(\gamma_t^1 \otimes e^2_j + e^1_i \otimes \gamma_t^2)^*\cdot\left[(m_1+m_2-2) e_k^1 \otimes e_j^2 - \sum_{k_0 \neq k} e_{k_0}^1 \otimes e_j^2 - \sum_{j_0\neq j}e_i^1\otimes e_{j_0}^2\right]\\
=&\;\;\frac{1}{m_1+m_2-1}\left[(m_1+m_2-2)(e_k^1)^*\cdot\gamma_t^1 - \sum_{k_0 \neq k} (e_{k_0}^1)^*\cdot \gamma_t^1 - (e_j^2)^*\cdot\gamma_t^2- \sum_{j_0\neq j}(e_{j_0}^2)^*\cdot\gamma_t^2\right]\\
=&\;\;\frac{1}{m_1+m_2-1}\left[(m_1+m_2-1)(e_k^1)^*\cdot\gamma_t^1 - \sum_{k_0} (e_{k_0}^1)^*\cdot \gamma_t^1 - (e_j^2)^*\cdot\gamma_t^2- \sum_{j_0}(e_{j_0}^2)^*\cdot\gamma_t^2\right] \\
=&\;\; (e_k^1)^*\cdot\gamma_t^1,
\end{align*}
where in the last equality we used the assumption that $\sum_{i=1}^{m_n}[\gamma^n_t]_i = 0$ for $n=1,2$. Now noticing that $(e_k^1)^*\cdot\gamma_t^1 = (\gamma_t^1)^*\cdot(\psi_t^1)^+\cdot(e_k^1 - e_i^1)$, we obtain:
\[
1 + \boldsymbol{\gamma}_t^*\cdot\boldsymbol{\psi}_t^+\cdot(e_k^1 \otimes X_{t-}^2 - X^1_{t-} \otimes X^2_{t-}) = 1 + (\gamma_t^1)^*\cdot(\psi_t^1)^+\cdot(e_k^1 - e_i^1).
\]
Combining this with the inequality (\ref{eq:proof_linear_bsde_comp_multi0}), we obtain the inequality (\ref{eq:proof_linear_bsde_comp_multi1}). Proceeding in a similar way we can also show that for $k=1,\dots,m_2$:
\[
1 + \boldsymbol{\gamma}_t^*\cdot\boldsymbol{\psi}_t^+\cdot(X_{t-}^1 \otimes e_k^2 - X^1_{t-} \otimes X^2_{t-}) \ge 0.
\]
Applying Lemma \ref{lem:linear_bsde_comp} to the BSDE (\ref{eq:linear_bsde_multi}), we obtain the desired result.
\end{proof}

\bibliographystyle{siam}

\begin{thebibliography}{10}

\bibitem{benazzoli2017}
{\sc C.~Benazzoli, L.~Campi, and L.~D. Persio}, {\em Mean-field games with
  controlled jumps},  (2017).

\bibitem{carmona2013probabilistic}
{\sc R.~Carmona and F.~Delarue}, {\em Probabilistic analysis of mean-field
  games}, SIAM Journal on Control and Optimization, 51 (2013), pp.~2705--2734.

\bibitem{carmona2015probabilistic}
{\sc R.~Carmona and D.~Lacker}, {\em A probabilistic weak formulation of mean
  field games and applications}, The Annals of Applied Probability, 25 (2015),
  pp.~1189--1231.

\bibitem{carmona2016discrete}
{\sc R.~Carmona and P.~Wang}, {\em Finite state mean field games with major and
  minor players}, arXiv preprint arXiv:1610.05408,  (2016).

\bibitem{cecchin2017}
{\sc A.~Cecchin and M.~Fischer}, {\em Probabilistic approach to finite state
  mean field games}, arXiv preprint arXiv:1704.00984,  (2017).

\bibitem{cohen2008}
{\sc S.~N. Cohen and R.~J. Elliott}, {\em Solutions of backward stochastic
  differen- tial equations on markov chains}, Commun. Stoch. Anal.,  (2008),
  pp.~251--262.

\bibitem{cohen2010}
\leavevmode\vrule height 2pt depth -1.6pt width 23pt, {\em Comparisons for
  backward stochastic differential equations on markov chains and related
  no-arbitrage conditions}, Ann. Appl. Probab., 20 (2010), pp.~267--311.

\bibitem{doncel2017}
{\sc J.~Doncel, N.~Gast, and B.~Gaujal}, {\em Mean-field games with explicit
  interactions}.
\newblock Feb. 2016.

\bibitem{elie2016}
{\sc R.~Elie, T.~Mastrolia, and D.~Possama{\"\i}}, {\em A tale of a principal
  and many many agents}, arXiv preprint arXiv:1608.05226,  (2016).

\bibitem{elliott1995}
{\sc R.~J. Elliott, L.~Aggoun, and J.~B. Moore}, {\em Hidden Markov Models:
  Estimation and Control}, no.~29 in Applications of Mathematics, Springer, New
  York, 1995.

\bibitem{filippov2013}
{\sc A.~F. Filippov}, {\em Differential equations with discontinuous righthand
  sides: control systems}, vol.~18, Springer Science \&amp; Business Media,
  2013.

\bibitem{fournier2015}
{\sc N.~Fournier and A.~Guillin}, {\em On the rate of convergence in
  wasserstein distance of the empirical measure}, Probability Theory and
  Related Fields, 162 (2015), pp.~707--738.

\bibitem{georgano1967}
{\sc G.~Georganopoulos}, {\em Sur l'approximation des fonctions continues par
  des fonctions lipschitziennes}, C. R. Acad. Sci. Paris, 264 (1967),
  pp.~319--321.

\bibitem{gomes2013}
{\sc D.~A. Gomes, J.~Mohr, and R.~R. Souza}, {\em Continuous time finite state
  mean field games}, Applied Mathematics \&amp; Optimization, 68 (2013),
  pp.~99--143.

\bibitem{gomes2014}
{\sc D.~A. Gomes, R.~M. Velho, and M.-T. Wolfram}, {\em Socio-economic
  applications of finite state mean field games}, Philosophical Transactions of
  the Royal Society A: Mathematical, Physical and Engineering Sciences, 372
  (2014).

\bibitem{hu1997}
{\sc Y.~Hu and S.~Peng}, {\em A stability theorem of backward stochastic
  differential equations and its application}, Comptes Rendus de l'Acad{\'e}mie
  des Sciences-Series I-Mathematics, 324 (1997), pp.~1059--1064.

\bibitem{jacod1981}
{\sc J.~Jacod and J.~M{\'e}min}, {\em Sur un type de convergence
  interm{\'e}diaire entre la convergence en loi et la convergence en
  probabilit{\'e}}, in S{\'e}minaire de Probabilit{\'e}s XV 1979/80, Springer,
  1981, pp.~529--546.

\bibitem{jacod1987}
{\sc J.~Jacod and A.~N. Shiryaev}, {\em Skorokhod topology and convergence of
  processes}, in Limit theorems for stochastic processes, Springer, 1987,
  pp.~288--347.

\bibitem{kolokoltsov2016}
{\sc V.~Kolokoltsov and A.~Bensoussan}, {\em Mean-field-game model for botnet
  defense in cyber-security}, Applied Mathematics \&amp; Optimization, 74
  (2016), pp.~669--692.

\bibitem{miculescu2000}
{\sc R.~Miculescu}, {\em Approximation of continuous functions by lipschitz
  functions}, Real Analysis Exchange, 26 (2000), pp.~449--452.

\bibitem{protter2005}
{\sc P.~E. Protter}, {\em Stochastic differential equations}, in Stochastic
  Integration and Differential Equations, Springer, 2005, pp.~249--361.

\bibitem{royer2006}
{\sc M.~Royer}, {\em Backward stochastic differential equations with jumps and
  related non-linear expectations}, Stochastic processes and their
  applications, 116 (2006), pp.~1358--1376.

\bibitem{sokol2015}
{\sc A.~Sokol and N.~R. Hansen}, {\em Exponential martingales and changes of
  measure for counting processes}, Stochastic analysis and applications, 33
  (2015), pp.~823--843.

\bibitem{villani2008}
{\sc C.~Villani}, {\em Optimal transport: old and new}, vol.~338, Springer
  Science \&amp; Business Media, 2008.

\end{thebibliography}

\end{document}